\numberwithin{equation}{section}
\def\N{\mathbb{N}}
\def\Z{\mathbb{Z}}
\def\Q{\mathbb{Q}}
\def\R{\mathbb{R}}
\def\II{\parbox[][1.2cm][c]{0cm}{\ }}
\definecolor{myblue}{RGB}{147, 191, 252}
\definecolor{myblue2}{RGB}{112, 125, 149}
\definecolor{mygreen}{RGB}{0, 255, 162}
\definecolor{mon_rouge}{RGB}{255, 149, 148}
\newtheorem{remark}{Remark}
\newtheorem{definition}{Definition}
\newtheorem{theorem}{Theorem}
\newtheorem{lemma}{Lemma}[section]
\newtheorem{proposition}{Proposition}[section]
\newtheorem{corollary}{Corollary}[section]
\title{$2 \times 2$ hyperbolic systems of conservation laws \\ 
                                                in classes of functions of bounded $p$-variation}
\author{Olivier Glass\footnote{CEREMADE, UMR 7534,
Université Paris-Dauphine \& CNRS, PSL,
Place du Maréchal de Lattre de Tassigny,
75775 Paris Cedex 16, France. E-mail: {\tt glass{@}ceremade.dauphine.fr}}}
\date{\today}
\begin{document}
%
%\newgeometry{margin=15pt}
%
%
%
\maketitle
\begin{abstract}
In this paper, we consider $2 \times 2$ hyperbolic systems of conservation laws 
 in one space dimension with characteristic fields satisfying a condition that encompasses
 genuine nonlinearity and linear degeneracy as well as intermediate cases,
 namely, with standard notations, $r_i\cdot \nabla \lambda_i \geq 0$. \par
We prove the existence of entropy solutions in the fractional $BV$ spaces $\mathcal{W}_p(\R)$
 of functions of bounded $p$-variation, $p \in \left[1,\frac{3}{2}\right]$, for small initial data.
\end{abstract}
{\small {\bf Keywords.} Hyperbolic systems of conservation laws, fractional $BV$ spaces, entropy solutions, Glimm's theorem.}
\tableofcontents
%
%
%
%
% --------------------------------------------------------------------------------------------------------------------%
%                                                                                                                     %
%                                                    INTRODUCTION                                                     %
%                                                                                                                     %
% --------------------------------------------------------------------------------------------------------------------%
%
%
%
\section{Introduction}
\label{Sec:Introduction}
The existence of solutions to the initial value problem for hyperbolic systems of conservation laws
 under standard conditions on the characteristic fields was established in the celebrated paper 
 by James Glimm \cite{Glimm65} in 1965, when the initial data is a function of small total variation. 
The space $BV(\R)$ is of central importance in Glimm's result, and remains up to now 
 a standard space for many results on one-dimensional hyperbolic systems of conservation laws,
 even if for particular genuinely nonlinear systems -- typically $2 \times 2$ systems -- 
 important results were obtained in the space $L^{\infty}(\R)$ 
 \cite{BianchiniColomboMonti10,Cheverry98,GlimmLax70}. \par
In this paper, we are interested in the existence theory for entropy solutions of one-dimensional
 $2 \times 2$ hyperbolic systems of conservation laws in the Wiener class ${\mathcal W}_{p}(\R)$ 
 of functions of bounded $p$-variation (also denoted $BV_{s}(\R)$, $s=1/p$), which can be seen 
 as an intermediate space between $L^{\infty}(\R)$ and $BV(\R)$
 (although it is not an interpolated space between the two in the strict sense). \par
We recall that one-dimensional $2 \times 2$ hyperbolic systems of conservation laws
 are partial differential equations of the following form:
\begin{equation} \label{Eq:SCL}
u_{t} + f(u)_{x}= 0 \ \text{ for } \ (t,x) \in \R^{+} \times \R,
\end{equation}
where the unknown is $u= u(t,x): \R^{+} \times \R \rightarrow \R^2$ 
 and the flux function $f:\Omega \rightarrow \R^{2}$, defined on some open set $\Omega \subset \R^{2}$,
 satisfies the strict hyperbolicity condition that the eigenvalues $\lambda_{1}, \lambda_{2}(u)$ 
 of the Jacobian matrix $Df(u)$ are real and distinct:
\begin{equation} \label{Eq:StrictHyperbolicity}
\lambda_{1}(u) < \lambda_{2}(u).
\end{equation}
Let us denote $r_{1}(u), r_{2}(u)$ eigenvectors associated to these eigenvalues.
%  normalized in such a way that
% %
% \begin{equation} \label{Eq:ChampsCaracCommutent}
%     [r_1,r_2]=0.
% \end{equation}
%
%
%
%\medskip
%
%
We will use an assumption on characteristic fields $(\lambda_i,r_i)$ that is slightly broader 
 than the standard genuine nonlinearity or linear degeneracy. 
Precisely, we will assume the property
\begin{equation} \label{Eq:Monotonicity} \tag{{\bf M}}
r_{i} \cdot \nabla \lambda_{i} \geq 0 \text{ in } \Omega,
\end{equation}
and we will refer to characteristic fields satisfying this condition as {\it monotone.} 
Naturally, fields satisfying $r_{i}\cdot \nabla \lambda_{i} \leq 0$ are also monotone, 
 since it suffices to reverse the direction of $r_{i}$. 
 Of course, this condition encompasses Lax's conditions of genuine nonlinearity 
 (which means $r_{i} \cdot \nabla \lambda_{i} > 0$ in $\Omega$ and is further abridged as GNL)
 and linear degeneracy (which means $r_{i}  \cdot \nabla \lambda_{i} = 0$  in $\Omega$
 and is further abridged as LD). 
We refer to \cite{BressanBook00,DafermosBook10} for general references on the topic of hyperbolic systems of conservation laws. \par
\medskip
Our goal in this paper is to build entropy solutions
 for such systems in the space of functions with finite $p$-variation class ${\mathcal W}_{p}(\R)$. 
We recall that the space $BV(\R;\R)$ can be defined as the space of functions $v: \R \rightarrow \R$
 for which
\begin{equation} \label{Eq:TV}
TV(v) := \sup_{N} \sup_{x_{1} < \dots < x_{N}} \sum_{k=0}^{N-1} |v(x_{k+1}) - v(x_{k})| < +\infty.
\end{equation}
For $1 \leq p < +\infty$, the $p$-variation of $v$  is defined by analogy by
\begin{equation} \label{Eq:TVp}
V_{p}(v) : = 
\left(\sup_{N} \sup_{x_{1} < \dots < x_{N}} \sum_{k=0}^{N-1} |v(x_{k+1}) - v(x_{k})|^p\right)^{1/p},
\end{equation}
and the class $\mathcal{W}_{p}(\R)$ of functions of finite $p$-variation is defined 
 as the set of functions $v: \R \rightarrow \R$ for which $V_{p}(u) < +\infty$. 
This space is naturally endowed with the norm:
\begin{equation} \label{NormeWp}
    \| v \|_{\mathcal{W}_p} := \| v \|_{\infty} + V_{p}(v).
\end{equation}
Let us briefly recall the definition of entropy solutions of \eqref{Eq:SCL}.
An entropy couple for a hyperbolic system of conservation laws \eqref{Eq:SCL}
 is defined as a couple of regular functions $(\eta,q) : \Omega \rightarrow \R$ satisfying:
\begin{equation} \label{Def:CoupleEntropie}
\forall u \in \Omega, \ \ D\eta(u) \cdot Df(u) = Dq(u).
\end{equation}
Then we use the following definition.
\begin{definition} \label{Def:SolutionEntropie}
A function $u \in L^\infty(0,T;\mathcal{W}_p(\R)) \cap C^{\frac{1}{p}}([0,T] ;L^p_{loc}(\R))$ is called an
 {\it entropy solution} of \eqref{Eq:SCL} when it is a weak solution in the sense of distributions
 and when for any entropy couple $(\eta,q)$, with $\eta$ convex, one has in the 
 sense of distributions
\begin{equation} \label{Def:SolEntrop1}
\eta(u)_t + q(u)_x \leq 0,
\end{equation}
that is, for all $\varphi \in {\mathcal D}((0,+\infty) \times \R)$ with $\varphi \geq 0$, 
\begin{equation} \label{InegEntropie}
\int_{\R_+ \times \R} \big( \eta(u(t,x)) \varphi_t(t,x) + q(u(t,x)) \varphi_x(t,x) \big)
 \, dx \, dt \geq 0.
\end{equation}
\end{definition}
\ \par
\noindent
In this paper, we prove the existence of entropy solutions of $2 \times 2$ systems \eqref{Eq:SCL} 
 in the class $\mathcal{W}_p(\R)$.
\begin{theorem} \label{Thm:Main}
Consider a strictly hyperbolic $2 \times 2$ system \eqref{Eq:SCL} satisfying \eqref{Eq:Monotonicity}. 
Let $p \in \left[1,\frac{3}{2}\right]$. 
Let $\overline{u} \in \Omega$. 
There exists $\varepsilon_0>0$ such that for any $u_0 \in \mathcal{W}_p(\R)$ such that
\begin{equation} \label{Eq:Smallness}
\| u_0 - \overline{u} \|_{\mathcal{W}_p} < \varepsilon_0,
\end{equation}
there exists $u \in L^\infty(\R_+;\mathcal{W}_p(\R)) \cap C^{1/p}([0,+\infty);L^p_{loc}(\R))$
 an entropy solution of \eqref{Eq:SCL} with initial condition $u(0,\cdot)=u_0$,
satisfying moreover that for all $s,t \geq 0$,
\begin{gather}
\label{Eq:EstVpGlobale}
V_p(u(t,\cdot))^p \leq V_p(u_0)^p + \mathcal{O}(1)V_p(u_0)^{2p}, \\
\label{Eq:TimeEstGlobale}
\int_{\R} |u(t,x) - u(s,x)|^p \, dx \leq \mathcal{O}(1) V_p(u_0)^p \, |t-s|.
\end{gather}
\end{theorem}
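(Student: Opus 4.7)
The plan is to adapt Glimm's random choice scheme to the $\mathcal{W}_p$ setting. I would proceed as follows.

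First, construct approximate solutions $u_{\Delta,\theta}$ by Glimm's scheme: fix a mesh $(\Delta x,\Delta t)$ satisfying the CFL condition under \eqref{Eq:StrictHyperbolicity}, and an equidistributed random sequence $\theta=(\theta_n)\in[0,1]^{\N}$. At $t=0$, discretize $u_0$ into piecewise constant data on intervals of length $\Delta x$; at each time step, solve the Riemann problems at the cell interfaces and sample at $x=x_k+\theta_n\Delta x$. Under \eqref{Eq:Monotonicity}, each Lax curve is $C^{1,1}$ across the shock/rarefaction junction, so for small amplitudes the Riemann problem admits a unique Liu-admissible solution whose waves can be measured by their parametric strengths $\alpha_i$. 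The range of $u_{\Delta,\theta}$ stays in a neighborhood of $\overline u$.

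The heart of the proof is the derivation of interaction estimates tailored to $V_p$. In the classical $BV$ case, the interaction of two waves of strengths $\alpha,\beta$ yields a cubic correction of order $|\alpha||\beta|(|\alpha|+|\beta|)$ in the non-GNL monotone setting. Raising things to the power $p$, I expect the growth of $V_p^p$ during an interaction to be controlled by a term of the form $\mathcal{O}(1)|\alpha|^p|\beta|^p(|\alpha|+|\beta|)^{3-2p}$ for waves of distinct families, with analogous intra-family corrections coming from the superadditivity gap $(|\alpha|+|\beta|)^p-|\alpha|^p-|\beta|^p$. The restriction $p\le\frac{3}{2}$ enters precisely to keep the exponent $3-2p$ nonnegative, so that the correction stays small relative to the wave strengths. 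With this in hand I would design a Glimm-type functional $F(u)=V_p(u)^p + K\,Q(u)$, where $Q$ is a quadratic potential summing $|\alpha|^p|\beta|^p$ over pairs of approaching waves and weighted so that $F$ decreases across every interaction. Iterating through the scheme yields a uniform bound $V_p(u_{\Delta,\theta}(t,\cdot))^p\le V_p(u_0)^p+\mathcal{O}(1)V_p(u_0)^{2p}$, which is exactly \eqref{Eq:EstVpGlobale}.

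The time estimate \eqref{Eq:TimeEstGlobale} would then follow from the piecewise constant structure of $u_{\Delta,\theta}$ by bounding the $L^p$-displacement between times $s$ and $t$ by the finite propagation speed times the sum of $|\alpha|^p$ of the waves crossed, i.e.\ by $\mathcal{O}(1)V_p(u_{\Delta,\theta})^p|t-s|$. Compactness is then obtained by a Helly-type selection argument adapted to $\mathcal{W}_p$ (using that bounded $V_p$-sequences with uniform $L^\infty$ bound admit pointwise a.e.\ convergent subsequences), combined with the $C^{1/p}$-in-time regularity in $L^p_{loc}$; a diagonal extraction produces a limit $u$ enjoying the two desired estimates.

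Finally, following Glimm and Liu, I would show that for almost every $\theta$ the limit $u$ is a weak solution and satisfies the entropy inequality \eqref{InegEntropie} for every convex entropy $\eta$. The main obstacle, by far, is the $p$-variation interaction analysis: unlike the $BV$ case, wave strengths do not add linearly under $V_p$, so one must track simultaneously the nonlinear superadditivity gap and the cubic interaction residues, choosing $Q$ precisely enough to absorb both. Getting this bookkeeping right while preserving the smallness condition that makes the bootstrap close is the key technical difficulty, and it is what forces the bound $p\le\frac{3}{2}$.
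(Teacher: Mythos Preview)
Your plan hinges on the claim that the change in $V_p^p$ at an opposite-family interaction is $\mathcal{O}(1)|\alpha|^p|\beta|^p(|\alpha|+|\beta|)^{3-2p}$, so that a potential $Q=\sum|\alpha|^p|\beta|^p$ absorbs it. This fails because $V_p$ is \emph{nonlocal}: perturbing one strength by $\delta$ changes $V_p^p$ by up to $\mathcal{O}(1)V_p^{p-1}|\delta|$, not by $p|\sigma_\alpha|^{p-1}|\delta|$. For a $1$-wave $\alpha$ crossing a $2$-shock $\beta$ (strengths measured in Riemann invariants) one has $\sigma_\alpha'=\sigma_\alpha(1+\mathcal{O}(\sigma_\beta^3))$, so the change in $V_p^p$ is $\mathcal{O}(1)V_p^{p-1}|\sigma_\alpha||\sigma_\beta|^3$; when $|\sigma_\alpha|\ll|\sigma_\beta|$ this is not dominated by $|\sigma_\alpha|^p|\sigma_\beta|^p$, the ratio diverging as $|\sigma_\alpha|\to0$. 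The paper's motivating example---a $2$-shock crossing a packet of tiny oscillating $1$-waves---shows more: the multiplicative amplification, though bounded, varies from wave to wave, and $V_p$ is extremely sensitive to variable multiplication. No purely local Glimm functional survives this. The same nonlocality breaks your time estimate: ``$\sum|\alpha|^p$ over the waves crossed'' is not controlled by $V_p^p$ (take alternating jumps of size $\varepsilon$).

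The paper replaces each of your ingredients. It uses wave-front tracking rather than random choice, because front lines must be followed forward in time. Each wave is assigned a \emph{preamplified} strength that already incorporates all its future opposite-family multiplicative corrections up to a time horizon $T$ (an ``oracle'' functional), so that crossings of opposite families only perturb these strengths at higher order. The interaction potential is built from \emph{nonlocal} strengths $\mathfrak{s}^k_{l}(\alpha)=\mathcal{V}^k(\le\alpha)-\mathcal{V}^k(<\alpha)$ rather than $|\sigma_\alpha|^p$, and a separate cubic term $\mathcal{Q}_3=\sum|\sigma_\alpha|^3$ handles same-family cancellations; the restriction $p\le\tfrac32$ enters via $\mathcal{Q}_3\lesssim\mathcal{V}^{3/p}\le\mathcal{V}^2$, not through your exponent $3-2p$. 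Undoing the preamplification and propagating estimates along front lines both require the discrete Love--Young inequality and are run as an induction on the time horizon under an a priori hypothesis $H(\tau)$. The time estimate is obtained separately by localizing the entire machinery to a backward cone from $(t,x^*)$ and then using subadditivity of $V_p^p$ on disjoint spatial intervals of width $\widehat\lambda|t-s|$.
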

\begin{remark} \label{rem:CasNgeq3}
A straightforward adaptation of an example by Young \cite{RYoung99} proves
 that one cannot hope for such a result to hold for $n \times n$ hyperbolic systems, $n \geq 3$, at least
 in the linearly degenerate case.
\end{remark}
\begin{remark} \label{rem:OndesEngendreesBV}
The proof gives actually a slightly more precise picture of \eqref{Eq:EstVpGlobale}, at least at the discrete level of
 wave-front tracking approximations (as described below). 
Indeed, the total production of new waves (created by interactions of waves of the same characteristic family),
 measured by the total sum of their initial strengths in absolute value, is of order $\mathcal{O}(1) V_p(u_0)^{2p}$.
This somehow expresses that new waves are bounded in classical total variation.
\end{remark}
\noindent
These remarks are more precisely justified in Section~\ref{Sec:Remarks}. \par
\ \par
We now discuss several aspects of our result. \par
\ \par
\noindent
%
%
% ------------------------ Differences TV/Vp ------------------------
%
%
{\bf Important differences between $TV$ and $V_p$.}
First, let us emphasize some key differences between the usual total variation and the $p$-variation for $p>1$. 
To illustrate those differences, we will consider a piecewise constant function $f$ 
 with successive jumps $a_1, \ldots, a_n$ located at $x_1< \ldots < x_n$. 
We observe that the $p$-variation is:
\begin{itemize}
 \item {\it non-local}, 
        in the sense that the change in $p$-variation when modifying a jump $a_i$ does not merely 
        depend on $a_i$,
 \item {\it non-monotone},
        in the sense that reducing $a_i$ in absolute value while maintaining its sign does not 
        necessarily decrease the $p$-variation
        (taking for instance $(a_1,a_2,a_3)=(5,-2,5)$, $p=2$ and changing $a_2$ to $-1$),
\item {\it non-commutative with respect to jumps}, 
        in the sense that permuting the $a_i$ affects $V_p(f)$,
        as seen for instance on the above example, 
\item  {\it partially insensitive to cancellations}, in the sense that merging two successive 
        discontinuities with opposite monotonicity does not necessarily affect the $p$-variation,
        as seen on the same example as above,
\item {\it sensitive to a bounded but variable magnification of jumps}, 
        in the sense that multiplying jumps by variable bounded coefficients may change dramatically
        the $p$-variation (comparing for instance the cases of $a_k=(-1)^k$, $k=1,\ldots,n$,
        which gives a $p$-variation of order $\mathcal{O}(n^{1/p})$,
        and of $a_k b_k$ where $b_k=1+\frac{(-1)^k}{10}$,
        which gives a $p$-variation of order $\mathcal{O}(n)$),
\item {\it of fluctuating homogeneity with respect to changes in jumps}, 
        in the sense that for instance adding  a $(n+1)$-th discontinuity of size $h$ to the right 
        of $x_n$ can add to $V_p(f)^p$ a value ranging from $|h|^p$ to $p V_p(f)^{p-1}|h|$ 
        at leading order.
\end{itemize}
The fact that the opposite holds in the case of the total variation is frequently used when considering 
 $BV$ solutions of \eqref{Eq:SCL}. \par
\ \par
\noindent
%
%
% ------------------------ Previous works ------------------------
%
%
%
{\bf Previous works.}
As mentioned in the introduction, in the genuinely nonlinear case, the existence of entropy solutions 
 for $2 \times 2$ systems at this level of regularity is not new: 
 this was proved by Glimm and Lax \cite{GlimmLax70} under an additional
 assumption that was later removed by Bianchini, Colombo and Monti \cite{BianchiniColomboMonti10}; see also the 
 related work by Cheverry \cite{Cheverry98}. 
In this context, Theorem~\ref{Thm:Main} should be considered as  a result of propagation of regularity
 in $\mathcal{W}_p$ spaces (even if, as far as we know, uniqueness is not established for solutions of this regularity). 
These results heavily rely on genuine nonlinearity and the associated effect 
 of decay of positive waves. 
The result here relies on a completely different aspect of those systems, namely the weaker coupling property
 of $2 \times 2$ systems. 
This explains why our result is valid under the broader assumption \eqref{Eq:Monotonicity} which allows 
 all sort of degeneracy of the characteristic speed. \par
\ \par
Hyperbolic conservation laws in the context of Wiener spaces have been considered in different references
 in the last ten years. 
Let us first mention references in the case of scalar conservation laws.
Up to our knowledge, Bourdarias, Gisclon and Junca \cite{BourdariasGisclonJunca14} were the first to consider
 $BV_s$ spaces in the context of conservation laws: they prove in particular the decay of $TV_s$ for solutions of
 scalar conservation laws and study the regularizing effect for convex fluxes.
A regularity result for general fluxes, relying on the more general notion of $\Phi$-variation, 
        was obtained by Marconi~\cite{Marconi2018}.
The notion of $\Phi$-variation was also used in Jenssen-Ridder~\cite{JenssenRidderPhi20} where precise time estimates
 on the solutions are given.
Let us also mention other recent studies in the field: 
 Ghoshal-Guelmame-Jana-Junca \cite{MR4130242} and Guelmame-Billel-Junca-Clamond~\cite{MR4069619}. \ \par
%
% and Ghoshal-Junca-Parmar \cite{Ghoshal2024Feb}.
%
%
Concerning systems, there are fewer references.
In Bourdarias-Gisclon-Junca-Peng~\cite{MR3538367}, the authors consider a $2 \times 2$ triangular system related to chromatography,
composed of an uncoupled scalar conservation law and a transport equation; they study existence in $BV_s \times L^\infty$.
The reference Bourdarias-Choudhury-Guelmame-Junca~\cite{BourdariasChoudhuryGuelmameJunca22} considers more general $2 \times 2$ 
systems with this triangular structure.
% The recent paper Haspot-Junca~\cite{HaspotJunca22} considers the case of a $2 \times 2$ system with one genuinely 
%  nonlinear field and one linearly degenerate one; the solution is considered in $BV_s \times L^\infty$.
%
\ \par
All of the above references concerning scalar conservation laws or systems of conservation laws in $BV_s$
 have a feature in common: they consider equations that are {\it $p$-variation diminishing} 
 in the field that is measured in $BV_s$. 
This is a natural property for scalar conservation laws (and in the discrete case this can be justified 
 by Proposition~\ref{Pro:ElemePropsVp}--\ref{Item:Fusion} below), but can only apply to very particular systems. 
The present paper is, up to our knowledge, the first to consider the case of general $2 \times 2$ systems
 where interactions can increase the $p$-variation, 
 and to measure this increase of $p$-variation through waves interactions, whether by amplification effect in the case of
 interactions of waves of different families, or by production of new waves in the case of interactions of waves 
 of the same family. \par
\ \par
\noindent
%
%
% ------------------------ Ideas of proof ------------------------
%
%
%
{\bf Ideas of proof.}
To describe the first difficulty met when considering systems in $BV_s$, let us start with a simple example.
Consider a $2 \times 2$ system where one characteristic field (say, with negative speed) is LD and the other one
 (say, with positive speed) is GNL. 
Consider as an initial state a shock of family $2$ on the left, and then a family of small oscillating
 contact discontinuities of family $1$, giving a small $p$-variation, but a large total variation, on the right.
When interacting with the shock, the strength of these contact discontinuities will be amplified (or damped) after crossing 
 by a multiplicative factor (depending on the size of the shock and on the local states) which in a first approximation can be 
 considered constant.
This involves in general that the difference between the original strength of these contact discontinuities and their new one
 will be {\it of large total variation}.
This has an important consequence: we will not be able to measure the differences in strength progressively
 in time in absolute value.
In particular, one cannot hope to use Glimm's approach in this situation, and we need to rely on
 {\it global-in-time} estimates, at least to understand the effect of interactions of waves of opposite families. \par
On the other hand, interactions of waves of the same family are responsible for the creation of new waves in the solution, 
 and for the complexity of the general wave pattern. 
It seems quite difficult to approach these interactions with global-in-time estimates due to this pattern complexity,
 and inter alia because the order (from left to right) of the newly created waves is difficult to predict;
 as mentioned earlier this is quite relevant when considering the $p$-variation. \par
This leads to the following idea: to include in the measure of wave strengths a ``preamplification factor''
 that partially predicts their future strengths after interactions with waves of the opposite family, and to treat these
 preamplified strengths by using a Glimm-type functional that measures their evolution through
 interactions with waves of the same family. 
(Actually, interactions of waves of opposite families will be considered as well, but this will concern
 higher order corrections.)
In particular in the process the new waves will be estimated using absolute values, which eventually leads
 to Remark~\ref{rem:OndesEngendreesBV} above. \par
Doing so, we face another difficulty: in our situation, we cannot use standard Glimm's functional 
\begin{equation*}
\sum_{\text{all waves}} |\sigma_\alpha| + C \sum_{\alpha < \beta} |\sigma_\alpha| |\sigma_\beta|, 
\end{equation*}
(where $\sigma_\alpha$ measures the jump across a wave $\alpha$)
because it is not uniformly bounded at initial time, since in general the initial data is not in $BV$.
A natural way to extend it would be to consider instead
\begin{equation*}
\sum_{\text{all waves}} |\sigma_\alpha|^p + C \sum_{\alpha < \beta} |\sigma_\alpha|^p |\sigma_\beta|^p, 
\end{equation*}
but:
\begin{itemize}
\item[--] the left term above does not control the $p$-variation,
% (and vanishes asymptotically when approximating a continuous initial condition)
\item[--] the gain obtained by means of the right term at an interaction point is of order 
        $|\sigma_\alpha|^p |\sigma_\beta|^p$, which does not allow controlling errors of size
        $|\sigma_\alpha||\sigma_\beta|^k$, whatever $k$.
\end{itemize}
Hence we introduce a functional that mimics Glimm's one, but uses a notion of {\it nonlocal strength}, which when 
 measuring the strength of a wave $\alpha$ does not only depend on $\alpha$, but is better adapted to the 
 $p$-variation.
Since this Glimm-type functional relies on nonlocal, preamplified strengths, the evolution of this functional
is more intricate, and we will need a priori assumptions on 
 our (wave-front front-tracking) approximations to prove that it decays 
 (this part corresponds to ``local-in-time estimates''). \par
Once this decay is obtained, we will have to prove that it indeed allows measuring the $p$-variation
 of the approximation, that is, to remove the preamplification factors.
This is not straightforward, since as mentioned earlier, the $p$-variation is very sensitive to multiplication.
In particular, going back to our first example of a shock interacting with small oscillating contact discontinuities,
 one has to ensure that the coefficients giving the amplification of these contact discontinuities across 
 the interaction do not vary too much.
(As we will see, this is connected to the so-called Love-Young inequality.) 
Hence, we will also need a priori assumptions on the approximation in this part of the proof,
 to measure the variation of interactions coefficients {\it along front lines}.
This is the part where the ``global-in-time'' estimates are operated. \par
Due to this need of a priori estimates, the proof takes the form of a large induction argument, where an a priori idea
 of the $p$-variation of the solution, not only for given times but also along front-lines,
 will allow getting a better estimate and go forward in time. \par
Finally, let us mention that time estimates in this case are not a direct consequence of the uniform in time 
 $V_p$ estimate that the above considerations allow obtaining.
We will need indeed a localized version of the estimates above to conclude. \par
\ \par
\noindent
%
%
% ------------------------ Structure of the paper ------------------------
%
%
%
{\bf Structure of the paper.}
In Section~\ref{Sec:PreliminaryMaterial}, we give some elementary properties for $V_p$
 and its discrete counterparts (which are simple but will be used all along the paper),
 and of monotone characteristic fields (for which an important part but not the entirety can be considered classical).
In Section~\ref{Sec:MainIngredients}, we present the main concepts that we will use to prove Theorem~\ref{Thm:Main}.
In Section~\ref{Sec:ProofOfTheMainTheorem}, we prove Theorem~\ref{Thm:Main} by relying on the objects constructed 
 in Section~\ref{Sec:MainIngredients}.
In Section~\ref{Sec:Remarks}, we justify Remarks~\ref{rem:CasNgeq3} and \ref{rem:OndesEngendreesBV}.
Finally, Section~\ref{Sec:Appendix} is an Appendix where we detail the proofs of the technical statements
omitted above. \par
\ \par
\noindent
%
%
% ------------------------ Structure of the paper ------------------------
%
%
%
{\bf General notation.}
As a notation for subsequent sections, we will use the notation ``$\mathcal{O}(1)$'' to describe 
 a bounded quantity depending on $f$, 
% (and the domain $B(\overline{u};r)$ on which we will consider $f$, see Section~\ref{Sec:MainIngredients}), 
 but independent of $u_0$, the discretization parameter $\nu$, the time-horizon $T$ (as described below), etc. 
The same is valid for all constants $c$, $C$. 
In the same way, we will write $\lesssim (\cdots)$ for $\leq \mathcal{O}(1)(\cdots)$. \par
%
%
%
%
%
%
% --------------------------------------------------------------------------------------------------------------------%
%                                                                                                                     %
%                                                PRELIMINARY MATERIAL                                                 %
%                                                                                                                     %
% --------------------------------------------------------------------------------------------------------------------%
% 
%
%
\section{Preliminary material}
\label{Sec:PreliminaryMaterial}
\subsection{Functions of $p$-bounded variations}
\subsubsection{Elementary properties of the $p$-variation}
Given a finite real sequence $x=(x_1,\ldots,x_n)$, we define its {\it $p$-variation}
 that we denote $v_{p}(x)$ as the following:
\begin{equation} \label{Def:vp}
v_{p}(x):= \left( \sup_{k \leq n} \ \sup_{0=i_{0}<\dots<i_{k}=n} \ 
    \sum_{j=0}^{k-1} \left|x_{i_{j+1}} - x_{i_{j}+1} \right|^{p} \right)^{1/p}.
\end{equation}
Correspondingly we define {\it maximal $p$-sum of $x$}, which we denote $s_p(x)$, the version for partial sums,
 that is:
\begin{equation} \label{Def:sp}
s_{p}(x):= \left( \sup_{k \leq n} \ \sup_{0=i_{0}<\dots<i_{k}=n} \ 
    \sum_{j=0}^{k-1} \left|\sum_{\ell=i_{j}+1}^{i_{j+1}} x_{\ell} \right|^{p} \right)^{1/p}.
\end{equation}
Clearly,
\begin{equation*}
v_p (x_1,\ldots,x_n) = s_p(x_2-x_1,\ldots,x_n-x_{n-1}).
\end{equation*}
The connection with the $p$-variation of functions is straightforward: 
given a step function $f$, its $p$-variation is given by
\begin{equation*}
V_p(f) = s_p([f]_{i = 1,\ldots,N}),
\end{equation*}
where $[f]_1,\ldots,[f]_N$ represent the jumps in $f$, from left to right. \par
\ \par
Concerning $s_p$ and $v_p$, we will use the following vocabulary. 
In \eqref{Def:vp} or \eqref{Def:sp},  we will refer to a family of indices $(i_{0},\ldots,i_{k})$ 
 satisfying $0=i_{0}<\dots<i_{k}=n$ as a {\it partition}, 
 and in such a partition we will call a set of indices $\{i_j+1,\ldots,i_{j+1}\}$ 
 for some $j \in \{0,\ldots,k-1 \}$ a {\it component} of this partition. 
We will say that such a partition is optimal when it achieves the supremum
 in \eqref{Def:vp} or \eqref{Def:sp}. \par
\ \par
In the next statements, we give a few elementary properties of $s_p$ and $v_p$.
\begin{proposition}[Elementary properties of $s_p$ and $v_p$] \label{Pro:ElemePropsVp} \ \par
\begin{enumerate}
\item \label{Item:SpSpPrime} Let $1 \leq p \leq p'$ and $a=(a_i)_{i=1\ldots k}$. 
Then $s_{p'}(a) \leq s_p(a)$ and $v_{p'}(a) \leq v_p(a)$.
\item \label{Item:TriangIneg} Let $a=(a_i)_{i=1\ldots k}$ and $b=(b_i)_{i=1\ldots k}$. 
Then $s_p(a + b) \leq s_p(a) + s_p(b)$ and $v_p(a + b) \leq v_p(a) + v_p(b)$.
\item \label{Item:DLVp} For some $C>0$ depending only on $p$ the following holds. 
Let $a=(a_i)_{i=1\ldots k}$ and $b=(b_i)_{i=1\ldots k}$. 
Then $s^p_p(a + b) \leq s^p_p(a) + C s_p(a)^{p-1}s_p(b) + 2 s^p_p(b)$. 
In particular, if $s_p(b) \leq \kappa s_p(a)$, 
then $s^p_p(a + b) \leq s^p_p(a) + (C + 2 \kappa^{p-1}) s_p(a)^{p-1}s_p(b)$. 
\item \label{Item:Fusion} Let $a=(a_i)_{i=1\ldots k}$. 
For $j=1,\ldots,k-1$, call $(b_i^j)_{i=1\ldots k-1}=(a_1,\ldots,a_{j-1},a_j + a_{j+1}, a_{j+2},\ldots,a_k)$. 
Then $s_p(b^j) \leq s_p(a)$ in all cases and $s_p(b^j) = s_p(a)$ when $a_j$ and $a_{j+1}$ have the same sign
(in particular swapping $a_j$ and $a_{j+1}$ does not change $s_p(a)$ in this case).
%
% \item Let $a=(a_i)_{i=1\ldots k}$ and $1 = i_1 < \dots < i_N =k$. Then 
% %
% \begin{equation*} 
% s_p \left( \left(\sum_{j=i_\ell}^{i_{\ell+1-1}} a_i\right)_{\ell=1\ldots N} \right) \leq s_p((a_i)_{i=1\ldots k}).
% \end{equation*}
\item \label{Item:VpFusion} Let $a=(a_i)_{i=1\ldots k}$ and $b=(b_i)_{i=1\ldots k'}$. 
Denote $a \diamond b := (a_1,\ldots,a_k,b_1,\ldots,b_{k'})$. 
Then we have $s_p(a)^p + s_p(b)^p \leq s_p(a \diamond b)^p$.
\item \label{Item:Interleave} 
Let $a=(a_i)_{i=1\ldots k}$ and $b=(b_i)_{i=1\ldots k'}$. 
Let $c=(c_i)_{i=1\ldots k+k'}$ an interleaved sequence of $a$ and $b$, that is, 
 a sequence whose elements are the collection of all $a_i$ and $b_i$ 
 in such a way that the order of elements of $a$ and $b$ is preserved. 
Then $v_p(a) \leq v_p(c)$ and $|s_p(a) - s_p(b)| \leq s_p(c) \leq s_p(a) + s_p(b)$.
\item \label{Item:Repetition} 
Let $a=(a_i)_{i=1\ldots k}$. 
Consider $b=(b_i)_{i=1\ldots k'}$, whose first terms are $a_1$ repeated an arbitrary number of times, 
then $a_2$ repeated an arbitrary number of times, \ldots, until $a_k$ repeated an arbitrary number of times.
Then $v_p(b) \leq v_p(a)$. 
\item \label{Item:Ajoutde0} 
Let $a=(a_i)_{i=1\ldots k}$.
Then $v_p(0,a_1,\ldots,a_k) \leq |a_1| + v_p(a)$ and $v_p(a_1,\ldots,a_k,0) \leq |a_k| + v_p(a)$.
\item \label{Item:LipVp} Let $(-c,c)$ an interval in $\R$, $f \in W^{1,\infty}(-c,c)$
 and $a=(a_i)_{i=1\ldots k} \in (-c,c)^k$. 
Then 
\begin{equation} \nonumber 
v_p\left( f(a_i)\right) \leq  \|f'\|_\infty v_p(a).
\end{equation}
\item \label{Item:VpProd} Consider $n$ sequences $a^j=(a^j_i)_{i=1\ldots k}$ for $j=1,\ldots,n$, 
and let $c_i = \prod_{k=1}^{n}  a^k_i$. 
Then 
\begin{equation} \nonumber 
v_p\left( c_i \right) \leq \sum_{j=1}^n \left(v_p(a^j) \prod_{k \neq j} \|a^k\|_\infty\right).
\end{equation}

\end{enumerate}
\end{proposition}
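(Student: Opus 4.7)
The plan is to dispatch nine of the ten items using standard $\ell^p$ bookkeeping and reserve the actual work for the equality half of item (4). For items (1)--(3), apply to the vector of component sums of any fixed partition, respectively, the monotonicity $\|\cdot\|_{\ell^{p'}} \leq \|\cdot\|_{\ell^p}$ for $p \leq p'$, Minkowski's inequality, and the scalar bound $(x+y)^p \leq x^p + Cx^{p-1}y + 2y^p$ (itself derived from $(x+y)^p - x^p = p\int_0^y (x+s)^{p-1}\,ds$ together with the subadditivity of $t \mapsto t^{p-1}$). For (9), use the Lipschitz estimate $|f(a)-f(a')| \leq \|f'\|_\infty |a-a'|$ partition-wise, and for (10), telescope
\[
\prod_k a^k_i - \prod_k a^k_{i'} = \sum_{j=1}^n \Bigl(\prod_{k<j} a^k_{i'}\Bigr)(a^j_i - a^j_{i'})\Bigl(\prod_{k>j} a^k_i\Bigr)
\]
and apply Minkowski on the partition-wise sum. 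Item (5) is a concatenation of optimal partitions of $a$ and $b$.

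For item (7), any partition of $b$ visits a weakly-increasing sequence of block indices in $\{1,\ldots,k\}$; its strictly-increasing subsequence---which starts at $1$ and ends at $k$---is an admissible partition of $a$ with the same $v_p$-sum. For item (8), work with the equivalent form $v_p(x)^p = \sup_{1 = m_0 < \ldots < m_l = n} \sum_j |x_{m_{j+1}} - x_{m_j}|^p$: a partition of $(0, a_1, \ldots, a_k)$ either uses $m_1 = 2$ (decomposing as $|a_1|^p$ plus an admissible partition of $a$) or it does not (insert $m_1 = 2$ and absorb the initial term via the Minkowski-type inequality $((x+y)^p + S)^{1/p} \leq x + (y^p + S)^{1/p}$). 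Item (6) has three substatements: $v_p(a) \leq v_p(c)$ because an optimal partition of $a$ lifts through the embedding $a \hookrightarrow c$ to an admissible partition of $c$ (padded at both ends if needed) with identical differences; $s_p(c) \leq s_p(a) + s_p(b)$ by splitting each $c$-component into its $a$-part sum $\alpha_j$ and $b$-part sum $\beta_j$ and applying Minkowski to $\sum |\alpha_j + \beta_j|^p$ (the $(\alpha_j)$ and $(\beta_j)$ are admissible partitions of $a$ and $b$ after dropping empties); and $|s_p(a) - s_p(b)| \leq s_p(c)$ by writing $c = a' + b'$ with $a'$ (resp.\ $b'$) the copy of $c$ with the $b$- (resp.\ $a$-) entries zeroed, noting $s_p(a') = s_p(a)$ and $s_p(b') = s_p(b)$ since inserting zeros does not change $s_p$, and applying item (2).

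The main obstacle is the equality in item (4), namely $s_p(a) \leq s_p(b^j)$ when $a_j$ and $a_{j+1}$ share a sign; the reverse $s_p(b^j) \leq s_p(a)$ is immediate, since any partition of $b^j$ lifts to a partition of $a$ with identical component sums by expanding the merged entry back into $\{a_j, a_{j+1}\}$ inside the same component. For the hard direction, take a partition of $a$ realizing $s_p(a)^p$. If $a_j$ and $a_{j+1}$ share a component, the partition descends to $b^j$ with equal sum. Otherwise $a_j$ closes a component $C_m$ (sum $S_m$) and $a_{j+1}$ opens $C_{m+1}$ (sum $S_{m+1}$); I build a partition of $b^j$ with sum at least as large by a sign analysis of $(S_m, S_{m+1})$. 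When $S_m, S_{m+1}$ share a sign, merge $C_m$ and $C_{m+1}$ into one component of $b^j$ and use $(|S_m| + |S_{m+1}|)^p \geq |S_m|^p + |S_{m+1}|^p$ for $p \geq 1$. When they have opposite signs, say $S_m > 0 > S_{m+1}$ (with $a_j, a_{j+1} > 0$ after possibly negating), keep a breakpoint at the merged position in $b^j$: the two new component sums are $S_m + a_{j+1}$ and $S_{m+1} - a_{j+1}$, and since $a_{j+1}$ adds in absolute value to each, the new $p$-th powers both dominate the old ones. The symmetric choice---breakpoint one position earlier in $b^j$---handles $S_m < 0 < S_{m+1}$. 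The edge cases $|C_m| = 1$ or $|C_{m+1}| = 1$ force $(S_m, S_{m+1})$ into the same-sign subcase, so one of the two constructions is always admissible. The swap assertion is then immediate: $a$ and its position-$(j,j+1)$ swap $a'$ produce the same fused sequence $b^j$, hence have the same $s_p$.
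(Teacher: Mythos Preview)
Your proof is correct. Most items follow the paper's line closely; the real divergence is in the equality half of item~(4), which you rightly identify as the main content, and to a lesser extent in item~(8).

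For item~(4), the paper argues structurally: in any optimal partition for $s_p(a)$, consecutive component sums have opposite signs, and the first and last entries of each component share the sign of that component's sum. Together these force same-sign neighbours $a_j,a_{j+1}$ into the same component of the optimal partition, which then descends verbatim to $b^j$. Your approach is instead constructive: you allow the optimal partition to separate $a_j$ and $a_{j+1}$ and then manufacture, by a sign case analysis, a partition of $b^j$ with at least the same value. Both are valid; the paper's route yields a reusable structural fact about optimal partitions, while yours avoids proving that fact at the price of a case split. One small wording issue: your sentence ``the edge cases $|C_m|=1$ or $|C_{m+1}|=1$ force $(S_m,S_{m+1})$ into the same-sign subcase'' is not literally correct (e.g.\ $|C_m|=1$ with $a_j>0$ pins down $S_m=a_j>0$ but says nothing about $S_{m+1}$). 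What is true, and what you actually need, is that each singleton edge case rules out precisely the opposite-sign subcase whose construction it would obstruct: $|C_{m+1}|=1$ forces $S_{m+1}=a_{j+1}>0$, excluding the subcase $S_m>0>S_{m+1}$ that requires $|C_{m+1}|\geq 2$, and symmetrically for $|C_m|=1$. So an admissible construction is always available and your conclusion stands.

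For item~(8) the paper uses a one-line trick: write $(0,a_1,\ldots,a_k)=(a_1,a_1,a_2,\ldots,a_k)+(-a_1,0,\ldots,0)$, apply the triangle inequality (item~(2)), and invoke item~(7) to get $v_p(a_1,a_1,\ldots,a_k)=v_p(a)$. Your direct partition analysis via the scalar inequality $((x+y)^p+S)^{1/p}\leq x+(y^p+S)^{1/p}$ is a longer but equally valid alternative.
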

\begin{proposition}[Sensitivity of $s_p$ with respect to endpoint values] 
\label{Pro:PropsVpEndpoints}
 \ \par
\begin{enumerate}
\item \label{Item:EndPoint1} Let $ n \geq 1$, $x_1, \ldots, x_n$ in $\R$. 
Then the map $t \mapsto s_p^p(x_1,\ldots,x_{n},t)$ is convex, decreasing on $\R_-$ 
and increasing on $\R_+$.
\item \label{Item:EndPoint2} Let $ n \geq 1$, $x_1, \ldots, x_n, x_{n+1}$ in $\R$. Then
\begin{equation} \label{Eq:ForceAGauche}
s_p^p(x_1,\ldots,x_{n+1}) - s_p^p(x_1,\ldots,x_n) \geq
\left\{ \begin{array}{l}
    p|x_n|^{p-1} |x_{n+1}| \ \text{ if } x_{n+1} \text{ and } x_n \text{ have the same sign,} 
                                                                                \medskip \\
    |x_{n+1}|^p \ \text{ in all cases.}
\end{array} \right. 
\end{equation}
\item \label{Item:EndPoint3} The equivalent properties are valid mutatis mutandis
 when varying the leftmost variable $x_1$ rather than the rightmost one $x_{n+1}$.
\end{enumerate}
\end{proposition}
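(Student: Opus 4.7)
}

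The plan is to analyze $\phi(t) := s_p^p(x_1,\ldots,x_n,t)$ through the structure of the optimal partition, which is legitimate because the supremum in \eqref{Def:sp} is taken over a finite set. First I would rewrite
\[
\phi(t) = \max_{P} \bigl[ A_P + |B_P + t|^p \bigr],
\]
where $P$ ranges over partitions of $\{1,\ldots,n+1\}$, $A_P$ is the sum of $|\sum_{\ell \in C} x_\ell|^p$ over the non-terminal components $C$ of $P$, and $B_P$ is the sum of $x_\ell$ over the indices in the terminal component of $P$ \emph{other than} $n+1$. Since each map $t \mapsto A_P + |B_P + t|^p$ is convex on $\R$ (as $p \geq 1$), the pointwise maximum $\phi$ is convex; this establishes the convexity part of item~\ref{Item:EndPoint1}.

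Next I would prove the ``in all cases'' half of item~\ref{Item:EndPoint2}. Fix an optimal partition $P^*$ of $\{1,\ldots,n\}$ achieving $\phi(0) = s_p^p(x_1,\ldots,x_n)$ with terminal component sum $S$. Extending $P^*$ by appending $\{n+1\}$ as a new isolated terminal component gives a valid partition of $\{1,\ldots,n+1\}$ whose associated value is $\phi(0) + |x_{n+1}|^p$, so
\[
\phi(x_{n+1}) \geq \phi(0) + |x_{n+1}|^p,
\]
which is exactly the unconditional bound (and taking $x_{n+1} = 0$ incidentally shows $\phi(0) = s_p^p(x_1,\ldots,x_n)$). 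Combined with convexity, this implies that $0$ is a global minimizer of $\phi$, and standard convexity arguments (for $t_1 < t_2 \leq 0$, write $t_2$ as a convex combination of $t_1$ and $0$) then give the monotonicity claimed in item~\ref{Item:EndPoint1}.

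For the same-sign refinement of item~\ref{Item:EndPoint2}, the key observation is a structural property of $P^*$: when $x_n$ has a definite sign, the optimal terminal sum $S$ must have the \emph{same} sign as $x_n$ and satisfy $|S| \geq |x_n|$. Indeed, if $C_k$ is the terminal component and one writes $S = a + x_n$ with $a$ the partial sum of $C_k \setminus \{n\}$, then comparing $P^*$ with the partition obtained by separating $\{n\}$ into its own terminal block yields $|a + x_n|^p \geq |a|^p + |x_n|^p$, and for $p \geq 1$ this inequality characterizes $a$ and $x_n$ having the same sign, which forces $|S| = |a| + |x_n| \geq |x_n|$ and $\operatorname{sgn}(S) = \operatorname{sgn}(x_n)$ (the degenerate case $C_k = \{n\}$ giving $S = x_n$ directly). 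Assuming now $x_n$ and $x_{n+1}$ share a sign, extend $P^*$ by \emph{absorbing} $n+1$ into $C_k$; the new terminal sum is $S + x_{n+1}$, aligned with $S$, so by convexity of $y \mapsto |y|^p$,
\[
|S + x_{n+1}|^p - |S|^p \;\geq\; p|S|^{p-1}|x_{n+1}| \;\geq\; p|x_n|^{p-1}|x_{n+1}|,
\]
which delivers the stronger bound. Item~\ref{Item:EndPoint3} then follows by applying the same argument to the reversed sequence $(x_{n+1},x_n,\ldots,x_1)$, since $s_p$ is invariant under reversal.

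The main obstacle is the structural lemma on $P^*$ underlying step~3: it is tempting to hope that $S = x_n$ in the optimal partition, but this fails in general, and one must instead argue via the sign/magnitude comparison $|a+x_n|^p \geq |a|^p + |x_n|^p$ that passes the gain $p|S|^{p-1}$ down to $p|x_n|^{p-1}$ precisely in the same-sign regime. Everything else reduces to the envelope formula for $\phi$ and classical convexity of the power function.
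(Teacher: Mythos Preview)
Your proof is correct and follows essentially the same route as the paper: convexity via the envelope formula, the ``in all cases'' bound by appending $\{n+1\}$ as a singleton component, and the same-sign bound via the structural observation that in an optimal partition the terminal block sum is aligned with $x_n$, followed by convexity of $y\mapsto|y|^p$. The only noticeable difference is the monotonicity part of item~\ref{Item:EndPoint1}: the paper argues directly that for $t\geq 0$ every optimal partition yields an expression increasing in $t$, whereas you deduce it from convexity together with $\phi(t)\geq\phi(0)+|t|^p$, which forces $0$ to be the global minimizer; your route is arguably cleaner, but both are elementary and equivalent in spirit.
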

The proofs of Propositions~\ref{Pro:ElemePropsVp} and \ref{Pro:PropsVpEndpoints} are postponed
 to Subsection~\ref{Subsec:ProofsVp}.
\subsubsection{The discrete Love-Young inequality and its consequence}
A central tool in this paper is the following discrete Love-Young inequality,
 see L.C.~Young\cite[(5.1)]{LCYoung36}.
\begin{proposition} \label{Pro:LYoriginal}
Let $p,q \geq 1$ such that
\begin{equation} \label{Eq:LYCondPQ}
\frac{1}{p} + \frac{1}{q} >1.
\end{equation}
Consider the $2n$ real numbers $x_{1}, \dots, x_{n}$ and $y_{1}, \dots, y_{n}$. Then one has
\begin{equation} \label{Eq:InegLY}
\left| \sum_{1\leq i \leq j \leq n} x_{i} y_{j} \right| 
\leq \left(1+ \zeta\left(\frac{1}{p} + \frac{1}{q}\right)\right) s_{p}(x) s_{p}(y),
\end{equation}
where
$\zeta(s):= \sum_{n \geq 1} \frac{1}{n^{s}}$ is Riemann's zeta function.
\end{proposition}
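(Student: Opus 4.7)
The plan is to mimic in the discrete setting Young's original elimination argument for the continuous Love-Young inequality. Set $X_{j} := \sum_{i=1}^{j} x_{i}$ and $Y_{j} := \sum_{i=1}^{j} y_{i}$ for $j=1,\ldots,n$, with the convention $X_{0}=Y_{0}=0$. Grouping the terms of $S$ by their second index and writing $y_{j}=Y_{j}-Y_{j-1}$, we get
\begin{equation*}
S \ := \ \sum_{1 \leq i \leq j \leq n} x_{i}y_{j} \ = \ \sum_{j=1}^{n} X_{j}(Y_{j}-Y_{j-1}),
\end{equation*}
which is a discrete Riemann-Stieltjes sum. Observe that the definition \eqref{Def:sp} identifies $s_{p}(x)$ and $s_{q}(y)$ precisely with the $p$- and $q$-variations of the partial-sum sequences $(X_{j})_{0 \leq j \leq n}$ and $(Y_{j})_{0 \leq j \leq n}$.

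\textbf{Coarsening step.} For any sub-partition $\kappa : 0=k_{0}<k_{1}<\cdots<k_{m}=n$, define the coarsened sum
\begin{equation*}
S_{\kappa} \ := \ \sum_{\ell=1}^{m} X_{k_{\ell}}(Y_{k_{\ell}}-Y_{k_{\ell-1}}),
\end{equation*}
so that $S$ corresponds to the finest partition $(0,1,\ldots,n)$ while the trivial partition gives $S_{(0,n)}=X_{n}Y_{n}$. A direct computation shows that if $\kappa'$ is obtained from $\kappa$ by removing one inner index $k_{\ell^{*}}$ (with $\ell^{*} \in \{1,\ldots,m-1\}$), then
\begin{equation*}
|S_{\kappa} - S_{\kappa'}| \ = \ |X_{k_{\ell^{*}+1}}-X_{k_{\ell^{*}}}| \cdot |Y_{k_{\ell^{*}}}-Y_{k_{\ell^{*}-1}}|.
\end{equation*}
The idea is then to start from the finest partition and iteratively remove an inner index of smallest elimination cost, until only $(0,n)$ remains.

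\textbf{Key lemma.} Setting $\sigma := 1/p + 1/q > 1$, the crucial step is to show that for any intermediate partition of size $m \geq 2$, there exists an inner index $\ell^{*}$ for which the elimination error is bounded by $s_{p}(x)\, s_{q}(y)/(m-1)^{\sigma}$. To prove this, set $u_{\ell}:=|X_{k_{\ell+1}}-X_{k_{\ell}}|^{p}$ and $v_{\ell}:=|Y_{k_{\ell}}-Y_{k_{\ell-1}}|^{q}$ for $\ell=1,\ldots,m-1$, so that $\sum_{\ell} u_{\ell} \leq s_{p}(x)^{p}$ and $\sum_{\ell} v_{\ell} \leq s_{q}(y)^{q}$. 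The quantity to estimate equals $w_{\ell}^{\sigma}$ with $w_{\ell}:=u_{\ell}^{1/(p\sigma)} v_{\ell}^{1/(q\sigma)}$, and since $1/(p\sigma) + 1/(q\sigma) = 1$, the classical numerical Young inequality yields $w_{\ell} \leq u_{\ell}/(p\sigma) + v_{\ell}/(q\sigma)$. After rescaling so that $s_{p}(x)=s_{q}(y)=1$, summation gives $\sum_{\ell} w_{\ell} \leq 1/(p\sigma) + 1/(q\sigma) = 1$, whence the minimum of the $w_{\ell}$ over $m-1$ indices is at most $1/(m-1)$; raising to the power $\sigma$ concludes.

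\textbf{Conclusion and main difficulty.} Summing the elimination errors over all $n-1$ successive removals gives
\begin{equation*}
|S - X_{n}Y_{n}| \ \leq \ s_{p}(x)\, s_{q}(y) \sum_{k=1}^{n-1} k^{-\sigma} \ \leq \ \zeta(\sigma)\, s_{p}(x)\, s_{q}(y),
\end{equation*}
and combining with the trivial bound $|X_{n}Y_{n}| \leq s_{p}(x)\, s_{q}(y)$ (since $X_{0}=Y_{0}=0$) produces the announced estimate. The main obstacle — and the step where the hypothesis $\sigma > 1$ is used decisively — is obtaining the exponent $\sigma$ rather than $1$ in the key lemma: a direct Hölder estimate on $\sum_{\ell} \alpha_{\ell}\beta_{\ell}$ would only produce exponent $1$, leading to a divergent harmonic series at the final step. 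It is precisely the reparametrization $\alpha_{\ell}\beta_{\ell} = w_{\ell}^{\sigma}$ through the numerical Young inequality that upgrades this exponent and converts the sum into the convergent $\zeta(\sigma)$ series.
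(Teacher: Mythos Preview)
Your argument is correct and is precisely L.~C.~Young's original elimination argument; the paper does not supply its own proof of this proposition but simply cites \cite{LCYoung36} for it, so there is nothing to compare against beyond noting that you have reproduced the cited proof. One minor remark: the right-hand side in the paper's statement reads $s_{p}(x)\,s_{p}(y)$, which is a typo for $s_{p}(x)\,s_{q}(y)$ (as you correctly write throughout); this is clear since $q$ is otherwise unused in the statement, and in any case the only application in the paper (Corollary~\ref{Cor:MultiplSp}) takes $q=p$.
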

The importance of this inequality stems from the fact that it takes cancellations into account;
 it would not hold with absolute values inside the sum. 
It was introduced in \cite{LCYoung36} to generalize the Stieltjes integral. \par
\ \par
We will in particular use the following consequence of the discrete Love-Young inequality.
\begin{corollary} \label{Cor:MultiplSp}
Let $a=(a_i)_{i=1\ldots n}$ and $b=(b_i)_{i=1\ldots n}$. Call $ab := (a_1 b_1,\ldots,a_n b_{n})$.
Suppose that $p<2$.
Then for some constant $C>0$ depending only on $p$,
\begin{equation} \label{Eq:MultiplSp}
s_p(ab) \leq C s_p(a) \big( \|b\|_\infty + v_p(b) \big).
\end{equation}
\end{corollary}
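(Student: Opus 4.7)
The plan is, given any partition $0 = i_0 < \dots < i_k = n$, to bound $\bigl(\sum_{j=0}^{k-1} |c_j|^p\bigr)^{1/p}$, where $c_j := \sum_{\ell = i_j+1}^{i_{j+1}} a_\ell b_\ell$, by the right-hand side of \eqref{Eq:MultiplSp}, and then to pass to the supremum over partitions. The central idea is a telescoping trick on each component: fixing $b_{i_j+1}$ as a reference value, I write
\begin{equation*}
c_j \;=\; b_{i_j+1} \sum_{\ell = i_j+1}^{i_{j+1}} a_\ell \;+\; \sum_{\ell = i_j+1}^{i_{j+1}} a_\ell (b_\ell - b_{i_j+1}) \;=:\; T_j^{(1)} + T_j^{(2)},
\end{equation*}
and denote by $a^{(j)}$, $b^{(j)}$ the restrictions of $a$, $b$ to the indices of component $j$.

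The term $T_j^{(1)}$ is immediate: $|T_j^{(1)}| \leq \|b\|_\infty |S_j|$ with $S_j := \sum_{\ell=i_j+1}^{i_{j+1}} a_\ell$, so directly from the definition of $s_p$ in \eqref{Def:sp}, $\sum_j |T_j^{(1)}|^p \leq \|b\|_\infty^p s_p(a)^p$. For $T_j^{(2)}$ I expand $b_\ell - b_{i_j+1} = \sum_{k=i_j+2}^{\ell}(b_k - b_{k-1})$ and exchange the order of summation, recasting $T_j^{(2)}$ as the triangular sum
\begin{equation*}
T_j^{(2)} \;=\; \sum_{i_j+2 \,\leq\, k \,\leq\, \ell \,\leq\, i_{j+1}} (b_k - b_{k-1}) \, a_\ell.
\end{equation*}
The discrete Love-Young inequality (Proposition \ref{Pro:LYoriginal}), applied with $p = q$---admissible precisely because the assumption $p<2$ yields $\tfrac{1}{p}+\tfrac{1}{p}>1$---then gives $|T_j^{(2)}| \leq \mathcal{O}(1)\, v_p(b^{(j)})\, s_p(a^{(j)}) \leq \mathcal{O}(1)\, v_p(b)\, s_p(a^{(j)})$, using the identity $s_p\!\bigl((b_k - b_{k-1})_k\bigr) = v_p(b^{(j)})$ noted just after \eqref{Def:sp}.

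To conclude, I rely on the elementary observation that $\sum_j s_p(a^{(j)})^p \leq s_p(a)^p$: any choice of sub-partitions of the individual components concatenates into a refinement of the outer partition, which is itself a legal partition in \eqref{Def:sp}. Combining the bounds on $T_j^{(1)}$ and $T_j^{(2)}$ through the triangle inequality in $\ell^p$ then yields $\bigl(\sum_j |c_j|^p\bigr)^{1/p} \leq s_p(a)\bigl(\|b\|_\infty + \mathcal{O}(1)\, v_p(b)\bigr)$, and passing to the supremum over partitions establishes \eqref{Eq:MultiplSp}. The main conceptual obstacle is recognizing that the diagonal sum $\sum a_\ell b_\ell$ has to be converted into a triangular form amenable to Love-Young; the hypothesis $p<2$ is a genuine threshold here, being exactly what permits the symmetric application ($p=q$) of the Love-Young inequality without imposing any stronger regularity on $b$.
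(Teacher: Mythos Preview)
Your proof is correct and follows essentially the same approach as the paper's: the telescoping with $b_{i_j+1}$ as reference, the recasting of $T_j^{(2)}$ as a triangular sum, and the application of the discrete Love--Young inequality with $p=q$ are all exactly what the paper does. The only cosmetic difference is that the paper bounds $s_p(a^{(j)}) \leq s_p(a)$ uniformly and then sums $v_p(b^{(j)})^p$ via Proposition~\ref{Pro:ElemePropsVp}--\ref{Item:VpFusion}, whereas you bound $v_p(b^{(j)}) \leq v_p(b)$ uniformly and sum $s_p(a^{(j)})^p$; both routes are equivalent.
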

Corollary~\ref{Cor:MultiplSp} is proved in Subsection~\ref{Subsec:ProofsDLY}.
\subsubsection{Classical properties of $BV$ that can be brought to $\mathcal{W}_p$}
We list here some properties of the Wiener spaces $\mathcal{W}_p$ that are counterparts
 of classical properties of $BV$. 
These are commonly used for the construction of $BV$ solutions of hyperbolic systems of conservation laws.
\begin{proposition} \label{Pro:Classical}
The following statements hold true in $\mathcal{W}_p$.
\begin{enumerate}
\item Functions in $\mathcal{W}_p$ are regulated.
\item (Approximation by piecewise constant functions)  Let $u \in \mathcal{W}_p$.
There exists a sequence $(u_{n})_{n \geq 0}$ of step functions such that $u_n \rightarrow u$ in $L^1_{loc}$
 and $V_p(u_n) \leq V_p(u)$, $\| u_n \|_{\infty} \leq \| u \|_{\infty}$.
\item (Helly's selection principle) The imbedding $\mathcal{W}_p(\R) \hookrightarrow L^1_{loc}(\R)$ is compact.
\end{enumerate}
\end{proposition}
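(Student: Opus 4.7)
For item~(1), I would argue by contradiction: if $u$ failed to have a left- or right-limit at some point $x_0$, one could extract interleaved sequences $y_1<z_1<\dots<y_N<z_N$ tending to $x_0$ such that $|u(y_k)-u(z_k)|\geq \varepsilon$ for some fixed $\varepsilon>0$; the resulting partition would force $V_p(u)^p \geq N\varepsilon^p$ for arbitrary $N$, contradicting $V_p(u)<+\infty$. Boundedness of $u$ is automatic from $|u(x)-u(x_0)|\leq V_p(u)$ at any fixed reference point $x_0$.

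For item~(2), I would set $u_n(x):=u(\lfloor nx\rfloor/n)$, i.e., the step function obtained by freezing $u$ at the left endpoint of each dyadic interval $[k/n,(k+1)/n)$. Any partition used to test $V_p(u_n)$ contributes the same sum as its projection onto the grid $\{k/n\}_{k\in\Z}$, which is a legitimate partition for $V_p(u)$; hence $V_p(u_n)\leq V_p(u)$, and $\|u_n\|_\infty\leq\|u\|_\infty$ is immediate. Since $u$ is regulated by~(1), it is continuous off a countable (hence null) set, and at each continuity point $u_n(x)\to u(x^-)=u(x)$; combined with the uniform $L^\infty$ bound, dominated convergence then yields $u_n\to u$ in $L^1_{\mathrm{loc}}$.

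For item~(3), I would follow the classical Helly strategy; the main obstacle is the upgrade from pointwise convergence on a dense set to $L^1_{\mathrm{loc}}$ convergence, because for $p>1$ one does not have the $TV$ control used in the standard $BV$ argument. Given a sequence $(u_n)$ with $\|u_n\|_{\mathcal{W}_p}\leq M$, a diagonal extraction on $\Q$ produces a subsequence $(v_n)$ such that $v_n(q)\to \tilde u(q)$ for every $q\in\Q$; passing to the limit in finite partitions gives $V_p(\tilde u|_\Q)\leq \liminf V_p(v_n) \leq M$. The function $\tilde u$ is then extended to $u\in\mathcal{W}_p(\R)$ through the one-sided $\Q$-limits $u(x):=\lim_{\Q\ni q\to x^+}\tilde u(q)$, whose existence follows from the same argument as in~(1) and which immediately yields $V_p(u)\leq M$.

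For the $L^1_{\mathrm{loc}}$ step, which is the delicate one, I would fix a compact $K\subset\R$, a small $\delta>0$, and a grid $q_0<\dots<q_m$ in $\Q\cap K$ of mesh $\leq\delta$, and compare $v_n$ and $u$ to their left-endpoint step approximants $\phi_n$, $\phi$ on this grid. The decomposition
\begin{equation*}
    \|v_n-u\|_{L^1(K)} \leq \|v_n-\phi_n\|_{L^1(K)} + \|\phi_n-\phi\|_{L^1(K)} + \|\phi-u\|_{L^1(K)}
\end{equation*}
reduces everything to two tasks. The middle term tends to $0$ as $n\to\infty$ by bounded convergence on the finite grid. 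For the outer terms, choosing on each $[q_k,q_{k+1}]$ a near-maximiser $x_k$ of $|v_n(x)-v_n(q_k)|$ and testing $V_p(v_n)$ on the refinement $\{q_0,x_0,q_1,x_1,\dots,q_m\}$ gives $\sum_k s_k^p \leq 2^p M^p$ with $s_k:=\sup_{[q_k,q_{k+1}]}|v_n-v_n(q_k)|$; Hölder's inequality combined with $\sum_k(q_{k+1}-q_k)^{p'}\leq \delta^{p'-1}|K|$ (where $1/p+1/p'=1$) then yields
\begin{equation*}
    \|v_n-\phi_n\|_{L^1(K)} \leq \sum_k (q_{k+1}-q_k)\, s_k \lesssim M\,|K|^{1/p'}\delta^{1/p}
\end{equation*}
uniformly in $n$, with the identical bound applying to $\|\phi-u\|_{L^1(K)}$. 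Taking $\delta$ small enough makes these outer terms arbitrarily small, concluding the argument.
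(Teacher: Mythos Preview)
The paper does not actually prove this proposition: it simply cites Dudley--Norvai\v{s}a for items~(1)--(2) and Musielak--Orlicz for item~(3), treating all three as classical. Your proposal is therefore more detailed than the paper's own treatment, and the arguments you sketch are essentially correct and standard. A couple of small points are worth flagging.

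In item~(2), your $u_n(x)=u(\lfloor nx\rfloor/n)$ is piecewise constant on the grid $\{k/n\}_{k\in\Z}$ but has infinitely many pieces, so it is not a step function in the usual sense (finitely many values). Since the paper later feeds these approximations into a wave-front tracking algorithm that requires finitely many initial discontinuities, this matters. The fix is immediate: truncate by setting $u_n$ constant (equal to $u(\pm n)$, say) outside $[-n,n]$; the values of $u_n$ remain values of $u$ at ordered points, so $V_p(u_n)\leq V_p(u)$ and $\|u_n\|_\infty\leq\|u\|_\infty$ are preserved, and $L^1_{\mathrm{loc}}$ convergence is unaffected. (Also, ``dyadic'' is a slip; your grid has mesh $1/n$.)

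In item~(3), your H\"older step uses the conjugate exponent $p'$ and so is written for $p>1$; at $p=1$ the bound degenerates but the estimate is then simply $\sum_k(q_{k+1}-q_k)s_k\leq\delta\sum_k s_k\leq 2\delta M$, which is the classical $BV$ argument. One further detail: when bounding $\|\phi-u\|_{L^1(K)}$, it is cleanest to take $\phi$ built from the values $\tilde u(q_k)$ (the pointwise limits on $\Q$) rather than $u(q_k)$, so that the middle term $\|\phi_n-\phi\|$ goes to zero directly; the third term is then controlled by the same H\"older argument applied to $V_p(\tilde u|_{\Q})\leq M$, using rational near-maximisers.
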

These statements are rather classical;
 see \cite[Proposition 3.33]{DudleyNorvaivsa11} for first one and (the proof of) \cite[Theorem 2.1]{DudleyNorvaivsa11}
 for the second one. 
For Helly's principle, we refer to \cite{MusielakOrlicz59}. \par
%
%
%
%
% ------------------------------------------- Monotone characteristic fields -------------------------------------------
%
%
%
%
\subsection{Monotone characteristic fields and interaction estimates}
\label{Subsec:CharacFields}
In this paragraph, we consider monotone characteristic fields, such as described in the introduction. 
We first see how the wave curves can be constructed as in the genuinely nonlinear 
 and linearly degenerate cases. 
Then we introduce interaction estimates that are slightly more precise
 than the usual ones (but restricted to $2 \times 2$ systems.)
\subsubsection{Wave curves for monotone characteristic fields}
We recall that Lax's $i$-th wave curve starting at $u_0 \in \Omega$ are defined 
 as the set of all (right) states that can be connected to $u_0$ by means of (a succession of) elementary waves
 (rarefaction/contact discontinuity/admissible shock) of that family. 
Due to \cite[Theorem 3.2]{Bianchini-Riemann03}, these are uniquely defined, even without any other
 assumption than strict hyperbolicity. \par
We first recall that for linearly degenerate fields ($\nabla \lambda_i \cdot r_i =0$),
 the Hugoniot locus coincides with the rarefaction curve, and they both correspond to contact discontinuities. 
It follows that in that case, one can define the Lax curve in the same way 
  as in the genuinely nonlinear case ($\nabla \lambda_i \cdot r_i >0$), 
  that is, by choosing the rarefaction curve for $s \geq 0$ and the shock curve for $s<0$. \par
The main result of this paragraph is that this remains true under the broader
 condition \eqref{Eq:Monotonicity}. 
Of course, the proof of existence of Lax curve under this assumption is not new, 
 since as mentioned before no condition is needed in addition to strict hyperbolicity. 
This part does not depend on the fact that the system is $2 \times 2$, so the next statement is written
 in the general $n \times n$ case. \par
We parameterize here the various $i$-curves $\Gamma$ starting at $u_0$ with
\begin{equation} \label{Eq:Parametrisation}
s=\ell_i(u_0) \cdot (\Gamma(s) - u_0), 
\end{equation}
where $(\ell_1(u),\ldots,\ell_n(u))$ is the dual basis of $(r_1(u),\ldots,r_n(u))$. \par
We denote $\mathcal{R}_{i} =\mathcal{R}_{i}(s,u_0)$ and $\mathcal{S}_{i}=\mathcal{S}_{i}(s,u_0)$ 
 the $i$-th rarefaction curve and the $i$-th Hugoniot locus starting from $u_0$. 
These are considered in both $s>0$ and $s<0$ cases. 
We have the following elementary statement.
\begin{proposition} \label{Pro:LaxCurves}
Consider a strictly hyperbolic $n \times n$ system. Under Assumption \eqref{Eq:Monotonicity}
 on the $i$-th characteristic field, given $u_0 \in \Omega$,
 Lax's $i$-th curve starting from $u_0$ is given by:
\begin{equation} \label{Eq:LaxCurves}
    \mathcal{T}_i(s;u_0) = \left\{ \begin{array}{l}
        \mathcal{R}_i(s;u_0) \text{ if } \ s \geq 0, \\
        \mathcal{S}_i(s;u_0) \text{ if } \ s \leq 0.
    \end{array} \right.
\end{equation}
\end{proposition}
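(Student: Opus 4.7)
The overall strategy is, for each sign of $s$, to exhibit an admissible self-similar Riemann solution from $u_0$ to the state claimed in \eqref{Eq:LaxCurves} using only waves of the $i$-th characteristic family; the uniqueness of the Lax curve (already granted by the Bianchini result quoted just before the proposition) then forces the two one-dimensional curves $\{\mathcal{R}_i(s;u_0)\}_{s \geq 0}$ and $\{\mathcal{S}_i(s;u_0)\}_{s\leq 0}$ to coincide with the corresponding half of $\mathcal{T}_i(\cdot;u_0)$.

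\textbf{Case $s \geq 0$.} Along the rarefaction curve, the chain rule combined with \eqref{Eq:Monotonicity} yields
\begin{equation*}
\frac{d}{d\tau}\lambda_i(\mathcal{R}_i(\tau;u_0)) = (r_i \cdot \nabla \lambda_i)(\mathcal{R}_i(\tau;u_0)) \geq 0,
\end{equation*}
so $\tau \mapsto \lambda_i(\mathcal{R}_i(\tau;u_0))$ is non-decreasing on $[0,s]$. I would split $[0,s]$ into the maximal open sub-intervals on which this map is strictly increasing and the maximal closed sub-intervals on which it is constant. On a strict-increase piece, a classical centered $i$-rarefaction fan fills the sector between the two extreme speeds. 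On a constancy piece $[\tau_1,\tau_2]$ with common value $c$, integrating $\frac{d}{d\tau}f(\mathcal{R}_i(\tau)) = Df(\mathcal{R}_i(\tau))r_i(\mathcal{R}_i(\tau)) = c\, r_i(\mathcal{R}_i(\tau))$ gives
\begin{equation*}
f(\mathcal{R}_i(\tau_2)) - f(\mathcal{R}_i(\tau_1)) = c\bigl(\mathcal{R}_i(\tau_2) - \mathcal{R}_i(\tau_1)\bigr),
\end{equation*}
which is exactly the Rankine--Hugoniot relation for a discontinuity of speed $c$; Lax admissibility is automatic since both sides share the characteristic speed $c$. Gluing these pieces produces the required self-similar solution, continuity at transition points being guaranteed since adjacent sub-intervals carry matching extreme speeds.

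\textbf{Case $s \leq 0$.} One has to verify that, for $|s|$ small, the Hugoniot state $\mathcal{S}_i(s;u_0)$ and its shock speed $\sigma(s;u_0)$ satisfy Lax's (non-strict) admissibility
\begin{equation*}
\lambda_i(\mathcal{S}_i(s;u_0)) \leq \sigma(s;u_0) \leq \lambda_i(u_0).
\end{equation*}
I would differentiate the Rankine--Hugoniot identity $f(\mathcal{S}_i(s)) - f(u_0) = \sigma(s)(\mathcal{S}_i(s)-u_0)$, project onto $\ell_i(\mathcal{S}_i(s))$ using the parameterization \eqref{Eq:Parametrisation}, and derive an expression for $\sigma(s)$ as an average of $\lambda_i$ along the Hugoniot curve plus higher-order terms. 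Because the Hugoniot curve has second-order contact with the rarefaction curve at $s=0$, the monotonicity of $\lambda_i \circ \mathcal{R}_i$ established in the previous step transfers to a corresponding monotonicity along $\mathcal{S}_i$ at leading order, which combined with \eqref{Eq:Monotonicity} sandwiches $\sigma(s)$ between the two eigenvalues as required. The main obstacle is precisely this case: because \eqref{Eq:Monotonicity} allows $r_i \cdot \nabla \lambda_i$ to vanish on non-trivial sets, the Lax inequalities cannot be made strict, so the usual GNL admissibility arguments must be replaced by non-strict versions; a convenient alternative, if the direct bookkeeping becomes unwieldy, is to approximate $\lambda_i$ by a GNL perturbation $\lambda_i + \varepsilon \chi$ with $r_i \cdot \nabla(\lambda_i + \varepsilon \chi) > 0$, apply the classical Lax theory, and pass to the limit $\varepsilon \to 0$ by continuous dependence of $\mathcal{S}_i$ and $\sigma$ on the flux.
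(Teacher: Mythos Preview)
Your treatment of the $s\geq 0$ half is essentially the paper's and is fine.

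The $s\leq 0$ half has a real gap. Your plan is to use the classical second-order tangency $|\mathcal{R}_i(s)-\mathcal{S}_i(s)|=O(|s|^3)$ to transfer the monotonicity of $\lambda_i$ ``at leading order'' from the rarefaction curve to the Hugoniot curve. But the quantity you must prove nonnegative, $\sigma(s)-\lambda_i(\mathcal{S}_i(s))$, is controlled along the rarefaction curve by $\overline{\lambda}_i(u_0,\mathcal{R}_i(s))-\lambda_i(\mathcal{R}_i(s))$, and under \eqref{Eq:Monotonicity} this may vanish to arbitrarily high order in $s$ (it is identically zero in the LD case). A generic $O(|s|^3)$ error then swamps the signal, and the argument collapses precisely in the degenerate cases the proposition is designed to cover. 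The paper's proof circumvents this by proving a \emph{refined} tangency estimate: via a Newton--Kantorovich argument and the convexity Lemma~\ref{Lem:InegFnConvexes} it shows
\[
\|\mathcal{R}_i-\mathcal{S}_i\|_{L^\infty([s,0])}
+\|\overline{\lambda}_i(u_0,\mathcal{R}_i(\cdot))-\overline{\lambda}_i(u_0,\mathcal{S}_i(\cdot))\|_{L^\infty}
\ \lesssim\ |s|\bigl[\overline{\lambda}_i(u_0,\mathcal{R}_i(s))-\lambda_i(\mathcal{R}_i(s))\bigr],
\]
so the error already carries the very factor one is trying to bound from below, and the desired inequality follows for $|s|$ small. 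This coupling between the degeneracy of $r_i\cdot\nabla\lambda_i$ and the order of contact of $\mathcal{R}_i$ with $\mathcal{S}_i$ is the missing idea in your sketch.

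Your fallback perturbation argument is plausible but not correctly formulated: $\lambda_i$ is an eigenvalue of $Df$, not an independent datum, so you cannot simply replace $\lambda_i$ by $\lambda_i+\varepsilon\chi$. You would have to perturb the flux $f\mapsto f_\varepsilon$ so that the resulting $i$-th field is GNL on a fixed neighbourhood, check that the Hugoniot curves and shock speeds converge as $\varepsilon\to 0$, and then pass the strict Lax inequalities to non-strict ones in the limit. This can likely be carried out, but it needs more than what you have written.
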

The proof of Proposition~\ref{Pro:LaxCurves} is postponed to Subsection~\ref{Subsec:WCMontoneFields}. 
Though this is rather classical, we cannot apply the standard proof for GNL case 
due to the lack of usable Taylor series expansion in this case. \par
\subsubsection{The Riemann problem and its variants}
From now on, we go back to $2 \times 2$ systems, and work with Riemann coordinates. 
More precisely, since the system under view is $2 \times 2$, normalizing $r_1$ and $r_2$
so that $[r_1,r_2]=0$, we may introduce near the base point $\overline{u}$ coordinates $w=(w_1,w_2) = w[u]$ 
defined by
\begin{equation} \label{Eq:RiemannCoordinates}
\frac{\partial u}{\partial w_i} = r_i(u), \ \text{ for } i=1,2.
\end{equation}
Accordingly, we can use the jump of $w_1$ (respectively of $w_2$) to parameterize the $1$-wave curves 
(resp. the $2$-wave curves). %; this is consistent with~\eqref{Eq:Parametrisation}. 
In particular, the rarefaction curves take a simple form in these coordinates: 
 for $w^0=(w_1^0,w_2^0)$, we have
\begin{equation} \label{Eq:RarRC}
\mathcal{R}_1(s;w^0) = (w_1^0 + s ,w_2^0) 
\ \text{ and } \ 
\mathcal{R}_2(s;w^0) = (w_1^0  ,w_2^0+s).
\end{equation}
Note that we will also use the rarefaction curve for {\it negative} waves. 
We will refer to a wave between $u_l$ and $u_r = \mathcal{R}_i(\sigma;u_l)$, $\sigma <0$
 as a {\it compression wave}. \par
On the other side, due to the classical (at least) second-order tangency 
between rarefaction curves and the Hugoniot locus, one can find smooth functions $\mathcal{D}_i$, $i=1,2$
such that shock curves take the following form in Riemann coordinates:
\begin{equation} \label{Eq:ChocRC}
\mathcal{S}_1(s;w^0) = \big(w_1^0 + s ,w_2^0 + s^3 \mathcal{D}_1(s;w^0) \big) 
\ \text{ and } \ 
\mathcal{S}_2(s;w^0) = (w_1^0 +s^3 \mathcal{D}_2(s;w^0) ,w_2^0+s).
\end{equation}
Note that $\mathcal{D}_i$ is also defined for positive values (still corresponding to the Hugoniot locus). \par
\ \par
Since we are considering monotone characteristic fields, in accordance with Proposition~\ref{Pro:LaxCurves},
 we can form Lax curves $\mathcal{T}_i$ according to \eqref{Eq:LaxCurves}, which we parameterize by means 
 of the Riemann coordinates as above. %\par
Now in the sequel we will use {\it three} types of waves:
\begin{itemize}
\item shock waves (SW) corresponding to $(u_-,u_+)$ with 
        $u_+=\mathcal{S}_i(\sigma,u_-)$ with $\sigma \leq 0$,
\item rarefaction waves (RW) corresponding to $(u_-,u_+)$ with 
        $u_+=\mathcal{R}_i(\sigma,u_-)$ with $\sigma \geq 0$, 
\item compression waves (CW) corresponding to $(u_-,u_+)$ with 
         $u_+=\mathcal{R}_i(\sigma,u_-)$ with $\sigma \leq 0$.
\end{itemize}
All cases above include the possible case when the flux degenerates so that the wave
 is actually a contact discontinuity, but we keep the above vocabulary (SW/RW/CW)
 in that case. \par
\ \par
With these three types of waves, we will use in the construction several variants of the Riemann problem. 
All are gathered in the following statement.
\begin{proposition} \label{Prop:RiemannS}
There exists a neighborhood $U$ of $\overline{u}$ and $c>0$ such that the following holds.
Given $\mathcal{U}_1 \in \{ \mathcal{R}_1, \mathcal{T}_1\}$ and 
$\mathcal{U}_2 \in \{ \mathcal{R}_2, \mathcal{T}_2\}$, for any $u_l, u_r \in U$,
there exists unique $\sigma_1, \sigma_2 \in (-c,c)$ such that
\begin{equation} \label{Eq:RiemannS}
u_r = \mathcal{U}_2(\sigma_2; \mathcal{U}_1(\sigma_1;u_l)),
\end{equation}
with $\mathcal{U}_1(\sigma_1;u_l) \in \Omega$.
\end{proposition}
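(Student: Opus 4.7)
The plan is to treat the four cases $(\mathcal{U}_1,\mathcal{U}_2) \in \{\mathcal{R}_1,\mathcal{T}_1\}\times\{\mathcal{R}_2,\mathcal{T}_2\}$ uniformly through the implicit function theorem, applied to the composition map
\[
\Psi : (\sigma_1,\sigma_2;u_l) \mapsto \mathcal{U}_2\bigl(\sigma_2;\mathcal{U}_1(\sigma_1;u_l)\bigr),
\]
viewed as a function defined on a neighborhood of $(0,0;\overline{u})$, and then inverting it in the $(\sigma_1,\sigma_2)$ variables near the diagonal values $\Psi(0,0;\overline{u}) = \overline{u}$.

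The first step is to check that, even under the mere monotonicity assumption \eqref{Eq:Monotonicity}, each of the four curves is of class $C^1$ in $\sigma$ (and jointly in the base point). For the rarefaction curves $\mathcal{R}_i$ this is immediate from their definition as integral curves of the smooth vector fields $r_i$; in Riemann coordinates they are given by the linear formulas \eqref{Eq:RarRC}. For the Lax curves $\mathcal{T}_i$, one uses Proposition~\ref{Pro:LaxCurves} together with the explicit representations \eqref{Eq:RarRC}--\eqref{Eq:ChocRC}: the branch for $s \leq 0$ is the shock curve, which by \eqref{Eq:ChocRC} has a cubic tangency with the rarefaction branch at $s=0$, so the resulting Lax curve is actually of class $C^2$ in $s$ across the origin (with the third derivative being the potentially discontinuous one). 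Joint regularity in $(s;w^0)$ follows by the same comparison. This regularity is comfortably more than what the implicit function theorem requires.

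The second step is to compute $\partial_{(\sigma_1,\sigma_2)} \Psi(0,0;\overline{u})$. Working in the Riemann coordinates of \eqref{Eq:RiemannCoordinates}, the tangent vector at $s=0$ to each of $\mathcal{R}_1,\mathcal{T}_1$ is $(1,0)$, and to each of $\mathcal{R}_2,\mathcal{T}_2$ is $(0,1)$ (for $\mathcal{T}_i$ this follows from the fact that the $s^3$-term in \eqref{Eq:ChocRC} contributes nothing to the first derivative at $0$). Hence in Riemann coordinates the Jacobian at the base point is the identity matrix, which is trivially invertible. In the original coordinates this just says $\partial_{(\sigma_1,\sigma_2)}\Psi(0,0;\overline{u}) = [r_1(\overline{u}),r_2(\overline{u})]$, which is invertible by strict hyperbolicity \eqref{Eq:StrictHyperbolicity}.

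The third and final step is to apply the implicit function theorem: there exist a neighborhood $U$ of $\overline{u}$ and a constant $c>0$ such that for any $u_l, u_r \in U$ the equation $\Psi(\sigma_1,\sigma_2;u_l) = u_r$ has a unique solution $(\sigma_1,\sigma_2) \in (-c,c)^2$, and moreover the intermediate state $\mathcal{U}_1(\sigma_1;u_l)$ stays in $\Omega$ by shrinking $U$ if needed so that the trajectories of both wave curves remain in $\Omega$ on $(-c,c)$. Since this argument only uses the common first-order data of the four curves at $s=0$, it applies simultaneously to all four combinations $(\mathcal{U}_1,\mathcal{U}_2)$. The main (mild) obstacle is the first step: verifying that the replacement of a Lax curve by a rarefaction curve does not spoil the $C^1$ hypothesis of the implicit function theorem under the weaker assumption \eqref{Eq:Monotonicity}; once one notes that the $C^2$ tangency between rarefaction and shock curves is algebraic and does not rely on genuine nonlinearity, this is harmless.
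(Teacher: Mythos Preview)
Your proof is correct and follows exactly the route the paper indicates: the paper simply states that the result is standard and relies on the inverse mapping theorem, with the cases involving $\mathcal{R}_i$ being even simpler than the classical case $(\mathcal{T}_1,\mathcal{T}_2)$. Your argument spells out precisely this, computing the Jacobian of $\Psi$ at the base point via the common tangent vectors $r_1(\overline{u}),r_2(\overline{u})$ and noting that the $C^2$ tangency \eqref{Eq:ChocRC} between shock and rarefaction curves (which is algebraic and independent of genuine nonlinearity) guarantees the needed regularity of $\mathcal{T}_i$.
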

The proof is standard in the case $\mathcal{U}_1 =\mathcal{T}_1$ and $\mathcal{U}_2 =\mathcal{T}_2$,
and totally similar (simpler, even) in other cases, relying on the inverse mapping theorem. \par
\ \par
%
%
%
%
% ----------------------------------------------- Interaction estimates -----------------------------------------------
%
%
%
\subsubsection{Sharper interaction estimates}
\label{sss:SIE}
In what follows we will use interaction estimates that are stronger 
 (and of a slightly different form in the case of opposite families)
 than classical Glimm's estimates~\cite{Glimm65}, 
 relying in particular on the fact that the system is $2 \times 2$ 
 and that we can use Riemann invariants to measure wave strengths. 
A difference with classical interaction estimates 
(besides the fact that we treat variants of the Riemann problem) 
is that the error term is not the same for both outgoing waves.
Moreover, we treat all shocks in a similar fashion as what is done when
 studying strong shocks (see e.g. \cite{Schochet1991Feb}). \par
\ \par
For that purpose we distinguish between interactions of waves of the same family 
and interactions of waves of opposite families. Inside each of these propositions, we also consider 
the possibility of a compression wave. We will refer to shock and rarefaction waves as {\it classical} waves
(as opposed to compression waves.) \par
\ \par
\noindent
We will use the following notation: given a wave $u_+= \mathcal{U}_i(\sigma;u_-)$
with $\mathcal{U}_i \in \{\mathcal{T}_i,\mathcal{R}_i\}$, 
we define $\sigma_*$ as $\sigma_*=0$ unless $(u_-,u_+)$ is a shock in which case $\sigma_*=\sigma$. 
In other words, $\sigma_*=0$ if $\mathcal{U}_i = \mathcal{R}_i$ and $\sigma_*=\sigma_-$ 
for $\mathcal{U}_i = \mathcal{T}_i$. %\par
The open set $U$ and the constant $c>0$ mentioned in the next statements are the ones
 obtained in Proposition~\ref{Prop:RiemannS}. \par
\ \par
\noindent
{\it Interactions of waves of opposite families.} 
It is classical that interactions of waves of opposite family yield a quadratic error term. 
Here we will use that for $2 \times 2$ systems, when the waves are measured with Riemann invariants, 
 the error is actually of fourth order. 
This was already noticed in~\cite{BressanColombo95} (see the proof of Lemma~2 therein) 
 in the case of GNL fields, in a form that is slightly different from ours. \par
\begin{proposition}[Interaction of waves of opposite families] \label{Prop:InteractionDF} 
There exist two functions $\mathcal{C}^1=\mathcal{C}^1(u_m;s_1,s_2)$ 
 and $\mathcal{C}^2=\mathcal{C}^2(u_m;s_1,s_2)$ of Lipschitz class on $U \times (-c,c)^2$,
 such that was follows is valid. \par
\ \par
\noindent
{\bf 1.} (Classical waves) 
Suppose that $u_l, u_m, u_r$ are states in $U$ such that
\begin{equation*}
u_m=\mathcal{T}_2(\sigma_2;u_l) \ \text{ and } \ u_r=\mathcal{T}_1(\sigma_1;u_m).
\end{equation*}
Then one has $u_r = \mathcal{T}_2(\sigma'_2;\mathcal{T}_1(\sigma'_1;u_l))$ with
\begin{equation} \label{Eq:InteractionDF}
\sigma_1' = \sigma_1 \big( 1 + \mathcal{C}^1(u_m;\sigma_1,\sigma_2) (\sigma_2)_*^3\big)
 \ \text{ and } \ 
\sigma_2' = \sigma_2 \big( 1 + \mathcal{C}^2(u_m;\sigma_1,\sigma_2) (\sigma_1)_*^3\big).
\end{equation}
{\bf 2.} (Classical/compression waves). 
Suppose that $u_l, u_m, u_r$ are states in $U$ such that
\begin{equation*}
u_m=\mathcal{T}_2(\sigma_2;u_l) \ \text{ and } \ u_r=\mathcal{R}_1(\sigma_1;u_m).
\end{equation*}
Then one has $u_r = \mathcal{T}_2(\sigma'_2;\mathcal{R}_1(\sigma'_1;u_l))$ with
\begin{equation} \label{Eq:InteractionDF-CW}
\sigma_1' = \sigma_1 \big( 1 + \mathcal{C}^1(u_m;\sigma_1,\sigma_2) (\sigma_2)_*^3 
                                + \mathcal{O}(1) |\sigma_1|^2 (\sigma_2)_*^3\big)
 \ \text{ and } \ 
\sigma_2' = \sigma_2,
\end{equation}
and the equivalent holds when $u_m=\mathcal{R}_2(\sigma_2;u_l) 
                                    \ \text{ and } \ u_r=\mathcal{T}_1(\sigma_1;u_m)$. \par
\ \par
\noindent
{\bf 3.} (Compression and rarefaction waves).
Suppose that $u_l, u_m, u_r$ are states in $U$ such that
\begin{equation*}
u_m=\mathcal{R}_2(\sigma_2;u_l) \ \text{ and } \ u_r=\mathcal{R}_1(\sigma_1;u_m).
\end{equation*}
Then one has $u_r = \mathcal{R}_2(\sigma_2;\mathcal{R}_1(\sigma_1;u_l))$.
\end{proposition}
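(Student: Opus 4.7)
The plan is to work throughout in the Riemann coordinates $(w_1,w_2)$ introduced in \eqref{Eq:RiemannCoordinates}. Combining \eqref{Eq:RarRC}--\eqref{Eq:ChocRC} with the $\sigma_*$ convention yields the unified formula
\[
\mathcal{U}_i(\sigma; w) \;=\; w + \sigma\, e_i + \sigma_*^3\, \mathcal{D}_i(\sigma; w)\, e_{3-i} \qquad (i=1,2),
\]
where $(e_1,e_2)$ is the canonical basis in Riemann coordinates and $\mathcal{D}_i$ is the smooth function of \eqref{Eq:ChocRC} (the cubic term drops out when $\mathcal{U}_i=\mathcal{R}_i$, consistently with $\sigma_*=0$ in that case). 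Each of the three parts then reduces to matching the two coordinates of $u_r$ computed along the two possible orderings, and solving the resulting $2\times 2$ system by the implicit function theorem.

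Part 3 is immediate: in these coordinates $\mathcal{R}_1$ and $\mathcal{R}_2$ act by axis-aligned translations and therefore commute exactly, giving $\sigma_1'=\sigma_1$ and $\sigma_2'=\sigma_2$.

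For Part 1, write $u_m=\mathcal{T}_2(\sigma_2;u_l)$, $u_1'=\mathcal{T}_1(\sigma_1';u_l)$ and impose $\mathcal{T}_1(\sigma_1;u_m)=\mathcal{T}_2(\sigma_2';u_1')$. Matching coordinates gives the system
\begin{align*}
\sigma_1'-\sigma_1 &= \sigma_{2*}^3\,\mathcal{D}_2(\sigma_2;u_l) - (\sigma_{2*}')^3\,\mathcal{D}_2(\sigma_2';u_1'),\\
\sigma_2'-\sigma_2 &= \sigma_{1*}^3\,\mathcal{D}_1(\sigma_1;u_m) - (\sigma_{1*}')^3\,\mathcal{D}_1(\sigma_1';u_l).
\end{align*}
A standard contraction argument on a small neighborhood of the origin produces a unique smooth solution $(\sigma_1',\sigma_2')$ with $\sigma_i'-\sigma_i = \mathcal{O}\bigl((\sigma_{1*}^3+\sigma_{2*}^3)|\sigma_i|\bigr)$; in particular $\sigma_i'$ shares the sign of $\sigma_i$, so $\sigma_{i*}$ and $\sigma_{i*}'$ are simultaneously zero or equal to $\sigma_i$ and $\sigma_i'$ respectively. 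Using this, the right-hand side of the first equation decomposes as $\sigma_{2*}^3\bigl[\mathcal{D}_2(\sigma_2;u_l)-\mathcal{D}_2(\sigma_2';u_1')\bigr]$ plus a remainder controlled by $\sigma_{2*}^3-(\sigma_{2*}')^3$; Taylor-expanding $\mathcal{D}_2$ and using $\sigma_2'-\sigma_2=\mathcal{O}(\sigma_{1*}^3)$ together with $u_1'-u_l=\sigma_1'\,e_1 + $ cubic produces a factor $\sigma_1$ at leading order. Hadamard's lemma and the smoothness of $\mathcal{D}_2$ then yield the multiplicative form $\sigma_1'=\sigma_1\bigl(1+\mathcal{C}^1(u_m;\sigma_1,\sigma_2)\,\sigma_{2*}^3\bigr)$; the treatment of $\sigma_2'$ is symmetric.

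Part 2 is a cleaner variant: the rarefaction curve $\mathcal{R}_1$ contributes nothing to the $e_2$-component, so the second matching equation gives $\sigma_2'=\sigma_2$ \emph{exactly}. The first becomes $\sigma_1'-\sigma_1 = \sigma_{2*}^3\bigl[\mathcal{D}_2(\sigma_2;u_l)-\mathcal{D}_2(\sigma_2;u_1')\bigr]$; a first-order Taylor expansion in $u_1'-u_l=\sigma_1'\,e_1$ yields the linear contribution absorbed into $\mathcal{C}^1\,\sigma_1\,\sigma_{2*}^3$, while the quadratic remainder gives the explicit $\mathcal{O}(|\sigma_1|^2\,\sigma_{2*}^3)$ correction. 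The symmetric case is identical. The main obstacle throughout will be the bookkeeping of the $\sigma_*$ convention: one must verify that $\sigma_i$ and $\sigma_i'$ share a sign so that the starred quantities agree up to admissible corrections, since this is exactly what promotes the error terms from order $|\sigma_j|^3$ to the sharper $\sigma_{j*}^3$ and means no case analysis by wave type is needed beyond the uniform formula above. Lipschitz regularity of $\mathcal{C}^1,\mathcal{C}^2$ then follows from the smoothness of $\mathcal{D}_i$ and the implicit-function construction.
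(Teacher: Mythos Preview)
Your overall strategy (work in Riemann coordinates, match components, invoke the implicit function theorem) is exactly the paper's, and Parts~2 and~3 are handled correctly. The gap is in your justification of the Lipschitz regularity of $\mathcal{C}^1,\mathcal{C}^2$ in Part~1.

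You write that Lipschitz regularity ``follows from the smoothness of $\mathcal{D}_i$ and the implicit-function construction'', and you invoke Hadamard's lemma. But your unified formula $\mathcal{U}_i(\sigma;w)=w+\sigma e_i+\sigma_*^3\mathcal{D}_i\,e_{3-i}$ is \emph{not} smooth in $\sigma$: for $\mathcal{U}_i=\mathcal{T}_i$ one has $\sigma_*=\sigma_-$, so $\sigma_*^3$ is only $C^{2,1}$ at $\sigma=0$. Consequently the implicit function $(\sigma_1',\sigma_2')$ is only $C^{2,1}$ in $(\sigma_1,\sigma_2)$, and after dividing by $\sigma_1$ (Hadamard costs one derivative) and then by $\sigma_{2*}^3$, your argument does not deliver a Lipschitz quotient across $\sigma_1=0$. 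Your remark that ``no case analysis by wave type is needed'' is precisely what fails here.

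The paper confronts this head-on: it separates the two cases $\sigma_1>0$ (the $1$-wave is on $\mathcal{R}_1$) and $\sigma_1<0$ (on $\mathcal{S}_1$), and in each case the wave curves are genuinely smooth, so one obtains smooth explicit functions $\mathcal{C}^1_{\mathcal R}$ and $\mathcal{C}^1_{\mathcal S}$ on the whole domain. The key computation is then to show that $\mathcal{C}^1_{\mathcal S}-\mathcal{C}^1_{\mathcal R}=\mathcal{O}(\sigma_1^2)$, which simultaneously (i) proves continuity at $\sigma_1=0$, hence global Lipschitz regularity of the piecewise-defined $\mathcal{C}^1$, and (ii) yields the additional $\mathcal{O}(|\sigma_1|^2(\sigma_2)_*^3)$ correction in Part~2 (since a compression wave uses $\mathcal{C}^1_{\mathcal R}$ where Part~1 uses $\mathcal{C}^1_{\mathcal S}$). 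Your Taylor-remainder derivation of the Part~2 error is fine, but without the matching at $\sigma_1=0$ you have not established the Lipschitz claim that the proposition asserts.
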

Of course, the above cases overlap a bit. Note that only the first two cases are somewhat new 
(in particular in the fact that the functions $\mathcal{C}^1$ and $\mathcal{C}^2$ are Lipschitz).
See e.g. Bressan-Colombo~\cite{BressanColombo95} for a close statement. \par
\ \par
\noindent
{\it Interactions of waves of the same family.}
Here classical interactions estimates give an error term of third order (see again e.g. \cite{BressanColombo95}). 
Relying again on the fact that our system is $2 \times 2$ and that we measure wave strengths with Riemann invariants, 
we can partially improve this order for the outgoing wave of the same family. \par
\begin{proposition}[Interaction of waves of the same family] 
\label{Prop:InteractionSF}
Let $i \in \{1,2\}$.
\ \par
\noindent
{\bf 1.} (Classical waves) 
Suppose that $u_l, u_m, u_r$ are states in $U$ such that 
$u_m=\mathcal{T}_i(\sigma_i;u_l)$ and  $u_r=\mathcal{T}_i(\sigma'_i;u_l)$.
Then one has $u_r = \mathcal{T}_2(\widetilde{\sigma}_2;\mathcal{T}_1(\widetilde{\sigma}_1;u_l))$ with
\begin{equation} \label{Eq:InteractionsSF}
\widetilde{\sigma}_i 
= \sigma_i + \sigma_i' + \mathcal{O}(1) |\sigma_i|^3|\sigma'_i|^3 (|\sigma_i| + |\sigma_i'|)^3
\ \text{ and } \ 
\widetilde{\sigma}_{3-i} = \mathcal{O}(1) |\sigma_i||\sigma'_i| (|\sigma_i| + |\sigma_i'|).
\end{equation}
\ \par
\noindent
{\bf 2.} (Shocks \& compression waves).
Suppose that $u_l, u_m, u_r$ are states in $U$ such that
 $u_m=\mathcal{T}_i(\sigma_i;u_l)$ and $u_r=\mathcal{R}_i(\sigma'_i;u_m)$,
 with $\sigma_i<0$ and $\sigma'_i<0$.
Then one has $u_r = \mathcal{T}_2(\sigma_2;\mathcal{T}_1(\widetilde{\sigma}_1;u_l))$ with 
\begin{equation} \label{Eq:InteractionsSF-CWvS}
\widetilde{\sigma}_i = \sigma_i + \sigma_i' + \mathcal{O}(1) |\sigma'_i|^3 (|\sigma_i| + |\sigma_i'|)^6
\ \text{ and } \ 
\widetilde{\sigma}_{3-i} = \mathcal{O}(1) |\sigma'_i| (|\sigma_i| + |\sigma_i'|)^2.
\end{equation}
The equivalent is valid when $u_m=\mathcal{R}_i(\sigma'_i;u_l)$ and $u_r=\mathcal{T}_i(\sigma_i;u_m)$. \par
\ \par
\noindent
{\bf 3.} (Compression and rarefaction waves).
Suppose that $u_l, u_m, u_r$ are states in $U$ such that
$u_m=\mathcal{R}_i(\sigma_i';u_l) \ \text{ and } \ u_r=\mathcal{R}_i(\sigma_i;u_m)$. 
Then one has $u_r = \mathcal{R}_i(\sigma_i + \sigma_i';u_l)$.
\end{proposition}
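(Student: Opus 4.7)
The plan is to work throughout in Riemann coordinates, where by \eqref{Eq:RarRC} the rarefaction/compression curves $\mathcal{R}_i$ are simple translations in the $i$-th coordinate, and by \eqref{Eq:ChocRC} the Hugoniot loci $\mathcal{S}_i$ differ from $\mathcal{R}_i$ only by a cubic perturbation in the transverse coordinate, with smooth coefficient $\mathcal{D}_i$. Part~3 is then immediate from \eqref{Eq:RarRC}, since composing two $\mathcal{R}_i$-curves just adds their parameters in the $w_i$-coordinate. The substantive work lies in Parts~1 and~2, where I would rely on Proposition~\ref{Prop:RiemannS} for the existence and uniqueness of $\widetilde{\sigma}_1,\widetilde{\sigma}_2$ and only need to estimate them.

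For Part~1, by symmetry take $i=1$. I would write the two scalar equations obtained by equating the $(w_1,w_2)$ Riemann coordinates of $u_r$ computed along the incoming $1$-waves and along the outgoing $(\widetilde{\sigma}_1,\widetilde{\sigma}_2)$-decomposition. Using the notation $\sigma_*$ from the statement (which vanishes for rarefactions and equals $\sigma$ for shocks), this produces the system
\begin{align*}
\widetilde{\sigma}_1 + (\widetilde{\sigma}_2)_*^3\,\mathcal{D}_2(\widetilde{\sigma}_2;u_{\mathrm{int}}) &= \sigma_1+\sigma_1', \\
(\widetilde{\sigma}_1)_*^3\,\mathcal{D}_1(\widetilde{\sigma}_1;u_l) + \widetilde{\sigma}_2 &= (\sigma_1)_*^3\,\mathcal{D}_1(\sigma_1;u_l) + (\sigma_1')_*^3\,\mathcal{D}_1(\sigma_1';u_m),
\end{align*}
where $u_{\mathrm{int}} = \mathcal{T}_1(\widetilde{\sigma}_1;u_l)$. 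Substituting the ansatz $\widetilde{\sigma}_1\approx\sigma_1+\sigma_1'$ into the second equation and using Lipschitz dependence of $\mathcal{D}_1$ on its base point (to replace $u_m$ by $u_l$ at the price of an admissible error), the estimation of $\widetilde{\sigma}_2$ reduces to the algebraic identity
\[
a^3+b^3-(a+b)^3 = -3ab(a+b),
\]
which yields $\widetilde{\sigma}_2 = \mathcal{O}(1)|\sigma_1||\sigma_1'|(|\sigma_1|+|\sigma_1'|)$ in the two-shock case, and even better bounds in the mixed shock/rarefaction cases where one of $(\sigma_1)_*,(\sigma_1')_*$ vanishes. Feeding this back into the first equation gives $\widetilde{\sigma}_1 = \sigma_1+\sigma_1' + \mathcal{O}(\widetilde{\sigma}_2^3)$, hence the announced bound.

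Part~2 follows the same template, except that only the shock wave contributes to the $w_{3-i}$-coordinate (the compression lies on $\mathcal{R}_i$ and leaves $w_{3-i}$ unchanged). The relevant algebraic identity is
\[
a^3-(a+b)^3 = -b\,(3a^2+3ab+b^2) = \mathcal{O}(|b|(|a|+|b|)^2),
\]
applied with $a=\sigma_i$, $b=\sigma_i'$, giving $\widetilde{\sigma}_2 = \mathcal{O}(1)|\sigma_i'|(|\sigma_i|+|\sigma_i'|)^2$, and then $\widetilde{\sigma}_1 = \sigma_i+\sigma_i' + \mathcal{O}(\widetilde{\sigma}_2^3) = \sigma_i+\sigma_i' + \mathcal{O}(1)|\sigma_i'|^3(|\sigma_i|+|\sigma_i'|)^6$ as claimed.

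The main obstacle is ensuring that the various Taylor-expansion errors for $\mathcal{D}_i$ — both in its shock parameter and in its base point, which in the first equation itself depends on the unknown $\widetilde{\sigma}_1$ — are controlled by the stated bounds without introducing parasitic lower powers of the wave sizes. This is handled by observing that higher-order corrections come with additional factors of $(|\sigma|+|\sigma'|)$ which, in a neighborhood of fixed size, can be absorbed in the $\mathcal{O}(1)$ constant, and by closing the implicit dependence $\widetilde{\sigma}_1 = \sigma_1+\sigma_1'+\mathcal{O}(\widetilde{\sigma}_2^3)$ by a direct iteration argument enabled by the smallness of $\widetilde{\sigma}_2$ relative to $\widetilde{\sigma}_1$.
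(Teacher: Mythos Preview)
Your proposal is correct and follows essentially the same route as the paper: project the identity $u_r=\mathcal{T}_2(\widetilde\sigma_2;\mathcal{T}_1(\widetilde\sigma_1;u_l))$ onto the two Riemann coordinates, read off $\widetilde\sigma_i=\sigma_i+\sigma_i'+\mathcal{O}(\widetilde\sigma_{3-i}^{\,3})$ from the $w_i$-equation, and estimate $\widetilde\sigma_{3-i}$ from the $w_{3-i}$-equation. The only differences are cosmetic: in Part~1 the paper simply cites the cubic bound on $\widetilde\sigma_{3-i}$ as classical (Bressan, Lemma~7.2) rather than deriving it, and in Part~2 the paper obtains the preliminary bound $\widetilde\sigma_1-\sigma_1=\mathcal{O}(\sigma_1')$ by invoking the standard Glimm estimate instead of your iteration.

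One small caveat on Part~1: your ``reduction to $a^3+b^3-(a+b)^3=-3ab(a+b)$'' treats $\mathcal{D}_1$ as constant. The actual expression is $g(\sigma_1)+g(\sigma_1')-g(\widetilde\sigma_1)$ with $g(s)=s^3\mathcal{D}_1(s;u_l)$, and the Taylor corrections from $\mathcal{D}_1$ varying in $s$ produce terms like $a^4+b^4-(a+b)^4$, which are \emph{not} bounded by $(|a|+|b|)^4$ alone (that would fail to be $\mathcal{O}(|a||b|(|a|+|b|))$). What saves you is that each $a^k+b^k-(a+b)^k$ still vanishes on $\{a=0\}\cup\{b=0\}$, hence factors as $\mathcal{O}(|a||b|(|a|+|b|)^{k-2})$; equivalently, $g(a)+g(b)-g(a+b)=\mathcal{O}(|a||b|(|a|+|b|))$ follows directly from Hadamard's lemma using only that $g$ has a triple zero at the origin. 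Your closing paragraph gestures at this, but make sure the argument you actually write down uses this structural fact rather than a crude $(|a|+|b|)^4$ bound on the remainder.
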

\begin{remark}
Notice the unusual $|\sigma'_i|^3$ error term in \eqref{Eq:InteractionsSF-CWvS}. 
This is due to the transformation of a compression wave in a shock wave in such a situation.
\end{remark}
\medskip
\noindent
These propositions are proved in Subsection~\ref{Subsec:ProofsShaperEstimates}.
We will call $C_*$ a bound for functions $\mathcal{C}^1$ and $\mathcal{C}^2$ in Proposition~\ref{Prop:InteractionDF}
 and for all ``$\mathcal{O}(1)$'' in Proposition~\ref{Prop:InteractionSF}. 
%
%
%
%
%
%
%
%
% --------------------------------------------------------------------------------------------------------------------%
%                                                                                                                     %
%                                                  MAIN INGREDIENTS                                                   %
%                                                                                                                     %
% --------------------------------------------------------------------------------------------------------------------%
%
%
%
\section{Main ingredients}
\label{Sec:MainIngredients}
Given $\overline{u} \in \Omega$, we first introduce $r>0$ such that:
\begin{itemize}
\item $\overline{B}(\overline{u};r)$ is included in the neighborhood $U$ introduced in Proposition~\ref{Prop:RiemannS},
    so the different Riemann problems with states in $\overline{B}(\overline{u};r)$ are solvable
    with intermediate states in $\Omega$;
\item the intervals $\{ \lambda_1(u), u \in B(\overline{u};r) \}$ 
    and $\{ \lambda_2(u), u \in B(\overline{u};r) \}$ are disjoint.
\end{itemize}
We strengthen a bit the last requirement as follows.
Translating the reference frame by a constant speed if necessary,
 we can suppose without loss of generality that for constants $\Lambda_1^{max} < 0$ 
 and $\Lambda_2^{min}>0$, one has
\begin{equation} \label{Eq:SeparationVitesses}
\forall u \in \overline{B}(\overline{u};r), \ \ \lambda_1(u) \leq \Lambda_1^{max} < 0 < \Lambda_2^{min} \leq \lambda_2(u). 
\end{equation} 
It suffices indeed to consider the modified flux $\widetilde{f}(u) = f(u) - \lambda u$ for proper $\lambda$;
 then entropy solutions $u(t,x)$  of \eqref{Eq:SCL} with flux $f$ correspond in a one-to-one manner
 to entropy solutions $\widetilde{u}(t,x):= u(t,x + \lambda t)$ of \eqref{Eq:SCL} with flux $\widetilde{f}$. 
The requirement \eqref{Eq:SeparationVitesses} is not essential;
  it simplifies a bit the descriptions (and the figures) below. \par

As a first condition on $\varepsilon_0$ in \eqref{Eq:Smallness}, we suppose that 
\begin{equation} \label{Eq:1stSmallness}
    u_0(\R) \subset B(\overline{u},r/2)
    \ \text{ and } \ 
    V_p(u_0) \leq  \frac{1}{2}, % \min\left( , \frac{1}{8 \sqrt[3]{C_*}}\right)
\end{equation}
for any $u_0$ fulfilling \eqref{Eq:Smallness}. \par
Above and all along this paper, we measure the $p$-variation of a vector-valued function $u : \R \longrightarrow \R^2$
 along the Riemann coordinates, that is,
\begin{equation} \label{Eq:VpU}
V_p(u)^p := V_p(w_1(u))^p + V_p(w_2(u))^p .
\end{equation}
\ \par
\noindent
We now describe the main ingredients of the proof in the next paragraphs. \par
%
%
%
%
%
% -------------------------- Front-tracking algorithm --------------------------
%
%
%
%
\subsection{Wave front-tracking algorithm}
\label{Subsec:FTA}
In this section, we describe a front-tracking algorithm which is a variant of the one of Bressan-Colombo~\cite{BressanColombo95};
we will use it to construct approximations of a solution. 
Given a threshold $\nu \in (0,\nu_0)$ (where $\nu_0>0$ is chosen small enough), we construct a piecewise constant function
 on a polygonal subdivision of $\R_+ \times \R$, where each line of discontinuity corresponds to a simple $1$ or $2$-wave. \par
\ \par
\noindent
{\bf Convention on the propagation speed.}
In what follows, all discontinuities will at first approximation travel at shock speed as defined below. 
However, we will further modify a bit this speed.
\begin{itemize}
\item For a wave of family $i$ separating $u_-$ and $u_+$ (either a shock, a rarefaction wave 
 or a compression wave), the shock speed $\overline{\lambda}_i(u_-,u_+)$ is defined as
\begin{equation} \label{Eq:VitesseChoc}
\overline{\lambda}_i(u_-,u_+) := \frac{\ell_i (u_-) \cdot \left(f(u^+) - f(u^-)\right)}
                                        {\ell_i (u_-) \cdot \left(u^+ - u^-\right)},
\end{equation}
which is consistent with \eqref{Eq:DefLambdaBar} below and exact in the case of a shock.
\item All waves will travel by default at {\it modified} shock speed, that is, for a wave of family $i$ separating
$u_-$ and $u_+$, at the speed $\overline{\lambda}^\nu_i(u_-,u_+)$ defined as
\begin{equation} \label{Eq:VitesseChocModifiee}
\overline{\lambda}^\nu_i(u_-,u_+) = \overline{\lambda}_i(u_-,u_+) + \nu (w_i[u_+] -w_i[\overline{u}]).
\end{equation}
This small modification is motivated by the possible local linear degeneracy of the characteristic speed. 
This is a way to ``genuinely nonlinearize'' the characteristic speed in order to treat all waves
 in a unified manner. 
This will simplify a bit the forthcoming analysis. \par
Taking $\nu_0$ small enough, and modifying $\Lambda_1^{max}$ and $\Lambda_2^{min}$ if necessary,
 we can ensure that the modified shock speed satisfies, for all $\nu \in (0,\nu_0)$,
\begin{multline} \label{Eq:SeparationVitesses2}
\forall u_-,u_+ \in \overline{B}(\overline{u};r), \ \ 
\overline{\lambda}^\nu_1(u_-,u_+)  \leq \Lambda_1^{max} < 0 
                    \ \text{ if } \ (u_-,u_+) \ \text{ is a } 1\text{-wave}, \\
\ \text{ and } \ 
0 < \Lambda_2^{min} \leq \overline{\lambda}^\nu_2(u_-,u_+)
                    \ \text{ if } \ (u_-,u_+) \ \text{ is a } 2\text{-wave}.
\end{multline}

\end{itemize}
We will possibly modify these speeds again by a small amount in the next algorithm. \par
\ \par
\noindent
{\bf Construction of front-tracking approximations.}
The construction of the approximations is a follows. \par
\ \par
\noindent
{\bf 1.} {\it (Approximation of the initial state.)} 
According to Proposition~\ref{Pro:Classical} we first introduce 
a family of piecewise constant approximations $(u^\nu_0)_{\nu \in (0,\nu_0)}$ satisfying 
\begin{multline} \label{Eq:ApproxInitiale}
\forall \nu \in (0,\nu_0), \  V_p (u^\nu_0) \leq V_p (u_0), \ 
\| u^\nu_0 - \overline{u} \|_{\infty} \leq \| u_0 - \overline{u} \|_{\infty} 
\ \text{ and } \ 
 u^\nu_0 \longrightarrow u_0  \ \text{ in } \ L^1_{loc}(\R) \text{ as } \nu \rightarrow 0^+.
\end{multline}
For the rest of the construction, $\nu$ is fixed in $(0,\nu_0)$. \par
\ \par
\noindent
{\bf 2.} {\it (Initial solver.)} At each discontinuity $x_1,\ldots,x_N$ of $u^\nu_0$ 
we solve the corresponding (standard) Riemann problem  $(u^\nu_0(x_i^-), u^\nu_0(x_i^+))$. 
We let each outgoing shock wave travel at modified shock speed, and we approximate outgoing rarefaction waves
 by rarefaction fans as follows: given an $i$-rarefaction wave between
$\omega_-$ and $\omega_+=\mathcal{R}_i(\sigma,\omega_-)$, $\sigma>0$, we consider the states:
\begin{equation} \label{Eq:RarFan}
\omega^0:=\omega_-,\ \ \omega^k:=\mathcal{R}_i\left(\frac{k}{m},\omega_-\right)
\text{ for } k=1, \ldots, m:=\left\lfloor \frac{\sigma}{\nu} \right\rfloor,
\ \ \omega^{m+1}:= \omega_+,
\end{equation}
and replace the rarefaction wave $(\omega_-,\omega_+)$ by the fan given by the states $\omega^0,\ldots,\omega^{m+1}$
 separated by straight lines traveling at modified shock speed $\overline{\lambda}^\nu_i(\omega^k,\omega^{k+1})$. 
Note that due to monotonicity of the characteristic field, one has 
 $\overline{\lambda}_i(\omega^k,\omega^{k+1}) \geq \overline{\lambda}_i(\omega^{k-1},\omega^{k})$
 (see e.g. \eqref{Eq:ShockSpeedRar}).
Hence the convention \eqref{Eq:VitesseChocModifiee} ensures that
 $\overline{\lambda}^\nu_i(\omega^k,\omega^{k+1}) > \overline{\lambda}^\nu_i(\omega^{k-1},\omega^{k})$. \par
If necessary, we further modify the speeds to ensure that no triplets
 of such discontinuities meet at the same point and that all interaction times are distinct. 
Doing so, we can moreover require that no discontinuities meet unless they would have met without this modification anyway,
 that is, if two discontinuities $(u_l,u_m)$ (on the left) and $(u_m,u_r)$ (on the right), 
 then the original (modified shock) speed of $(u_l,u_m)$ is strictly larger than the one of $(u_m,u_r)$. 
To obtain this result, we accelerate by a small amount the leftmost involved front, in such a way that
\begin{equation} \label{Eq:ModifSpeed}
| s - \overline{\lambda}^\nu_i(u_-,u_+) | \leq \nu |u_+ - u_-|.
\end{equation} \par
\ \par
\noindent%
{\bf 3.} {\it (Solvers at interaction points.)} 
We call discontinuity lines {\it fronts} and points where two fronts meet {\it interaction points}. 
At each interaction point, we consider the interaction of two fronts separating the states 
$(u_l,u_m)$ (on the left) and $(u_m,u_r)$ (on the right).

The way we approximate the outgoing Riemann solution
depends on the nature of the incoming fronts.
\begin{itemize}
\item For interactions of {\it opposite families}
(that is, when the two incoming fronts correspond to waves of different characteristic family), 
we approximate outgoing waves with {\it single fronts of the same nature as the incoming one},
 whether their strength is less than $\nu$ or not. Since the two fronts can correspond to rarefaction, 
 compression or shock waves, we  describe the situation as follows.
We write $u_m=\mathcal{U}_a(\sigma_2;u_l)$ with 
$\mathcal{U}_a \in \left\{ \mathcal{R}_2, \mathcal{T}_2\right\}$
and $u_r=\mathcal{U}_b(\sigma_1;u_m)$ with 
$\mathcal{U}_b \in \left\{ \mathcal{R}_1, \mathcal{T}_1\right\}$.
Then we solve the outgoing Riemann problem in the form 
$u_r= \mathcal{U}_b(\sigma_2';\mathcal{U}_a(\sigma_1';u_l))$,
and let $\widetilde{u}_m:= \mathcal{U}_a(\sigma_1';u_l)$. 
We let two outgoing fronts separate $u_l$ and $\widetilde{u}_m$, and $\widetilde{u}_m$ and $u_r$,
 respectively, and travel at the modified shock speed described above.
\item For interactions of fronts of {\it the same family}, there are several possibilities. 
Call $i \in \{1,2\}$ this family.
Again, the two fronts correspond to rarefaction, compression or shock waves,
and we write $u_m=\mathcal{U}_a(\sigma_i;u_l)$ and $u_r=\mathcal{U}_b(\sigma'_i;u_m)$ 
 with $\mathcal{U}_a, \mathcal{U}_b \in \left\{ \mathcal{R}_i, \mathcal{S}_i\right\}$.
Then we choose the relevant Riemann problem according to the following rules:
\begin{itemize}
\item For the outgoing front of family $i$,
       if $\mathcal{S}_i \in \{\mathcal{U}_a,\mathcal{U}_b\}$,
    or if $\sigma_i \sigma_i'>0$ with $\sigma_i + \sigma_i' \leq -2{\nu}$,
    we let $\mathcal{U}_i=\mathcal{T}_i$ 
    and we otherwise let $\mathcal{U}_i=\mathcal{R}_i$,
\item For the outgoing front of family $3-i$, 
    if $C_*|\sigma_i \sigma_i'| (|\sigma_i |+|\sigma_i'|)\geq {\nu}$,
    we let $\mathcal{U}_{3-i}=\mathcal{T}_{3-i}$ 
    and we otherwise let $\mathcal{U}_{3-i}=\mathcal{R}_{3-i}$.
    Recall $C_*$ is a bound for all ``$\mathcal{O}(1)$'' in Proposition~\ref{Prop:InteractionSF}.
\end{itemize}
Then we solve the corresponding outgoing Riemann problem in the form 
 $u_r= \mathcal{U}_2(\widetilde{\sigma}_2;\mathcal{U}_1(\widetilde{\sigma}_1;u_l))$.
 % and let $\widetilde{u}_m:= \mathcal{U}_a(\widetilde{\sigma}_1;u_l)$.
We let a single front separate the states bounding the wave of family $i$, 
 and approximate the outgoing wave of the other family by a single front 
 if it is shock or a compression wave, 
 by a rarefaction fan as in \eqref{Eq:RarFan} if it is a rarefaction.
 Again, we let each of these fronts travel at modified shock speed.
\end{itemize}
We introduce no front in the family $3-i$ if the strength of the corresponding wave is zero,
 but we extend the front of family $i$ even if it is of zero strength (in which case we call this 
 front {\it trivial}). \par
Again, we possibly modify the speeds of propagation of fronts to avoid three-fronts interactions 
 and to obtain that all interaction times are distinct, still requiring \eqref{Eq:ModifSpeed} 
 and the fact that all meetings of fronts would have taken place anyway. 
(Should the leftmost involved front be trivial, we accelerate the next one.) \par
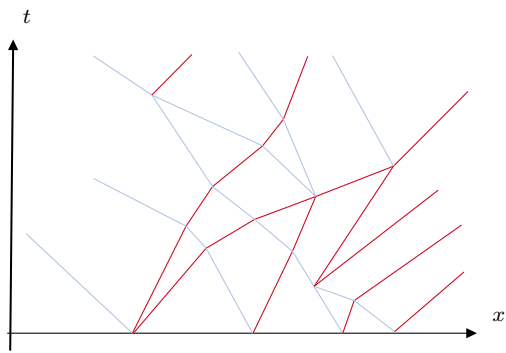
\begin{figure}[ht]
\centering
\begin{tikzpicture}[x=0.75pt,y=0.75pt,yscale=-1,xscale=1]
\draw    (98,250) -- (329.5,250) ;
\draw [shift={(332.5,250)}, rotate = 180] [fill={rgb, 255:red, 0; green, 0; blue, 0 }  ][line width=0.08]  [draw opacity=0] 
    (5.36,-2.57) -- (0,0) -- (5.36,2.57) -- cycle    ;
\draw [line width=0.75]    (99.5,258.75) -- (100.97,105.25) ;
\draw [shift={(101,102.25)}, rotate = 90.55] [fill={rgb, 255:red, 0; green, 0; blue, 0 }  ][line width=0.08]  [draw opacity=0] (5.36,-2.57) -- (0,0) -- (5.36,2.57) -- cycle    ;
\draw [color={rgb, 255:red, 208; green, 2; blue, 27 }  ,draw opacity=1 ]   (161,250) -- (197.29,207.29) -- (221.5,192.75) -- (252,181.25) ;
\draw [color={rgb, 255:red, 208; green, 2; blue, 27 }  ,draw opacity=1 ]   (160.5,250) -- (187.29,196.14) -- (200.5,176.25) -- (225.5,155.75) -- (235.86,142.43) -- (248,110.75) ;
\draw [color={rgb, 255:red, 208; green, 2; blue, 27 }  ,draw opacity=1 ]   (220.6,250.2) -- (240.4,208.8) -- (252,181.25) -- (290.8,166) -- (328,128.4) ;
\draw [color={rgb, 255:red, 177; green, 196; blue, 228 }  ,draw opacity=1 ]   (170.2,130.2) -- (184.87,152.49) -- (200.5,176.25) -- (221.5,192.75) -- (240.4,208.8) -- (265.5,250) ;
\draw [color={rgb, 255:red, 177; green, 196; blue, 228 }  ,draw opacity=1 ]   (170.2,130.2) -- (225.5,155.75) -- (252,181.25) ;
\draw [color={rgb, 255:red, 177; green, 196; blue, 228 }  ,draw opacity=1 ]   (213.57,108.71) -- (235.86,142.43) -- (252,181.25) ;
\draw [color={rgb, 255:red, 177; green, 196; blue, 228 }  ,draw opacity=1 ]   (141,172.14) -- (187.29,196.14) -- (197.29,207.29) -- (220.6,250.2) ;
\draw [color={rgb, 255:red, 177; green, 196; blue, 228 }  ,draw opacity=1 ]   (107.5,200) -- (160.5,250) ;
\draw [color={rgb, 255:red, 208; green, 2; blue, 27 }  ,draw opacity=1 ]   (324.8,195.6) -- (271.2,233.6) -- (265.5,250) ;
\draw [color={rgb, 255:red, 208; green, 2; blue, 27 }  ,draw opacity=1 ]   (290.8,166) -- (251.2,226.4) ;
\draw [color={rgb, 255:red, 177; green, 196; blue, 228 }  ,draw opacity=1 ]   (260.4,110.4) -- (290.8,166) ;
\draw [color={rgb, 255:red, 177; green, 196; blue, 228 }  ,draw opacity=1 ]   (141,110.6) -- (170.2,130.2) ;
\draw [color={rgb, 255:red, 208; green, 2; blue, 27 }  ,draw opacity=1 ]   (190.2,109.8) -- (170.2,130.2) ;
\draw [color={rgb, 255:red, 177; green, 196; blue, 228 }  ,draw opacity=1 ]   (251.2,226.4) -- (271.2,233.6) -- (291.4,249.2) ;
\draw [color={rgb, 255:red, 208; green, 2; blue, 27 }  ,draw opacity=1 ]   (313.2,178) -- (251.2,226.4) ;
\draw [color={rgb, 255:red, 208; green, 2; blue, 27 }  ,draw opacity=1 ]   (326,219.2) -- (291.4,249.2) ;
\draw (104.6,86.3) node [anchor=north west][inner sep=0.75pt]  [font=\footnotesize]  {$t$};
\draw (339,237.9) node [anchor=north west][inner sep=0.75pt]  [font=\footnotesize]  {$x$};
\end{tikzpicture}
\caption{A front-tracking approximation}
\label{Fig:WFTA}
\end{figure}
\begin{remark}
This algorithm can be seen as a limit case of the Bressan-Colombo algorithm~\cite{BressanColombo95}.
Indeed in this paper, the authors consider an approximate wave curve joining the rarefaction curve to
the shock curve by
\begin{equation*}
\mathcal{T}_i^\nu(s,u) = (1-\varphi(x/\sqrt{\nu})) \mathcal{R}_i(s,u) 
                            + \varphi(x/\sqrt{\nu}) \mathcal{S}_i(s,u) ,
\end{equation*}
where $\varphi$ is a smooth function equal to $1$ on $(-\infty,-2]$ and to $0$ on $[-1,+\infty)$. 
The algorithm above can be considered as a limit case, where we replace $\varphi$ by a Heaviside function. 
Indeed, using such a function $\varphi$ would yield poor estimates in Proposition~\ref{Prop:InteractionSF}.
The downside of using a Heaviside function is that this could naturally provoke an instability 
 in the pattern between compression waves and shocks. 
This motivates the additional rules that we enforced in the algorithm (such as the fact that a shock
 is always extended as a shock across interactions).
\end{remark}

\ \par
This allows to define an approximation $u^\nu$ up to a maximal time $T_\nu >0$ 
 where either the number of fronts becomes infinite,
 or interaction points accumulate or states leave the domain $U$. 
A representation of a front-tracking approximation is given in Figure~\ref{Fig:WFTA}.
We will prove later that for small enough $V_p(u_0)$, we actually have $T_\nu = +\infty$. \par 
%
%
%
%
%
% ---------------------------------------------------- Wave tracing ----------------------------------------------------
%
%
%
%
\subsection{Time horizon, front lines and modified $p$-variation}
\label{Subsec:WaveTracing}
For each $\nu \in (0,\nu_0)$, we consider the corresponding front-tracking approximation $u^\nu$ 
on $[0,T_\nu) \times \R$. \par
\ \par
\noindent
{\bf Time horizon.} For the forthcoming analysis, it will be useful to introduce a {\it time horizon} $T \in (0,T_\nu)$
 so that we focus on the restriction on $u^\nu$ in $[0,T] \times \R$. 
Of course, $T$ is intended to be arbitrarily close to $T_\nu$.
The interest of such a time horizon is that at first we cannot rule out the possibility of an unbounded number of fronts
 or an accumulation of them as $t \rightarrow T_{\nu}^-$.
On $[0,T]$, we know that the number of fronts at play is finite.
The time horizon is central in the forthcoming Glimm-type functional. \par
\ \par
\noindent
{\bf Front lines.} 
An important feature of the above front-tracking algorithm (made possible by condition \eqref{Eq:Monotonicity}),
 is that each front $\alpha$ existing at time $t$ in $u^\nu$ can be uniquely followed forward in time until time $T_\nu$,
 by following the outgoing front of the same characteristic family at each interaction point.
There is indeed one (and only one) outgoing front of family $i$, when this family is represented among the incoming
 fronts.
We call such a continuous succession of fronts a {\it front line}.
Note that each front line is unequivocally associated with a characteristic family. \par
When restricted to a time interval $[0,T]$, maximal front lines (for inclusion) end up at time $T$
 and start either at $t=0$, or at an interaction point of fronts of the same family.
We will refer to the point where the front line was created as the front line's {\it birthplace}. %\par
When a front $\alpha'$ belongs to the front line of a front $\alpha$, we will say that $\alpha$ is an
 {\it ancestor} of $\alpha'$. \par
%
%
% We will distinguish between two types of front line lines:
% %
% \begin{itemize}
% \item the {\it original} front lines, corresponding of front lines whose birthplace is at $t=0$,
% \item the {\it generated} front lines, corresponding of front lines whose birthplace is at positive time.
% \end{itemize}
% \ \par
%
An important feature of these front lines is that:
\begin{itemize}
\item Front lines of the same characteristic family cannot cross: when they meet, they merge for further times
(actually a good image would be that they propagate side by side, keeping their left/right order).
\item Front lines of different characteristic family cross at most once: this is due to the strict separation of
characteristic speeds.
\end{itemize}
\ \par
\noindent
{\bf Ordering and families of fronts.} 
For each family $i \in \{1,2\}$, and at each time $t< T_\nu$, we can gather all $i$-fronts that cross $\{t \} \times \R$.
These fronts are naturally ordered from left to right.
Since front lines of the same family cannot cross, the order (in the large sense) of two front lines of the same family remains
 the same for all times $t < T_\nu$ at which they both exist.  %\par
Given a front $\alpha$ existing at time $t$, we denote $x_\alpha(t)$ its position at that time as well.
For two fronts existing at time $t$, we will abusively write $\alpha < \beta$ for
$x_\alpha(t) < x_\beta(t)$. \par 
For $t \in [0,T]$, we call $\mathcal{A}^1(t)$ (respectively $\mathcal{A}^2(t)$) 
 the set of fronts of family $1$ (resp. of family $2$) existing at time $t$, and 
 $\mathcal{A}(t):=\mathcal{A}^1(t) \cup \mathcal{A}^2(t)$ the set of all fronts at time $t$.
Given a front $\gamma$ we will denote $\mathcal{A}^k_{<\gamma}(t)$
 (respectively $\mathcal{A}^k_{>\gamma}(t)$) the set of fronts of family $k$ existing at time $t$,
 strictly to the left (resp. right) of $\gamma$ at time $t$. 
 \par
\ \par
\noindent
{\bf Modified $p$-variation.} 
It will be helpful in order to measure the $p$-variation of front-tracking approximations, to introduce a slight 
 variant of \eqref{Eq:VpU}. Given a wave-front tracking approximation $u^\nu$ and $t \in [0,T_\nu)$, we can list 
 from left to right all $i$-fronts $\gamma^i_1,\ldots,\gamma^i_{n_i}$ that exist at time $t$. 
Then we set 
\begin{equation} \label{Eq:DefVtilde}
\widetilde{V}_p(u^\nu(t,\cdot)) := \Big( s_p^p(\sigma_{\gamma^1_1},\ldots,\sigma_{\gamma^1_{n_1}}) + s_p^p(\sigma_{\gamma^2_1},\ldots,\sigma_{\gamma^2_{n_2}}) \Big)^{1/p}.
\end{equation}
The difference with ${V}_p(u^\nu(t,\cdot))$ comes from the fact that $\widetilde{V}_p(u^\nu(t,\cdot))$ only measures $i$-waves 
in the $i$-th Riemann coordinate. 
We have the following elementary result, whose proof is delayed to Subsection~\ref{Subsec:ProofVVtilde}.
\begin{lemma} \label{Lem:VVtilde}
For some $C>0$, one has for any wave-front tracking approximation $u^\nu$ and $t \in (0,T_\nu)$,
\begin{equation} \label{Eq:VVtilde}
\left| \widetilde{V}_p(u^\nu(t,\cdot)) - V_p(u^\nu(t,\cdot)) \right| 
        \leq C \min \Big( V_p(u^\nu(t,\cdot))^3 , \widetilde{V}_p(u^\nu(t,\cdot))^3 \Big). 
\end{equation}
\end{lemma}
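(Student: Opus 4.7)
The plan is to express both $V_p(u^\nu(t,\cdot))$ and $\widetilde{V}_p(u^\nu(t,\cdot))$ as $s_p$-type quantities indexed by the \emph{same} ordered list of fronts, and to quantify the difference using the cubic tangency between shock and rarefaction curves, as encoded in \eqref{Eq:ChocRC}.

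Concretely, let $\alpha_1,\ldots,\alpha_N$ denote all fronts of $u^\nu(t,\cdot)$ ordered from left to right, and let $A_k$ ($k=1,2$) be the sequence whose $j$-th entry is the jump of $w_k$ across $\alpha_j$. Combining \eqref{Eq:RarRC} and \eqref{Eq:ChocRC}, I would split $A_k = B_k + C_k$, where the entry of $B_k$ at position $j$ equals $\sigma_{\alpha_j}$ when $\alpha_j$ is a $k$-wave and $0$ otherwise, while the entry of $C_k$ equals $\sigma_{\alpha_j}^3\,\mathcal{D}_{3-k}(\sigma_{\alpha_j};\cdot)= \mathcal{O}(|\sigma_{\alpha_j}|^3)$ when $\alpha_j$ is a $(3-k)$-shock and $0$ otherwise. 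By \eqref{Eq:VpU} one has $V_p^p = s_p^p(A_1)+s_p^p(A_2)$, and applying item \ref{Item:Fusion} of Proposition~\ref{Pro:ElemePropsVp} to erase the zero entries gives $s_p(B_k) = s_p(\sigma_{\gamma^k_1},\ldots,\sigma_{\gamma^k_{n_k}})$, so that $s_p^p(B_1)+s_p^p(B_2) = \widetilde{V}_p^p$.

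Next, I would bound the cross sequences. Using the crude estimate $s_p(x) \leq \sum_j|x_j|$ (which follows from $\sum_j|\sum_{\ell\in I_j} x_\ell|^p \leq (\sum_\ell|x_\ell|)^p$ for $p\geq 1$) together with the $\ell^p\hookrightarrow\ell^3$ inclusion (valid since $p\leq 3/2<3$), and then the trivial-partition bounds $\sum_{\beta\in\mathcal{A}^2(t)}|\sigma_\beta|^p \leq s_p^p(\sigma_{\gamma^2_1},\ldots)\leq \widetilde{V}_p^p$ as well as $\sum_{\beta\in\mathcal{A}^2(t)}|\sigma_\beta|^p \leq V_p(w_2)^p \leq V_p^p$, I obtain
\begin{equation*}
s_p(C_1) \leq \mathcal{O}(1)\sum_{\beta\in\mathcal{A}^2(t)}|\sigma_\beta|^3 \leq \mathcal{O}(1)\min(V_p,\widetilde{V}_p)^3,
\end{equation*}
and symmetrically for $C_2$. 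Plugging these into item \ref{Item:DLVp} applied to $A_k = B_k+C_k$, and then to $B_k = A_k+(-C_k)$ using $s_p(-C_k)=s_p(C_k)$, yields
\begin{equation*}
V_p^p \leq \widetilde{V}_p^p + \mathcal{O}(1)\,\widetilde{V}_p^{\,p+2}, \qquad \widetilde{V}_p^p \leq V_p^p + \mathcal{O}(1)\, V_p^{\,p+2},
\end{equation*}
where I absorb the leftover $s_p^p(C_k) = \mathcal{O}(1)\,\widetilde{V}_p^{3p}$ into the $\widetilde{V}_p^{p+2}$ term via $3p\geq p+2$ for $p\geq 1$ (with $\widetilde{V}_p$ bounded thanks to \eqref{Eq:1stSmallness}). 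A one-line mean-value-theorem argument, exploiting $x^p-y^p\geq p\min(x,y)^{p-1}(x-y)$ for $0\leq y\leq x$ and $p\geq 1$, then converts these into $|V_p-\widetilde{V}_p| \leq \mathcal{O}(1)\min(V_p,\widetilde{V}_p)^3 = \mathcal{O}(1)\min(V_p^3,\widetilde{V}_p^3)$, which is \eqref{Eq:VVtilde}.

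The main obstacle is really only the bookkeeping of the first step: the sequences $A_k, B_k, C_k$ must all be indexed by the \emph{same} left-to-right ordering of fronts of \emph{both} families, so that the identification $s_p(B_k) = s_p(\sigma_{\gamma^k_1},\ldots,\sigma_{\gamma^k_{n_k}})$ is an immediate consequence of the invariance of $s_p$ under deletion of zero terms (item \ref{Item:Fusion}). Once this setup is properly laid out, the rest reduces to a careful application of the algebraic properties of $s_p$ collected in Proposition~\ref{Pro:ElemePropsVp}.
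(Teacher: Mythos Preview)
Your setup is exactly the paper's: both arguments rest on writing the jump sequence of $w_k$ along all fronts as the sum of the ``diagonal'' piece $B_k$ (carrying $\sigma_{\alpha_j}$ at $k$-fronts) and the cubic ``off-diagonal'' piece $C_k$, and on the bound $s_p(C_k)=\mathcal{O}(1)\min(V_p,\widetilde V_p)^3$. The paper packages this via Proposition~\ref{Pro:ElemePropsVp}--\ref{Item:Interleave}, whose proof is precisely your zero-padding plus the triangle inequality (item~\ref{Item:TriangIneg}).

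There is one genuine slip, though. Having obtained $A_k=B_k+C_k$ and the cubic bound on $s_p(C_k)$, the natural move is item~\ref{Item:TriangIneg}, which gives directly
\[
\big|s_p(A_k)-s_p(B_k)\big|\le s_p(C_k)=\mathcal{O}(1)\min(V_p,\widetilde V_p)^3,
\]
and then the triangle inequality for the $\ell^p$-norm in $\R^2$ yields \eqref{Eq:VVtilde} with no further hypothesis. Instead you invoke item~\ref{Item:DLVp}, which works at the level of $p$-th powers and leaves the residual $s_p^p(C_k)=\mathcal{O}(1)\min^{3p}$; to absorb it into $\mathcal{O}(1)\min^{p+2}$ you appeal to ``$\widetilde V_p$ bounded thanks to \eqref{Eq:1stSmallness}''. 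But \eqref{Eq:1stSmallness} constrains only $V_p(u_0)$, not $V_p(u^\nu(t,\cdot))$ or $\widetilde V_p(u^\nu(t,\cdot))$, and for a generic $t\in(0,T_\nu)$ no such a priori bound is available---establishing it is the whole point of the induction in Section~\ref{Subsec:propagation}. So as written your argument has a circularity for $p>1$. The fix is immediate: replace the use of item~\ref{Item:DLVp} by item~\ref{Item:TriangIneg} (equivalently item~\ref{Item:Interleave}), and drop the mean-value step entirely.
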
 
%
%
%
%
%
%
%
%
%
%
% --------------------------------------------------- Measure curves ---------------------------------------------------
%
%
%
%
\subsection{Wave measure curves}
\label{Subsec:APA}
In this paragraph, we introduce special curves in the $[0,T_\nu) \times \R$ strip, along which 
 we will measure the $p$-variation of the approximations $u^\nu$, or more precisely the maximal $p$-sums
 of the strengths of their fronts. \par
\ \par
\noindent
{\bf Lower wave measure curves.}
We will call {\it $1$-lower wave measure curve} a piecewise affine curve included in $[0,T_\nu) \times \R$
composed of three parts (from left to right): 
 a horizontal segment $(-\infty,a) \times \{t_{\max}\}$,
 a part of a $1$-front line between times $t_{\max}$ and $t_{\min}$ 
 (followed infinitesimally below all along), 
 and a horizontal segment $(b,+\infty) \times \{t_{\min}\}$, with $0 \leq t_{\min} \leq t_{\max} < T_\nu$.  \par
In the same way, a {\it $2$-lower wave measure curve} is a piecewise affine curve composed of three parts
 (from left to right): 
 a horizontal segment $(-\infty,a) \times \{t_{\min}\}$,
 a part of a $2$-front line between times $t_{\min}$ and $t_{\max}$ 
 (again followed below all along),
 and a horizontal segment $(b,+\infty) \times \{t_{\max}\}$, with $0 \leq t_{\min} \leq t_{\max} < T_\nu$. \par
\tikzset{every picture/.style={line width=0.75pt}}
\definecolor{myblue}{RGB}{181,202,234}  %{177,196,228}
\definecolor{myred}{RGB}{208,2,27}
\begin{figure}[ht]
\hspace{-0.5cm}
\begin{subfigure}{0.45\textwidth}
    \centering
    \begin{tikzpicture}[x=0.75pt,y=0.75pt,yscale=-1,xscale=1]
%uncomment if require: \path (0,300); %set diagram left start at 0, and has height of 300
%
%
%Straight Lines 
\draw    (111,250) -- (375,249.8) ;
\draw [shift={(378,249.8)}, rotate = 179.96] [fill={rgb, 255:red, 0; green, 0; blue, 0 }  ][line width=0.08]  [draw opacity=0] (5.36,-2.57) -- (0,0) -- (5.36,2.57) -- cycle    ;

\draw [line width=0.75]    (120,252.6) -- (120.2,104.8) ;
\draw [shift={(120.2,101.8)}, rotate = 90.08] [fill={rgb, 255:red, 0; green, 0; blue, 0 }  ][line width=0.08]  [draw opacity=0] 
	(5.36,-2.57) -- (0,0) -- (5.36,2.57) -- cycle    ;

% Bleus

\draw [color={myblue}, draw opacity=1 ]   (202.53,130.53) -- (229.17,141.06) -- (267.27,168.2) ;

\draw [color={myblue}, draw opacity=1 ]   (264.79,110.25) -- (292.67,160) ;

\draw [color={myblue}, draw opacity=1 ]   
	(129.52,127.71) -- (209.27,192.2) -- (219.62,196.48) -- (229.62,207.62) -- (253.35,249.75) ;

\draw [color={myblue}, draw opacity=1 ]   (129.81,184.86) -- (173.07,220.4) -- (192.73,249.75) ;

\draw [color={myblue}, draw opacity=1 ]   (305.04,110) -- (324.17,136) ;

\draw [color={myblue}, draw opacity=1 ]   (173.79,110) -- (202.53,130.53) ;

\draw [color={myblue}, draw opacity=1 ]   (283.53,226.73) -- (303.53,233.93) -- (323.73,249.53) ;

\draw [color={myblue}, draw opacity=1 ]   (220.81,149.86) -- (226.67,185.2) ;

\draw [color={myblue}, draw opacity=1 ]   
	(202.53,130.53) -- (243.47,174) -- (253.83,193.08) -- (276.75,208.83) -- (283.53,226.73) -- (297.73,249.5) ;

% Rouges

\draw [color={myred}, draw opacity=1 ]
	(192.73,249.75) -- (229.62,207.62) -- (253.83,193.08) -- (267.27,168.2) ;

\draw [color={myred}, draw opacity=1 ]   
	(153,250) -- (173,220.4) -- (209,192) -- (226.67,185) -- (243.47,174) -- (267.27,168) ;

\draw [color={myred}, draw opacity=1 ]
	(253,250) -- (277.58,208) -- (292.67,160) -- (324,136) -- (359.57,118.4) ;

\draw [color={myred}, draw opacity=1 ]   (360,192.5) -- (303.53,234) -- (297.73,249.5) ;

\draw [color={myred}, draw opacity=1 ]   (324,136) -- (283.5,226.73) ;

\draw [color={myred}, draw opacity=1 ]   (222.5,110.13) -- (202.53,130.53) ;

\draw [color={myred}, draw opacity=1 ]   (359.5,160.5) -- (283.53,226.73) ;

\draw [color={myred}, draw opacity=1 ]   (359.79,209.75) -- (323.73,249.53) ;

\draw [color={myred}, draw opacity=1 ]   (192.73,249.75) -- (219.62,196.48) -- (226.67,185.2) ;

\draw [color={myred}, draw opacity=1 ]   (267.27,168.2) -- (292.67,160) ;

\draw [color={myred}, draw opacity=1 ]   (249.79,110) -- (229.17,141.06) -- (220.81,149.86) ;

% Courbe de mesure

\draw  [dash pattern={on 3pt off 3pt}]  (114,120) -- (182,120) -- (199,131) -- (240,174) -- (252,195) -- (274,210) -- (281,228) -- (286,237) -- (378,237) ;

% Texte 

\draw (124,86) node [anchor=north west][inner sep=0.75pt]  [font=\footnotesize]  {$t$};
\draw (384,247) node [anchor=north west][inner sep=0.75pt]  [font=\footnotesize]  {$x$};
\draw (375,222) node [anchor=north west][inner sep=0.75pt]  [font=\footnotesize]  {$t_{\min}$};
\draw (87,106) node [anchor=north west][inner sep=0.75pt]  [font=\footnotesize]  {$t_{\max}$};

\end{tikzpicture}
\caption{A $1$-lower measure curve}
\label{Subfig:L1MC}        
\end{subfigure}
\hspace{1cm}
\begin{subfigure}{.45\textwidth}
    \centering
    \begin{tikzpicture}[x=0.75pt,y=0.75pt,yscale=-1,xscale=1]
%uncomment if require: \path (0,300); %set diagram left start at 0, and has height of 300
%
%
%Straight Lines
\draw    (111,250) -- (375,249.8) ;
\draw [shift={(378,249.8)}, rotate = 179.96] [fill={rgb, 255:red, 0; green, 0; blue, 0 }  ][line width=0.08]  [draw opacity=0] 
	(5.36,-2.57) -- (0,0) -- (5.36,2.57) -- cycle    ;

\draw [line width=0.75]    (120,252.6) -- (120.2,104.8) ;
\draw [shift={(120.2,101.8)}, rotate = 90.08] [fill={rgb, 255:red, 0; green, 0; blue, 0 }  ][line width=0.08]  [draw opacity=0] 
	(5.36,-2.57) -- (0,0) -- (5.36,2.57) -- cycle    ;

% Bleus

\draw [color={myblue}, draw opacity=1 ]   
	(202.5,130.5) -- (243.5,174) -- (254,193) -- (276.75,209) -- (283.5,227) -- (298,249.5) ;

\draw [color={myblue}, draw opacity=1 ]   (202.53,130.53) -- (229.17,141.06) -- (267.27,168.2) ;

\draw [color={myblue}, draw opacity=1 ]   (264.79,110.25) -- (292.67,160) ;

\draw [color={myblue}, draw opacity=1 ]   
	(129.52,127.71) -- (209.27,192.2) -- (219.62,196.48) -- (229.62,207.62) -- (253.35,249.75) ;

\draw [color={myblue}, draw opacity=1 ]   (129.81,184.86) -- (173.07,220.4) -- (192.73,249.75) ;

\draw [color={myblue}, draw opacity=1 ]   (305.04,110) -- (324.17,136) ;

\draw [color={myblue}, draw opacity=1 ]   (173.79,110) -- (202.53,130.53) ;

\draw [color={myblue}, draw opacity=1 ]   (283.53,226.73) -- (303.53,233.93) -- (323.73,249.53) ;

\draw [color={myblue}, draw opacity=1 ]   (220.81,149.86) -- (226.67,185.2) ;

% Rouges

\draw [color={myred}, draw opacity=1 ]
	(192.73,249.75) -- (229.62,207.62) -- (253.83,193.08) -- (267.27,168.2) ;

\draw [color={myred}, draw opacity=1 ]   
	(153,250) -- (173,220.4) -- (209,192) -- (226.5,185) -- (243.5,174) -- (267,168) ;

\draw [color={myred}, draw opacity=1 ]   
	(253.5,250) -- (277.5,208) -- (293,160) -- (324,136) -- (359.5,118.5) ;

\draw [color={myred}, draw opacity=1 ]   (360.04,192.5) -- (303.53,233.93) -- (297.73,249.5) ;

\draw [color={myred}, draw opacity=1 ]   (324.17,136) -- (283.53,226.73) ;

\draw [color={myred}, draw opacity=1 ]   (222.53,110.13) -- (202.53,130.53) ;

\draw [color={myred}, draw opacity=1 ]   (359.54,160.5) -- (283.53,226.73) ;

\draw [color={myred}, draw opacity=1 ]   (359.79,209.75) -- (323.73,249.53) ;

\draw [color={myred}, draw opacity=1 ]   (192.73,249.75) -- (219.62,196.48) -- (226.67,185.2) ;

\draw [color={myred}, draw opacity=1 ]   (267.27,168.2) -- (292.67,160) ;

\draw [color={myred}, draw opacity=1 ]   (249.79,110) -- (229.17,141.06) -- (220.81,149.86) ;

\draw  [dash pattern={on 3pt off 3pt}]  (349,150.13) -- (308.75,150.38) -- (292.5,162.88) -- (245.75,176.13) -- (229.5,186.13) -- (222,197.88) -- (215.75,210) -- (112,210) ;

% Text Node
\draw (124.27,86.3) node [anchor=north west][inner sep=0.75pt]  [font=\footnotesize]  {$t$};
% Text Node
\draw (384.2,247.26) node [anchor=north west][inner sep=0.75pt]  [font=\footnotesize]  {$x$};
% Text Node
\draw (90.17,195) node [anchor=north west][inner sep=0.75pt]  [font=\footnotesize]  {$t_{\min}$};
% Text Node
\draw (350,137) node [anchor=north west][inner sep=0.75pt]  [font=\footnotesize]  {$t_{\max}$};

\end{tikzpicture}
    \caption{A $2$-lower measure curve}
\label{Subfig:L2MC}        
\end{subfigure}
\caption{Lower measure curves}
\label{Fig:LWC}
\end{figure}
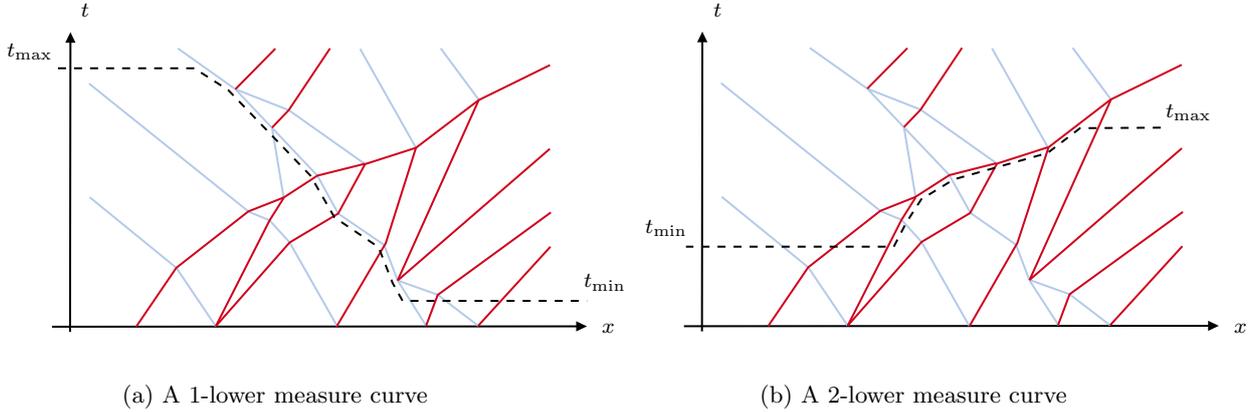
An important property of these curves is that a front line can cross a lower measure curve at most once.
This is due to the fact that front lines of different families can only cross once, and that
a front line meeting another front line of the same family stays in it. \par
%
%We also notice that horizontal lines are both $1$- and $2$-wave measure curves.
We remark that thanks to \eqref{Eq:SeparationVitesses2}, $1$- and $2$- lower wave measure curves 
 are graphs of functions in the $(t,x)$ half-plane. 
Lower wave curves are represented in Figure~\ref{Fig:LWC}. \par
\ \par
\noindent
{\bf Upper wave measure curves.}
Now we introduce an ``upper'' version of the wave measure curves above.
This extension is slightly less straightforward than expected. 
Moreover an upper curve of family $i$ will only be used to measure waves of family $i$. \par
We will call {\it $1$-upper wave measure curve} a piecewise affine curve composed of three parts: 
 a horizontal segment $(-\infty,a) \times \{t_{\min}\}$  (from left to right),
 a part of a $1$-front line between times $t_{\min}$ and $t_{\max}$ 
 (followed infinitesimally above all along, from {\it right to left}), 
 and a horizontal segment $(b,+\infty) \times \{t_{\max}\}$, with $0 \leq t_{\min} \leq t_{\max} < T_\nu$.  \par
In the same way, {\it $2$-upper wave measure curve} is a piecewise affine curve composed of three parts: 
 a horizontal segment $(-\infty,a) \times \{t_{\max}\}$ (from left to right),
 a part of a $2$-front line between times $t_{\max}$ and $t_{\min}$ 
 (followed infinitesimally above all along, from {\it right to left}),
 and a horizontal segment $(b,+\infty) \times \{t_{\min}\}$, with $0 \leq t_{\min} \leq t_{\max} < T_\nu$. \par
\tikzset{every picture/.style={line width=0.75pt}} %set default line width to 0.75pt        
\begin{figure}[ht]
\hspace{-0.5cm}
\begin{subfigure}{0.45\textwidth}
    \centering
    \begin{tikzpicture}[x=0.75pt,y=0.75pt,yscale=-1,xscale=1]
%uncomment if require: \path (0,300); %set diagram left start at 0, and has height of 300
%
%
%Straight Lines
\draw    (111,250) -- (375,249.8) ;
\draw [shift={(378,249.8)}, rotate = 179.96] [fill={rgb, 255:red, 0; green, 0; blue, 0 }  ][line width=0.08]  [draw opacity=0] 
	(5.36,-2.57) -- (0,0) -- (5.36,2.57) -- cycle    ;

\draw [line width=0.75]    (120,252.6) -- (120.2,104.8) ;
\draw [shift={(120.2,101.8)}, rotate = 90.08] [fill={rgb, 255:red, 0; green, 0; blue, 0 }  ][line width=0.08]  [draw opacity=0] 
	(5.36,-2.57) -- (0,0) -- (5.36,2.57) -- cycle    ;

% Bleus

\draw [color={myblue}, draw opacity=1 ]   
	(202.53,130.53) -- (243.47,174) -- (253.83,193.08) -- (276.75,208.83) -- (283.53,226.73) -- (297.73,249.5) ;

\draw [color={myblue}, draw opacity=1 ]   (202.53,130.53) -- (229.17,141.06) -- (267.27,168.2) ;

\draw [color={myblue}, draw opacity=1 ]   (264.79,110.25) -- (292.67,160) ;

\draw [color={myblue}, draw opacity=1 ]   
	(129.52,127.71) -- (209.27,192.2) -- (219.62,196.48) -- (229.62,207.62) -- (253.35,249.75) ;

\draw [color={myblue}, draw opacity=1 ]   (129.81,184.86) -- (173.07,220.4) -- (192.73,249.75) ;

\draw [color={myblue}, draw opacity=1 ]   (305.04,110) -- (324.17,136) ;

\draw [color={myblue}, draw opacity=1 ]   (173.79,110) -- (202.53,130.53) ;

\draw [color={myblue}, draw opacity=1 ]   (283.53,226.73) -- (303.53,233.93) -- (323.73,249.53) ;

\draw [color={myblue}, draw opacity=1 ]   (220.81,149.86) -- (226.67,185.2) ;

% Rouges

\draw [color={myred}, draw opacity=1 ]   
	(192.73,249.75) -- (229.62,207.62) -- (253.83,193.08) -- (267.27,168.2) ;

\draw [color={myred}, draw opacity=1 ]   
	(153,250) -- (173,220.5) -- (209,192) -- (226.5,185) -- (243.5,174) -- (267,168) ;

\draw [color={myred}, draw opacity=1 ]   
	(253.35,249.75) -- (277.58,208) -- (292.67,160) -- (324.17,136) -- (360,114) ;

\draw [color={myred}, draw opacity=1 ]   (360,192.5) -- (303.53,233.93) -- (297.73,249.5) ;

\draw [color={myred}, draw opacity=1 ]   (324.17,136) -- (283.53,226.73) ;

\draw [color={myred}, draw opacity=1 ]   (222.53,110.13) -- (202.53,130.53) ;

\draw [color={myred}, draw opacity=1 ]   (360,160.5) -- (283.53,226.73) ;

\draw [color={myred}, draw opacity=1 ]   (360,209.75) -- (323.73,249.53) ;

\draw [color={myred}, draw opacity=1 ]   (192.73,249.75) -- (219.62,196.48) -- (226.67,185.2) ;

\draw [color={myred}, draw opacity=1 ]   (267.27,168.2) -- (292.67,160) ;

\draw [color={myred}, draw opacity=1 ]   (249.79,110) -- (229.17,141.06) -- (220.81,149.86) ;

\draw  [dash pattern={on 3pt off 3pt}]  (368.29,119.29) -- (192.68,120.28) -- (205.83,129.99) -- (246.86,173.29) -- (255.43,189.86) -- (278,205.86) -- (286,224.14) -- (294.29,237.86) -- (114.86,237.29) ;

% Text Node
\draw (124,86) node [anchor=north west][inner sep=0.75pt]  [font=\footnotesize]  {$t$};
% Text Node
\draw (384,247) node [anchor=north west][inner sep=0.75pt]  [font=\footnotesize]  {$x$};
% Text Node
\draw (92,221.5) node [anchor=north west][inner sep=0.75pt]  [font=\footnotesize]  {$t_{\min}$};
% Text Node
\draw (372,107) node [anchor=north west][inner sep=0.75pt]  [font=\footnotesize]  {$t_{\max}$};

\end{tikzpicture}
\caption{A $1$-upper measure curve}
\label{Subfig:U1MC}        
\end{subfigure}
\hspace{1cm}
\begin{subfigure}{.45\textwidth}
    \centering
    \begin{tikzpicture}[x=0.75pt,y=0.75pt,yscale=-1,xscale=1]
%uncomment if require: \path (0,300); %set diagram left start at 0, and has height of 300
%
%
%Straight Lines 
\draw    (111,250) -- (375,249.8) ;
\draw [shift={(378,249.8)}, rotate = 179.96] [fill={rgb, 255:red, 0; green, 0; blue, 0 }  ][line width=0.08]  [draw opacity=0] 
	(5.36,-2.57) -- (0,0) -- (5.36,2.57) -- cycle    ;

\draw [line width=0.75]    (120,252.6) -- (120.2,104.8) ;
\draw [shift={(120.2,101.8)}, rotate = 90.08] [fill={rgb, 255:red, 0; green, 0; blue, 0 }  ][line width=0.08]  [draw opacity=0] 
	(5.36,-2.57) -- (0,0) -- (5.36,2.57) -- cycle    ;

% Bleus

\draw [color={myblue}, draw opacity=1 ]   
	(202.53,130.53) -- (243.47,174) -- (253.83,193.08) -- (276.75,208.83) -- (283.53,226.73) -- (297.73,249.5) ;

\draw [color={myblue}, draw opacity=1 ]   (202.53,130.53) -- (229.17,141.06) -- (267.27,168.2) ;

\draw [color={myblue}, draw opacity=1 ]   (264.79,110.25) -- (292.67,160) ;

\draw [color={myblue}, draw opacity=1 ]   
	(129.52,127.71) -- (209.27,192.2) -- (219.62,196.48) -- (229.62,207.62) -- (253.35,249.75) ;

\draw [color={myblue}, draw opacity=1 ]   (129.81,184.86) -- (173.07,220.4) -- (192.73,249.75) ;

\draw [color={myblue}, draw opacity=1 ]   (305.04,110) -- (324.17,136) ;

\draw [color={myblue}, draw opacity=1 ]   (173.79,110) -- (202.53,130.53) ;

\draw [color={myblue}, draw opacity=1 ]   (283.53,226.73) -- (303.53,233.93) -- (323.73,249.53) ;

\draw [color={myblue}, draw opacity=1 ]   (220.81,149.86) -- (226.67,185.2) ;

% Rouges

\draw [color={myred}, draw opacity=1 ]   (192.73,249.75) -- (229.62,207.62) -- (253.83,193.08) -- (267.27,168.2) ;

\draw [color={myred}, draw opacity=1 ]   
	(153,250) -- (173,220.5) -- (209,192) -- (226.5,185) -- (243.5,174) -- (267,168) ;

\draw [color={myred}, draw opacity=1 ]   
	(253.35,249.75) -- (277.58,208) -- (292.67,160) -- (324.17,136) -- (359.57,118.4) ;

\draw [color={myred}, draw opacity=1 ]   (360.04,192.5) -- (303.53,233.93) -- (297.73,249.5) ;

\draw [color={myred}, draw opacity=1 ]   (324.17,136) -- (283.53,226.73) ;

\draw [color={myred}, draw opacity=1 ]   (222.53,110.13) -- (202.53,130.53) ;

\draw [color={myred}, draw opacity=1 ]   (359.54,160.5) -- (283.53,226.73) ;

\draw [color={myred}, draw opacity=1 ]   (359.79,209.75) -- (323.73,249.53) ;

\draw [color={myred}, draw opacity=1 ]   (192.73,249.75) -- (219.62,196.48) -- (226.67,185.2) ;

\draw [color={myred}, draw opacity=1 ]   (267.27,168.2) -- (292.67,160) ;

\draw [color={myred}, draw opacity=1 ]   (249.79,110) -- (229.17,141.06) -- (220.81,149.86) ;

\draw  [dash pattern={on 3pt off 3pt}]  (108,144.86) -- (307,144) -- (290,158) -- (265,166) -- (252,190.29) -- (226,206) -- (212,223) -- (360,223) ;

% Text 
\draw (124.27,86.3) node [anchor=north west][inner sep=0.75pt]  [font=\footnotesize]  {$t$};

\draw (384.2,247.26) node [anchor=north west][inner sep=0.75pt]  [font=\footnotesize]  {$x$};

\draw (366.45,213.97) node [anchor=north west][inner sep=0.75pt]  [font=\footnotesize]  {$t_{\min}$};

\draw (83,130) node [anchor=north west][inner sep=0.75pt]  [font=\footnotesize]  {$t_{\max}$};

\end{tikzpicture}
    \caption{A $2$-upper measure curve}
\label{Subfig:U2MC}        
\end{subfigure}
\caption{Upper measure curves}
\label{Fig:UWC}
\end{figure}
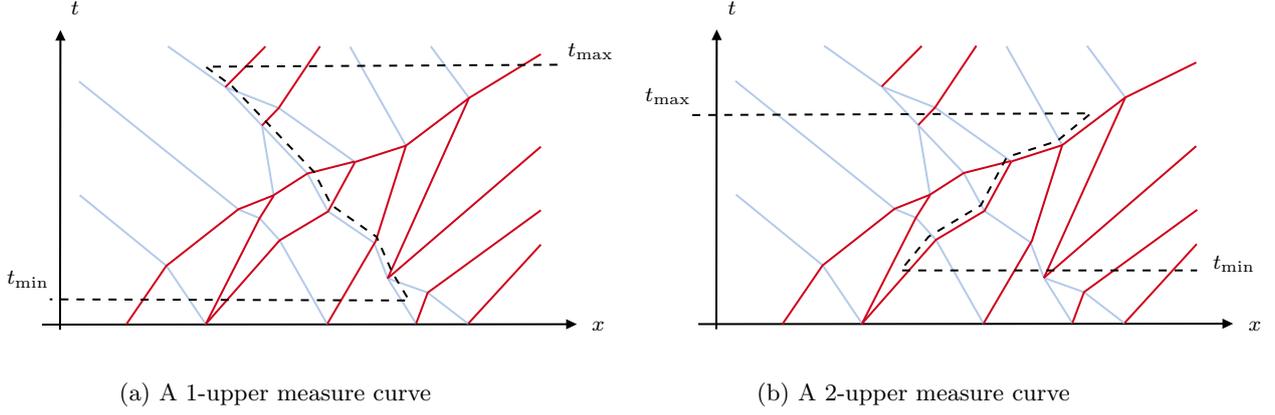
With this definition, upper measure curves are no longer graphs of functions.
Note that a front of family $i$ can only cross an $i$-upper measure curve only once, for the same reason as above.
This is not the case for fronts of family $3-i$, but as mentioned before,
 we will not measure $(3-i)$-fronts on such a curve. 
Upper wave curves are represented in Figure~\ref{Fig:UWC}. \par
\ \par
\noindent
{\bf Intermediate measure curves.} 
Given a lower or upper wave measure curve $\Gamma$ as above, we define intermediate measure curves $\Gamma_t$ 
 for times between $t=0$ and $t=t_{\max}$ as follows.
For $0 \leq t \leq t_{\min}$, $\Gamma_t$ is just the horizontal line $\{t\} \times \R$. 
For $t \in [t_{\min},t_{\max}]$, $\Gamma_t$ is composed of the part of $\Gamma$ in $[0,t] \times \R$ 
 (including the horizontal half-line at time $t_{\min}$)
 and of the horizontal half-line $\{t\} \times (-\infty,\Gamma(t)]$ (for $1$-lower and $2$-upper wave measure curves) 
 or $\{t\} \times [\Gamma(t),+\infty)$ (for $1$-upper and $2$-lower wave measure curves). 
In particular, $\Gamma_{t_{\max}}=\Gamma$. \par
%
%Finally we take the convention that $\Gamma_{t}=\Gamma$ for $t_{\max} \leq t < T_\nu$. \par
%
Intermediate measure curves are illustrated in the case of family $1$ in Figure~\ref{Fig:IMC}. \par
\ \par
\noindent
{\bf Measuring the $p$-variation on curves.} Given a measure curve $\Gamma$ and $t \in [0,t_{\max}]$,
 we can measure the maximal $p$-sum of strengths of fronts that cross $\Gamma_t$. 
Call indeed $\gamma^1_1,\ldots,\gamma^1_{n_1}$ (resp. $\gamma^2_1,\ldots,\gamma^2_{n_2}$) the $1$-fronts
 (resp. $2$-fronts) crossing $\Gamma_t$, ordered by following the line from $t_{\min}$ to $t_{\max}$ as described above. 
Then we set
\begin{multline} \label{Eq:DefVpGamma}
\widetilde{V}_p[u^\nu;\Gamma](t) := \left( s_p(\sigma_{\gamma^1_1},\ldots,\sigma_{\gamma^1_{n_1}})^p 
                + s_p(\sigma_{\gamma^2_1},\ldots,\sigma_{\gamma^2_{n_2}})^p \right )^{\frac{1}{p}} 
\ \text{ for lower measure curves,} \\
\text{ and } \ 
\widetilde{V}_p[u^\nu;\Gamma](t) := s_p(\sigma_{\gamma^i_1},\ldots,\sigma_{\gamma^i_{n_i}})
\ \text{ for an } i-\text{upper measure curve.}
\end{multline}
\ \par
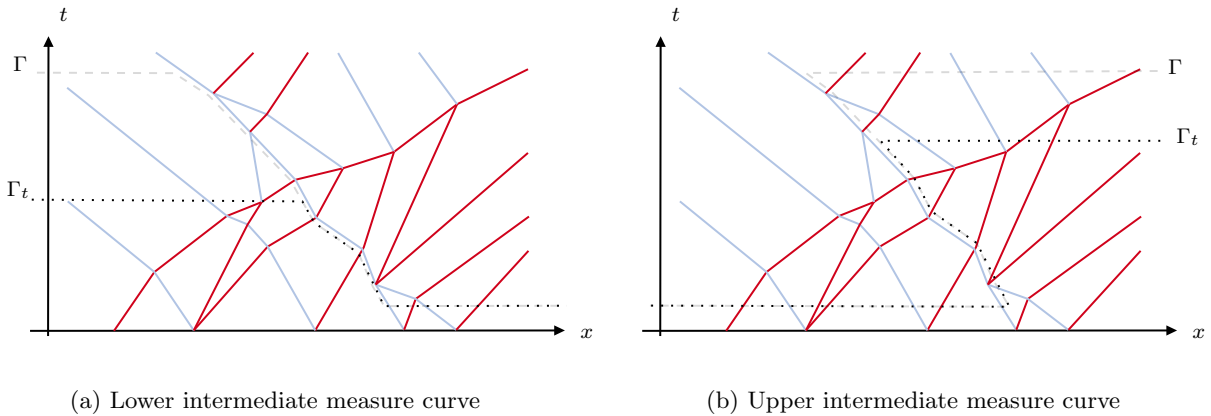
\begin{figure}[!ht]
\hspace{-0.5cm}
\begin{subfigure}{0.45\textwidth}
    \centering
    \begin{tikzpicture}[x=0.75pt,y=0.75pt,yscale=-1,xscale=1]
%uncomment if require: \path (0,300); %set diagram left start at 0, and has height of 300

%Straight Lines 
\draw    (111,250) -- (375,249.8) ;
\draw [shift={(378,249.8)}, rotate = 179.96] [fill={rgb, 255:red, 0; green, 0; blue, 0 }  ][line width=0.08]  [draw opacity=0] (5.36,-2.57) -- (0,0) -- (5.36,2.57) -- cycle    ;

\draw [line width=0.75]    (120,252.6) -- (120.2,104.8) ;
\draw [shift={(120.2,101.8)}, rotate = 90.08] [fill={rgb, 255:red, 0; green, 0; blue, 0 }  ][line width=0.08]  [draw opacity=0] (5.36,-2.57) -- (0,0) -- (5.36,2.57) -- cycle    ;

\draw [color={rgb, 255:red, 208; green, 2; blue, 27 }  ,draw opacity=1 ]   (192.73,249.75) -- (229.62,207.62) -- (253.83,193.08) -- (267.27,168.2) ;

\draw [color={rgb, 255:red, 208; green, 2; blue, 27 }  ,draw opacity=1 ]   (152.9,250) -- (173.07,220.4) -- (209.27,192.2) -- (226.67,185.2) -- (243.47,174) -- (267.27,168.2) ;

\draw [color={rgb, 255:red, 208; green, 2; blue, 27 }  ,draw opacity=1 ]   (253.35,249.75) -- (277.58,208) -- (292.67,160) -- (324.17,136) -- (359.57,118.4) ;

\draw [color={rgb, 255:red, 177; green, 196; blue, 228 }  ,draw opacity=1 ]   (202.53,130.53) -- (243.47,174) -- (253.83,193.08) -- (276.75,208.83) -- (283.53,226.73) -- (297.73,249.5) ;

\draw [color={rgb, 255:red, 177; green, 196; blue, 228 }  ,draw opacity=1 ]   (202.53,130.53) -- (229.17,141.06) -- (267.27,168.2) ;

\draw [color={rgb, 255:red, 177; green, 196; blue, 228 }  ,draw opacity=1 ]   (264.79,110.25) -- (292.67,160) ;

\draw [color={rgb, 255:red, 177; green, 196; blue, 228 }  ,draw opacity=1 ]   (129.52,127.71) -- (209.27,192.2) -- (219.62,196.48) -- (229.62,207.62) -- (253.35,249.75) ;

\draw [color={rgb, 255:red, 177; green, 196; blue, 228 }  ,draw opacity=1 ]   (129.81,184.86) -- (173.07,220.4) -- (192.73,249.75) ;

\draw [color={rgb, 255:red, 208; green, 2; blue, 27 }  ,draw opacity=1 ]   (360.04,192.5) -- (303.53,233.93) -- (297.73,249.5) ;

\draw [color={rgb, 255:red, 208; green, 2; blue, 27 }  ,draw opacity=1 ]   (324.17,136) -- (283.53,226.73) ;

\draw [color={rgb, 255:red, 177; green, 196; blue, 228 }  ,draw opacity=1 ]   (305.04,110) -- (324.17,136) ;

\draw [color={rgb, 255:red, 177; green, 196; blue, 228 }  ,draw opacity=1 ]   (173.79,110) -- (202.53,130.53) ;

\draw [color={rgb, 255:red, 208; green, 2; blue, 27 }  ,draw opacity=1 ]   (222.53,110.13) -- (202.53,130.53) ;

\draw [color={rgb, 255:red, 177; green, 196; blue, 228 }  ,draw opacity=1 ]   (283.53,226.73) -- (303.53,233.93) -- (323.73,249.53) ;

\draw [color={rgb, 255:red, 208; green, 2; blue, 27 }  ,draw opacity=1 ]   (359.54,160.5) -- (283.53,226.73) ;

\draw [color={rgb, 255:red, 208; green, 2; blue, 27 }  ,draw opacity=1 ]   (359.79,209.75) -- (323.73,249.53) ;

\draw [color={rgb, 255:red, 208; green, 2; blue, 27 }  ,draw opacity=1 ]   (192.73,249.75) -- (219.62,196.48) -- (226.67,185.2) ;

\draw [color={rgb, 255:red, 208; green, 2; blue, 27 }  ,draw opacity=1 ]   (267.27,168.2) -- (292.67,160) ;

\draw [color={rgb, 255:red, 177; green, 196; blue, 228 }  ,draw opacity=1 ]   (220.81,149.86) -- (226.67,185.2) ;

\draw [color={rgb, 255:red, 208; green, 2; blue, 27 }  ,draw opacity=1 ]   (249.79,110) -- (229.17,141.06) -- (220.81,149.86) ;

\draw [color={rgb, 255:red, 0; green, 0; blue, 0 }  ,draw opacity=0.15 ] [dash pattern={on 3pt off 3pt}]  (114.42,120.13) -- (182.92,120.38) -- (200.17,131.63) -- (240.92,174.13) -- (252.92,195.63) -- (274.92,210.88) -- (282.17,227.88) -- (287.76,237.55) -- (378.85,237.36) ;

\draw  [dash pattern={on 0.75pt off 3pt}]  (111.86,184) -- (246.92,184.88) -- (252.92,195.63) -- (274.92,210.88) -- (287.76,237.55) -- (378.85,237.36) ;

% Text Node
\draw (124.27,86.3) node [anchor=north west][inner sep=0.75pt]  [font=\footnotesize]  {$t$};

\draw (384.2,247.26) node [anchor=north west][inner sep=0.75pt]  [font=\footnotesize]  {$x$};

\draw (101,110.4) node [anchor=north west][inner sep=0.75pt]  [font=\footnotesize]  {$\Gamma$};

\draw (97.97,173.2) node [anchor=north west][inner sep=0.75pt]  [font=\footnotesize]  {$\Gamma_{t}$};

\end{tikzpicture}
\caption{Lower intermediate measure curve}
\label{Subfig:I1MC}        
\end{subfigure}
\hspace{1cm}
\begin{subfigure}{.45\textwidth}
    \centering
    \begin{tikzpicture}[x=0.75pt,y=0.75pt,yscale=-1,xscale=1]
%uncomment if require: \path (0,300); %set diagram left start at 0, and has height of 300

%Straight Lines
\draw    (111,250) -- (375,249.8) ;
\draw [shift={(378,249.8)}, rotate = 179.96] [fill={rgb, 255:red, 0; green, 0; blue, 0 }  ][line width=0.08]  [draw opacity=0] (5.36,-2.57) -- (0,0) -- (5.36,2.57) -- cycle    ;

\draw [line width=0.75]    (120,252.6) -- (120.2,104.8) ;
\draw [shift={(120.2,101.8)}, rotate = 90.08] [fill={rgb, 255:red, 0; green, 0; blue, 0 }  ][line width=0.08]  [draw opacity=0] (5.36,-2.57) -- (0,0) -- (5.36,2.57) -- cycle    ;

\draw [color={rgb, 255:red, 208; green, 2; blue, 27 }  ,draw opacity=1 ]   (192.73,249.75) -- (229.62,207.62) -- (253.83,193.08) -- (267.27,168.2) ;

\draw [color={rgb, 255:red, 208; green, 2; blue, 27 }  ,draw opacity=1 ]   (152.9,250) -- (173.07,220.4) -- (209.27,192.2) -- (226.67,185.2) -- (243.47,174) -- (267.27,168.2) ;

\draw [color={rgb, 255:red, 208; green, 2; blue, 27 }  ,draw opacity=1 ]   (253.35,249.75) -- (277.58,208) -- (292.67,160) -- (324.17,136) -- (359.57,118.4) ;

\draw [color={rgb, 255:red, 177; green, 196; blue, 228 }  ,draw opacity=1 ]   (202.53,130.53) -- (243.47,174) -- (253.83,193.08) -- (276.75,208.83) -- (283.53,226.73) -- (297.73,249.5) ;

\draw [color={rgb, 255:red, 177; green, 196; blue, 228 }  ,draw opacity=1 ]   (202.53,130.53) -- (229.17,141.06) -- (267.27,168.2) ;

\draw [color={rgb, 255:red, 177; green, 196; blue, 228 }  ,draw opacity=1 ]   (264.79,110.25) -- (292.67,160) ;

\draw [color={rgb, 255:red, 177; green, 196; blue, 228 }  ,draw opacity=1 ]   (129.52,127.71) -- (209.27,192.2) -- (219.62,196.48) -- (229.62,207.62) -- (253.35,249.75) ;

\draw [color={rgb, 255:red, 177; green, 196; blue, 228 }  ,draw opacity=1 ]   (129.81,184.86) -- (173.07,220.4) -- (192.73,249.75) ;

\draw [color={rgb, 255:red, 208; green, 2; blue, 27 }  ,draw opacity=1 ]   (360.04,192.5) -- (303.53,233.93) -- (297.73,249.5) ;

\draw [color={rgb, 255:red, 208; green, 2; blue, 27 }  ,draw opacity=1 ]   (324.17,136) -- (283.53,226.73) ;

\draw [color={rgb, 255:red, 177; green, 196; blue, 228 }  ,draw opacity=1 ]   (305.04,110) -- (324.17,136) ;

\draw [color={rgb, 255:red, 177; green, 196; blue, 228 }  ,draw opacity=1 ]   (173.79,110) -- (202.53,130.53) ;

\draw [color={rgb, 255:red, 208; green, 2; blue, 27 }  ,draw opacity=1 ]   (222.53,110.13) -- (202.53,130.53) ;

\draw [color={rgb, 255:red, 177; green, 196; blue, 228 }  ,draw opacity=1 ]   (283.53,226.73) -- (303.53,233.93) -- (323.73,249.53) ;

\draw [color={rgb, 255:red, 208; green, 2; blue, 27 }  ,draw opacity=1 ]   (359.54,160.5) -- (283.53,226.73) ;

\draw [color={rgb, 255:red, 208; green, 2; blue, 27 }  ,draw opacity=1 ]   (359.79,209.75) -- (323.73,249.53) ;

\draw [color={rgb, 255:red, 208; green, 2; blue, 27 }  ,draw opacity=1 ]   (192.73,249.75) -- (219.62,196.48) -- (226.67,185.2) ;

\draw [color={rgb, 255:red, 208; green, 2; blue, 27 }  ,draw opacity=1 ]   (267.27,168.2) -- (292.67,160) ;

\draw [color={rgb, 255:red, 177; green, 196; blue, 228 }  ,draw opacity=1 ]   (220.81,149.86) -- (226.67,185.2) ;

\draw [color={rgb, 255:red, 208; green, 2; blue, 27 }  ,draw opacity=1 ]   (249.79,110) -- (229.17,141.06) -- (220.81,149.86) ;

\draw [color={rgb, 255:red, 0; green, 0; blue, 0 }  ,draw opacity=0.15 ] [dash pattern={on 3pt off 3pt}]  (368.29,119.29) -- (192.68,120.28) -- (205.83,129.99) -- (246.86,173.29) -- (255.43,189.86) -- (278,205.86) -- (286,224.14) -- (294.29,237.86) -- (114.86,237.29) ;

\draw  [dash pattern={on 0.75pt off 3pt}]  (369.6,154.4) -- (230,154.4) -- (246.86,173.29) -- (255.43,189.86) -- (278,205.86) -- (294.29,237.86) -- (114.86,237.29) ;

% Text Node
\draw (124.27,86.3) node [anchor=north west][inner sep=0.75pt]  [font=\footnotesize]  {$t$};
% Text Node
\draw (384.2,247.26) node [anchor=north west][inner sep=0.75pt]  [font=\footnotesize]  {$x$};
% Text Node
\draw (372,112) node [anchor=north west][inner sep=0.75pt]  [font=\footnotesize]  {$\Gamma$};
% Text Node
\draw (376.26,146.63) node [anchor=north west][inner sep=0.75pt]  [font=\footnotesize]  {$\Gamma_{t}$};

\end{tikzpicture}
    \caption{Upper intermediate measure curve}
\label{Subfig:I2MC}        
\end{subfigure}
\caption{Intermediate measure curves}
\label{Fig:IMC}
\end{figure}
\ \par
%
%
%
% \ \par
\noindent
{\bf Components of $(\R_+ \times \R) \setminus \Gamma$.} 
Wave measure curves cut the half-plane $\R_+ \times \R$ in two parts. 
We will refer to the component containing times larger than $t_{max}$ as the {\it unlimited component}. 
Note that for both lower and upper measure curves, this component contains the front line that the curve
 follows between $t_{\min}$ and $t_{\max}$, and that on the contrary, for all intermediate wave curves $\Gamma_t$,
 the part at time $t$ belongs to the other component of $\R_+ \times \R$.
%
%
%
%
%
%
%
% -------------------------------------------------- Glimm functional --------------------------------------------------
%
%
%
%
%
\subsection{An oracle Glimm-type functional}
\label{Subsec:GlimmTypeFunctional}
As mentioned in the introduction, we construct a functional resembling Glimm's one,
 but using data from the future of the solution. The functional is constructed as follows. \par
\ \par
We let $\nu \in (0,\nu_0)$ and associate the approximation $u^{\nu}$.
First, we fix a time horizon $T \in (0,T_\nu)$ (again intended to be close to $T_\nu$).
All the forthcoming notions depend on this time horizon and on the restriction of $u^\nu$ on $[0,T]$. 
In many cases, the dependence on $T$, $t$ and $\nu$ will not be reflected on the notation, 
 for the sake of brevity. \par
\ \par
\noindent
{\bf Convention on interaction times.}
For interaction times, we will correspondingly distinguish times $t^-$ and $t^+$.
Given $\alpha$ a front crossing $\{t\} \times \R$, we write $\sigma_\alpha(t)$ for its strength at time $t$,
 taken immediately {\it before} the interaction in case of ambiguity. \par
\ \par
\noindent
{\bf Future interaction coefficients.}
Consider two fronts $\alpha$ (of family $1$) and $\beta$ (of family $2$) existing at time $t>0$
 such that their front lines cross between times $t$ and $T$.
We can associate the corresponding unique interaction point, say, separating $u_l$, $u_m$ and $u_r$,
  the $2$-front on the left separating $u_l$ from $u_m$ belonging to the front line of $\beta$
 (say, of strength $\sigma_2$), 
  the $1$-front on the right separating $u_m$ from $u_r$ belonging to the front line of $\alpha$
 (say, of strength $\sigma_1$). 
In that case, we define the {\it future interaction coefficients} of $\alpha$ and $\beta$ as follows:
\begin{equation} \label{Eq:FutureIntCoef}
\mathcal{C}^1_{\alpha \beta} = \mathcal{C}^1(u_m;0,\sigma_2)
\ \text{ and } \ 
\mathcal{C}^2_{\alpha \beta} = \mathcal{C}^2(u_m;\sigma_1,0)
\end{equation}
where the functions $\mathcal{C}^1$ and $\mathcal{C}^2$ were introduced in Proposition~\ref{Prop:InteractionDF}.
In the case where $\alpha$ and $\beta$ do no cross between times $t$ and $T$, we simply put 
\begin{equation*}
\mathcal{C}^1_{\alpha \beta} = \mathcal{C}^2_{\alpha \beta} =0.
\end{equation*}
We emphasize that these coefficients are those appearing at the time of the interaction; 
 they are determined in advance and used in the following functionals for times $t$ 
 before the interaction actually occurs (hence the expression ``oracle'' functional below). 
We also underline the fact that these coefficients depend not only on $t$ 
 but also on the time horizon $T$.
 \par
Note also that the coefficients $\mathcal{C}^i_{\alpha \beta}$ are independent of the fronts $\alpha$ (or $\beta$) 
 whose front lines have merged before the time of interaction. \par
Finally, we will write $\mathcal{C}^i_{\alpha\beta}$ and $\mathcal{C}^i_{\beta\alpha}$ interchangeably for two fronts $\alpha$ and $\beta$ of opposite families. \par
\ \par
\noindent
{\bf Preamplified strengths (of fronts).}
Let $t>0$. For a $1$-front $\alpha$ existing a time $t \leq T$, we define its preamplified strength as
 (using the notation of Proposition~\ref{Prop:InteractionDF} for $\sigma_*$):
\begin{equation} \label{Eq:PreampStr1}
\widehat{\sigma}_{\alpha}(t) 
:= {\sigma}_{\alpha}(t) \ 
\prod_{\beta \in \mathcal{A}_{< \alpha}^2(t)}
    (1 + \mathcal{C}^1_{\alpha\beta} (\sigma_\beta(t))_*^3).
\end{equation}
In the same way, for a $2$-front $\beta$ existing a time $t \leq T$,
 we define its preamplified strength as: \par
\begin{equation} \label{Eq:PreampStr2}
\widehat{\sigma}_{\beta}(t) 
:= {\sigma}_{\beta}(t) \ 
\prod_{\alpha \in \mathcal{A}_{>\beta}^1(t)}
(1 + \mathcal{C}^2_{\alpha \beta} (\sigma_\alpha(t))_*^3).
\end{equation}
We will use the following notation for the preamplification factor:
\begin{equation} \label{Eq:AmplificationFactor}
\mathcal{M}_{\alpha}(t) 
= \prod_{\beta \in \mathcal{A}_{< \alpha}^2(t)}
(1 + \mathcal{C}^1_{\alpha\beta} (\sigma_\beta(t))_*^3)
\ \text{ and } \ 
\mathcal{M}_{\beta}(t) =
\prod_{\alpha \in \mathcal{A}_{>\beta}^1(t)}
(1 + \mathcal{C}^2_{\alpha \beta} (\sigma_\alpha(t))_*^3).
\end{equation}
Notice that, while the interaction coefficients $\mathcal{C}^i_{\alpha\beta}$ are computed
 at the future time of the interaction, the strengths $\sigma_\alpha(t)$ and $\sigma_\beta(t)$
 are the ones at time $t$. \par
Remark also that due to the convention for future interaction coefficients,
 the indices $<\alpha$ and $>\beta$ on $\mathcal{A}^i$ are in fact not needed. \par
\ \par
\noindent
{\bf Total and partial strengths (of $u^\nu$).}
Given a time $t$, we describe the sets $\mathcal{A}^1(t)$ and $\mathcal{A}^2(t)$ 
 of $1$- and $2$- fronts in increasing order (that is, from left to right):
\begin{equation} \label{Eq:DefASets}
\mathcal{A}^1 = \{ \gamma^1_1, \ldots, \gamma^1_M \} 
\ \text{ and } \ 
\mathcal{A}^2 = \{ \gamma^2_1, \ldots, \gamma^2_N \},
\end{equation}
with ${\gamma^1_i} < {\gamma^1_j}$ and ${\gamma^2_i} < {\gamma^2_j}$ for meaningful $i<j$. \par
Then we measure the total (preamplified) strength of family $1$ and $2$ by
\begin{equation} \label{Eq:TotalStrengthes}
\mathcal{V}^1(t) = s^p_p(\widehat{\sigma}_{\gamma^1_1}(t), \ldots, \widehat{\sigma}_{\gamma^1_M}(t) )
\ \text{ and } \ 
\mathcal{V}^2(t) = s^p_p (\widehat{\sigma}_{\gamma^2_1}(t), \ldots, \widehat{\sigma}_{\gamma^2_N}(t) ).
\end{equation}
Given a front $\alpha$ of either family, we also introduce the {\it partial} strengths $\mathcal{V}^1(>\alpha)$ 
and $\mathcal{V}^2(>\alpha)$ as the maximal $p$-sums above limited to fronts strictly to the right of $\alpha$, 
so that for instance for fronts of family~$1$, 
$$
\mathcal{V}^1(>\gamma^1_i) 
   = s^p_p\big(\widehat{\sigma}_{\gamma^1_{i+1}}(t), \ldots, \widehat{\sigma}_{\gamma^1_M}(t) \big).
$$
We can define analogously $\mathcal{V}^i(< \alpha)$, $\mathcal{V}^i(\geq \alpha)$ and $\mathcal{V}^i(\leq \alpha)$. 
Clearly, apart from interaction times, if $\alpha$ is of family $k$, for the other family $3-k$ one has 
$\mathcal{V}^{3-k}(>\alpha) = \mathcal{V}^{3-k}(\geq \alpha)$. \par
\ \par
\noindent
{\bf Nonlocal strengths (of fronts).}
Given a front $\alpha$ of family $k$, 
we define its left (respectively right) {\it nonlocal strength} as follows:
\begin{equation} \label{Eq:NonLocalStrength}
\mathfrak{s}^k_l(\alpha) := \mathcal{V}^k( \leq \alpha) - \mathcal{V}^k( < \alpha) \ \ (\text{resp. }  
\mathfrak{s}^k_r(\alpha) := \mathcal{V}^k( \geq \alpha) - \mathcal{V}^k( > \alpha) ).
\end{equation}
We emphasize that, despite the notation, these strengths do not merely depend on $\alpha$, 
 but also on other fronts either on the left or on the right of $\alpha$,
 on the future interactions of $\alpha$, on $t$ and $T$. \par
Notice that
\begin{equation} \label{Eq:SommeForcesNonLocales}
\sum_{\alpha \in \mathcal{A}^k(t)} \mathfrak{s}^k_l(\alpha)
=\sum_{\alpha \in \mathcal{A}^k(t)} \mathfrak{s}^k_r(\alpha)
= \mathcal{V}^k(t).
\end{equation}
\ \par
\noindent
{\bf Interaction potentials.}
We can now define various interaction potentials associated with $u^\nu$ at time $t$. 
Precisely we let for $k \in \{ 1,2\}$
\begin{gather}
\label{Eq:Q1}
\mathcal{Q}_1^k(t):= \sum_{\substack{ {\alpha,\beta \in \mathcal{A}^k} \\ {\alpha < \beta} }} 
            \mathfrak{s}^k_r(\alpha) \, \mathfrak{s}^k_l(\beta), \\
% +\sum_{\substack{ {\alpha,\beta \in \mathcal{A}^k} \\ {\beta > \alpha} }} \varsigma^k_r(\alpha) \varsigma_l(\beta)
%
\label{Eq:Q2}
\mathcal{Q}_2(t):= \sum_{\substack{ {\alpha \in \mathcal{A}^2} \\ 
               {\beta \in \mathcal{A}^1} \\ {\alpha < \beta } } }
\mathfrak{s}^k_r(\alpha) \, \mathfrak{s}^k_l(\beta), \\ 
\label{Eq:Q3}
\mathcal{Q}_3^k(t):= \sum_{\alpha \in \mathcal{A}^k} |{\sigma}_\alpha|^3 (1+\delta_{CW}(\alpha)),
\end{gather}
where $\delta_{CW}(\alpha)=1$ if $\alpha$ is a compression front 
 and $\delta_{CW}(\alpha)=0$ otherwise. \par
In these formulas, $\mathcal{Q}_1^k$ measures the potential interactions between waves of family $k$, 
 $\mathcal{Q}_2$ measures the potential interactions between waves of different families,
 and $\mathcal{Q}_3$ might not look like an interaction term, 
 but will play a key role for interactions of waves of the same family with opposite signs.
We will write in short
\begin{equation*}
\mathcal{V}:=\mathcal{V}^1 + \mathcal{V}^2, \ \  \mathcal{Q}_1 := \mathcal{Q}_1^1 + \mathcal{Q}_1^1,
            \ \text{ and } \   \mathcal{Q}_3 := \mathcal{Q}_3^1 + \mathcal{Q}_3^2.
\end{equation*}
\smallskip
\ \par
\noindent
{\bf Main Glimm-type functional.} 
We can now define the following functional:
\begin{equation} \label{Eq:DefGlimmFunc}
\Upsilon(t) := \mathcal{V}(t) + C_2 \big[ C_1 \mathcal{Q}_1(t) + \mathcal{Q}_2(t) +\mathcal{Q}_3(t) \big],
\end{equation}
for $C_1,C_2 \geq 1$ large enough. \par
\ \par
\noindent
{\bf Additional terms on measure curves.} 
We consider $\Gamma$ a $1-$ or $2-$, lower or upper, measure curve $\Gamma$
 (with its $t_{\min}$ and $t_{\max}$), in the strip $[0,T] \times \R$. 
We parameterize $\Gamma$ from left to right, as it is described in Subsection~\ref{Subsec:APA}. 
We associate to it the intermediate curves $\Gamma_t$ for $t \in [0,t_{\max}]$.
We now describe additional functionals to measure waves on $\Gamma_t$. \par
For each $i \in \{1,2\}$ and each $t \in [0,t_{\max}]$, we can obtain the list 
$\gamma^i_1,\ldots,\gamma^i_{n_i}$ 
 of all $i$-fronts that cross $\Gamma_t$, ordered according to the above parameterization. 
We then set:
\begin{equation} \label{Eq:ViGamma}
\mathcal{V}^i_\Gamma (t) = s_p^p(\widehat{\sigma}_{\gamma^i_1},\ldots,\widehat{\sigma}_{\gamma^i_{n_i}}),
\end{equation}
where the preamplified strengths $\widehat{\sigma}_{\gamma^i_j}$ written above are calculated at the time 
they cross $\Gamma_t$ (including their preamplification factor). 
In particular, all terms apart from those that correspond to an  intersection with (the horizontal line)
$\Gamma_t \setminus \Gamma$ at time $t$ are ``frozen'', that is, do not evolve for larger $t$. %\par
Then we set
\begin{equation} \label{Eq:VGamma}
\mathcal{V}_\Gamma (t) = 
\left\{ \begin{array}{l}
\mathcal{V}^1_\Gamma(t) + \mathcal{V}^2_\Gamma(t) \ \text{ if } \Gamma \text{ is a lower measure curve,} 
\medskip \\
\mathcal{V}^i_\Gamma(t) \ \text{ if } \Gamma \text{ is an } i \text{-upper measure curve.} 
\end{array} \right. 
\end{equation}
In other words, as mentioned before, we measure fronts of both families on lower curves, 
 but merely those of family $i$ on an $i$-upper curve. \par
\ \par
\noindent
We will also measure {\it variations} of the preamplification factors along $\Gamma$ by using intermediate measure curves.
However, we will measure preamplification factors on $\Gamma$ that are different from those of \eqref{Eq:AmplificationFactor} in two ways:
\begin{itemize}
\item we take the same convention as for $\mathcal{V}_\Gamma$ that they are calculated at the time the fronts crosses $\Gamma_t$
     and then stay frozen,
\item we only take into account future interaction coefficients on the unlimited component of $(\R_+ \times \R) \setminus \Gamma$
 and before $t_{\max}$.
\end{itemize}
More precisely, we define here for $\alpha$ a $1$-front and $\beta$ a $2$-front existing at time $t$ and $i \in \{1,2\}$,
\begin{equation} \label{Eq:DefCTilde}
\tilde{\mathcal{C}}^{i,\Gamma}_{\alpha\beta} = \left\{ \begin{array}{l}
\mathcal{C}^i_{\alpha\beta} \text{ if } \alpha \text{ and } \beta \text{ meet in the unlimited component of } (\R_+ \times \R) \setminus \Gamma \text{ at a time } \leq t_{\max}, \\
0 \text{ otherwise,}
\end{array} \right. 
\end{equation}
and
\begin{equation} \label{Eq:AmplificationFactorGamma}
\widetilde{\mathcal{M}}^\Gamma_{\alpha}(t) 
= \prod_{\beta \in \mathcal{A}^2(t)}
(1 + \tilde{\mathcal{C}}^{1,\Gamma}_{\alpha\beta} (\sigma_\beta(t))_*^3)
\ \text{ and } \ 
\widetilde{\mathcal{M}}^\Gamma_{\beta}(t) =
\prod_{\alpha \in \mathcal{A}^1(t)}
(1 + \tilde{\mathcal{C}}^{2,\Gamma}_{\alpha \beta} (\sigma_\alpha(t))_*^3).
\end{equation}
Then we let 
\begin{equation} \label{Eq:MiGamma}
\mathfrak{M}^i_\Gamma (t) 
= v_p^p \left(\widetilde{\mathcal{M}}^\Gamma_{\gamma^i_1}(t^i_1),\ldots,
                \widetilde{\mathcal{M}}^\Gamma_{\gamma^i_{n_i}}(t^i_{n_i})\right),
\end{equation}
where $t^i_j$ is the time of intersection of $\gamma^i_j$ with $\Gamma$. 
Finally we define
\begin{equation} \label{Eq:MGamma}
\mathfrak{M}_\Gamma (t) = 
\left\{ \begin{array}{l}
\mathfrak{M}^1_\Gamma(t) + \mathfrak{M}^2_\Gamma(t) \ \text{ if } \Gamma \text{ is a lower measure curve,} 
\medskip \\
\mathfrak{M}^i_\Gamma(t) \ \text{ if } \Gamma \text{ is a } i \text{-upper measure curve.} 
\end{array} \right. 
\end{equation}
%
%
%
% --------------------------------------------------------------------------------------------------------%
%                                                                                                         %
%                                        PROOF OF THE MAIN THEOREM                                        %
%                                                                                                         %
% --------------------------------------------------------------------------------------------------------%
%
%
%
%
\section{Proof of Theorem~\ref{Thm:Main}}
\label{Sec:ProofOfTheMainTheorem}
%
%
%
%
% --------------------------------------- General idea of the proof ---------------------------------------
%
%
%
%
\subsection{Structure of the proof and induction hypothesis}
\ \par
As for Glimm's theorem, an important part of the proof consists in proving uniform-in-time estimates 
 in $\mathcal{W}_p(\R)$ for the wave-front tracking approximations $u^\nu$ 
 when $V_p(u_{0})$ is sufficiently small. 
It will follow that $T_\nu = +\infty$.
Then an additional step will be needed to obtain time-regularity estimates and deduce
 that the sequence $(u^\nu)$ is relatively compact in $L^1_{loc}(\R_+ \times \R)$. 
It will remain to prove that a limit is indeed an entropy solution of the system with initial condition $u_0$. \par
To prove the uniform estimate in $\mathcal{W}_p$, we will prove the decay 
 of the functional $\Upsilon$ for a given time horizon under a suitable induction assumption,
 and explain how to use this decay to obtain estimates of $u^\nu$ in $\mathcal{W}_p$ 
 (which is not immediate here). Then we will show how to propagate the induction hypothesis.
The model for the induction is the following elementary statement. 
\begin{quote}
{\it 
Let $f : [a,b) \rightarrow \R$ a càdlàg function. Suppose that for some $C>0$, one has
\begin{itemize}
\item $f(a) \leq C$,
\item for any $x \in [a,b]$, $f(x^+) - f(x^-) \leq C/2$,
\item for any $t \in (a,b)$,  $\big( f_{|[a,t)} \leq 2C \Rightarrow f_{|[a,t)} \leq C \big)$.
%\ \exists \delta>0, \ f_{|[a,t+ \delta]} \leq C \big)$.
\end{itemize}
Then $f \leq C$ on $[a,b)$.}
\end{quote}
%
%(This is trivially established by considering $\sup \{ t \in [a,b], \ f_{|[a,t]} \leq C \}$.) \par
%
\ \par
The difficult assumption to check in our context is the equivalent of the last one. 
The general idea is that if for some time-horizon $T$ we have an a priori estimate on the $p$-variation of 
 the waves in $[0,T]$ (for each time and along the above family of curves), then studying the evolution of 
 these waves and their interactions on $[0,T]$, we can obtain a better estimate on their $p$-variation for 
 $V_p(u_0)$ small enough, and then are able to propagate an estimate uniformly 
 in time (independently of $\nu$). \par
Let us now state the assumption which will be at the core of our induction procedure. 
We recall the notations \eqref{Eq:DefVtilde} for $\widetilde{V}_p(u^\nu(t))$ and
 \eqref{Eq:DefVpGamma} for $\widetilde{V}_p[u^\nu;\Gamma](t)$. \par
\ \par
\noindent
{\bf Assumption $H(\tau)$.}
For $\tau \in (0,T_\nu)$, we will say that $u^\nu$ satisfies the assumption $H(\tau)$ if: 
 for any wave measure curve $\Gamma$ included the strip $[0,\tau] \times \R$,
 and for all $t \in [0,\tau]$, one has
\begin{equation} \label{Eq:H1tau}
    \widetilde{V}_p(u^\nu(t)) \leq 4 \, V_p(u_0) %^\nu
    \ \text{ and } \ 
    \widetilde{V}_p[u^\nu;\Gamma](t) \leq 4 \, V_p(u_0).
\end{equation} 
\ \par
The goal of the next two subsections is to establish the following.
\begin{proposition} \label{Pro:Induction}
There exists $c>0$ such that the following holds. 
Suppose that $V_p(u_0)<c$ and let $\nu \in (0,\nu_0)$.
Then the front-tracking approximation $u^\nu$ satisfies $H(\tau)$ for all $\tau \in (0,T_\nu)$. 
\end{proposition}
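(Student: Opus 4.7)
The plan is to run a bootstrap/continuity argument on $\tau$. Set $\tau^\star := \sup\{\tau \in (0,T_\nu) : H(\tau)\text{ holds}\}$, which is positive since at $t=0$ one has $\widetilde{V}_p(u^\nu(0)) \leq V_p(u^\nu_0) \leq V_p(u_0)$ and every measure curve reduces there to a horizontal half-line. Assuming by contradiction $\tau^\star < T_\nu$, the piecewise-constant structure of $u^\nu$ on $[0,T]$ forces the obstruction to be a jump of $\widetilde{V}_p$ or $\widetilde{V}_p[u^\nu;\Gamma]$ across the level $4V_p(u_0)$ at some interaction time no later than $\tau^\star$. To rule this out, I will prove that \emph{if} the weakened estimate $\widetilde{V}_p(u^\nu(t)), \widetilde{V}_p[u^\nu;\Gamma](t) \leq 8 V_p(u_0)$ holds on $[0,\tau]$, \emph{then} in fact both quantities are bounded by $2V_p(u_0)$ on $[0,\tau]$, which (up to a small jump at each interaction, controlled by the same smallness) precludes the assumed crossing provided $V_p(u_0)$ is small.

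Under this weakened hypothesis and smallness of $V_p(u_0)$, the preamplification factors $\mathcal{M}_\alpha$ and $\widetilde{\mathcal{M}}^\Gamma_\alpha$ defined in \eqref{Eq:AmplificationFactor}--\eqref{Eq:AmplificationFactorGamma} will be uniformly close to $1$, since
\begin{equation*}
\sum_\beta |\sigma_\beta|^3 \leq \Big(\max_\beta |\sigma_\beta|\Big)^{3-p} \sum_\beta |\sigma_\beta|^p \lesssim V_p(u_0)^3,
\end{equation*}
where the restriction $p\leq 3/2<3$ is used comfortably. I then intend to show that the Glimm-type functional $\Upsilon$ from \eqref{Eq:DefGlimmFunc} decreases at each interaction, using the sharper estimates of Propositions~\ref{Prop:InteractionDF} and~\ref{Prop:InteractionSF}: for opposite-family interactions the preamplification factor is designed so that the leading amplification $\mathcal{C}^i(\sigma_*)^3$ is absorbed into $\mathcal{V}$, leaving only a genuinely fourth-order remainder that is dominated by the drop of $\mathcal{Q}_2$; for same-family interactions, the nonlocal strengths $\mathfrak{s}^k_l(\alpha),\mathfrak{s}^k_r(\alpha)$ from \eqref{Eq:NonLocalStrength} together with the cubic term $\mathcal{Q}_3$ compensate the creation of a wave of opposite family and the possible transformation of a compression into a shock. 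The same decay has to be proved for the curve-adapted functional $\mathcal{V}_\Gamma$ augmented by the same interaction potentials and by $\mathfrak{M}_\Gamma$; here the convention \eqref{Eq:DefCTilde} restricting $\widetilde{\mathcal{C}}^{i,\Gamma}$ to the unlimited component of $(\R_+\times\R)\setminus\Gamma$ is crucial, because it renders interactions located on the other side of $\Gamma$ transparent to $\mathfrak{M}_\Gamma$.

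Combining these decays with the base bound $\mathcal{V}(0)\leq V_p(u_0)^p(1+\mathcal{O}(V_p(u_0)^{p}))$ (and its analogue on the horizontal portion of $\Gamma_0$) will yield $\mathcal{V}(t),\mathcal{V}_\Gamma(t) \leq V_p(u_0)^p + \mathcal{O}(V_p(u_0)^{2p})$. To return to $\widetilde{V}_p$, I plan to apply Corollary~\ref{Cor:MultiplSp} to $\sigma_\alpha = \widehat{\sigma}_\alpha/\mathcal{M}_\alpha$: along a fixed family, the sequence $(1/\mathcal{M}_{\gamma^k_j})_j$ will have supremum $1+\mathcal{O}(V_p(u_0)^3)$ and $p$-variation $\mathcal{O}(V_p(u_0)^3)$ (thanks to the a priori bound on $\mathfrak{M}_\Gamma$ for suitable curves, and to the same bound for time slices), so that
\begin{equation*}
\widetilde{V}_p(u^\nu(t)) \leq \big(1+\mathcal{O}(V_p(u_0)^{p})\big)V_p(u_0) \leq 2V_p(u_0)
\end{equation*}
for $V_p(u_0)$ small, and the same computation ordered along $\Gamma$ gives the matching bound for $\widetilde{V}_p[u^\nu;\Gamma]$. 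The main obstacle is the decay of the $\Gamma$-functional: unlike for time slices, the variation of $\widetilde{\mathcal{M}}^\Gamma_\alpha$ along $\Gamma$ must be tracked front by front, and the Love--Young inequality of Corollary~\ref{Cor:MultiplSp} is indispensable here because absolute-value estimates on the increments of $1/\mathcal{M}$ would be too lossy at the scale $V_p(u_0)^p$ to close the bootstrap.
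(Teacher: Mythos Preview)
Your proposal is essentially correct and follows the same architecture as the paper: a continuity/bootstrap argument in which, under the a~priori bound $H(\tau)$, one proves the decay of the oracle Glimm functional $\Upsilon$ and of the curve functionals $\mathcal{V}_\Gamma+\mathcal{Q}$, $\mathfrak{M}_\Gamma+\mathcal{Q}$ (this is the paper's Proposition~\ref{Pro:DecayGlimm}), and then removes the preamplification factors via Corollary~\ref{Cor:MultiplSp} to recover the improved bound on $\widetilde{V}_p$ and $\widetilde{V}_p[u^\nu;\Gamma]$ (this is the paper's Proposition~\ref{Pro:EstimeesVetQ} together with Lemma~\ref{Lem:Induction}). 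Two small inaccuracies worth flagging: first, $\widetilde{V}_p(u^\nu(0))\leq V_p(u_0)$ is not literally true since $\widetilde{V}_p$ and $V_p$ differ by $\mathcal{O}(V_p^3)$ (Lemma~\ref{Lem:VVtilde}), so the paper uses $\widetilde{V}_p(u^\nu(0))\leq 2V_p(u_0)$; second, the control of $v_p(\mathcal{M}_\beta)$ along a time slice does not come directly from $\mathfrak{M}_\Gamma$ but rather from a bound on $v_p$ of the interaction coefficients $(\mathcal{C}^i_{\alpha\beta})_\beta$ along front lines (the paper's Lemma~\ref{Lem:VpCoefsInteraction}), which itself relies on the curve part of $H(\tau)$ --- your sketch conflates these two mechanisms, but the underlying idea and the role of the measure curves are as you describe.
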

The core of the proof of Proposition~\ref{Pro:Induction} is divided in two main propositions, 
that intuitively manage the (largest part of) interactions of waves of opposite families and interactions within a family, 
respectively. This corresponds respectively to ``global'' and ``local'' (in time) estimates. \par 
\begin{proposition} \label{Pro:EstimeesVetQ}
There exist two constants $c,C>0$ such that the following is true.
Given a front-tracking approximation $u^\nu$ given as previously satisfying the assumption $H(\tau)$
 for some $\tau < T_{\nu}$, one has for any $t \in [0,\tau]$:
\begin{gather}
\label{Eq:EquivVVp}
\widetilde{V}_p^p(u^\nu(t,\cdot)) \left(1 -c V_p(u_0)^{3}\right) \leq \mathcal{V}(t) 
  \leq \widetilde{V}_p^p(u^\nu(t,\cdot)) \left(1 +C V_p(u_0)^{3}\right), \\
\label{Eq:QQuadratique}
 \mathcal{Q}_1(t) + \mathcal{Q}_2(t) +\mathcal{Q}_3(t)\leq C \, [\mathcal{V}(t)]^2.
\end{gather}
Moreover, for any measure curve $\Gamma$ in the strip $[0,\tau] \times \R$, one has, when the time horizon is $\tau$,
\begin{equation} \label{Eq:MGamma0}
\mathfrak{M}_\Gamma(0) \leq C V_p(u_0)^3.
\end{equation}
\end{proposition}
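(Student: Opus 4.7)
The three estimates share a common cubic bound that is the main quantitative consequence of $H(\tau)$ combined with the restriction $p \leq 3/2$. Each individual strength satisfies $|\sigma_\alpha(t)| \leq \widetilde{V}_p(u^\nu(t)) \leq 4V_p(u_0)$, while the concavity argument $\sum_\alpha |\sigma_\alpha(t)|^p \leq \widetilde{V}_p(u^\nu(t))^p$ (finest partition in the definition of $s_p$) gives
\begin{equation*}
\sum_\alpha |\sigma_\alpha(t)|^3 \leq (4V_p(u_0))^{3-p}\, \widetilde{V}_p(u^\nu(t))^p \lesssim V_p(u_0)^3,
\end{equation*}
using $3-p \geq p$. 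A direct consequence, via $\prod(1+x_\beta) - 1 \leq \exp(\sum |x_\beta|) - 1$ and the bound $|\mathcal{C}^i| \leq C_*$, is that $|\mathcal{M}_\alpha(t) - 1| \lesssim V_p(u_0)^3$ uniformly in $\alpha$ and $t$, and likewise for $\widetilde{\mathcal{M}}^\Gamma_\alpha$.

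For \eqref{Eq:EquivVVp}, I aim to compare $\mathcal{V}^i = s_p^p(\widehat{\sigma}_{\gamma^i_\cdot})$ with $s_p^p(\sigma_{\gamma^i_\cdot})$. Writing $\widehat{\sigma}_\alpha = \sigma_\alpha + \sigma_\alpha(\mathcal{M}_\alpha - 1)$, Proposition~\ref{Pro:ElemePropsVp}.\ref{Item:DLVp} reduces the task to estimating $s_p((\sigma_\alpha(\mathcal{M}_\alpha - 1))_\alpha)$. Corollary~\ref{Cor:MultiplSp} applied to $a = (\sigma_\alpha)$ and $b = (\mathcal{M}_\alpha - 1)$ gives
\begin{equation*}
s_p\bigl((\sigma_\alpha(\mathcal{M}_\alpha - 1))_\alpha\bigr) \lesssim s_p((\sigma_\alpha)_\alpha)\bigl(\|\mathcal{M}-1\|_\infty + v_p((\mathcal{M}_\alpha)_\alpha)\bigr),
\end{equation*}
so it remains to bound the $p$-variation of the preamplification factors. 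Between two consecutive $1$-fronts $\gamma^1_j$ and $\gamma^1_{j+1}$, the factor $\mathcal{M}_\alpha$ changes through (a) the extra factors $(1+\mathcal{C}^1 \sigma_\beta^3)$ corresponding to the $2$-fronts $\beta \in \mathcal{A}^2_{<\gamma^1_{j+1}} \setminus \mathcal{A}^2_{<\gamma^1_j}$, each contributing $O(|\sigma_\beta|^3)$, and (b) the Lipschitz dependence of $\mathcal{C}^1$ on the state at the future interaction points. Since the sets $\mathcal{A}^2_{<\alpha}$ are monotone in $\alpha$, each $\beta$ is ``added'' only once; the Lipschitz correction in (b) is handled by absorbing the state-displacement into the same cubic budget. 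This yields $v_1((\mathcal{M}_{\gamma^1_j})_j) \lesssim V_p(u_0)^3$, and then $v_p \leq v_1$ since $p \geq 1$ and $(\sum |x_j|^p)^{1/p} \leq \sum|x_j|$. Plugging back into Proposition~\ref{Pro:ElemePropsVp}.\ref{Item:DLVp} gives the upper bound of \eqref{Eq:EquivVVp}; the lower bound is symmetric, writing $\sigma_\alpha = \widehat{\sigma}_\alpha \cdot \mathcal{M}_\alpha^{-1}$ and noting $\mathcal{M}_\alpha^{-1} - 1 = O(V_p(u_0)^3)$.

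For \eqref{Eq:QQuadratique}, the non-negativity of the nonlocal strengths and \eqref{Eq:SommeForcesNonLocales} immediately give $\mathcal{Q}_1^k(t) \leq (\sum_\alpha \mathfrak{s}^k_r(\alpha))(\sum_\beta \mathfrak{s}^k_l(\beta)) = \mathcal{V}^k(t)^2$, and the same for $\mathcal{Q}_2$. The term $\mathcal{Q}_3$ is controlled by the common cubic bound: $\mathcal{Q}_3 \lesssim \sum_\alpha |\sigma_\alpha|^3 \lesssim \widetilde{V}_p^3$, and since $\widetilde{V}_p \lesssim V_p(u_0)$ is bounded while $3-2p \geq 0$, one has $\widetilde{V}_p^3 = \widetilde{V}_p^{3-2p}\,\widetilde{V}_p^{2p} \lesssim \widetilde{V}_p^{2p} \lesssim \mathcal{V}^2$, invoking estimate \eqref{Eq:EquivVVp} already proven. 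For \eqref{Eq:MGamma0} at $t=0$, the curve $\Gamma_0$ is the horizontal line $\{0\}\times\R$, all intersection times $t^i_j$ equal $0$, and the fronts crossing $\Gamma_0$ are the initial ones. I bound $\mathfrak{M}^i_\Gamma(0)$ by estimating successive differences $\widetilde{\mathcal{M}}^\Gamma_{\gamma^i_{j+1}}(0) - \widetilde{\mathcal{M}}^\Gamma_{\gamma^i_j}(0)$: they are controlled by $\sum_{\beta \in S_j}|\sigma_\beta|^3$, where $S_j$ collects the opposite-family fronts whose future interaction with $\gamma^i_\cdot$ switches component of $(\R_+\times\R)\setminus \Gamma$ (or crosses the threshold $t_{\max}$) between $j$ and $j+1$, plus a Lipschitz correction for the coefficients $\mathcal{C}^i$. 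The geometry of front lines (opposite-family lines meet at most once, each front line crosses $\Gamma$ at most once) forces each $\beta$ to belong to at most a bounded number of $S_j$'s, whence $v_1((\widetilde{\mathcal{M}}^\Gamma_{\gamma^i_j})_j) \lesssim V_p(u_0)^3$, and finally $\mathfrak{M}^i_\Gamma(0) = v_p^p \leq v_1 \lesssim V_p(u_0)^3$ since increments are $\leq 1$.

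\textbf{Main obstacle.} The hardest step, which drives both \eqref{Eq:EquivVVp} and \eqref{Eq:MGamma0}, is the $p$-variation control of the preamplification factor along the front sequence: although each factor $(1+\mathcal{C}^i\sigma_*^3)$ is individually cubic-small, the factors $\mathcal{M}_\alpha$ couple together the full future of each interaction (Lipschitz-dependent coefficients evaluated at remote states) with the current wave distribution. The combinatorial/geometric argument forcing each opposite-family front to contribute only boundedly many times, together with a careful state-displacement estimate for the Lipschitz part of $\mathcal{C}^i$, is the technical heart of the proposition; the rest is essentially bookkeeping against the common cubic bound.
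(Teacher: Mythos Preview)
Your treatment of \eqref{Eq:QQuadratique} is correct and matches the paper. The genuine gap lies in the $p$-variation control of the preamplification factors, which underlies both \eqref{Eq:EquivVVp} and \eqref{Eq:MGamma0}. You claim $v_1\bigl((\mathcal{M}_{\gamma^1_j})_j\bigr) \lesssim V_p(u_0)^3$ by splitting consecutive differences into (a) newly appearing factors and (b) a ``Lipschitz correction'' to be ``absorbed into the same cubic budget''. Step~(b) fails. Fix a single $2$-shock $\beta$ and place to its right many oscillating $1$-fronts $\gamma^1_1,\ldots,\gamma^1_K$ with small $p$-variation but large total variation. The future interaction coefficient $\mathcal{C}^1_{\gamma^1_j,\beta}=\mathcal{C}^1(u_m^j;0,\sigma_\beta^j)$ is evaluated at the middle state $u_m^j$ on $\beta$'s front line where it meets the front line of $\gamma^1_j$; one checks that $|u_m^{j+1}-u_m^j|\sim |\sigma_{\gamma^1_j}|$, so $v_1\bigl((\mathcal{C}^1_{\gamma^1_j,\beta})_j\bigr)$ is of the order of the \emph{total variation} of the $1$-waves, and hence $v_1(\mathcal{M})\gtrsim |\sigma_\beta|^3\cdot TV(u_0)$, which is not controlled by any power of $V_p(u_0)$. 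The identical obstruction hits your argument for $\mathfrak{M}_\Gamma(0)$: the ``switching'' contribution is indeed summable, but the Lipschitz drift of $\mathcal{C}^i$ for the non-switching $\beta$'s is not.

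The paper never attempts a $v_1$ bound. Instead it proves, for each fixed opposite-family front $\beta$, that $v_p\bigl((\mathcal{C}^1_{\gamma^1_j,\beta})_j\bigr)=\mathcal{O}(1)$ (Lemma~\ref{Lem:VpCoefsInteraction}). This is precisely where the \emph{second} half of hypothesis $H(\tau)$ --- the bound on $\widetilde{V}_p$ along wave measure curves, not merely on horizontal time-slices --- is actually used: one follows $\beta$'s front line as a lower/upper measure curve, and Lipschitz continuity of $\mathcal{C}^1$ transfers the $p$-variation (not the TV) of states and strengths along that curve to the coefficient sequence. One then assembles $v_p(\mathcal{M})$ from these per-$\beta$ bounds via the product rule Proposition~\ref{Pro:ElemePropsVp}--\ref{Item:VpProd}, obtaining $v_p(\mathcal{M})\lesssim\sum_\beta|(\sigma_\beta)_*|^3\lesssim V_p(u_0)^3$, after which your use of Corollary~\ref{Cor:MultiplSp} goes through unchanged. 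In short, the missing ingredient is Lemma~\ref{Lem:VpCoefsInteraction} together with the recognition that only the $p$-variation of the interaction coefficients along front lines is controlled under $H(\tau)$; the $v_1$ route implicitly demands a TV bound that is unavailable in this setting.
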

\begin{proposition} \label{Pro:DecayGlimm} 
There exists $C_1,C_2 \geq 1$ and $c_0>0$ such that the following is true.
Suppose that $V_p(u_0) < c_0$.
Consider a front-tracking approximation $u^\nu$ constructed by means 
 of the algorithm of  Subsection~\ref{Subsec:FTA}, and let $\tau < T_\nu$.
Under the assumption $H(\tau)$, the functional $\Upsilon$ associated with the time horizon $T=\tau$ satisfies
\begin{equation} \label{Eq:DecayGlimm}
\Upsilon(t) \ \text{ is non-increasing on } \ [0,\tau].
\end{equation}
Moreover, for all measure curve $\Gamma$ included in the strip $[0,\tau] \times \R$,
we also have, for the same time horizon~$\tau$,
\begin{equation} \label{Eq:DecayGlimm2}
\mathcal{V}_\Gamma(t) + \mathcal{Q}(t) 
\ \text{ and } \ 
\mathfrak{M}_\Gamma(t) + \mathcal{Q}(t) \ \text{ are non-increasing on } \ [0,\tau].
\end{equation}
\end{proposition}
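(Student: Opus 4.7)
The plan is to observe that, for the fixed time horizon $T=\tau$, the functional $\Upsilon$ is piecewise constant in $t$: between two consecutive interaction times the wave strengths $\sigma_\alpha$ do not change (each front moves at its modified shock speed with endpoints fixed), and the future interaction coefficients $\mathcal{C}^i_{\alpha\beta}$ are data determined by the full trajectory on $[0,\tau]$. Consequently it suffices to check that $\Upsilon(\bar t^+)\leq \Upsilon(\bar t^-)$ at each interaction time $\bar t\in(0,\tau)$. The argument splits along the two cases of the algorithm: colliding fronts of opposite families, and colliding fronts of the same family.

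For an interaction of opposite families at $\bar t$, with $\alpha\in\mathcal{A}^2$ on the left and $\beta\in\mathcal{A}^1$ on the right, Proposition~\ref{Prop:InteractionDF} gives $\sigma_\alpha^+=\sigma_\alpha^-(1+\mathcal{C}^2_{\alpha\beta}(\sigma_\beta^-)_*^3)+E_\alpha$ and $\sigma_\beta^+=\sigma_\beta^-(1+\mathcal{C}^1_{\alpha\beta}(\sigma_\alpha^-)_*^3)+E_\beta$, with errors $E_\alpha,E_\beta$ of order $|\sigma_\alpha|^3|\sigma_\beta|^3(|\sigma_\alpha|+|\sigma_\beta|)$ in the compression/classical subcase and zero in the classical case. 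The very definition \eqref{Eq:PreampStr1}--\eqref{Eq:PreampStr2} is engineered so that the factor $(1+\mathcal{C}^i_{\alpha\beta}(\cdot)_*^3)$ disappears from the preamplifiers $\mathcal{M}_\alpha,\mathcal{M}_\beta$ at time $\bar t^+$ while appearing in $\sigma_\alpha^+,\sigma_\beta^+$, so the preamplified strengths $\widehat\sigma_\alpha,\widehat\sigma_\beta$ are preserved up to these errors; the preamplification factors of all other fronts are untouched (the $\alpha$-$\beta$ interaction is not among their own future interactions). Using Proposition~\ref{Pro:ElemePropsVp}(3) and $H(\tau)$, the induced variation of $\mathcal{V}^1+\mathcal{V}^2$ is bounded by $\mathcal{O}(1)V_p(u_0)^{p-1}(|E_\alpha|+|E_\beta|)$, while $\mathcal{Q}_2$ strictly decreases by at least $\mathfrak{s}^2_r(\alpha)\mathfrak{s}^1_l(\beta)\gtrsim |\widehat\sigma_\alpha|^p|\widehat\sigma_\beta|^p$ (via Proposition~\ref{Pro:PropsVpEndpoints}(2)). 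For $C_2$ large enough and $V_p(u_0)<c_0$ small enough, this dominates the errors.

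For an interaction of the same family $i$ at $\bar t$, Proposition~\ref{Prop:InteractionSF} replaces the two incoming $i$-fronts (strengths $\sigma_i,\sigma_i'$) by one outgoing $i$-front of strength $\sigma_i+\sigma_i'+\mathcal{O}(1)|\sigma_i|^3|\sigma_i'|^3(|\sigma_i|+|\sigma_i'|)^3$ together with a new $(3-i)$-wave of strength $\mathcal{O}(1)|\sigma_i||\sigma_i'|(|\sigma_i|+|\sigma_i'|)$. The fusion property Proposition~\ref{Pro:ElemePropsVp}(4) shows that merging the two $\widehat\sigma_i$ contributions into their sum does not increase $\mathcal{V}^i$ when the signs agree, and decreases it when they disagree. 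Using Proposition~\ref{Pro:ElemePropsVp}(3), the error on the outgoing $i$-front and the newly created $(3-i)$-wave together perturb $\mathcal{V}$ by at most $\mathcal{O}(1)V_p(u_0)^{p-1}\cdot|\sigma_i||\sigma_i'|(|\sigma_i|+|\sigma_i'|)$ to some power; meanwhile $\mathcal{Q}_1^i$ loses the term $\mathfrak{s}^i_r(\alpha)\mathfrak{s}^i_l(\beta)\gtrsim|\widehat\sigma_\alpha|^p|\widehat\sigma_\beta|^p$, and $\mathcal{Q}_2$ loses all pairs with $\alpha$ or $\beta$. The cubic term $\mathcal{Q}_3$ is used to absorb the anomalous $|\sigma_i'|^3$ error of Proposition~\ref{Prop:InteractionSF}(2) coming from the transformation of a compression wave into a shock. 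For $p\leq 3/2$, $C_1$ large (to outweigh creation of new $(3-i)$-waves), $C_2$ large (to outweigh the residual $\mathcal{V}$-perturbation) and $V_p(u_0)<c_0$ small, one concludes $\Upsilon(\bar t^+)\leq \Upsilon(\bar t^-)$.

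The statements on measure curves are obtained by a parallel analysis. For $\mathcal{V}_\Gamma+\mathcal{Q}$, the intermediate curve $\Gamma_t$ evolves with $t$ only when its horizontal part sweeps past an interaction point located on the non-unlimited side of $\Gamma$; at such crossings the combinatorics of fronts intersecting $\Gamma_t$ is modified exactly as $\mathcal{A}^i(t)$ at a standard interaction, and the computations above transfer verbatim, the compensating decrease being provided by the global $\mathcal{Q}$ terms. For $\mathfrak{M}_\Gamma+\mathcal{Q}$, one exploits that the modified factors $\widetilde{\mathcal{M}}^\Gamma_\gamma$ defined in \eqref{Eq:DefCTilde}--\eqref{Eq:AmplificationFactorGamma} only register future interactions in the unlimited component of $(\R_+\times\R)\setminus\Gamma$; then the change of $v_p$ of the sequence of factors at each interaction is controlled by Corollary~\ref{Cor:MultiplSp} together with Proposition~\ref{Pro:ElemePropsVp}(10), yielding a bound compatible with the loss in $\mathcal{Q}_2$. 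The main obstacle is the same-family interaction step in the first claim: the two incoming $i$-fronts and the outgoing one have \emph{different} preamplification products (their left/right partitions of opposite-family fronts differ), so the clean sum-preservation of strengths is only approximate, and closing the estimate requires carrying through the higher-order errors of Proposition~\ref{Prop:InteractionSF} jointly with the Love-Young-type control of Corollary~\ref{Cor:MultiplSp} and the induction assumption $H(\tau)$.
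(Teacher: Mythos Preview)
Your outline has the right architecture (piecewise constancy, case split at interactions, decay of $\mathcal{Q}$ absorbing errors), but two of the concrete mechanisms you invoke are wrong, and they are precisely where the work lies.

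\textbf{Opposite-family interactions.} You claim that the preamplification factors of all fronts other than $\alpha,\beta$ are untouched. They are not. By definition \eqref{Eq:PreampStr1}--\eqref{Eq:AmplificationFactor}, $\mathcal{M}_\gamma(t)$ for a $1$-front $\gamma$ contains factors $(1+\mathcal{C}^1_{\gamma\delta}(\sigma_\delta(t))_*^3)$ where $\sigma_\delta(t)$ is the \emph{current} strength of each $2$-front $\delta$. When $\sigma_\alpha$ changes across the interaction, every $1$-front $\gamma$ whose front line will later meet that of $\alpha$ has its preamplifier modified accordingly (equation \eqref{Eq:ChgtPreampFac1} in the paper). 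Controlling the resulting change in $\mathcal{V}$ is nontrivial: it requires bounding $v_p$ of the sequence of future interaction coefficients $(\mathcal{C}^1_{\alpha\gamma})_\gamma$, which is exactly Lemma~\ref{Lem:VpCoefsInteraction}, followed by Corollary~\ref{Cor:MultiplSp}. This is the heart of the ``local'' analysis and you have skipped it.

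Relatedly, your error bookkeeping for $\widehat\sigma_\alpha,\widehat\sigma_\beta$ themselves is off. The coefficient stored in $\mathcal{M}_\beta$ is $\mathcal{C}^1(u_m;0,\sigma_\alpha)$, whereas the actual amplification uses $\mathcal{C}^1(u_m;\sigma_\beta,\sigma_\alpha)$; since $\mathcal{C}^1$ is only Lipschitz, the mismatch is $\mathcal{O}(|\sigma_\beta|)$, giving $\widehat\sigma_{\beta'}=\widehat\sigma_\beta+\mathcal{O}(1)|(\sigma_\alpha)_*|^3|\sigma_\beta|^2$. This fifth-order (not seventh-order, and not zero in the classical case) error is what $\mathcal{Q}_2$ must absorb.

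\textbf{Same-family interactions.} You have the preamplification situation backwards: the two incoming $i$-fronts $\alpha,\beta$ are adjacent and share the same front line after merging, hence $\mathcal{M}_\alpha=\mathcal{M}_\beta=\mathcal{M}_{\alpha'}$ exactly; the fusion of $\widehat\sigma_\alpha,\widehat\sigma_\beta$ is clean. What does change are the preamplifiers of \emph{other} $(3-i)$-fronts, via \eqref{Eq:ChgtPreampFac2}--\eqref{Eq:fCoefMC}, and of $i$-fronts to one side, via the newly created $(3-i)$-wave. Moreover, the decay mechanisms are not quite as you describe: in the monotone subcase one needs the sharper bound $\mathfrak{s}^i_r(\alpha)\mathfrak{s}^i_l(\beta)\geq p\,2^{-2p}|\sigma_\alpha||\sigma_\beta|^{2p-1}$ from Proposition~\ref{Pro:PropsVpEndpoints} (this is where $p\leq 3/2$ and the constant $C_1$ enter), while in the non-monotone subcase the dominant decay comes from $\mathcal{Q}_3$ through $|\sigma_\alpha+\sigma_\beta|^3-|\sigma_\alpha|^3-|\sigma_\beta|^3\leq -\tfrac34\max^2\min$, not from $\mathcal{Q}_1$. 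Your account uses $\mathcal{Q}_3$ only for the CW$\to$shock anomaly, which is a secondary role.
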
       
A central lemma which will be useful for both the proofs of Propositions~\ref{Pro:EstimeesVetQ}
and \ref{Pro:DecayGlimm} is the following.
\begin{lemma} \label{Lem:VpCoefsInteraction}
There exists a constant $C>0$ such that the following is true.
Consider a front-tracking approximation $u^\nu$, satisfying the assumption $H(\tau)$
 for some $\tau < T_{\nu}$.
Let $t \in [0,\tau]$ and consider an $i$-front $\alpha$ (for $i \in \{1,2\}$) existing at time $t$.
Then we have for the future interaction coefficients computed with time horizon $T=\tau$:
\begin{equation} \label{Eq:VpCoefsInteraction}
v_p \big[ (\mathcal{C}^{3-i}_{\alpha\beta})_{\beta \in \mathcal{A}^{3-i}(t)} \big]
\leq C.
\end{equation}
\end{lemma}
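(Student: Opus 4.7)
Fix $\alpha \in \mathcal{A}^i(t)$ and, by symmetry, take $i=1$; ordering the $2$-fronts present at time~$t$ from left to right as $\beta_1<\cdots<\beta_N$, the task is to bound $v_p((\mathcal{C}^2_{\alpha\beta_j})_{j=1,\ldots,N})$. By~\eqref{Eq:FutureIntCoef}, $\mathcal{C}^2_{\alpha\beta_j}=0$ unless $\beta_j$'s front line meets $\alpha$'s in $[t,\tau]$; otherwise, it equals $c^{[k]}:=\mathcal{C}^2(u_m^{[k]};\sigma_1^{[k]},0)$, where $k$ indexes in temporal order the $2$-lines $L_1,\ldots,L_K$ met by $\alpha$, and $u_m^{[k]},\sigma_1^{[k]}$ are the middle state and the $1$-wave strength at the $k$-th interaction. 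All $\beta_j$'s sharing a common $L_k$ produce the same value, and the meeting $\beta_j$'s form a contiguous block of indices (as a $\beta_j$ farther from $\alpha$ meets later, so whenever a farther one meets in $[t,\tau]$, a closer one does as well). Items~\ref{Item:Repetition} and~\ref{Item:Ajoutde0} of Proposition~\ref{Pro:ElemePropsVp} then reduce the task to bounding $v_p((c^{[k]})_{k=1,\ldots,K})$, up to a bounded additive error controlled by the uniform bound on $\mathcal{C}^2$.

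Since $\mathcal{C}^2$ is Lipschitz on $U\times(-c,c)^2$ by Proposition~\ref{Prop:InteractionDF}, items~\ref{Item:TriangIneg} and~\ref{Item:LipVp} applied component-wise further reduce to controlling $v_p((u_m^{[k]})_k)$ and $v_p((\sigma_1^{[k]})_k)$. Walking along $\alpha$'s front line forward in time, between the $k$-th and $(k{+}1)$-st interactions with $2$-lines one encounters the crossing of $L_k$ itself, which shifts $u_m$ by a $2$-wave of strength $\sigma_{L_k}$ and $\sigma_1$ by a multiplicative factor $1+\mathcal{O}(\sigma_{L_k}^3)$, and finitely many mergings with other $1$-front lines. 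A merging with a $1$-front $\alpha'$ absorbed \emph{from the left} of $\alpha$'s line shifts $u_m$ by a $1$-jump of size $\sigma_{\alpha'}$ and increases $\sigma_1$ by $\sigma_{\alpha'}+\text{h.o.}$, while a merging \emph{from the right} leaves $u_m$ unchanged but still contributes $\sigma_{\alpha'}+\text{h.o.}$ to $\sigma_1$. Combining the triangle inequality (item~\ref{Item:TriangIneg}) and the fusion property (item~\ref{Item:Fusion}, which lets one replace the cumulative merging jump between two $2$-crossings by its individual constituents), the problem reduces to $s_p$-bounds on the sequences $(\sigma_{L_k})_k$, $(\sigma_{\alpha'_\ell})_{\ell\text{ left}}$, and $(\sigma_{\alpha'_\ell})_{\ell\text{ right}}$ in the relevant temporal orders, plus a higher-order remainder of order $V_p(u_0)^3$ absorbed via item~\ref{Item:SpSpPrime}.

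To estimate these $s_p$'s I would invoke $H(\tau)$ on two wave measure curves along $\alpha$'s front line, both with $t_{\min}=t$ and $t_{\max}=\tau$. On a $1$-lower measure curve $\Gamma^{\rm low}$ following $\alpha$'s line, the $2$-lines meeting $\alpha$ and the $1$-fronts absorbed from the left cross the curve only on its $1$-front-line segment (a point that follows from straightforward geometric considerations about the positions of these fronts at times $t$ and $\tau$). Item~\ref{Item:VpFusion}, applied to the concatenation of the fronts captured on the three segments of $\Gamma^{\rm low}$, then yields $s_p((\sigma_{L_k})_k)\leq \widetilde V_p[u^\nu;\Gamma^{\rm low}](\tau)\leq 4V_p(u_0)$ and likewise for the left-absorbed $1$-fronts. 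A symmetric argument using a $1$-upper measure curve $\Gamma^{\rm up}$ along $\alpha$'s line yields the same bound for the right-absorbed $1$-fronts, and item~\ref{Item:Interleave} combines the two sequences into the $s_p$-bound needed for the $\sigma_1$ estimate. Assembled together, this gives $v_p((c^{[k]})_k)\lesssim V_p(u_0)$, hence $\leq C$ under the smallness hypothesis on $V_p(u_0)$.

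The delicate aspect is the geometric bookkeeping: one must identify which side of $\alpha$'s line each incoming $1$-front comes from and pair it with the correct measure curve, and one must then use item~\ref{Item:VpFusion} to isolate the contribution of only the line segment of $\Gamma$, since a naive ``$s_p$ of a subsequence'' estimate is false in general (as the example $s_p^p((100,100))=40000>39601=s_p^p((100,-1,100))$ for $p=2$ shows). The various higher-order error terms (from shock curve tangency, compression--shock distinctions, and new $1$-waves created at $2$--$2$ interactions in $[t,\tau]$) must likewise be summed in the $s_p$ rather than $\ell^1$ sense, which is where item~\ref{Item:SpSpPrime} and the smallness of $V_p(u_0)$ play their roles.
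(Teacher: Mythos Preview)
Your approach is essentially that of the paper: extend $\alpha$ forward as a front line, reduce to the coefficients $c^{[k]}$ at the actual $2$-line crossings, use the Lipschitz regularity of $\mathcal{C}^2$ to pass to $v_p$ of the middle states and of the running strength $\sigma_1^{[k]}$, and control these via $H(\tau)$ applied to the $1$-lower and $1$-upper measure curves following $\alpha$'s line. One small omission in your reduction step: besides Items~\ref{Item:Repetition} and~\ref{Item:Ajoutde0} you also need Item~\ref{Item:Interleave} (for $v_p$), since some $L_k$'s are $2$-lines created after time $t$ and therefore have no ancestor among the $\beta_j$'s --- the nonzero part of $(\mathcal{C}^2_{\alpha\beta_j})_j$ is a repeated \emph{subsequence} of $(c^{[k]})_k$, and the paper handles this explicitly.

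Your handling of $v_p((\sigma_1^{[k]})_k)$ is in fact slightly more direct than the paper's: the paper takes an optimal partition for $(c^{[k]})_k$, writes a Duhamel formula for $\mathfrak{S}$ over each block, and then invokes Corollary~\ref{Cor:MultiplSp} (Love--Young) to control the product sum $\sum_k(\sigma_{d_k}+\mathfrak{e}_k)m_k$; you instead bound the single-step increments $(m_k-1)\sigma_1^{[k]}+A_k$ and absorb the multiplicative piece in $\ell^1$ using $\|\sigma_1^{[k]}\|_\infty\leq 4V_p(u_0)$ from the time-slice part of $H(\tau)$ together with $\sum_k|\sigma_{L_k}|^3\lesssim V_p(u_0)^3$. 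Both routes reach the same bound; yours avoids the Duhamel/Love--Young machinery at this step at the cost of invoking the pointwise-in-time half of $H(\tau)$ in addition to the measure-curve half.
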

\noindent
Recall that $\mathcal{C}^{3-i}_{\alpha\beta}$ is taken as $0$ when $\alpha$ and $\beta$ do not interact in $[0,\tau]$. \par
\ \par
Lemma~\ref{Lem:VpCoefsInteraction} and Proposition~\ref{Pro:EstimeesVetQ} will be established
in Subsection~\ref{Subsec:DeduceVpEst} while
Proposition~\ref{Pro:DecayGlimm} will be proved in Subsection~\ref{Subsec:Decay}. \par
\ \par
For the next two subsections we consider a front-tracking approximation $u^\nu$ for fixed $\nu \in (0,\nu_0)$.
We let $\tau \in (0,T_\nu)$, and we suppose $H(\tau)$ to be satisfied. \par
As a first remark, we notice that under the assumption $H(\tau)$ and for sufficiently small $V_p(u_0)$,
 all preamplification factors are uniformly bounded in $[0,\tau]$, and more precisely satisfy for any 
 front $\alpha$ existing at time $t \in [0,\tau]$,
\begin{equation} \label{Eq:Mentre2et12}
\frac{1}{2} \leq \mathcal{M}_{\alpha}(t) \leq 2.
\end{equation}
This is a direct consequence of \eqref{Eq:AmplificationFactor}, $H(\tau)$, the boundedness of coefficients
 $\mathcal{C}^1$ and $\mathcal{C}^2$, and of
\begin{equation} \label{Eq:ProdExp}
\exp\left(-\sum_{i=1}^n |x_i| \right) \leq \prod_{i=1}^n (1+x_i) \leq \exp\left(\sum_{i=1}^n |x_i| \right) .
\end{equation}
\ \par
We emphasize that in Subsections~\ref{Subsec:DeduceVpEst} and \ref{Subsec:Decay}, the precise speed of propagation
 of fronts does not play a role apart from the strict separation of speeds of $1$- and $2$- fronts 
 (which involves in particular that fronts lines of different families can cross at most once),
 and the fact that front-lines of the same family merge when they meet.
The central property that we use is the fact that at an interaction point, outgoing fronts are based on one of
 the above Riemann problems. \par
%
%
%
%
%

% ------------------------------------ Preamplification factors ------------------------------------
%
%
%
%
\subsection{Global estimates: managing the preamplification factors}
\label{Subsec:DeduceVpEst}
In this section, we prove of Lemma~\ref{Lem:VpCoefsInteraction} and of Proposition~\ref{Pro:EstimeesVetQ}.
\subsubsection{Proof of Lemma~\ref{Lem:VpCoefsInteraction}} % 
\label{ssub:PrLemCI}
\ \par
\noindent
{\bf 1.}
We extend the front $\alpha$ forward in time as a front line that we follow
 from time $t$ to time $\tau$. 
This extension is unique, and we call it ${a}$.
We associate to this front line ${a}$ an $i$-lower wave measure curve 
 $\Gamma_a^{\downarrow}$ and an $i$-upper wave measure curve $\Gamma_a^{\uparrow}$
 whose middle (non-horizontal) part corresponds to ${a}$ between time $t$
 and time $\tau$ (hence $t_{\min}=t$ and $t_{\max}=\tau$ for these curves). \par
To simplify the presentation, we suppose that $\alpha$ is a $1$-front, so that we consider 
 the coefficients $\mathcal{C}^2_{\alpha \beta}$ for $2$-fronts $\beta$ at time $t$. \par
Now we consider all $2$-fronts meeting ${a}$ between times $t$ and $\tau$.
These fronts are naturally ordered from right to left, that is, increasingly in terms 
 of their time of interaction with ${a}$.
We call them $\gamma_1$,\ldots,$\gamma_N$ (in that order). 
Note that each $\gamma_j$ meets $a$ once (due to separation of speeds between $\lambda_1$ and $\lambda_2$).
We call the corresponding future interaction coefficient $\mathcal{C}^2_{a\gamma_j}$. \par
\ \par
\noindent
{\bf 2.} We first show that 
\begin{equation} \label{Eq:CdeTversA}
v_p \big[ (\mathcal{C}^2_{\alpha\beta})_{\beta \in \mathcal{A}^{2}(t)} \big] 
\leq 
v_p (0,\mathcal{C}^2_{a\gamma_1},\ldots,\mathcal{C}^2_{a\gamma_N},0) .
\end{equation}
Call $\beta_1,\ldots,\beta_K$ all $2$-fronts at time $t$ to the left of $a$, ordered from left to right. 
Between $1$ and $K$, we can introduce $K_1$ as the largest integer so that the line fronts corresponding to fronts of
 index between $1$ and $K_1$ do not meet ${a}$ before time $\tau$.
If $K_1 = K$, then the left-hand side of \eqref{Eq:CdeTversA} is zero, since the convention is that
 $\mathcal{C}^2_{a\beta}=0$ when $a$ and the front line of $\beta$ do not cross.
If $K_1 < K$, since line fronts of the same family cannot cross,
 this involves that all line fronts for fronts between $\beta_{K_1+1}$ and $\beta_{K}$ included meet ${a}$ before time~$\tau$. \par
Now some of these line fronts $\beta_i$, $i=K_1+1,\ldots,K$, merge before reaching ${a}$, and some of the fronts
 $\gamma_i$, $i=1,\ldots,N$, may not have ancestors among $\beta_1,\ldots,\beta_K$, because they correspond 
 to fronts that were created after time $t$.
This involves that the sequence $(\mathcal{C}^2_{a\beta_{K_1+1}},\ldots,\mathcal{C}^2_{a\beta_K})$ can be obtained from 
the sequence $(\mathcal{C}^2_{a\gamma_1},\ldots,\mathcal{C}^2_{a\gamma_N})$ by forgetting some of the terms, 
and then repeating some of the coefficients.
Using Proposition~\ref{Pro:ElemePropsVp}--\ref{Item:Interleave}\&\ref{Item:Repetition}, we deduce that
\begin{equation*}
v_p(\mathcal{C}^2_{a\beta_{K_1+1}},\ldots,\mathcal{C}^2_{a\beta_K})
    \leq v_p(\mathcal{C}^2_{a\gamma_1},\ldots,\mathcal{C}^2_{a\gamma_N}).
\end{equation*}
Then \eqref{Eq:CdeTversA} is a consequence of  the fact that $\mathcal{C}^2_{\alpha\beta_k}=0$ for $k \leq K_1$
 and $\mathcal{C}^2_{\alpha\beta}=0$ for fronts $\beta$ to the right of $\alpha$ at time $t$. \par
\ \par
\noindent
{\bf 3.}
Now let us now prove that
\begin{equation} \label{Eq:CAdeVpA}
v_p (\mathcal{C}^2_{a\gamma_1},\ldots,\mathcal{C}^2_{a\gamma_N}) 
\lesssim \widetilde{V}_p(u^\nu;\Gamma^{\downarrow}_a)^p + \widetilde{V}_p(u^\nu;\Gamma^{\uparrow}_a)^p + \mathcal{O}(\nu^3).
\end{equation}
We consider an optimal partition generating $v_p (\mathcal{C}^2_{a\gamma_1},\ldots,\mathcal{C}^2_{a\gamma_N})$:
\begin{equation} \label{Eq:VpAOpt}
v_p^p (\mathcal{C}^2_{a\gamma_1},\ldots,\mathcal{C}^2_{a\gamma_N})
= \sum_{j=0}^{k-1} \left|\mathcal{C}^2_{a\gamma_{i_{j+1}}} - \mathcal{C}^2_{a\gamma_{i_{j}+1}} \right|^{p},
\end{equation}
with $i_1=0 < \dots < i_k=N$.
We consider $j \in \{1,\ldots,k-1\}$ and estimate  
    $\mathcal{C}^2_{a\gamma_{i_{j+1}}} - \mathcal{C}^2_{a\gamma_{i_{j}+1}}$.
Recall that the various coefficients $\mathcal{C}^2_{a \gamma_i}$ are computed precisely 
    at the moment when $\gamma_i$ crosses $a$. 
Call $u_m^i$ the value of $u^\nu$ on the left of $a$ and on the right of $\gamma_i$,
    at the interaction point between $a$ and $\gamma_i$, and call $\mathfrak{S}_i$ the strength
    of the $1$-front containing $a$ at this interaction (just before it).
With these notations we have $\mathcal{C}^2_{a\gamma_{i}} = \mathcal{C}^2(u_m^i;\mathfrak{S}_i,0)$.
Using the fact that $\mathcal{C}^2$ is Lipschitz, it follows that
\begin{equation*} 
|\mathcal{C}^2_{a\gamma_{i_{j+1}}} - \mathcal{C}^2_{a\gamma_{i_{j}+1}}|
\lesssim |u_{m}^{i_{j+1}} - u_{m}^{i_{j}+1}| + |\mathfrak{S}_{i_{j+1}} - \mathfrak{S}_{i_{j}+1}|.
\end{equation*}
We estimate separately the two terms in the right-hand side. \par
\ \par
\noindent
$\circ$ 
{\it The $|u_{m}^{i_{j+1}} - u_{m}^{i_{j}+1}|$ term.}
The difference between $u_{m}^{i_{j}+1}$ and $u_{m}^{i_{j+1}}$ is due to the fronts meeting ${a}$ 
    between the crossing points of ${a}$ with $\gamma_{i_{j}+1}$ and $\gamma_{i_{j+1}}$ (including the former),
    under ${a}$ (that is, on its left).
Call $b_1,\ldots,b_m$ these fronts, and call $\mathcal{B}^j_1$ (respectively $\mathcal{B}^j_2$) the set of indices
    $k \in \{1,\ldots,m\}$ for which $b_k$ is of family $1$ (resp. $2$).
Then we estimate $|u_{m}^{i_{j+1}} - u_{m}^{i_{j}+1}|$ by
\begin{equation*}
|u_{m}^{i_{j+1}} - u_{m}^{i_{j}+1}| \leq |w_1(u_{m}^{i_{j+1}}) - w_1(u_{m}^{i_{j}+1})|
+ |w_2(u_{m}^{i_{j+1}}) - w_2(u_{m}^{i_{j}+1})|.
\end{equation*}
Using a bound ``$\mathcal{O}(1)$'' for functions $\mathcal{D}_1$ and $\mathcal{D}_2$ in \eqref{Eq:ChocRC} 
we have that for $u \in U$ and $\sigma \in (-c,c)$ (introduced in Proposition~\ref{Prop:RiemannS}),
\begin{equation*}
w_1(\Phi_i(\sigma,u)) - w_1(u)  =
\left\{ \begin{array}{l} 
\sigma \text{ if } i =1, \\
\mathcal{O}(1) |\sigma|^3 \text{ if } i =2, 
\end{array} \right.
\text{ and } \ 
w_2(\Phi_i(\sigma,u)) - w_2(u) =
\left\{ \begin{array}{l} 
\mathcal{O}(1) |\sigma|^3 \text{ if } i =1, \\
\sigma \text{ if } i =2.
\end{array} \right.
\end{equation*}
We deduce that
\begin{multline*}
|w_1(u_{m}^{i_{j+1}}) - w_1(u_{m}^{i_{j}+1})| 
\leq \left|\sum_{k \in \mathcal{B}^j_1} \sigma_{b_k}\right| + \mathcal{O}(1) \sum_{k \in \mathcal{B}^j_2} |\sigma_{b_k}|^3
\\ \text{ and } \ 
|w_2(u_{m}^{i_{j+1}}) - w_2(u_{m}^{i_{j}+1})| 
\leq \left|\sum_{k \in \mathcal{B}^j_2} \sigma_{b_k}\right| + \mathcal{O}(1) \sum_{k \in \mathcal{B}^j_1} |\sigma_{b_k}|^3.
\end{multline*}
Consequently, using $p<3$, Proposition~\ref{Pro:ElemePropsVp}--\ref{Item:SpSpPrime},
$H(\tau)$ on $\Gamma_a^{\downarrow}$ and \eqref{Eq:1stSmallness}, we find
\begin{eqnarray} \nonumber
|u_{m}^{i_{j+1}} - u_{m}^{i_{j}+1}| 
&\lesssim& s_p(b_k; k \in \mathcal{B}^j_1) + s_p(b_k; k \in \mathcal{B}^j_2) 
        + s^3_p(b_k; k \in \mathcal{B}^j_1) + s^3_p(b_k; k \in \mathcal{B}^j_2) \\
\nonumber 
&\lesssim& s_p(b_k; k \in \mathcal{B}^j_1) + s_p(b_k; k \in \mathcal{B}^j_2).
\end{eqnarray} \par
\ \par
\noindent
$\circ$ {\it The $|\mathfrak{S}_{i_{j+1}} - \mathfrak{S}_{i_{j}+1}|$ term.}
Here the evolution of $\mathfrak{S}$ between indices $i_{j+1}$ and $i_{j}+1$ is due to all fronts interacting
 with ${a}$ on both sides (since there can be fronts of family $1$ joining ${a}$ from the right).
So changing a bit the notations we call here $\widehat{\mathcal{B}}^j_1:=\{d_1,\ldots,d_K\}$ the fronts 
 of family $1$ on both sides (ordered as before)
 that join ${a}$ between the crossing points of ${a}$ with $\gamma_{i_j+1}$ and with $\gamma_{i_{j+1}}$.
We keep the notation  $\gamma_{i_j+1},\ldots,\gamma_{i_{j+1}-1}$ for fronts of family~$2$ between these points;
 we note $\widehat{\mathcal{B}}^j_2:=\{i_j+1,\ldots,i_{j+1}-1\}$. 
The situation is represented in Figure~\ref{Fig:FollowingA}. \par
In what follows we will use the following notation: for $k \in \{1,\ldots,K\}$
 and $p \in \widehat{\mathcal{B}}^j_2$, $d_k < \gamma_p$ means that the front $d_k$ meets $a$ 
 before the front $\gamma_p$. %\par
Finally, for $k \in \widehat{\mathcal{B}}^j_1$ we call $\mathfrak{e}_k$ the ``$\mathcal{O}(1)$'' error term of
 Proposition~\ref{Prop:InteractionSF} for the outgoing wave of the family $1$
 at the interaction point between ${a}$ and $d_k$ (which are $1$-fronts). \par 
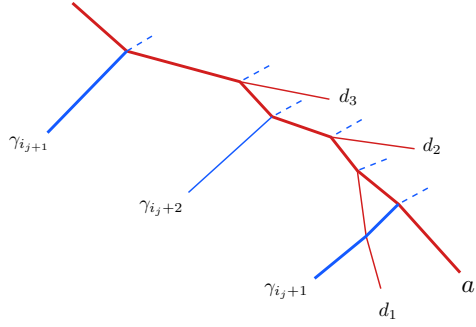
\begin{figure}[ht]
\centering
\resizebox{.4\linewidth}{!}{\tikzset{every picture/.style={line width=0.75pt}}
\begin{tikzpicture}[x=0.75pt,y=0.75pt,yscale=-1,xscale=1]

% a
\draw [color={rgb, 255:red, 210; green, 30; blue, 30 } ,draw opacity=1 ][line width=1.5] (473,220) -- (431,173) -- (403,150) -- (385,127) -- (345,113) -- (323,89) -- (246,68) -- (209,35) ;

% d_2
\draw [color={rgb, 255:red, 210; green, 30; blue, 30 } ,draw opacity=1 ]   (385,127) -- (442,135) ;
% d_1
\draw [color={rgb, 255:red, 210; green, 30; blue, 30 } ,draw opacity=1 ]   (419,231) -- (409,195) -- (403,150) ;
% d_3
\draw [color={rgb, 255:red, 210; green, 30; blue, 30 } ,draw opacity=1 ]   (323,89) -- (384,101) ;

% \gamma_{{i_j}+1}
\draw [color={rgb, 255:red, 20; green, 90; blue, 255 }  ,draw opacity=1 ][line width=1.5]    (431,173) -- (409,195) -- (373.91,224) ;
% \gamma_{i_{j+1}}
\draw [color={rgb, 255:red, 20; green, 90; blue, 255 }  ,draw opacity=1 ][line width=1.5]    (246,68) -- (192,124) ;
% \gamma_{{i_j}+2}
\draw [color={rgb, 255:red, 20; green, 90; blue, 255 }  ,draw opacity=1 ]   (345,113) -- (288,165) ;
% SUITE \gamma_{i_{j+1}}
\draw [color={rgb, 255:red, 20; green, 90; blue, 255 }  ,draw opacity=1 ] [dash pattern={on 3pt off 3pt}]  (246,68) -- (266,56) ;
% SUITE \gamma_{i_{j}+2}
\draw [color={rgb, 255:red, 20; green, 90; blue, 255 }  ,draw opacity=1 ] [dash pattern={on 3pt off 3pt}]  (365,102) -- (345,113) ;
% SUITE \gamma_{i_{j}+1}
\draw [color={rgb, 255:red, 20; green, 90; blue, 255 }  ,draw opacity=1 ] [dash pattern={on 3pt off 3pt}]  (451,162) -- (431,173) ;
% INTER a d_1
\draw [color={rgb, 255:red, 20; green, 90; blue, 255 }  ,draw opacity=1 ] [dash pattern={on 3pt off 3pt}]  (423,142) -- (403,150) ;
% INTER a d_2
\draw [color={rgb, 255:red, 20; green, 90; blue, 255 }  ,draw opacity=1 ] [dash pattern={on 3pt off 3pt}]  (406,115) -- (385,127) ;
% INTER a d_3
\draw [color={rgb, 255:red, 20; green, 90; blue, 255 }  ,draw opacity=1 ] [dash pattern={on 3pt off 3pt}]  (343,78) -- (323,89) ;

% Text 
\draw (485,224) node [anchor=north east][inner sep=0.75pt] [xscale=1.3, yscale=1.3]{$a$};
\draw (371,226) node [anchor=north east][inner sep=0.75pt]  {$\gamma_{{i_j}+1}$};
\draw (195,125) node [anchor=north east][inner sep=0.75pt]  {$\gamma_{i_{j+1}}$};
\draw (286,169) node [anchor=north east][inner sep=0.75pt]  {$\gamma_{{i_j}+2}$};
\draw (432,239) node [anchor=north east][inner sep=0.75pt]  {$d_{1}$};
\draw (462,126) node [anchor=north east][inner sep=0.75pt]  {$d_{2}$};
\draw (405,92) node [anchor=north east][inner sep=0.75pt]  {$d_{3}$};
\end{tikzpicture}}
\caption{Following the front-line $a$}
\label{Fig:FollowingA}
\end{figure}
Then an induction shows the following Duhamel-type formula: for $\ell \geq i_j+1$,
\begin{multline} \label{Eq:Duhamel}
\mathfrak{S}_\ell 
= \mathfrak{S}_{i_j+1}
    \prod_{\substack{p \in \widehat{\mathcal{B}}^j_2 \\ p < \ell}}
    \left(1+ \mathcal{C}^1(u_m^p;\mathfrak{S}_p,\sigma_{\gamma_p}) (\sigma_{\gamma_p})_*^3\right) \\
+ \sum_{\substack{k \in \widehat{\mathcal{B}}^j_1 \\ \text{s.t. } d_k < \gamma_\ell}}  ( \sigma_{d_k} + \mathfrak{e}_k)
    \prod_{\substack{p \in \widehat{\mathcal{B}}^j_2  \\ p <\ell \\ \text{s.t. } d_k < \gamma_p}}
    \left(1+ \mathcal{C}^1(u_m^p;\mathfrak{S}_p,\sigma_{\gamma_p}) (\sigma_{\gamma_p})_*^3\right) .
\end{multline}
This allows to estimate $\mathfrak{S}_{i_{j+1}} - \mathfrak{S}_{i_{j}+1}$ as follows.
We use \eqref{Eq:Duhamel} with $\ell = {i_{j+1}}$.
We estimate the first term by using
\begin{equation*}
\left|\prod_{\substack{p \in \widehat{\mathcal{B}}^j_2}}
    \left(1+ \mathcal{C}^1(u_m^p;\mathfrak{S}_p,\sigma_{\gamma_p}) (\sigma_{\gamma_p})_*^3\right) - 1 \right|
    \lesssim \sum_{\substack{p \in \widehat{\mathcal{B}}^j_2}} (\sigma_{\gamma_p})_*^3,
\end{equation*}
and deduce that
\begin{equation*}
\left|\mathfrak{S}_{i_j+1} 
    \prod_{p \in \widehat{\mathcal{B}}^j_2 }
    \left(1+ \mathcal{C}^1(u_m^p;\mathfrak{S}_p,\sigma_{\gamma_p}) (\sigma_{\gamma_p})_*^3\right) 
    - \mathfrak{S}_{i_j+1} \right| 
\lesssim \left|\mathfrak{S}_{i_j+1} \right|
    \ \sum_{\substack{p \in \widehat{\mathcal{B}}^j_2}} (\sigma_{\gamma_p})_*^3
\lesssim \sum_{\substack{p \in \widehat{\mathcal{B}}^j_2}} (\sigma_{\gamma_p})_*^3.
\end{equation*}
For the second term we will use Corollary~\ref{Cor:MultiplSp}. 
Calling
\begin{equation} \label{Eq:CoefPreamp}
m_k =  \prod_{\substack{p \in \widehat{\mathcal{B}}^j_2  \\ p <\ell \\ \text{s.t. } d_k < \gamma_p}}
    \left(1+ \mathcal{C}^1(u_m^p;\mathfrak{S}_p,\sigma_{\gamma_p}) (\sigma_{\gamma_p})_*^3\right),
\end{equation}
we have, using a uniform bound on $\mathcal{C}^1$, Proposition~\ref{Pro:ElemePropsVp}--\ref{Item:SpSpPrime}
 and $H(\tau)$ on $\Gamma_a^{\downarrow}$,
\begin{equation} \label{Eq:SpCoefPreamp}
v_p(m_k; k \in \widehat{\mathcal{B}}^j_1)
  \leq v_1(m_k; k \in \widehat{\mathcal{B}}^j_1) 
  \lesssim \sum_{\substack{p \in \widehat{\mathcal{B}}^j_2}} (\sigma_{\gamma_p})_*^3
\ \text{ and } \ 
\| m_k\|_\infty \lesssim 1.
\end{equation}
On the other hand, one has $s_p(\sigma_{d_k} + \mathfrak{e}_k) \leq s_p(\sigma_{d_k}) + s_p(\mathfrak{e}_k)$.
Moreover, one has $|\mathfrak{e}_k| \lesssim |\sigma_{d_k}|^3$, with the only possible exception
 that $a$ at this interaction is a CW turning into a shock, with $\sigma_{d_k} < |\sigma_a|$ where 
 $\sigma_a$ is the strength of the front carried by $a$ at that time. 
But this can happen only once (since a shock never turns in a CW), and in that case $|\mathfrak{e}_k| \lesssim \nu^3$
 because all CW are of size $\mathcal{O}(\nu)$ (see the proof p.~\pageref{Place:EstCW} that is valid under $H(\tau)$).
It follows that, using again Proposition~\ref{Pro:ElemePropsVp}--\ref{Item:SpSpPrime},
\begin{equation*}
s_p(\mathfrak{e}_k, k \in \widehat{\mathcal{B}}^j_1) \leq s_1(\mathfrak{e}_k, k \in \widehat{\mathcal{B}}^j_1)
    \leq s_1(|\sigma_{d_k}|^3 , k \in \widehat{\mathcal{B}}^j_1) +\mathcal{O}(\nu^3)
    \leq s_p(\sigma_{d_k}, k \in \widehat{\mathcal{B}}^j_1)^3 +\mathcal{O}(\nu^3).
\end{equation*}
Hence Corollary~\ref{Cor:MultiplSp} gives:
\begin{eqnarray} \nonumber 
\left|\sum_{k \in \widehat{\mathcal{B}}^j_1} (\sigma_{d_k} + \mathfrak{e}_k)
    \prod_{\substack{p \in \widehat{\mathcal{B}}^j_2  \\ p <\ell \\ \text{s.t. } d_k < \gamma_p}}
    \left(1+ \mathcal{C}^1(u_m^p;\mathfrak{S}_p,\sigma_{\gamma_p}) (\sigma_{\gamma_p})_*^3\right) \right|
&\lesssim& \left(s_p(\sigma_{d_k}; k \in \widehat{\mathcal{B}}^j_1) +\mathcal{O}(\nu^3) \right)
   \left( 1 + \sum_{\substack{p \in \widehat{\mathcal{B}}^j_2}} (\sigma_{\gamma_p})_*^3 \right) \\
&\lesssim& s_p(\sigma_{d_k}; k \in \widehat{\mathcal{B}}^j_1) +\mathcal{O}(\nu^3), 
\end{eqnarray}
where we used again $H(\tau)$ on $\Gamma_a^{\downarrow}$. \par
\ \par
\noindent
$\circ$ {\it Back to the proof of \eqref{Eq:CAdeVpA}.}
Gathering the estimates above we deduce 
\begin{eqnarray*}
|\mathcal{C}^2_{a\gamma_{i_{j+1}}} - \mathcal{C}^2_{a\gamma_{i_{j}+1}}| 
&\lesssim& s_p(\sigma_{b_k}; k \in \mathcal{B}^j_1) + s_p(\sigma_{b_k}; k \in \mathcal{B}^j_2)
    + s_p(\sigma_{d_k}; k \in \widehat{\mathcal{B}}^j_1) 
    + \sum_{\substack{p \in \widehat{\mathcal{B}}^j_2}} (\sigma_{\gamma_p})_*^3 +\mathcal{O}(\nu^3)\\
&\lesssim& s_p(\sigma_{b_k}; k \in \mathcal{B}^j_1) + s_p(\sigma_{b_k}; k \in \mathcal{B}^j_2)
    + s_p(\sigma_{d_k}; k \in \widehat{\mathcal{B}}^j_1) +\mathcal{O}(\nu^3),
\end{eqnarray*}
where the error $\mathcal{O}(\nu^3)$ can only occur once along $a$.
Raising to the power $p$, summing over $j$ and using Proposition~\ref{Pro:ElemePropsVp}-\ref{Item:VpFusion} 
 and \eqref{Eq:VpAOpt},b we arrive at
\begin{eqnarray*}
v_p^p (\mathcal{C}^2_{a\gamma_1},\ldots,\mathcal{C}^2_{a\gamma_N})
&\lesssim& \mathcal{O}(\nu^3) + \sum_{j=0}^{k-1} \left(s^p_p(\sigma_{b_k}; k \in \mathcal{B}^j_1) + s^p_p(\sigma_{b_k}; k \in \mathcal{B}^j_2)
    + s^p_p(\sigma_{\gamma_k}; k \in \widehat{\mathcal{B}}^j_1) \right) \\
& \lesssim & \widetilde{V}_p(u^\nu;\Gamma^{\downarrow}_a)^p + \widetilde{V}_p(u^\nu;\Gamma^{\uparrow}_a)^p + \mathcal{O}(\nu^3),
\end{eqnarray*}
that is \eqref{Eq:CAdeVpA}. \par
\ \par
\noindent
{\bf 4.} 
Using $H(\tau)$, we infer 
$v_p^p (\mathcal{C}^2_{a\gamma_1},\ldots,\mathcal{C}^2_{a\gamma_N}) \lesssim V_p(u_0)^p + \mathcal{O}(\nu^3)$.
Going back to \eqref{Eq:CdeTversA} and using Proposition~\ref{Pro:ElemePropsVp}--\ref{Item:Ajoutde0},
 we obtain \eqref{Eq:VpCoefsInteraction}. 
This ends the proof of Lemma~\ref{Lem:VpCoefsInteraction}.
\hfill \qedsymbol \par
\ \par
\noindent
\subsubsection{Proof of Proposition~\ref{Pro:EstimeesVetQ}} % (fold)
\label{ssub:PrProVQ}
\ \par
\noindent
{\bf Proof of \eqref{Eq:EquivVVp}.} 
We first consider $\mathcal{V}^1(t)$. Call $\beta_1,\ldots,\beta_K$ the list of all $1$-fronts at time $t$, from left
 to right, and $\alpha_1,\ldots,\alpha_N$ the list of all $2$-fronts at time $t$, from left
 to right.
For any $i \in \{1,\ldots,N\}$ and $k \in \{1,\ldots,K\}$, we can introduce the front lines
 starting from $\alpha_i$ and $\beta_k$ at time $t$. \par
Now we estimate 
$\mathcal{V}^1(t)
 = s^p_p( \widehat{\sigma}_{\beta_1},\ldots, \widehat{\sigma}_{\beta_K} )
= s^p_p( \mathcal{M}_{\beta_1} \sigma_{\beta_1},\ldots, \mathcal{M}_{\beta_K} \sigma_{\beta_K} )$. 
To that purpose, we estimate $v_p( \mathcal{M}_{\beta_1} ,\ldots, \mathcal{M}_{\beta_K} )$.
Due to the convention that $\mathcal{C}^i_{\alpha_i \beta_k}=0$
 when the front lines of $\alpha_i$ and $\beta_k$ do not cross, we have
\begin{equation} \label{Eq:EcritureGlobaleM}
\mathcal{M}_{\beta_k}(t) 
= \prod_{\alpha \in \mathcal{A}^2(t)} (1 + \mathcal{C}^1_{\beta_k\alpha} (\sigma_\alpha(t))_*^3)
= \prod_{i=1}^N \left(1 + \mathcal{C}^1_{\beta_k\alpha_i} (\sigma_{\alpha_i}(t))_*^3\right).
\end{equation}
Now we use Proposition~\ref{Pro:ElemePropsVp}--\ref{Item:VpProd} to estimate
 $v_p( \mathcal{M}_{\beta_1} ,\ldots, \mathcal{M}_{\beta_K} )$:
\begin{equation*}
v_p( \mathcal{M}_{\beta_1} ,\ldots, \mathcal{M}_{\beta_K} ) 
\leq \sum_{i=1}^N v_p \bigg( 1+ \mathcal{C}^1_{\beta_k\alpha_i} (\sigma_{\alpha_i}(t))_*^3 ; \ k =1\ldots K\bigg)
\prod_{j \neq i} \max_k \Big| 1+ \mathcal{C}^1_{\beta_k\alpha_j} (\sigma_{\alpha_j}(t))_*^3 \Big|.
\end{equation*}
Since the coefficients $\mathcal{C}^1_{\beta_k\alpha_j}$ are uniformly bounded, using \eqref{Eq:ProdExp} and $H(\tau)$,
 we easily infer that
\begin{equation} \label{Eq:MLinfty}
\prod_{j \neq i} \max_k \Big| 1+ \mathcal{C}^1_{\beta_k\alpha_j} (\sigma_{\alpha_j}(t))_*^3 \Big|
\lesssim \exp \left( \widetilde{V}_p(u^\nu(t))^3 \right)
\lesssim 1.
\end{equation}
Hence we find
\begin{eqnarray*}
v_p( \mathcal{M}_{\beta_1} ,\ldots, \mathcal{M}_{\beta_K} ) 
&\lesssim& \sum_{i=1}^N v_p \Big( 1+ \mathcal{C}^1_{\beta_k\alpha_i} (\sigma_{\alpha_i}(t))_*^3 ; \ k =1\ldots K\Big) \\
&=& \sum_{i=1}^N v_p \Big( \mathcal{C}^1_{\beta_k\alpha_i} (\sigma_{\alpha_i}(t))_*^3 ; \ k =1\ldots K\Big) \\
&= & \sum_{i=1}^N |(\sigma_{\alpha_i}(t))_*|^3 \, v_p \Big( \mathcal{C}^1_{\beta_k\alpha_i} ; \ k =1\ldots K\Big).
\end{eqnarray*}
Due to Lemma~\ref{Lem:VpCoefsInteraction} and $H(\tau)$, we find
\begin{equation} \label{Eq:EstVpM}
v_p( \mathcal{M}_{\beta_1} ,\ldots, \mathcal{M}_{\beta_K} ) 
\lesssim \sum_{i=1}^N |(\sigma_{\alpha_i}(t))_*|^3 
\lesssim \widetilde{V}_p(u^\nu(t))^3 \lesssim V_p(u_0)^3.
\end{equation}
Now as for \eqref{Eq:MLinfty} the boundedness of the future interaction coefficients easily yields
\begin{equation} \label{Eq:EstLinftyM}
\left\| (\mathcal{M}_{\beta_1}-1,\ldots,\mathcal{M}_{\beta_N}-1) \right\|_\infty
\lesssim s_3(\sigma_{\alpha_1},\ldots,\sigma_{\alpha_N})^3
\lesssim V_p(u_0)^{3}.
\end{equation}
Using Corollary~\ref{Cor:MultiplSp}, we finally find that
\begin{eqnarray*}
\mathcal{V}^1(t)^{\frac{1}{p}} 
&\leq& s_p(\sigma_{\beta_1},\ldots,\sigma_{\beta_N}) 
    + s_p\left( (\mathcal{M}_{\beta_1}-1)\sigma_{\beta_1},\ldots,(\mathcal{M}_{\beta_N}-1)\sigma_{\beta_N} \right) \\
&\leq& s_p(\sigma_{\beta_1},\ldots,\sigma_{\beta_N}) \left(1 + \mathcal{O}(1) V_p(u_0)^{3}\right).
\end{eqnarray*}
In the other direction, we use the fact that all preamplification factors satisfy \eqref{Eq:Mentre2et12}.
Using Proposition~\ref{Pro:ElemePropsVp}--\ref{Item:LipVp} with $f(x) = 1/x$, we find that 
\begin{equation} \label{Eq:Mmoins1}
v_p (\mathcal{M}^{-1}_{\beta_1},\ldots,\mathcal{M}^{-1}_{\beta_N}) \lesssim V_p(u_0)^{3}.
\end{equation}
In the same way the factors $\mathcal{M}^{-1}_{\beta_k}$ satisfy \eqref{Eq:EstLinftyM}.
Consequently we also obtain
\begin{equation} \label{Eq:spVsp1}
 s_p(\sigma_{\beta_1},\ldots,\sigma_{\beta_N}) \leq \mathcal{V}^1(t)^{\frac{1}{p}} \left(1 + \mathcal{O}(1) V_p(u_0)^{3}\right).
\end{equation}
The same can be done for family $2$, so
\begin{equation*}
\mathcal{V}^2(t)^{\frac{1}{p}} \left(1 - \mathcal{O}(1) V_p(u_0)^{3}\right)
\leq s_p(\sigma_{\alpha_1},\ldots,\sigma_{\alpha_K})
\leq \mathcal{V}^2(t)^{\frac{1}{p}} \left(1 + \mathcal{O}(1) V_p(u_0)^{3}\right) .
\end{equation*}
Raising the above inequalities to the power $p$ and summing them, we reach \eqref{Eq:EquivVVp}. \par
\ \par
\noindent{\bf Proof of \eqref{Eq:QQuadratique}.} 
The estimate $\mathcal{Q}_1(t) + \mathcal{Q}_2(t) \leq C \, [\mathcal{V}(t)]^2$ is immediate
 noting that $\sum_{\alpha \in \mathcal{A}^k(t)} \mathfrak{s}_l^k(\alpha) = \mathcal{V}^k(t)$
 and  $\sum_{\alpha \in \mathcal{A}^k(t)} \mathfrak{s}_r^k(\alpha) = \mathcal{V}^k(t)$.
For $\mathcal{Q}_3$, we have 
\begin{equation*}
\mathcal{Q}_3(t) 
\leq s_3^3(\sigma_\alpha; \alpha \in \mathcal{A}(t) ) \\
\leq s_p^3(\sigma_\alpha; \alpha \in \mathcal{A}(t) ) \\
\lesssim \mathcal{V}(t)^{3/p} \lesssim \mathcal{V}(t)^{2},
\end{equation*}
where we used Proposition~\ref{Pro:ElemePropsVp}--\ref{Item:SpSpPrime}, \eqref{Eq:EquivVVp}, $H(\tau)$ and $p \leq \frac{3}{2}$. \par
\ \par
\noindent{\bf Proof of \eqref{Eq:MGamma0}.} 
The proof of \eqref{Eq:MGamma0} is similar to the one of \eqref{Eq:EstVpM}. 
One replaces $\mathcal{M}_{\beta}$ with $\widetilde{\mathcal{M}}^{\Gamma}_{\beta}$
 and $\mathcal{C}^1_{\beta\alpha}$ with $\tilde{\mathcal{C}}^{1,\Gamma}_{\beta\alpha}$
 in \eqref{Eq:EcritureGlobaleM}. 
The main point is that we have for coefficients $\tilde{\mathcal{C}}^{1,\Gamma}_{\beta\alpha}$
 the same uniform bound as for coefficients ${\mathcal{C}}^{1}_{\beta\alpha}$ in Lemma~\ref{Lem:VpCoefsInteraction},
 namely that $v_p \Big( \tilde{\mathcal{C}}^{1,\Gamma}_{\beta_k\alpha_i} ; \ k =1\ldots K\Big)$ is bounded.
This is due to the fact that if $\beta_k$ and $\beta_{k'}$ both meet $\alpha_i$ in the unlimited component
 of $(\R_+ \times \R) \setminus \Gamma$ (for $k<k'$), then the same is valid for $\beta_j$, $k<j<k'$, since
 $\alpha_i$ can cross $\Gamma$ at most once between $t_{\min}$ and $t_{\max}$, because $\Gamma$ coincides
 with a front line for such times. 
Notice also that the coefficients $\mathcal{C}^1_{\beta\alpha}$ do not evolve during $[0,T]$ and are consequently 
not affected by the freezing procedure on preamplification factors $\widetilde{\mathcal{M}}^{\Gamma}_{\beta}$. \par
It follows that the sequence $( \tilde{\mathcal{C}}^{1,\Gamma}_{\beta_k\alpha_i} ; \ k =1\ldots K)$
 can be written as the multiplication of the sequence
 $( {\mathcal{C}}^{1}_{\beta_k\alpha_i} ; \ k =1\ldots K)$ 
 with a sequence of the form $(0,\ldots,0,1,\ldots,1,0,\ldots,0)$,
and the conclusion follows with Proposition~\ref{Pro:ElemePropsVp}--\ref{Item:Ajoutde0}. \par
\ \par
This ends the proof of Proposition~\ref{Pro:EstimeesVetQ}.
\hfill \qedsymbol \par
%
%
%
%
%
% ------------------------------------------- Decay of the Glimm functional -------------------------------------------
%
%
%
%
%
\subsection{Local estimates: decay of the oracle Glimm-type functional}
\label{Subsec:Decay}
In this section, we prove Proposition~\ref{Pro:DecayGlimm}. \par
\ \par
We introduce a measure curve $\Gamma$ in the strip $[0,\tau] \times \R$,
 and we associate the various functionals (in particular $\Upsilon$) to the time-horizon $\tau$ and 
 to the curve $\Gamma$. \par
Now the structure of the proof is as follows. 
We first consider the evolution of the functional $\Upsilon$ and prove \eqref{Eq:DecayGlimm}.
Since $\Upsilon(t)$ is clearly constant between interaction points, 
one only has to check that for any interaction time $t$, one has
\begin{equation} \label{Eq:DecayUpsilon}
\Upsilon(t^+) \leq \Upsilon(t^-) .
\end{equation}
We will do so by considering the three possible types of interactions at time $t$. \par
Then in a second time, we prove \eqref{Eq:DecayGlimm2}, by showing that the decay of $\mathcal{Q}$
 is sufficient to control the possible growths of $\mathcal{V}_\Gamma$ and $\mathfrak{M}_\Gamma$. \par
\ \par
\noindent
We recall that at an interaction time, only two fronts are involved. 
Hence we consider an interaction time $t$ and two fronts $\alpha$ (on the left)
 and $\beta$ (on the right) that interact at time $t^-$. 
We call $u_l$ (respectively $u_r$) the leftmost (resp. rightmost) state at the interaction,
 and $u_m$ the state between the two fronts $\alpha$ and $\beta$, before they interact.
We discuss \eqref{Eq:DecayUpsilon} according to the nature of the waves 
 carried by $\alpha$ and $\beta$.
%
%
%
%
% -------------------------- Interactions same family
%
%
%
%
\subsubsection{Interactions of opposite families}
\label{Subsec:IOF}
Here we suppose that $\alpha$ and $\beta$ correspond to waves of different families. 
Since $\alpha$ and $\beta$ interact, this means that $\alpha$ is of family $2$ 
 and $\beta$ of family $1$. 
We call $\beta'$ the outgoing front of family $1$ and $\alpha'$ the outgoing front of family $2$
 (we recall that at interaction points of opposite families, only single fronts emerge for each family). 
\par
According to Proposition~\ref{Eq:InteractionDF}, we have
\begin{equation} \label{Eq:EvolForcesDeBase}
\sigma_{\beta'} = \sigma_{\beta} 
                \big( 1 + \mathcal{C}^1(u_m;\sigma_\beta,\sigma_\alpha) (\sigma_{\alpha})_*^3 + \mathfrak{e}_{\beta} \big)
 \ \text{ and } \ 
\sigma_{\alpha'} = \sigma_{\alpha} 
                \big( 1 + \mathcal{C}^2(u_m;\sigma_\beta,\sigma_\alpha) (\sigma_{\beta})_*^3 + \mathfrak{e}_{\alpha} \big),
\end{equation}
where $\mathfrak{e}_{\beta} = \mathcal{O}(1) (\sigma_{\alpha})_*^3 |\sigma_{\beta}|^2$ and 
$\mathfrak{e}_{\alpha} = \mathcal{O}(1) (\sigma_{\beta})_*^3 |\sigma_{\alpha}|^2$ are the possible
additional error terms from \eqref{Eq:InteractionDF-CW} when $\beta$ or $\alpha$ is a compression wave.
On the other side, due to \eqref{Eq:AmplificationFactor}, we have
\begin{equation*}
\mathcal{M}_{\beta'} = \mathcal{M}_{\beta} 
                       \big( 1 + \mathcal{C}^1(u_m;0,\sigma_\alpha) (\sigma_{\alpha})_*^3\big)^{-1}
 \ \text{ and } \ 
\mathcal{M}_{\alpha'} = \mathcal{M}_{\alpha} 
                        \big( 1 + \mathcal{C}^2(u_m;\sigma_\beta,0) (\sigma_{\beta})_*^3\big)^{-1}.
\end{equation*}
Using that $\mathcal{C}^1$ and $\mathcal{C}^2$ are Lipschitz, we deduce that
\begin{equation} \label{Eq:EvolForcesAmplifiees}
\widehat{\sigma}_{\beta'} = \mathcal{M}_{\beta'} \sigma_{\beta'} 
    = \widehat{\sigma}_{\beta} + \mathcal{O}(1) |(\sigma_\alpha)_*|^3 |\sigma_\beta|^2
 \ \text{ and } \ 
\widehat{\sigma}_{\alpha'} = \mathcal{M}_{\alpha'} \sigma_{\alpha'}
    = \widehat{\sigma}_{\alpha} + \mathcal{O}(1) |\sigma_\alpha|^2 |(\sigma_\beta)_*|^3.
\end{equation}
Now we measure the error induced by the transformation of $\alpha$ and $\beta$ 
on $\mathcal{V}$, $\mathcal{Q}_1$, $\mathcal{Q}_2$ and $\mathcal{Q}_3$. \par
\ \par \noindent
{\bf Evolution of $\mathcal{V}$.}
The functional $\mathcal{V}$ is affected in two ways by the interaction of $\alpha$ and $\beta$:
\begin{itemize}
\item the preamplified forces of $\alpha$ and $\beta$ are modified according to \eqref{Eq:EvolForcesAmplifiees};
\item the preamplified strengths of other fronts change as well,
 due to the modification of $\sigma_\alpha$ and $\sigma_\beta$ that affects
 the preamplification factors $\mathcal{M}_\gamma$ for fronts $\gamma$ approaching $\alpha$ and $\beta$.
\end{itemize}
\noindent
{\it Update of preamplification factors of other fronts.}
Call $\widetilde{\mathcal{V}}^1(t)$  (respectively $\widetilde{\mathcal{V}}^2(t)$)
the value obtained in \eqref{Eq:TotalStrengthes} by updating the preamplification factors
of $1$- (resp. $2$-) fronts $\gamma$ at time $t$ to take the modification of $\sigma_\beta$ 
 (resp. $\sigma_\alpha$) into account, but not yet the modification of the preamplified strength
 of the involved front $\widehat{\sigma}_\beta$ (resp. $\widehat{\sigma}_\alpha$) itself. \par
Let us start with the estimate of $\widetilde{\mathcal{V}}^1(t)$.
Consider a $1$-front $\gamma$ approaching $\alpha$.
The preamplified strength of $\gamma$ evolves according to
\begin{equation} \label{Eq:ChgtPreampFac1}
\widehat{\sigma}_{\gamma}(t^+) 
= \widehat{\sigma}_{\gamma}(t^-) 
\frac{ (1 + \mathcal{C}^1_{\alpha \gamma} (\sigma_{\alpha'})_*^3) }
    { (1 + \mathcal{C}^1_{\alpha \gamma} (\sigma_\alpha)_*^3)  }.
\end{equation}
Notice that this is valid included in the case where $\mathcal{C}^1_{\alpha \gamma}=0$. \par
We apply Proposition~\ref{Pro:ElemePropsVp}-\ref{Item:LipVp} with 
\begin{equation*}
f(x) = \frac{ (1 + x (\sigma_{\alpha'})_*^3) }{ (1 + x (\sigma_\alpha)_*^3)  } -1
\ \text{ for } x \in [-C_*,C_*],
\end{equation*}
where $C_*$ was introduced at the end of Paragraph~\ref{sss:SIE}.
We note that since the nature of fronts do not change across interactions of opposite families and due to \eqref{Eq:EvolForcesDeBase},
 we have
\begin{equation*}
\|f\|_{W^{1,\infty}} 
= \mathcal{O}(1) \big( (\sigma_{\alpha'})_*^3 - (\sigma_{\alpha})_*^3 \big)
= \mathcal{O}(1) |\sigma_\alpha|^3|\sigma_\beta|^3.
\end{equation*}
Consequently,
\begin{equation} \label{Eq:EstOscilCoefs}
v_p(f(\mathcal{C}^1_{\alpha \gamma})_{\gamma \in \mathcal{A}^1(t)})
\lesssim |\sigma_\alpha|^3|\sigma_\beta|^3 
            v_p((\mathcal{C}^1_{\alpha \gamma})_{\gamma \in \mathcal{A}^1(t)}).
\end{equation}
Now we use Corollary~\ref{Cor:MultiplSp} to deduce that
\begin{equation*}
s_p(( \widehat{\sigma}_{\gamma}(t^+) - \widehat{\sigma}_{\gamma}(t^-) )_{\gamma \in \mathcal{A}^1(t)}) 
\lesssim |\sigma_\alpha|^3|\sigma_\beta|^3 
    v_p((\mathcal{C}^1_{\alpha \gamma})_{\gamma \in \mathcal{A}^1(t)})
    s_p(( \widehat{\sigma}_{\gamma}(t^-) )_{\gamma \in \mathcal{A}^1(t)}) .
\end{equation*}
The preamplification factors of $1$-fronts to the left of $\alpha$ at time $t$
 do not change across this interaction.
Consequently, using Proposition~\ref{Pro:ElemePropsVp}-\ref{Item:DLVp} (and noticing that
 the above term is $\lesssim \mathcal{V}^1(t^-)$), we have 
\begin{eqnarray}
\nonumber 
\widetilde{\mathcal{V}}^1(t) 
&\leq& \mathcal{V}^1(t^-) + C \mathcal{V}^1(t^-)^{\frac{p-1}{p}} 
\Big[s_p(( \widehat{\sigma}_{\gamma}(t^+) - \widehat{\sigma}_{\gamma}(t^-) )_{\gamma \in \mathcal{A}^1(t)}) 
\Big] \\
\label{Eq:V1updatePreamp}
&\leq& \mathcal{V}^1(t^-) + C \mathcal{V}^1(t^-)^{\frac{p-1}{p}} 
\Big[|\sigma_\alpha|^3|\sigma_\beta|^3 
    v_p((\mathcal{C}^1_{\alpha \gamma})_{\gamma \in \mathcal{A}^1(t)})
    s_p(( \widehat{\sigma}_{\gamma}(t^-) )_{\gamma \in \mathcal{A}^1(t)})
\Big].
\end{eqnarray}
Reasoning in the same way for family $2$, we have
\begin{equation}
\label{Eq:V2updatePreamp}
\widetilde{\mathcal{V}}^2(t) 
\leq \mathcal{V}^2(t^-) + C \mathcal{V}^2(t^-)^{\frac{p-1}{p}} 
\Big[|\sigma_\alpha|^3|\sigma_\beta|^3 
    v_p((\mathcal{C}^2_{\beta \gamma})_{\gamma \in \mathcal{A}^2(t)})
    s_p(( \widehat{\sigma}_{\gamma}(t^-) )_{\gamma \in \mathcal{A}^2(t)})
\Big].
\end{equation}
Using Lemma~\ref{Lem:VpCoefsInteraction} (since $H(\tau)$ is valid), we have
 $v_p((\mathcal{C}^2_{\beta \gamma})_{\gamma \in \mathcal{A}^2(t)}) =\mathcal{O}(1)$
and
 $v_p((\mathcal{C}^1_{\alpha \gamma})_{\gamma \in \mathcal{A}^1(t)}) =\mathcal{O}(1)$.
Moreover we have
 $s_p(( \widehat{\sigma}_{\gamma}(t^-) )_{\gamma \in \mathcal{A}^2(t)}) \leq \mathcal{V}(t^-)^{1/p}$
and
 $s_p(( \widehat{\sigma}_{\gamma}(t^-) )_{\gamma \in \mathcal{A}^1(t)}) \leq \mathcal{V}(t^-)^{1/p}$.
Hence we find
\begin{equation} \nonumber 
\widetilde{\mathcal{V}}^1(t)  + \widetilde{\mathcal{V}}^2(t) 
 = \mathcal{V}(t^-) 
+ \mathcal{O}(1) \mathcal{V}(t^-) |\sigma_\alpha|^3 |\sigma_\beta|^3 .
\end{equation}
We note in passing that $\widetilde{\mathcal{V}}^1(t) \lesssim \mathcal{V}^1(t^-)$ and 
$\widetilde{\mathcal{V}}^2(t) \lesssim \mathcal{V}^2(t^-)$. \par
\ \par
\noindent
{\it Update of $\widehat{\sigma}_\alpha$ and $\widehat{\sigma}_\beta$.} 
The difference $\mathcal{V}^1(t^+) -\widetilde{\mathcal{V}}^1(t)$ 
 (respectively $\mathcal{V}^2(t^+) -\widetilde{\mathcal{V}}^2(t)$)
 is due to the change $\widehat{\sigma}_\beta(t^-)$ in $\widehat{\sigma}_\beta(t^+)$ 
 (resp. $\widehat{\sigma}_\alpha(t^-)$ in $\widehat{\sigma}_\alpha(t^+)$),
 for which we have \eqref{Eq:EvolForcesAmplifiees}.
Using Proposition~\ref{Pro:ElemePropsVp}--\ref{Item:DLVp}, we find that
\begin{equation*}
\mathcal{V}^1(t^+) -\widetilde{\mathcal{V}}^1(t)
    = \mathcal{O}(1) \widetilde{\mathcal{V}}^1(t)^{\frac{p-1}{p}} 
        |\widehat{\sigma}_\beta(t^-) - \widehat{\sigma}_\beta(t^+)|
    = \mathcal{O}(1) \mathcal{V}^1(t^-)^{\frac{p-1}{p}} 
          |\sigma_\beta|^2 |(\sigma_\alpha)_*|^3,
\end{equation*}
and in the same way
\begin{equation*}
\mathcal{V}^2(t^+) -\widetilde{\mathcal{V}}^2(t)
    = \mathcal{O}(1) \widetilde{\mathcal{V}}^2(t)^{\frac{p-1}{p}} 
          |\sigma_\alpha|^2 |(\sigma_\beta)_*|^3.
\end{equation*} \par
\ \par
\noindent
{\it Final estimate of $\mathcal{V}(t^+) - \mathcal{V}(t^-)$.} 
Putting these estimate together we obtain
\begin{equation} \label{Eq:EstVInterDiff}
\mathcal{V}(t^+) = \mathcal{V}(t^-) + \mathcal{O}(1) \mathcal{V}(t^-)^{\frac{p-1}{p}} 
        |\sigma_\alpha|^2 |\sigma_\beta|^2 (|\sigma_\alpha| + |\sigma_\beta|).
\end{equation} \par
\ \par
%n
%
\noindent
{\bf Evolution of $\mathcal{Q}_1$.}
As for $\mathcal{V}$, the quadratic term $\mathcal{Q}_1$ is modified through the change in $\widehat{\sigma}_{\alpha}$
and $\widehat{\sigma}_{\beta}$, and because the preamplified factors of other fronts change as well. 
We write
\begin{equation*}
\mathcal{Q}_1^k(t^+) - \mathcal{Q}_1^k(t^-)
= \sum_{\substack{ {\mu,\nu \in \mathcal{A}^k} \\ {\nu > \mu} }}
   \mathfrak{s}^k_r(\mu)(t^+) \, \Big[ \mathfrak{s}^k_l(\nu)(t^+) - \mathfrak{s}^k_l(\nu)(t^-) \Big]
+ \sum_{\substack{ {\mu,\nu \in \mathcal{A}^k} \\ {\nu > \mu} }}
   \Big[ \mathfrak{s}^k_r(\mu)(t^+) - \mathfrak{s}^k_r(\mu)(t^-) \Big] \, \mathfrak{s}^k_l(\nu)(t^-),
\end{equation*}
Recalling $\mathfrak{s}^k_l(\gamma) := \mathcal{V}^k( \leq \gamma) - \mathcal{V}^k( < \gamma)$,
 we find for the first sum that
\begin{equation*}
\sum_{\substack{ {\mu,\nu \in \mathcal{A}^k} \\ {\nu > \mu} }} \mathfrak{s}^k_r(\mu)(t^+)
    \, \Big[ \mathfrak{s}^k_l(\nu)(t^+) - \mathfrak{s}^k_l(\nu)(t^-) \Big] 
= \sum_{\mu \in \mathcal{A}^k} \mathfrak{s}^k_r(\mu)(t^+)
    \,  \Big[ \mathcal{V}^ k(t^+)-\mathcal{V}^k(\leq \mu)(t^+) - \mathcal{V}^ k(t^-) + \mathcal{V}^k(\leq \mu)(t^-) \Big] .
\end{equation*}
Now following the same lines as for \eqref{Eq:EstVInterDiff}, we see that we can estimate 
$\mathcal{V}^k(\leq \mu; t^+) -\mathcal{V}^k(\leq \mu; t^-)$ in the same way as 
$\mathcal{V}^k(t^+) - \mathcal{V}^k(t^-)$. 
Namely, the only differences are as follows. The update of preamplification factors only concern fronts 
 to the left of $\mu$. 
 Hence, the errors in \eqref{Eq:V1updatePreamp} and \eqref{Eq:V2updatePreamp} are replaced here with
\begin{multline*}
\mathcal{O}(1) \, \mathcal{V}_\mu^1(t^-)^{\frac{p-1}{p}} 
\Big[|\sigma_\alpha|^3|\sigma_\beta|^3 
    v_p((\mathcal{C}^2_{\alpha \gamma})_{\gamma \in  \mathcal{A}^1_{\leq \mu}})
    s_p(( \widehat{\sigma}_{\gamma}(t^-) )_{\gamma \in  \mathcal{A}^1_{\leq \mu}})
\Big]
\\ \text{ and } \ 
\mathcal{O}(1) \, \mathcal{V}_\mu^2(t^-)^{\frac{p-1}{p}} 
\Big[|\sigma_\alpha|^3|\sigma_\beta|^3 
    v_p((\mathcal{C}^1_{\beta \gamma})_{\gamma \in \mathcal{A}^2_{\leq \mu}})
    s_p(( \widehat{\sigma}_{\gamma}(t^-) )_{\gamma \in \mathcal{A}^2_{\leq \mu}})
\Big]. 
\end{multline*}
The update in the preamplified forces $\widehat{\sigma}_\alpha$ and $\widehat{\sigma}_\beta$
 is the same, but only apply when $\alpha$ and $\beta$ are to the left of $\mu$.
Hence we get in the same way that 
\begin{equation*} 
\mathcal{V}^k(\leq \mu; t^+) = \mathcal{V}^k(\leq \mu; t^-) + \delta \mathcal{V}(t),
\end{equation*}
with
\begin{equation*}
\delta \mathcal{V}(t)= 
\mathcal{O}(1) \mathcal{V}(t^-)^{\frac{p-1}{p}} 
            |\sigma_\alpha|^2 |\sigma_\beta|^2 (|\sigma_\alpha| + |\sigma_\beta|).
\end{equation*}
Hence we find 
\begin{equation*}
\mathcal{Q}_1^k(t^+) - \mathcal{Q}_1^k(t^-)
\leq \delta \mathcal{V}(t)  \sum_{\mu \in \mathcal{A}^k} \mathfrak{s}^k_r(\mu)(t^+) \\
=  \delta \mathcal{V}(t) \, \mathcal{V}^ k(t^+)
\leq 2 \delta \mathcal{V}(t) \, \mathcal{V}^ k(t^-).
\end{equation*}
The other sum can be treated likewise, so we finally find that
\begin{equation} \label{Eq:EstQ1-1}
\mathcal{Q}_1^k(t^+) - \mathcal{Q}_1^k(t^-)
= \mathcal{O}(1) \mathcal{V}(t^-)^{\frac{2p-1}{p}} |\sigma_\alpha|^2 |\sigma_\beta|^2 
                                        (|\sigma_\alpha| + |\sigma_\beta|) .
\end{equation} \par
\ \par \noindent
{\bf Evolution of $\mathcal{Q}_2$.}
The functional $\mathcal{Q}_2$ is treated exactly in the same way as $\mathcal{Q}_1$, 
 with the only difference that a term is removed from $\mathcal{Q}_2$ in the case of
 an interaction of fronts of opposite families.
 % and the fact that here the change in $\widehat{\sigma_\alpha}$
 % affects $\mathcal{V}^2$ and the one in $\widehat{\sigma_\beta}$ affects $\mathcal{V}^1$. 
Hence following the same lines as for $\mathcal{Q}_1$, we find
\begin{equation*}
\mathcal{Q}_2(t^+) - \mathcal{Q}_2(t^-) = - \mathfrak{s}^k_r(\alpha) \, \mathfrak{s}^k_l(\beta)
+ \mathcal{O}(1) \mathcal{V}(t^-)^{\frac{2p-1}{p}} |\sigma_\alpha|^2 |\sigma_\beta|^2 
                                        (|\sigma_\alpha| + |\sigma_\beta|).
\end{equation*}
Now using Proposition~\ref{Pro:PropsVpEndpoints}--\ref{Item:EndPoint2} and \eqref{Eq:Mentre2et12}, we have that
$$
\mathfrak{s}^1_r(\alpha) = \mathcal{V}^1(\geq \alpha) - \mathcal{V}^1(> \alpha) 
        \geq |\widehat{\sigma}_\alpha|^p \geq 2^{-p}|\sigma_\alpha|^p
\ \text{ and } \ 
\mathfrak{s}^1_l(\beta) 
        = \mathcal{V}^1(\leq \beta) - \mathcal{V}^1(< \beta) \geq |\widehat{\sigma}_\beta|^{p} \geq 2^{-p}|\sigma_\beta|^{p}.
$$
We finally get
\begin{equation*}
\mathcal{Q}_2(t^+) - \mathcal{Q}_2(t^-) \leq - 2^{-p} |\sigma_\alpha|^p \, |\sigma_\beta|^{p}
+ \mathcal{O}(1) \mathcal{V}(t^-)^{\frac{2p-1}{p}} |\sigma_\alpha|^2 |\sigma_\beta|^2 
                                        (|\sigma_\alpha| + |\sigma_\beta|) .
\end{equation*}
\ \par \noindent
{\bf Evolution of $\mathcal{Q}_3$.}
The case of $\mathcal{Q}_3$ is simpler, because $\mathcal{Q}_3$ involves only standard
 (local, unampflified) forces. 
Using Proposition~\ref{Prop:InteractionDF} and the fact that at an interaction point for different families, 
 the outgoing fronts have the same nature as the incoming ones, one finds,
 using simply \eqref{Eq:SommePuissanceP} for $p=3$,
\begin{equation*}
\mathcal{Q}_3(t^+) - \mathcal{Q}_3(t^-) =  \mathcal{O}(1) \,|\sigma_\alpha(t^-)|^3 |\sigma_\beta(t^-)|^3 .
\end{equation*}
\ \par \noindent
{\bf Conclusion.}
By $H(\tau)$ and Proposition~\ref{Pro:EstimeesVetQ} we have $\mathcal{V}(t) \lesssim V_p(u_0)^p$.
Hence using $p<2$, and relying on the $- 2^{-p} |\sigma_\alpha(t^-)|^p |\sigma_\beta(t^-)|^p$ term in the decay
 of $\mathcal{Q}_2$, we easily find that $\Upsilon(t^+) - \Upsilon(t^-) \leq 0$ in that case, 
 provided that $V_p(u_0)$ is small enough. This does not require $C_1$ or $C_2$ to be large. \par
\smallskip
%
%
% -------------------------- Interactions within a family --- monotone case
%
%
%
%
\subsubsection{Interactions within a family: monotone case}
\label{sss:monotone}
 
We consider two fronts $\alpha$ and $\beta$ of the same family (say, of family $1$),
 with $\sigma_\alpha \sigma_\beta >0$, interacting at time~$t$.
We rely on Proposition~\ref{Prop:InteractionSF} to measure the strengths of new waves. 
%We obtain the following. \par
%
%
We denote $\mathcal{E}_{\alpha\beta}$ a term allowing to measure the error in the interaction estimate, namely
\begin{equation*}
\mathcal{E}_{\alpha \beta} =
\left\{ \begin{array}{l}
 |\sigma_\alpha \sigma_\beta^2| + |\sigma_\alpha^2 \sigma_\beta| 
        \text{ if } \alpha \text{ and } \beta \text{ are both classical}, \medskip \\
 |\sigma_\alpha \sigma_\beta^2| + |\sigma_\alpha^2 \sigma_\beta| + |\sigma_\beta|^3 
        \text{ if } \alpha \text{ is a shock and } \beta \text{ is a CW}, \medskip \\
 |\sigma_\alpha \sigma_\beta^2| + |\sigma_\alpha^2 \sigma_\beta| + |\sigma_\alpha|^3 
        \text{ if } \alpha \text{ is a CW and } \beta \text{ is a shock}, \medskip \\
 |\sigma_\alpha \sigma_\beta^2| + |\sigma_\alpha^2 \sigma_\beta| + |\sigma_\alpha|^3 + |\sigma_\beta|^3 
        \text{ if } \alpha \text{ and } \beta 
        \text{ are both CW with } \sigma_\alpha + \sigma_\beta \leq - 2 {\nu}, \medskip \\
 0 \text{ if } \alpha \text{ and } \beta 
        \text{ are both CW with } \sigma_\alpha + \sigma_\beta > - 2{\nu}.
\end{array} \right.
\end{equation*}
Call $\alpha'$ the outgoing front of family $1$; 
 there can be several outgoing fronts in family $2$, which we call $\mu_1,\ldots,\mu_k$ (all of the same sign).
Proposition~\ref{Prop:InteractionSF} establishes that
\begin{equation} \label{Eq:EvolForcesMonot}
\sigma_{\alpha'} = \sigma_\alpha + \sigma_\beta + \mathcal{O}(1) \mathcal{E}_{\alpha\beta}^3
\ \text{ and } \ 
\sigma_{\mu_1} + \ldots + \sigma_{\mu_k} =\mathcal{O}(1) \mathcal{E}_{\alpha\beta}.
\end{equation}
The preamplification factors of $\alpha$ and $\beta$ do not change across the interaction (and coincide), 
 while the preamplification factor for the outgoing $\mu_1,\ldots,\mu_k$ are bounded, so that
\begin{equation} \label{Eq:SortieMC}
\widehat{\sigma}_{\alpha'} = \widehat{\sigma}_\alpha + \widehat{\sigma}_\beta + \mathcal{O}(1) \mathcal{E}_{\alpha\beta}^3
\ \text{ and } \ 
\widehat{\sigma}_{\mu_1} + \ldots + \widehat{\sigma}_{\mu_k} =\mathcal{O}(1) \mathcal{E}_{\alpha\beta}.
\end{equation}
\ \par
\noindent
{\bf Evolution of $\mathcal{V}$.}
As before, $\mathcal{V}$ is modified due to \eqref{Eq:SortieMC}, and also because of the change
 in the preamplification factors of fronts other than $\alpha$ and $\beta$. \par
\ \par
\noindent
{\it Evolution of $\mathcal{V}^1$.} 
Due to the newly created $2$-wave at the interaction point,
the $1$-fronts $\gamma$ to the right of $\beta$ get an additional factor
 (only non-trivial when $k=1$ and $\mu_1$ is a shock) given by 
 $(1+\mathcal{C}^1_{\mu_1 \gamma} (\sigma_{\mu_1})_*^3)$. 
Denoting $\{ \gamma^1_1, \ldots, \gamma^1_M \}$ the $1$-fronts at time $t^-$ as in \eqref{Eq:DefASets}, with $\beta=\gamma_k$,
 this corresponds to multiplying the sequence
$(\widehat{\sigma}_{\gamma^1_1}, \ldots, \widehat{\sigma}_{\gamma^1_k}, 
            \widehat{\sigma}_{\gamma^1_{k+1}} \ldots, \widehat{\sigma}_{\gamma^1_M})$
by the sequence
$(1,\ldots,1, 1 + \mathcal{C}^1_{\mu_1 \gamma^1_{k+1}} (\sigma_{\mu_1})_* ^3, 
                \ldots, 1 + \mathcal{C}^1_{\mu_1 \gamma^1_{M}} (\sigma_{\mu_1})_*^3 )$,
that is to say, to add to it
$(0,\ldots,0, \mathcal{C}^1_{\mu_1 \gamma^1_{k+1}} (\sigma_{\mu_1})_*^3 \, \sigma_{\gamma^1_{\mu_{k+1}}} , 
               \ldots, \mathcal{C}^1_{\mu_1 \gamma^1_{M}} (\sigma_{\mu_1})_*^3 \, \sigma_{\gamma^1_{\mu_{K}}} )$.
Using Corollary~\ref{Cor:MultiplSp} we estimate the $p$-sum of this term by
\begin{equation*}
(\sigma_{\mu_1})_*^3 \, v_p(0, \mathcal{C}^1_{\mu_1 \gamma^1_{k+1}}, \ldots,
            \mathcal{C}^1_{\mu_1 \gamma^1_{M}}) \, 
            s_p(\widehat{\sigma}_{\gamma^1_{\mu_{k+1}}},\ldots,\widehat{\sigma}_{\gamma^1_{\mu_{K}}})
\lesssim \mathcal{E}_{\alpha\beta}^3 \, \mathcal{V}^1(t^-)^{\frac{1}{p}} ,
\end{equation*}
where we used Lemma~\ref{Lem:VpCoefsInteraction} (since $H(\tau)$ is valid) 
 and Proposition~\ref{Pro:ElemePropsVp}--\ref{Item:Ajoutde0}. \par
Taking additionally into account the error due to $\widehat{\sigma}_\alpha,\widehat{\sigma}_\beta$ being transformed 
 into $\widehat{\sigma}_{\alpha'}$, using Proposition~\ref{Pro:ElemePropsVp}--\ref{Item:DLVp}\&\ref{Item:Fusion}, 
 we find that
\begin{equation*}
\mathcal{V}^1(t^+) \leq \mathcal{V}^1(t^-) 
+ \mathcal{O}(1) \mathcal{V}^1(t^-)^{\frac{p-1}{p}} \mathcal{E}_{\alpha\beta}.
% \Big[
% + \mathcal{E}_{\alpha\beta}^3 \, s_p(C_{\mu_1 \gamma}, \gamma \in \mathcal{A}^1_{>\mu_1}(t)) \Big].
\end{equation*}
\ \par
\noindent
{\it Evolution of $\mathcal{V}^2$.} 
We now consider the change in the preamplification factors for $2$-fronts. 
For $\gamma$ of family $2$ such that $\gamma <\alpha$, the factor evolves according to
\begin{equation} \label{Eq:ChgtPreampFac2}
\widehat{\sigma}_{\gamma}(t^+) = \widehat{\sigma}_{\gamma}(t^-) 
\frac{ (1 + \mathcal{C}^2_{\alpha \gamma} (\sigma_{\alpha'})_*^3) }
     { (1 + \mathcal{C}^2_{\alpha \gamma} (\sigma_\alpha)_*^3)  
                (1 + \mathcal{C}^2_{\alpha \gamma} (\sigma_\beta)_*^3)  }.
\end{equation}
The fact that the coefficient $\mathcal{C}^2_{\alpha \gamma}$ is the same for all three expressions 
is due to the fact that all these fronts correspond to the same front-line when interacting with $\gamma$. 
Hence here we set
\begin{equation} \label{Eq:fCoefMC}
f(x) = \frac{ (1 + x (\sigma_{\alpha'})_*^3) }
            { (1 + x (\sigma_\alpha)_*^3) (1 + x (\sigma_\beta)_*^3)  }, \ \ x \in [-C_*,C_*],
\end{equation}
and observe that here we have in all cases
$\|f\|_{W^{1,\infty}} = \mathcal{O}(1) \Big((\sigma_{\alpha'})_*^3 - (\sigma_\alpha)_*^3 - (\sigma_\beta)_*^3 \Big)
= \mathcal{O}(1) \mathcal{E}_{\alpha \beta}$.
Hence we find 
\begin{equation*} 
\widetilde{\mathcal{V}}^2(t) \leq \mathcal{V}^2(t^-) + \mathcal{O}(1) \mathcal{V}^2(t^-)^{\frac{p-1}{p}} \mathcal{E}_{\alpha \beta},
\end{equation*}
where, as in Subsection~\ref{Subsec:IOF}, $\widetilde{\mathcal{V}}^2(t)$ denotes the value obtained after updating
 the preamplification factors in $\mathcal{V}^2(t^-)$. \par
The rest of the change in $\mathcal{V}^2$ is due the new $2$-waves $\mu_1,\ldots,\mu_k$ which have to be included in $\mathcal{V}^2$.
Due to \eqref{Eq:SortieMC}, we have $s_p(\mu_1,\ldots,\mu_k) \lesssim \mathcal{E}_{\alpha\beta}$.
Using Proposition~\ref{Pro:ElemePropsVp}--\ref{Item:DLVp}, we deduce
\begin{equation*} 
\mathcal{V}^2(t^+) - \widetilde{\mathcal{V}}^2(t)  \leq \mathcal{O}(1) 
\Big(\widetilde{\mathcal{V}}^2(t)^{\frac{p-1}{p}} \mathcal{E}_{\alpha \beta} + \mathcal{E}_{\alpha \beta}^p\Big).
\end{equation*}
Finally, using the rough estimate $\mathcal{E}_{\alpha\beta} \lesssim \mathcal{V}_1(t^-)^{1/p}$, 
 we find that
\begin{equation*} 
\mathcal{V}(t^+) \leq \mathcal{V}(t^-) 
+ \mathcal{O}(1) \widetilde{\mathcal{V}}(t^-)^{\frac{p-1}{p}} \mathcal{E}_{\alpha \beta}.
\end{equation*}
\ \par
\ \par
\noindent
{\bf Evolution of $\mathcal{Q}_1$.}
The modification of $\mathcal{Q}_1^1$ has three sources:
\begin{itemize}
\item the various $\mathfrak{s}^1_r(\gamma)$ and $\mathfrak{s}^1_l(\gamma)$ are modified 
      (when $\gamma < \alpha$ and $\gamma > \alpha$, respectively) due to the change 
      in the preamplification factors that have to take into account the appearing of a new wave in family~$2$,
\item the terms $( \mathfrak{s}^1_r(\alpha) + \mathfrak{s}^1_r(\beta) ) \, \mathfrak{s}^1_l(\gamma)$ are
      replaced with $\mathfrak{s}^1_r(\alpha') \, \mathfrak{s}^1_l(\gamma)$ and in the other direction,
      terms $\mathfrak{s}^1_r(\gamma) \, ( \mathfrak{s}^1_l(\alpha) + \mathfrak{s}^1_l(\beta) )$ are
      replaced with $\mathfrak{s}^1_r(\gamma) \mathfrak{s}^1_l(\alpha')$,
\item the term $\mathfrak{s}^1_r(\alpha) \, \mathfrak{s}^1_l(\beta)$ is removed 
      from the sum.
\end{itemize}
The change of the preamplification factors were considered when we studied the evolution of $\mathcal{V}$. 
Reasoning as for \eqref{Eq:EstQ1-1}, using again $\mathcal{E}_{\alpha\beta} \lesssim \mathcal{V}_1(t^-)^{1/p}$,
 we deduce that the first error can be estimated by
 $\mathcal{O}(1) \mathcal{V}(t^-)^{\frac{2p-1}{p}} \mathcal{E}_{\alpha \beta}$. \par
For what concerns the second error, Proposition~\ref{Pro:ElemePropsVp}--\ref{Item:Fusion} 
 proves that there would be no change if we had $\sigma_{\alpha'}=\sigma_\alpha + \sigma_\beta$. 
Hence, the error is only due to the additional $\mathcal{O}(1)\mathcal{E}_{\alpha\beta}^3$ term. 
This involves with Proposition~\ref{Pro:ElemePropsVp}--\ref{Item:DLVp} that this second error can be measured by 
 $\mathcal{O}(1)\mathcal{E}_{\alpha\beta}^3 \mathcal{V}^1(t^-)^{\frac{2p-1}{p}}$. \par
For what concerns the removed term $\mathfrak{s}^1_r(\alpha) \, \mathfrak{s}^1_l(\beta)$, 
due to Proposition~\ref{Pro:PropsVpEndpoints}--\ref{Item:EndPoint2} and $\sigma_\alpha \sigma_\beta >0$,
 we have that
$$
\mathfrak{s}^1_r(\alpha) = \mathcal{V}^1(\geq \alpha) - \mathcal{V}^1(> \alpha) 
        \geq p |\widehat{\sigma}_\beta|^{p-1}|\widehat{\sigma}_\alpha|
\ \text{ and } \ 
\mathfrak{s}^1_l(\beta) 
        = \mathcal{V}^1(\leq \beta) - \mathcal{V}^1(< \beta) \geq |\widehat{\sigma}_\beta|^{p},
$$
so using \eqref{Eq:Mentre2et12}, the corresponding gain is
\begin{equation} \label{Eq:Loss1}
- \mathfrak{s}^1_r(\alpha) \mathfrak{s}^1_l(\beta) \leq -p\, 2^{-2p} |\sigma_\alpha| |\sigma_\beta|^{2p-1} .
\end{equation}
Estimating in the other way around
$$
\mathfrak{s}^1_r(\alpha) 
        = \mathcal{V}^1(\geq \alpha) - \mathcal{V}^1(> \alpha) \geq |\widehat{\sigma}_\alpha|^p
\ \text{ and } \ 
\mathfrak{s}^1_l(\beta) = \mathcal{V}^1(\leq \beta) - \mathcal{V}^1(< \beta) 
        \geq p |\widehat{\sigma}_\alpha|^{p-1}|\widehat{\sigma}_\beta|,
$$
we also have
\begin{equation*}
- \mathfrak{s}^1_r(\alpha) \mathfrak{s}^1_l(\beta) \leq -p\, 2^{-2p} |\sigma_\alpha|^{2p-1} |\sigma_\beta| .
\end{equation*}
The modification of $\mathcal{Q}_1^2$ has two sources:
\begin{itemize}
\item the various $\mathfrak{s}^2_r(\gamma)$ and $\mathfrak{s}^2_l(\gamma)$ are modified 
      (when $\gamma \leq \alpha$ and $\gamma \geq \alpha$, respectively) due to the change 
      in the preamplification factors that have to take into account the merging of $\alpha$ and $\beta$,
\item the new interaction terms $\mathfrak{s}^2_r(\mu_i) \, \mathfrak{s}^2_l(\gamma)$ and
      $\mathfrak{s}^2_r(\gamma) \mathfrak{s}^2_l(\mu_i)$. 
\end{itemize}
Reasoning as before, using the same function $f$ as for \eqref{Eq:fCoefMC}, we see that the error corresponding
 to the first source can be estimated by 
$\mathcal{O}(1) \mathcal{V}^2(t^-) \Big( \mathcal{V}^2(t^-)^{\frac{p-1}{p}} \mathcal{E}_{\alpha\beta} 
        + \mathcal{E}_{\alpha\beta}^p \Big)$. 
For the second source, we have $\sum_{i=1}^{k} |\mu_i| \lesssim \mathcal{E}_{\alpha\beta}$ which with
 Proposition~\ref{Pro:ElemePropsVp}--\ref{Item:SpSpPrime}, \ref{Item:DLVp} yields 
 $\sum_{i=1}^{k} \mathfrak{s}^2_r(\mu_i) \lesssim \mathcal{V}^2(t^-)^{\frac{p-1}{p}} \mathcal{E}_{\alpha\beta} 
 + \mathcal{E}_{\alpha\beta}^p$, so that we can estimate the second error as the first one. \par
\ \par
\noindent
In total, using again $\mathcal{E}_{\alpha\beta} \lesssim \mathcal{V}_1(t^-)^{1/p}$, we deduce that
\begin{equation} \label{Eq:EstQ1-2}
\mathcal{Q}_1(t^+) - \mathcal{Q}_1(t^-)
\leq -\frac{p\, 2^{-2p}}{2} |\sigma_\alpha| |\sigma_\beta|^{2p-1} 
  -\frac{p\, 2^{-2p}}{2} |\sigma_\alpha|^{2p-1} |\sigma_\beta| 
+ \mathcal{O}(1) \mathcal{V}(t^-)^{\frac{2p-1}{p}} \mathcal{E}_{\alpha \beta}.
\end{equation}
{\bf Evolution of $\mathcal{Q}_2$.}
The functional $\mathcal{Q}_2$ is treated exactly in the same way as $\mathcal{Q}_1$, 
 with the only difference that no term is removed from $\mathcal{Q}_2$ here. 
Hence we get here
\begin{equation} \label{Eq:EstQ2-2}
\mathcal{Q}_2(t^+) - \mathcal{Q}_2(t^-)
= \mathcal{O}(1) \mathcal{V}(t^-)^{\frac{2p-1}{p}}  \mathcal{E}_{\alpha \beta}.
\end{equation}
\ \par
\noindent
{\bf Evolution of $\mathcal{Q}_3$.}
We denote
\begin{equation*}
\mathcal{D}_{\alpha \beta} =
\left\{ \begin{array}{l}
|\sigma_\beta|^3 \text{ if } \alpha \text{ is a shock and } \beta \text{ is a CW}, \\
|\sigma_\alpha|^3  \text{ if } \alpha \text{ is a CW and } \beta \text{ is a shock}, \\
|\sigma_\alpha|^3 + |\sigma_\beta|^3 \text{ if } \alpha \text{ and } \beta \text{ are CW with }
                            \sigma_\alpha + \sigma_\beta \leq - 2{\nu}, \\
0 \text{ otherwise.}
\end{array} \right.
\end{equation*}
Then one deduces from \eqref{Eq:EvolForcesMonot} and the fact that $\alpha'$ is a CW only if $\alpha$
 and $\beta$ are both CW with $\sigma_\alpha + \sigma_\beta > -2\nu$, that
\begin{equation} \label{Eq:VarQ3}
\mathcal{Q}_3(t^+) \leq \mathcal{Q}_3(t^-) 
+ 3 (1+\delta_{CW}(\alpha')) |\sigma_\alpha| |\sigma_\beta| \big( |\sigma_\alpha| + |\sigma_\beta| \big) 
- \mathcal{D}_{\alpha \beta}
+ \mathcal{O}(1) \mathcal{E}_{\alpha\beta}^3.
\end{equation} \par
\ \par
\noindent   
\ \par \noindent
{\bf Conclusion.}
For $C_1$ large enough in \eqref{Eq:DefGlimmFunc}, the decay
 $-\frac{p\, 2^{-2p}}{2} |\sigma_\alpha| |\sigma_\beta|^{2p-1} -\frac{p\, 2^{-2p}}{2} |\sigma_\alpha|^{2p-1} |\sigma_\beta|$ 
 compensates for the second term in the right-hand side of \eqref{Eq:VarQ3}.
 (We can see here that $C_1$ is only needed for $p=\frac{3}{2}$.)
Then under $H(\tau)$, if $V_p(u_0)$ is small enough, and taking $C_2$ large enough
 (which is only needed if $p=1$) the terms 
$-\frac{p\, 2^{-2p}}{2} |\sigma_\alpha| |\sigma_\beta|^{2p-1} -\frac{p\, 2^{-2p}}{2} |\sigma_\alpha|^{2p-1} |\sigma_\beta|$ 
and 
$- \mathcal{D}_{\alpha \beta}$
compensate for all terms containing $\mathcal{E}_{\alpha\beta}$. %\par
Hence we also find in that case that $\Upsilon(t^+) - \Upsilon(t^-) \leq 0$. \par
%
%
%
%
%
% -------------------------- Interactions within a family --- non-monotone case
%
%
%
%
\subsubsection{Interactions within a family: non-monotone case}
\label{sss:nonmonotone}
As in the previous paragraph, we consider again two fronts $\alpha$ and $\beta$ 
 of the same family (say, of family $1$), interacting at time $t$, 
 with this time $\sigma_\alpha \sigma_\beta <0$.  \par
%
%Suppose for instance that $|\sigma_\alpha| \geq |\sigma_\beta|$.
%
\ \par
\noindent
{\bf Estimate of $\mathcal{V}$.}
The estimates on the evolution of the parts $\mathcal{V}$ of the functional are similar 
 in this case as the ones of the monotone case. 
Even, they are simpler since there is no transformation of a compression wave in a shock here, and the error
 term is systematically
 $\mathcal{E}_{\alpha\beta}= |\sigma_\alpha| |\sigma_\beta| (|\sigma_\alpha| + |\sigma_\beta|)$. 
Hence we find
\begin{equation*} 
\mathcal{V}(t^+) \leq \mathcal{V}(t^-) + \mathcal{O}(1) \mathcal{V}(t^-)^{\frac{p-1}{p}}  |\sigma_\alpha| |\sigma_\beta| 
\left(|\sigma_\alpha| + |\sigma_\beta|\right) .
\end{equation*}
The evolution of $\mathcal{Q}_1$, $\mathcal{Q}_2$ and $\mathcal{Q}_3$ is merely slightly different. \par
\ \par
\noindent
{\bf Estimate of $\mathcal{Q}_1$ and $\mathcal{Q}_2$.}
The evolution in $\mathcal{Q}_1$  and $\mathcal{Q}_2$ of the various $\mathfrak{s}^1_r(\gamma)$ 
 and $\mathfrak{s}^1_l(\gamma)$  due to the change in the preamplification factors is similar
 to the monotone case, but the estimates on the removed term $\mathfrak{s}^1_r(\alpha)\mathfrak{s}^1_l(\beta)$ is 
 different due to non-monotonicity, that is, we can merely estimate here 
 (using again Proposition~\ref{Pro:PropsVpEndpoints}--\ref{Item:EndPoint2} and \eqref{Eq:Mentre2et12})
\begin{equation*} 
- \mathfrak{s}^1_r(\alpha) \mathfrak{s}^1_l(\beta) \leq - 2^{-2p} |\sigma_\alpha|^{p} |\sigma_\beta|^{p} .
\end{equation*}
Hence we have here
\begin{gather*} 
\mathcal{Q}_1^k(t^+) - \mathcal{Q}_1^k(t^-)
\leq - 2^{-2p} |\sigma_\alpha|^{p} |\sigma_\beta|^{p} 
+ \mathcal{O}(1) \mathcal{V}(t^-)^{\frac{2p-1}{p}}  
    |\sigma_\alpha| |\sigma_\beta| \left(|\sigma_\alpha| + |\sigma_\beta|\right), \\
\mathcal{Q}_2(t^+) - \mathcal{Q}_2(t^-) 
= \mathcal{O}(1) \mathcal{V}(t^-)^{\frac{2p-1}{p}} |\sigma_\alpha| |\sigma_\beta| 
\left(|\sigma_\alpha| + |\sigma_\beta|\right) .
\end{gather*}
\ \par
\noindent
{\bf Estimate of $\mathcal{Q}_3$.}
Here we benefit from the cancellation effect between $\sigma_\alpha$ and $\sigma_\beta$. 
Note that due to the behavior of our front-tracking algorithm in the case of same-family
interaction, there cannot be a transformation of a CW into a shock here. \par
Using the fact that $\sigma_\alpha \sigma_\beta <0$, we have
\begin{equation*}
|\sigma_\alpha + \sigma_\beta|^3 - |\sigma_\alpha|^3 - |\sigma_\beta|^3
\leq -\frac{3}{4} \max(|\sigma_\alpha|, |\sigma_\beta|)^2 \min(|\sigma_\alpha|, |\sigma_\beta|).
\end{equation*}
(For $a>b>0$, one has $(a-b)^3 - a^3 - b^3 =- 3 a^2 b + 3 a b^2 - b^3$ and one uses
$3ab \leq b^2 + \frac{9}{4} a^2$.) \par
Adding the errors of \eqref{Eq:EvolForcesMonot}, we find
\begin{equation*}
\mathcal{Q}_3(t^+) - \mathcal{Q}_3(t^-) \leq
-\frac{3}{4} \max(|\sigma_\alpha|, |\sigma_\beta|)^2 \min(|\sigma_\alpha|, |\sigma_\beta|)
+ \mathcal{O}(1) |\sigma_\alpha|^3 |\sigma_\beta|^3 \big( |\sigma_\alpha| + |\sigma_\beta| \big)^3.
\end{equation*}
\ \par \noindent
{\bf Conclusion.} 
Relying on the decay 
 $-\frac{3}{4} \max(|\sigma_\alpha|, |\sigma_\beta|)^2 \min(|\sigma_\alpha|, |\sigma_\beta|)$
 of the $\mathcal{Q}_3$ term,
 we find again in that case that $\Upsilon(t^+) - \Upsilon(t^-) \leq 0$ provided that $V_p(u_0)$ 
is small enough and that $C_2$ is large enough in the case $p=1$.
%
%
%
%
%
%
% -------------------------- Measures along curves --------------------------
%
%
%
%
\subsubsection{Measures along curves}
\label{sss:curves}
We consider $\Gamma$ a wave measure curve in the strip $[0,\tau] \times \R$.
Again we only have to consider interaction times and to prove that at such $t$:
\begin{gather} 
\label{Eq:DecayUpsilonGamma1}
\mathcal{V}_\Gamma(t^+) +\mathcal{Q}(t^+)\leq \mathcal{V}_\Gamma(t^-) +\mathcal{Q}(t^-) \\
\label{Eq:DecayUpsilonGamma2}
\mathfrak{M}_\Gamma(t^+) +\mathcal{Q}(t^+)\leq \mathfrak{M}_\Gamma(t^-) +\mathcal{Q}(t^-).
\end{gather}
\ \par
\noindent
{\bf Evolution of $\mathcal{V}_\Gamma$.} 
Since in $\mathcal{V}_\Gamma$ the preamplified forces $\widehat{\sigma}_\gamma$ are frozen after $\gamma$ has met
 $\Gamma$, we only have to consider the evolution of preamplified forces:
\begin{itemize}
\item on the half-line to the left of $(t,\Gamma(t))$ for $1$-lower and $2$-upper measure curves,
\item on the half-line to the right of $(t,\Gamma(t))$ for $2$-lower and $1$-upper measure curves.
\end{itemize}
Call $I_t$ the corresponding half-line at time $t$. 
Then as for $\mathcal{V}$, the change of $\mathcal{V}_\Gamma$ at time $t$ is due to:
\begin{itemize}
\item The change in the strength of the fronts that interact, 
    and the possible strength of the new outgoing waves. 
    This is compensated by $\mathcal{Q}(t^+) - \mathcal{Q}(t^-)$, as shown in the previous paragraphs.
\item The modification of the preamplification factors: here this concerns only fronts that cross $I_t$,
      since other preamplification factors are frozen.
      Notice in particular that when an interaction of opposite families is met on $I_t$, the corresponding
      preamplification factor is not represented in $\mathfrak{M}_\Gamma(t)$, because this interaction takes place
      in the limited component of $(\R_+ \times \R) \setminus \Gamma$.
      Hence no factor $\mathcal{M}_\alpha$ disappears from the preamplified strengths in $\mathcal{V}_\Gamma(t)$.
      Consequently, updating $\mathcal{V}_\Gamma(t)$ corresponds to multiplying the preamplified strengths by 
    $\left(1,\ldots,1, \left(\frac{\mathcal{M}_{\alpha}(t^+)}{\mathcal{M}_{\alpha}(t^-)}\right)_{\alpha > \Gamma(t)}\right)$ 
        if $\Gamma$ is a $1$-lower or a $2$-upper measure curve, and by
    $\left(\left(\frac{\mathcal{M}_{\alpha}(t^+)}{\mathcal{M}_{\alpha}(t^-)}\right)_{\alpha < \Gamma(t)}, 1,\ldots,1,\right)$ 
    if $\Gamma$ is a $2$-lower or a $1$-upper measure curve.
    The $p$-variation $v^p_p\left(\frac{\mathcal{M}_{\alpha}(t^+)}{\mathcal{M}_{\alpha}(t^-)}\right)$ has been estimated in the previous paragraphs. ``Flattening'' the sequence $\frac{\mathcal{M}_{\alpha}(t^+)}{\mathcal{M}_{\alpha}(t^-)}$ on the left/right of $\Gamma(t)$, this gives:
\begin{eqnarray}
\nonumber 
v_p\left(1,\ldots,1, \left(\frac{\mathcal{M}_{\alpha}(t^+)}{\mathcal{M}_{\alpha}(t^-)} \right)_{\alpha > \Gamma(t)}\right)
&=& v_p\left(0,\ldots,0, \left(\frac{\mathcal{M}_{\alpha}(t^+)}{\mathcal{M}_{\alpha}(t^-)} \right)_{\alpha > \Gamma(t)}-1\right) \\
\nonumber 
&\leq& v_p\left( \left(\frac{\mathcal{M}_{\alpha}(t^+)}{\mathcal{M}_{\alpha}(t^-)}\right)-1 \right)
+ \left\| \left(\frac{\mathcal{M}_{\alpha}(t^+)}{\mathcal{M}_{\alpha}(t^-)}\right)-1 \right\|_\infty \\
\label{Eq:EstvpMsM}
&=& v_p\left( \left(\frac{\mathcal{M}_{\alpha}(t^+)}{\mathcal{M}_{\alpha}(t^-)}\right) \right)
+ \left\| \left(\frac{\mathcal{M}_{\alpha}(t^+)}{\mathcal{M}_{\alpha}(t^-)}\right)-1 \right\|_\infty .
\end{eqnarray}
The right-hand side has been estimated in the above sections; it can be absorbed by $\mathcal{Q}(t^+)-\mathcal{Q}(t^-)$,
see in particular \eqref{Eq:EstOscilCoefs} and \eqref{Eq:ChgtPreampFac2}-\eqref{Eq:fCoefMC}.
\end{itemize}
\ \par
\noindent
{\bf Evolution of $\mathfrak{M}_\Gamma$.} 
The reasoning for $\mathfrak{M}_\Gamma$ is similar as for the preamplification factors in the case of $\mathcal{V}_\Gamma$.
However the ``flattening'' of the sequence does not correspond to the same set of indices,
 since only future interactions taking place in the unlimited component of $(\R_+ \times \R) \setminus \Gamma$
 before time horizon $\tau$ are considered.

The main point is as follows. 
Consider for instance the $2$-front $\beta$ involved in the interaction at time $t$.
Then if $\alpha_1$ and $\alpha_2$ are two $1$-fronts have a future interaction
 with $\beta$ in the unlimited component of $(\R_+ \times \R) \setminus \Gamma$
 before time horizon $\tau$, then the same is true for any $1$-front between $\alpha_1$ and $\alpha_2$.
So the reasoning is the same for $\mathfrak{M}_\Gamma$ as for $\mathcal{V}_\Gamma$, multiplying
 by a sequence of the form 
$\left(1,\ldots,1, 
\left( \frac{\mathcal{M}_{\alpha}(t^+)}{\mathcal{M}_{\alpha}(t^-)}
         \right)_{\underline{\alpha} \leq \alpha \leq \overline{\alpha}},
1,\ldots,1 \right)$,
where $\underline{\alpha}$ and $\overline{\alpha}$ are the extremal fronts that have a future interaction
 taking place in the unlimited component of $(\R_+ \times \R) \setminus \Gamma$ before time horizon $\tau$. \par
Hence reasoning as before, we find
\begin{equation*}
\mathfrak{M}_\Gamma(t^+) - \mathfrak{M}_\Gamma(t^-) 
\lesssim \mathfrak{M}_\Gamma(t^{-})^{\frac{p-1}{p}} \, 
    v_p\left(0,\ldots,0, \left(\frac{\mathcal{M}_{\alpha}(t^+)}{\mathcal{M}_{\alpha}(t^-)} -1 \right)_{\alpha > \Gamma(t)},0,\ldots,0\right),
\end{equation*}
which we estimate as in \eqref{Eq:EstvpMsM}.
%
%
%
%
%
% ------------------------------------------------- Propagating H(tau) -------------------------------------------------
%
%
%
%
\subsection{Propagating $H(\tau)$: proof of Proposition~\ref{Pro:Induction}}
\label{Subsec:propagation}
Proposition~\ref{Pro:Induction} will be a direct consequence of the following lemma.
\begin{lemma} \label{Lem:Induction}
There exists $c>0$ such that if $V_p(u_0)<c$, then for $\nu \in (0,\nu_0)$ the following holds.
Suppose that for some $\tau \in (0,T_\nu)$, 
the front-tracking approximation $u^\nu$ satisfies $H(\tau)$. 
Then there exists $\delta>0$ such that  $u^\nu$ satisfies $H(\tau + \delta)$.
\end{lemma}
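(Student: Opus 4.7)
The plan is to convert the decay of the oracle functional from Proposition~\ref{Pro:DecayGlimm} into a \emph{strict} improvement of the inequalities in $H(\tau)$, namely $\widetilde{V}_p(u^\nu(t)) \leq 2V_p(u_0)$ and $\widetilde{V}_p[u^\nu;\Gamma](t) \leq 2V_p(u_0)$ for $t\in[0,\tau]$ and any measure curve $\Gamma$ in $[0,\tau]\times\R$. Since $u^\nu$ has only finitely many interaction points in any compact subinterval of $[0,T_\nu)$, a small $\delta>0$ can then be chosen so that no interaction of $u^\nu$ occurs in $(\tau,\tau+\delta]$, and the strict bounds propagate to give $H(\tau+\delta)$.

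For the global pointwise bound, Propositions~\ref{Pro:EstimeesVetQ} and~\ref{Pro:DecayGlimm} apply with time horizon $T=\tau$. At $t=0$, Lemma~\ref{Lem:VVtilde} combined with \eqref{Eq:ApproxInitiale} gives $\widetilde{V}_p(u_0^\nu) \leq V_p(u_0)(1+CV_p(u_0)^2)$, and then \eqref{Eq:EquivVVp}--\eqref{Eq:QQuadratique} with \eqref{Eq:DefGlimmFunc} yield $\Upsilon(0) \leq V_p(u_0)^p\bigl(1+CV_p(u_0)^{\min(p,2)}\bigr)$. The monotonicity $\Upsilon(t)\leq \Upsilon(0)$ on $[0,\tau]$ combined with the lower bound in \eqref{Eq:EquivVVp} then gives, for any $t\in[0,\tau]$,
\[
\widetilde{V}_p(u^\nu(t))^p \;\leq\; \frac{\Upsilon(0)}{1-cV_p(u_0)^3} \;\leq\; V_p(u_0)^p\bigl(1+C'V_p(u_0)^{\min(p,2)}\bigr),
\]
which is $\leq 2^pV_p(u_0)^p$ for $V_p(u_0)<c$ small enough.

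For the curves, I would fix a measure curve $\Gamma$ in $[0,\tau]\times\R$ and use \eqref{Eq:DecayGlimm2}. Since $\Gamma_0=\{0\}\times\R$, one has $\mathcal{V}_\Gamma(0)\leq \mathcal{V}(0) \leq V_p(u_0)^p(1+o(1))$, and \eqref{Eq:MGamma0} gives $\mathfrak{M}_\Gamma(0)\leq CV_p(u_0)^3$; propagation via \eqref{Eq:DecayGlimm2} yields both $\mathcal{V}_\Gamma(t) \leq V_p(u_0)^p(1+o(1))$ and $\mathfrak{M}_\Gamma(t) \leq CV_p(u_0)^{\min(3,2p)}$ for $t\in[0,t_{\max}]$. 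To convert the preamplified sum $\mathcal{V}_\Gamma$ into the raw sum $\widetilde{V}_p[u^\nu;\Gamma]^p$ one applies Corollary~\ref{Cor:MultiplSp} to the sequence of true preamplification factors $\mathcal{M}_{\gamma^i_j}(t^i_j)$ along $\Gamma$: the $\|\cdot\|_\infty$ bound is \eqref{Eq:Mentre2et12}, and their $v_p$ is controlled by the decomposition $\mathcal{M}=\widetilde{\mathcal{M}}^\Gamma\cdot(\mathcal{M}/\widetilde{\mathcal{M}}^\Gamma)$, where the first factor is handled by $\mathfrak{M}_\Gamma$ and the quotient (a product of interaction coefficients for crossings in the limited component of $(\R_+\times\R)\setminus\Gamma$) by a Duhamel-type argument mirroring the proof of Lemma~\ref{Lem:VpCoefsInteraction}. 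This delivers $\widetilde{V}_p[u^\nu;\Gamma](t)\leq 2V_p(u_0)$.

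To finish, choose $\delta>0$ with no interaction of $u^\nu$ in $(\tau,\tau+\delta]$. Then $\widetilde{V}_p(u^\nu(t))$ is constant on this interval, equal to $\widetilde{V}_p(u^\nu(\tau^+))\leq 2V_p(u_0)$ by the previous step together with the inequality $\Upsilon(\tau^+)\leq \Upsilon(\tau^-)$ already proved at interaction times. For a measure curve $\Gamma$ in $[0,\tau+\delta]\times\R$, its truncation at $\tau$ is a measure curve in $[0,\tau]\times\R$ with identical intermediate curves for $t\wedge t_{\max}\leq\tau$, so Step~2's bound applies; for larger $t$, the absence of interactions on the followed front line means no new front crosses $\Gamma$ after $\tau$, so $\widetilde{V}_p[u^\nu;\Gamma](t)$ remains equal to its value at $\tau$. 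The main obstacle, I expect, is the translation step for curves: the decays only directly control the preamplified sum $\mathcal{V}_\Gamma$ and the modified factor $\widetilde{\mathcal{M}}^\Gamma$, whereas downgrading to a bound on the raw $p$-variation requires controlling the variation of the \emph{true} preamplification factor $\mathcal{M}$ along $\Gamma$, i.e.\ handling the limited-component quotient $\mathcal{M}/\widetilde{\mathcal{M}}^\Gamma$ by a version of the Duhamel-type analysis of Lemma~\ref{Lem:VpCoefsInteraction}.
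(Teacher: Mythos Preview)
Your overall strategy and the treatment of the pointwise bound $\widetilde V_p(u^\nu(t))$ match the paper's. The gap lies in the curve estimate, precisely at the step you flag as the ``main obstacle''.

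You aim to establish the sharpened bound $\widetilde V_p[u^\nu;\Gamma]\le 2V_p(u_0)$ for \emph{every} measure curve $\Gamma\subset[0,\tau]\times\R$, which forces you to control the quotient $\mathcal M/\widetilde{\mathcal M}^\Gamma$. Two issues here. First, your description of the quotient is inaccurate: for a front $\alpha$ crossing $\Gamma$ at time $s_\alpha$, its subsequent trajectory lies in the unlimited component, so the factors present in $\mathcal M_\alpha(s_\alpha)$ but missing from $\widetilde{\mathcal M}^\Gamma_\alpha(s_\alpha)$ correspond to interactions at times in $(t_{\max}(\Gamma),\tau]$, not to ``crossings in the limited component''. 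Second, and more importantly, this whole detour is unnecessary: curves with $t_{\max}\le\tau$ are already covered by $H(\tau)$ itself (which gives $\le 4V_p(u_0)$, and that is all $H(\tau+\delta)$ asks). Only curves with $t_{\max}\in(\tau,\tau+\delta]$ require fresh analysis, and for those the paper works directly with the truncation $\check\Gamma:=\Gamma_{\tau^-}$, which has $t_{\max}(\check\Gamma)=\tau$. The key observation you are missing is that \emph{for this specific curve} the quotient is identically~$1$: since $t_{\max}(\check\Gamma)$ equals the time horizon $\tau$, the condition ``meet in the unlimited component at a time $\le t_{\max}(\check\Gamma)$'' defining $\tilde{\mathcal C}^{i,\check\Gamma}$ coincides with ``meet after the crossing time and before $\tau$'', which is exactly the condition making $\mathcal C^i$ nonzero in $\mathcal M_\alpha(s_\alpha)$. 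Hence $\widetilde V_p[u^\nu;\check\Gamma]$ is obtained from $\mathcal V_{\check\Gamma}$ by dividing out preamplification factors whose $v_p$ is directly controlled by $\mathfrak M_{\check\Gamma}$ via Corollary~\ref{Cor:MultiplSp}; no additional Duhamel argument is needed.

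A smaller point: the functional $\Upsilon$ with time horizon $\tau$ is only defined on $[0,\tau]$, so invoking ``$\Upsilon(\tau^+)\le\Upsilon(\tau^-)$'' is problematic. The paper instead uses the decay $\mathcal V(\tau^-)\le\Upsilon(0)$ together with \eqref{Eq:EquivVVp} to get $\widetilde V_p(u^\nu(\tau^-))\le 3V_p(u_0)$, then absorbs the production of the single interaction at time $\tau$ (which is $\mathcal O(V_p(u_0)^3)$ by Propositions~\ref{Prop:InteractionDF}--\ref{Prop:InteractionSF}) to reach $\le 4V_p(u_0)$ at $\tau^+$; the same device handles the jump from $\widetilde V_p[u^\nu;\check\Gamma](\tau^-)$ to $\widetilde V_p[u^\nu;\Gamma](\tau^+)$.
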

\begin{proof}[Proof of Lemma~\ref{Lem:Induction}]
As a first condition on $c$, we ask that $c < C^{-1/3}$, where $C$ is the constant in \eqref{Eq:VVtilde}.
As a consequence, using the first condition in \eqref{Eq:ApproxInitiale}, we have
\begin{equation} \label{Eq:EstVtildeInit}
\widetilde{V}_p(u^\nu(0,\cdot)) \leq V_p(u_0) + C V_p(u_0)^3 \leq 2 V_p(u_0).
\end{equation}

We fix $\tau \in (0,T_\nu)$ and suppose that $H(\tau)$ is satisfied. The conclusion of Lemma~\ref{Lem:Induction} is trivial 
if $\tau$ is not an interaction time.
Hence, we suppose that $\tau$ is an interaction time and choose $\delta >0$ such that $\tau + \delta$ is less
than the next interaction time. \par
We introduce a measure curve $\Gamma$ in the strip $[0,\tau + \delta] \times \R$
with $t_{\max} \in (\tau,\tau+\delta]$ 
(other wave measure curves being already taken into account in $H(\tau)$).
The goal is to prove that for $t \in (\tau,t_{\max}]$, $\widetilde{V}_p(u^\nu(t,\cdot))$ and $\widetilde{V}_p[u^\nu;\Gamma](t)$ satisfy \eqref{Eq:H1tau}. \par
\ \par
\noindent
{\it Estimate of $\widetilde{V}_p(u^\nu(t,\cdot))$.}
Since $H(\tau)$ is valid, using the time-horizon $\tau$ for the functionals, we deduce from Proposition~\ref{Pro:DecayGlimm}
 that $\Upsilon$ is non-increasing on $[0,\tau]$. 
Hence $\Upsilon(\tau^-) \leq \Upsilon(0)$, and in particular 
\begin{equation*}
\mathcal{V}(\tau^-) \leq \mathcal{V}(0) + \mathcal{O}(1) \mathcal{V}(0)^2.
\end{equation*}
(We could notice here that since the time-horizon is $\tau$, one has $\mathcal{V}(\tau^-)=\widetilde{V}_p^p(u^\nu(\tau^-,\cdot))$
 though this is not strictly necessary.)
Using Proposition~\ref{Pro:EstimeesVetQ}, we infer that
\begin{eqnarray}
\nonumber
\widetilde{V}^p_p(u^\nu(\tau^-,\cdot)) 
&\leq&  \,V_p^p(u_0) (1 + \mathcal{O}(1) V_p(u_0)^3)^2 + \mathcal{O}(1) V^{2p}_p(u_0)
            (1 + \mathcal{O}(1) V_p(u_0)^3)^3 \\
\label{Eq:EstVpEnfin}
&\leq& \, V_p^p(u_0) + \mathcal{O}(1) V^{2p}_p(u_0).  
\end{eqnarray}
Hence provided that $V_p(u_0)$ is small enough, one has
\begin{equation*}
\widetilde{V}_p(u^\nu(\tau^-,\cdot)) \leq 3 V_p(u_0).  
\end{equation*}
Now the production at the interaction point at time $\tau$ cannot be larger than $\mathcal{O}(1)\widetilde{V}_p(u^\nu(\tau^-,\cdot))^3$,
 according to Propositions~\ref{Prop:InteractionDF} and \ref{Prop:InteractionSF}. 
Hence measuring the $p$-sum by the $1$-one, we see that 
\begin{equation*}
\widetilde{V}_p(u^\nu(\tau^+,\cdot)) \leq \widetilde{V}_p(u^\nu(\tau^-,\cdot))  + \mathcal{O}(1){V}^3_p(u^\nu(\tau^-,\cdot)) 
\leq 4 V_p(u_0),
\end{equation*}
provided again that $V_p(u_0)$ is small enough. \par
\ \par
\noindent
{\it Estimate of $\widetilde{V}_p[u^\nu;\Gamma](t)$.}
To the curve $\Gamma$, we can associate the intermediate curves $\Gamma_t$ as in Section~\ref{Subsec:APA}. 
We let $\check{\Gamma}=\Gamma_{\tau^-}$.
Now we use the functionals $\mathcal{V}_{\check{\Gamma}}$ and $\mathfrak{M}_{\check{\Gamma}}$ associated
 with the curve $\check{\Gamma}$ and the time horizon $T=\tau$. 
Using Proposition~\ref{Pro:DecayGlimm} and the fact that $H(\tau)$ is satisfied, we have
\begin{equation*}
\mathcal{V}_{\check{\Gamma}}(\tau^-) \leq \mathcal{V}_{\check{\Gamma}}(0) + \mathcal{O}(1) \mathcal{V}(0)^2
        \leq 2 V_p(u_0) + \mathcal{O}(1)V_p(u_0)^{2p}
\ \text{ and } \ 
\mathfrak{M}_{\check{\Gamma}}(\tau^-) \leq \mathfrak{M}_{\check{\Gamma}}(0) + \mathcal{O}(1) \mathcal{V}(0)^2
        \lesssim V_p(u_0)^{2p},
\end{equation*}
where we used Proposition~\ref{Pro:EstimeesVetQ}. \par
Using this estimate on $\mathfrak{M}_{\check{\Gamma}}(\tau^-)$ and  reasoning as for \eqref{Eq:Mmoins1},
 we obtain for the inverses of the preamplification factors along $\check{\Gamma}$
 (using the notations of \eqref{Eq:MiGamma}, that is calling $\gamma^i_1,\ldots,\gamma^i_{n_i}$ the 
 $i$-fronts crossing $\check{\Gamma}$):
\begin{equation*} 
v_p^p \left( \left({\widetilde{\mathcal{M}}^{\check{\Gamma}}}_{\gamma^i_1}\right)^{-1}, \ldots, 
             \left({\widetilde{\mathcal{M}}^{\check{\Gamma}}}_{\gamma^i_{n_i}}\right)^{-1} \right)
\lesssim V_p(u_0)^{2p},
\end{equation*}
where only the family $i=j$ is considered if $\check{\Gamma}$ a $j$-upper curve. \par
Now we use the fact that for fronts $\alpha$ crossing $\check{\Gamma}$, the modified interaction coefficients 
 $\tilde{\mathcal{C}}^{i,\check{\Gamma}}_{\beta\alpha}$ coincide with their unmodified version $\mathcal{C}^{i}_{\beta\alpha}$,
 since both only consider possible interactions in the unlimited component of $\R_+ \times \R \setminus \check{\Gamma}$ before
 the time horizon $\tau$. (Recall that for an $i$-upper measure curve, only fronts of family $i$ are considered 
 in  $\mathfrak{M}$, see \eqref{Eq:MGamma}.) 
If follows that
\begin{equation*}
\widetilde{V}_p[u^\nu;\check{\Gamma}](\tau^-) 
= s_p \Big(\widetilde{\mathcal{M}}^{\check{\Gamma}}_{\gamma^i_1}(t)^{-1} \, \widehat{\sigma}_{\gamma^i_1}(t),
     \ldots, \widetilde{\mathcal{M}}^{\check{\Gamma}}_{\gamma^i_{n_i}}(t)^{-1} \, \widehat{\sigma}_{\gamma^i_{n_i}}(t) \Big).
\end{equation*}
Consequently, Corollary~\ref{Cor:MultiplSp}, we find 
\begin{equation*} 
\widetilde{V}_p[u^\nu;\check{\Gamma}](\tau^-) \leq 2 V_p(u_0) (1 + \mathcal{O}(1) V_p(u_0)^{2p}) 
\leq 3 V_p(u_0),
\end{equation*}
provided that $V_p(u_0)$ is small enough. 
Again, the production at the interaction point at time $\tau$ cannot be larger than $\mathcal{O}(1){V}_p(u^\nu(\tau^-,\cdot))^3$,
 so here we find 
\begin{equation*}
\widetilde{V}_p[u^\nu;\Gamma](\tau^+) 
= \widetilde{V}_p[u^\nu;\Gamma](\tau^-) + \mathcal{O}(1){V}_p(u_0)^3
= \widetilde{V}_p[u^\nu;\check{\Gamma}](\tau^-) + \mathcal{O}(1){V}_p(u_0)^3.
\end{equation*}
The conclusion follows in the same way as for $\widetilde{V}_p[u^\nu]$.
This ends the proof of Lemma~\ref{Lem:Induction}.
\end{proof}
\begin{proof}[Proof of Proposition~\ref{Pro:Induction}]
Consider a front-tracking approximation $u^\nu$ for $\nu \in (0,\nu_0)$.
Then for positive times $\tau$ before the first interaction, $u^\nu$ satisfies $H(\tau)$ since in that case,
for any $t \in [0,\tau]$ and for any measure curve $\Gamma$ in the strip $[0,\tau] \times \R$, one gets from
\eqref{Eq:EstVtildeInit} that
$\widetilde{V}_p(u^\nu(t,\cdot)) = \widetilde{V}_p[u^\nu;\Gamma](t) \leq 2 V_p(u_0)$. \par
We let
%
% \begin{equation*}
$T_\star = \sup\, \{ \tau \in (0,T_\nu) \ / \ H(\tau) \text{ is valid}\}$.
% \end{equation*}
%
If we had $T_*<T_\nu$, then $T_*$ would be an interaction time, and Lemma~\ref{Lem:Induction} would allow
 going beyond $T_*$, bringing a contradiction. 
The conclusion follows.
\end{proof}
%
%
%
%
%
%
% -------------------------------------------------- End of the proof --------------------------------------------------
%
%
%
%
\subsection{End of the proof}
\subsubsection{Global existence of front-tracking approximations}
The first step to conclude is to prove that $T_\nu=+\infty$, where $T_\nu>0$ was defined in Subsection~\ref{Subsec:FTA}. 
In what follows we prove that the various conditions limiting $T_\nu$ do not occur in finite time. \par
\ \par
\noindent
{\bf Finite number of fronts.}
Let us prove that the number of fronts remains bounded on $[0,T_\nu)$.
This argument is classical.
In the above front-tracking algorithm, when two fronts meet, there can be more than to outgoing fronts 
 only in the case of a same-family interaction, with an outgoing wave in the opposite family is a rarefaction
 of strength at least $\nu>0$. 
Due to the analysis of Paragraphs~\ref{sss:monotone} and \ref{sss:nonmonotone}, this implies a decay 
 of size $\nu$ in $\Upsilon$, whatever the choice of time horizon $T$.
Since $\Upsilon$ can be bounded independently of the time horizon,
 such an event happens only finitely many times, and the total number of fronts remains bounded. \par
\ \par
\noindent
{\bf Admissibility of states.}
Let us prove that we have an a priori bound on $\|u^\nu - \overline{u}\|_\infty$ in $[0,T_\nu)$, 
 so all states remain in $U$ and $T_\nu$ cannot be limited by Riemann problems needing states outside.
It is actually a direct consequence of $H(\tau)$ being valid for all $\tau < T_\nu$.
Since $u^\nu$ is constant on the left and on the right a compact set in space,
 with states $u^\nu(-\infty)$ and $u^\nu(+\infty)$ satisfying 
 $\| u^\nu(\pm \infty) - \overline{u}\|_\infty \leq \| u_0 - \overline{u}\|_\infty$,
 using the bound on $\widetilde{V}_p(u^\nu(t,\cdot))$ and
 choosing $\varepsilon_0>0$ in \eqref{Eq:Smallness} suitably small ensures that all states remain
 in $B(\overline{u};r)$ for $r$ fixed as previously. \par
\ \par
\noindent
{\bf Non accumulation of fronts.}
The analysis here borrows arguments from \cite{BressanColombo95}, but the front-tracking algorithm that we use allows
 simplifying a bit the analysis. \par
Suppose that fronts accumulate at time $T_\nu$. 
We call an extended front line a continuous locally piecewise affine line $t \mapsto \gamma(t)$ defined on some
 interval $[t_0,T_\nu)$, which follows fronts of a given characteristic family.
Due to finite speed of propagation, all extended front lines are uniformly Lipschitz and can be propagated 
 till time $T_\nu$ included.
Due to the finite number of fronts, accumulations can occur only at a finite number of places
 (that are final points of extended front lines).
Pick $x^*$ one of them.
Due to the previous analysis, we can introduce a rectangle $[T_\nu-h,T_\nu] \times [x^*-h,x^*+h]$, 
 with $h>0$ small such that:
\begin{itemize}
\item there is no interaction point with more than two outgoing fronts in $[T_\nu-h,T_\nu] \times [x^*-h,x^*+h]$.
\end{itemize}
Reducing again $h$ if necessary, we can also require that 
\begin{itemize}
\item there is no interaction point
 in $[T_\nu-h,T_\nu] \times [x^*-h,x^*+h]$ for which an incoming compression front is transformed in a shock front.
There are indeed only a finite number of such points where this occurs, due to the analysis 
 of Subsection~\ref{sss:monotone}: see in particular the $\mathcal{D}_{\alpha\beta}$ term in \eqref{Eq:EstQ1-2},
 which gives a minimal decay on $\Upsilon$ independently of the time horizon. 
\end{itemize}
It follows that the number of shock fronts is non-increasing in $[T_\nu-h,T_\nu] \times [x^*-h,x^*+h]$; reducing again
 $h$ if necessary, we can suppose that 
\begin{itemize}
\item the number of shock fronts is constant in $[T_\nu-h,T_\nu] \times [x^*-h,x^*+h]$. 
\end{itemize}
Since an incoming shock front generates an outgoing shock front in the same family (unless it disappears), we can follow
 these shocks as disjoint front lines in $[T_\nu-h,T_\nu)$.
It follows that we can reduce $h>0$ in order that
\begin{itemize}
\item all extended shock front lines in $[T_\nu-h,T_\nu] \times [x^*-h,x^*+h]$ converge to $x^*$ as $t \rightarrow T_\nu$.
\end{itemize}
We denote them $x_1(t) < \ldots < x_N(t)$ where $x_1(t),\ldots,x_k(t)$ correspond to fronts of the second family,
and $x_{k+1}(t),\ldots,x_N(t)$ correspond to fronts of the first family. \par
Now other fronts present in $[T_\nu-h,T_\nu] \times [x^*-h,x^*+h]$ are either rarefaction waves or compression waves. 
We define $\mathcal{N}(t)$ as the number of such fronts (RW or CW) of family $1$ and to the right of $x_k(t)$ plus
 the number of such fronts (RW or CW) of family $2$ and to the left of $x_{k+1}(t)$.
Interactions involving such fronts between themselves do not generate other fronts: they cross if it is an interaction 
 of opposite families, and they merge if it is an interaction of the same family. 
When an $i$-front of RW/CW type meets a $(3-i)$-shock, they cross as well. 
Finally, when an $i$-front of RW/CW type meets an $i$-shock, they merge as a shock and generate a RW/CW front in the other family,
 so $\mathcal{N}(t)$ decreases, because this generated $(3-i)$-front is to the left of $x_k(t)$ (if $i=2$) and to the right
 of $x_{k+1}(t)$ (if $i=1$). 
It follows that $\mathcal{N}(t)$ is non-increasing, and decreases at each interaction of a RW/CW wave with a shock. \par
Hence, there are only finitely many interactions of this type.
Reducing $h$ again, we can hence obtain
\begin{itemize}
\item The only interactions in $[T_\nu-h,T_\nu) \times [x^*-h,x^*+h]$ concern CW/RW between themselves.
\end{itemize}
Such an interaction in the same family makes the number of fronts decrease. 
Hence interactions of CW/RW of the same family are finite, and reducing $h>0$ again we obtain
\begin{itemize}
\item The only interactions in $[T_\nu-h,T_\nu) \times [x^*-h,x^*+h]$ concern CW/RW of opposite families.
\end{itemize}
In the case of interaction of CW/RW opposite families, the fronts simply cross.
Hence we can follow $n_1$ the lines of $1$-CW/RW in $[T_\nu-h,T_\nu) \times [x^*-h,x^*+h]$ and 
 $n_2$ the lines of $2$-CW/RW in $[T_\nu-h,T_\nu) \times [x^*-h,x^*+h]$. 
These generate at most $n_1 n_2$ interactions. \par
Finally, we see that there are only finitely many interaction points in $[T_\nu-h,T_\nu] \times [x^*-h,x^*+h]$, 
 which contradicts the fact that fronts accumulate at $(T_\nu,x^*)$. \par
\ \par
This finally proves that $T_\nu = +\infty$ for all $\nu \in (0,\nu_0)$. \hfill \qedsymbol \par
\ \par
\noindent
Note that $H(\tau)$ being valid for all $\tau \in (0,T_\nu)=(0,+\infty)$ gives us that \eqref{Eq:EstVpEnfin}
 is valid for all times.
Using Lemma~\ref{Lem:VVtilde}, we deduce
\begin{equation} \label{Eq:HolyGraal}
\forall t \geq 0, \ \ V_p(u^\nu(t,\cdot)) \leq V_p(u_0) + \mathcal{O}(1)V_p(u_0)^{2p}.
\end{equation}
(which will eventually give \eqref{Eq:EstVpGlobale}). \par
%
%
%
% --------------------------------------------------- Time estimates --------------------------------------------------
%
%
%
%
\subsubsection{Time estimates}
The goal of this paragraph is to prove the following proposition.
\begin{proposition} \label{Pro:TimeEstimates}
The sequence $(u^\nu)$ is bounded in $C^{1/p}([0,+\infty);L^p_{loc}(\R))$.
\end{proposition}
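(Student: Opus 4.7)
The aim is to establish, for any compact $K \subset \R$ and any $s < t$, the bound $\int_K |u^\nu(t,x) - u^\nu(s,x)|^p \, dx \leq C(K)(t-s)$ uniformly in $\nu$, which yields the desired $C^{1/p}$ bound. I proceed as follows.

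By finite speed of propagation (from \eqref{Eq:SeparationVitesses2}), working in Riemann coordinates reduces the estimate to bounding $\int_K |\Delta_i(x)|^p \, dx$ for $i = 1,2$, where $\Delta_i(x) := w_i[u^\nu(t,x)] - w_i[u^\nu(s,x)]$. Using the second-order tangency \eqref{Eq:ChocRC}, each $\Delta_i(x)$ decomposes as a signed sum $S_i(x)$ of the strengths of $i$-fronts crossing $\{x\} \times [s,t]$, plus an error of order $\sum_{\beta \in F_{3-i}(x)} |\sigma_\beta|^3$ coming from opposite-family fronts in the same space-time region. The cube-error contribution integrates, by a Fubini argument (each front $\beta$ crosses $\{x\} \times [s,t]$ for $x$ in an interval of length $\leq \Lambda(t-s)$) combined with the bound $\|\sigma\|_\infty^{3-p} \sum |\sigma|^p \leq \mathcal{O}(1)V_p(u_0)^{3}$ from \eqref{Eq:HolyGraal}, to at most $\mathcal{O}(t-s) V_p(u_0)^{3}$, which is of lower order.

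The main term $S_1(x)$ is handled by combining two ingredients. First, the elementary shift-integral inequality
\[
\int_\R |g(x+L) - g(x)|^p \, dx \leq L \cdot V_p(g)^p,
\]
proved by applying the partition $\{\xi + n L\}_{n \in \Z}$ to $V_p$ and integrating the result over $\xi \in [0,L]$, which yields $\int |g(x+L_{\max}(t-s)) - g(x)|^p dx \leq \mathcal{O}(1)(t-s)V_p(u_0)^p$ for $g = w_1[u^\nu(s,\cdot)]$, where $L_{\max}$ is the uniform upper bound on $|\lambda_1|$. Second, since $1$-speeds are uniformly bounded away from $0$ by $L_{\min} := |\Lambda_1^{\max}|$ and bounded above by $L_{\max}$, the set $F_1(x)$ corresponds to $1$-fronts whose position at time $s$ lies in an interval $[x, x + L(x)]$ with $L(x) \in [L_{\min}(t-s), L_{\max}(t-s)]$; the discrepancy between $S_1(x)$ and $w_1[u^\nu(s, x + L_{\max}(t-s))] - w_1[u^\nu(s, x)]$ consists of strengths of ``buffer zone'' $1$-fronts in $[x + L(x), x + L_{\max}(t-s)]$. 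These buffer fronts can be captured via a suitably constructed $1$-upper wave measure curve $\Gamma_x^{\uparrow}$ with $t_{\min} = s$ and $t_{\max} = t$, whose middle portion follows an appropriately chosen $1$-front line bounding the buffer region, and whose $\widetilde{V}_p[u^\nu; \Gamma_x^{\uparrow}]$ is controlled by $4 V_p(u_0)$ thanks to Proposition~\ref{Pro:Induction}.

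The main obstacle is implementing the buffer estimate uniformly over $x \in K$, since $\Gamma_x^{\uparrow}$ varies discretely with $x$ while $x$ varies continuously. One must show that as $x$ traverses $K$, each individual buffer-zone front contributes to $S_1(x)$ only for $x$ in an interval of length $\lesssim (t-s)$, and that the integrated $p$-sum over all such contributions is bounded by $\mathcal{O}(t-s) V_p(u_0)^p$ via a Fubini argument combined with the uniform bound on $\widetilde{V}_p$ along the family of measure curves. Performing the symmetric analysis for $\Delta_2$ using $2$-upper wave measure curves, and summing all contributions, yields the desired uniform bound and hence the relative compactness of $(u^\nu)$ in $C^{1/p}([0,+\infty); L^p_{loc}(\R))$.
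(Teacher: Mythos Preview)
Your approach is genuinely different from the paper's and, as written, has a real gap precisely at the step you yourself flag as ``the main obstacle.''

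\textbf{The gap.} Your claimed decomposition is incomplete. Writing $S_1(x)$ for the signed sum of strengths of $1$-fronts crossing the vertical segment $\{x\}\times[s,t]$ \emph{at their crossing time}, the discrepancy with $w_1[u^\nu(s,x+L_{\max}(t-s))]-w_1[u^\nu(s,x)]$ is not just a sum over ``buffer'' fronts: it also contains the term
\[
\sum_{\gamma\in F_1(x)}\bigl(\sigma_\gamma(\tau_\gamma)-\sigma_\gamma(s)\bigr),
\]
i.e.\ the cumulative strength evolution of the crossing fronts between time $s$ and their crossing time. As the paper stresses in its introduction, these increments are precisely the quantities that are \emph{not} absolutely summable in the $BV_s$ setting (they are what forces the preamplification machinery). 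You cannot simply fold them into ``cubes'' or into the buffer term. Moreover, even granting the buffer description, the proposed ``Fubini argument'' does not go through for $p>1$: you have $|B(x)|^p$, not $\sum|\sigma_\gamma|$, and the $I_\gamma$ need not have bounded overlap. The bound $\widetilde V_p[u^\nu;\Gamma_x^\uparrow]\leq 4V_p(u_0)$ gives a constant estimate at each $x$, not an $(t-s)$-small one; no summability of $\sum|\sigma_\gamma|^p$ over the buffer is available without additional structure.

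\textbf{What the paper does instead.} The paper avoids your buffer analysis entirely by proving a \emph{pointwise localized} estimate (Lemma~\ref{Lem:EstVplocale}):
\[
|u^\nu(t,x^*)-u^\nu(s,x^*)|\lesssim V_p\bigl(u^\nu(s,\cdot);[x^*-\hat\lambda(t-s),\,x^*+\hat\lambda(t-s)]\bigr).
\]
The idea is to build a modified approximation $v^\nu$ that coincides with $u^\nu$ inside the backward triangle with apex $(t,x^*)$, and outside of which every front leaving the triangle is extended at the extremal speed $\pm\hat\lambda$. Because the decay analysis of Sections~\ref{Subsec:DeduceVpEst}--\ref{Subsec:propagation} uses only speed separation (not the precise speeds), it applies verbatim to $v^\nu$; and for $v^\nu$ the \emph{sides of the triangle are themselves wave measure curves}, so $H(\tau)$ directly controls the $p$-variation along them. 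This yields the pointwise bound, which one then raises to the $p$-th power and integrates over intervals of length $\hat\lambda(t-s)$, summing via Proposition~\ref{Pro:ElemePropsVp}--\ref{Item:VpFusion}. The trick of modifying the approximation so that the triangle sides become admissible measure curves is exactly what your approach is missing.
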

The result of Proposition~\ref{Pro:TimeEstimates} is not as direct as the usual ${Lip}(\R_+;L^1_{loc}(\R))$
 boundedness of the $BV$ case. 
Some arguments here are borrowed from \cite{JenssenRidderPhi20}.
Proposition~\ref{Pro:TimeEstimates} relies on the following lemma.
\begin{lemma} \label{Lem:EstVplocale}
For some $C>0$, one has the following.
Let 
\begin{equation*}
\hat{\lambda} > \max\Big(\sup_\Omega |\lambda_1(u)|, \sup_\Omega \lambda_2(u)\Big).
\end{equation*}
Let $\nu \in (0,\nu_0)$.
Given $t>0$ the following holds for almost all $x^* \in \R$: for $0\leq s<t$, one has:
\begin{equation} \label{Eq:EstVplocale}
|u^\nu(t,x^*) - u^\nu(s,x^*)| \leq C \, V_p \Big(u^\nu(s,\cdot); [x^*-\hat{\lambda}(t-s), x^*+\hat{\lambda}(t-s)]\Big).
\end{equation}

\end{lemma}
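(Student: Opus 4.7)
The plan is to use a localization argument based on finite speed of propagation in the front-tracking algorithm, reducing the local estimate to the already-established global $V_p$ bound \eqref{Eq:HolyGraal}.

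Fix $t > s \geq 0$ and exclude the (countable, hence measure-zero) set of $x^*$ for which $x^*$ or $x^* \pm \hat{\lambda}(t-s)$ coincides with a discontinuity of $u^\nu(s,\cdot)$ or $u^\nu(t,\cdot)$. Set $I_s := [x^* - \hat{\lambda}(t-s), x^* + \hat{\lambda}(t-s)]$ and define a modified initial datum
\begin{equation*}
v_0(x) :=
\begin{cases}
u^\nu\big(s, x^* - \hat{\lambda}(t-s)\big) & \text{if } x < x^* - \hat{\lambda}(t-s), \\
u^\nu(s, x) & \text{if } x \in I_s, \\
u^\nu\big(s, x^* + \hat{\lambda}(t-s)\big) & \text{if } x > x^* + \hat{\lambda}(t-s).
\end{cases}
\end{equation*}
Since $u^\nu(s,\cdot)$ is piecewise constant, $v_0$ is a step function, constant outside $I_s$, with $V_p(v_0) = V_p(u^\nu(s,\cdot); I_s)$.

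Apply the front-tracking algorithm of Subsection~\ref{Subsec:FTA} (same parameter $\nu$, starting time $s$) to $v_0$, producing a new approximation $v^\nu$. By \eqref{Eq:SeparationVitesses2} every front travels at speed bounded in absolute value by $\hat{\lambda}$; tracing any front crossing $\{x^*\} \times (s,t]$ back to its ancestors (including those newly produced by interactions) confines them to the backward cone with base $I_s$, where $v_0$ and $u^\nu(s,\cdot)$ agree. Finite speed of propagation for the algorithm therefore yields $v^\nu(t, x^*) = u^\nu(t, x^*)$ and also $v^\nu(s, x^*) = u^\nu(s, x^*)$.

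Since $V_p(v_0) \leq V_p(u^\nu(s,\cdot)) \leq C V_p(u_0)$ by \eqref{Eq:HolyGraal} applied to $u^\nu$, the hypothesis of Proposition~\ref{Pro:Induction} is satisfied by $v_0$, and \eqref{Eq:HolyGraal} applied to $v^\nu$ gives $V_p(v^\nu(t,\cdot)) \leq V_p(v_0) + \mathcal{O}(1) V_p(v_0)^{2p} \leq C V_p(v_0)$. Moreover, $v_0$ being constant at $\pm\infty$, the same finite-speed argument shows that $v^\nu(t,\cdot)$ has the same limits at $\pm\infty$ as $v_0$; in particular $v^\nu(t, -\infty) = v_0(-\infty) = v^\nu(s, -\infty)$. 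Applying the two-point partition bound $|f(x) - f(-\infty)| \leq C \, V_p(f)$ (componentwise in Riemann coordinates) to both $v^\nu(t,\cdot)$ and $v^\nu(s,\cdot) = v_0$, we conclude
\begin{equation*}
|u^\nu(t, x^*) - u^\nu(s, x^*)| \leq |v^\nu(t, x^*) - v^\nu(t, -\infty)| + |v^\nu(s, x^*) - v^\nu(s, -\infty)| \leq C \bigl(V_p(v^\nu(t,\cdot)) + V_p(v_0)\bigr) \leq C \, V_p(u^\nu(s, \cdot); I_s),
\end{equation*}
which is \eqref{Eq:EstVplocale}.

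The main technical obstacle is the finite-speed-of-propagation claim. It is intuitively clear from the bound $|\overline{\lambda}^\nu_i| \leq \hat{\lambda}$, but requires some care because the FT algorithm introduces small perturbations of shock speeds (satisfying \eqref{Eq:ModifSpeed}) to avoid triple or simultaneous interactions. One argues that these perturbations may be chosen consistently inside the backward cone of $(t,x^*)$, or equivalently that the value $u^\nu(t,x^*)$ is unaffected by them since they alter only the precise timing of interactions and not the combinatorial structure of ancestors of fronts reaching $(t, x^*)$.
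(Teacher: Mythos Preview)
Your overall strategy—localize via the backward cone with base $I_s$, build an auxiliary approximation with initial datum $v_0$ equal to $u^\nu(s,\cdot)$ on $I_s$ and constant outside, apply the global $V_p$ bound to it, and pivot through the value at $-\infty$—is exactly the paper's. The divergence is in how the auxiliary $v^\nu$ is produced and how one secures $v^\nu(t,x^*)=u^\nu(t,x^*)$.

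You rerun the front-tracking algorithm on $v_0$ and invoke finite speed of propagation. As you acknowledge, this is the delicate point: the algorithm involves arbitrary perturbation choices \eqref{Eq:ModifSpeed}. Your second proposed justification (perturbations affect only timing, not the combinatorial structure) is not correct in general—changing which of two nearly-simultaneous interactions occurs first, when they share a common front, can produce different intermediate states and hence a different value at the apex. Your first justification (choose perturbations inside the cone to coincide with those of $u^\nu$) can be made to work, since no front from outside the cone can enter it, so the interaction pattern inside is fully determined by $v_0$ and those choices; but you do not carry this out.

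The paper sidesteps the issue entirely. Rather than rerunning the algorithm, it \emph{defines} $v^\nu := u^\nu$ inside the triangle, and outside the triangle extends each exiting $i$-front at the extremal speed $(-1)^i\hat{\lambda}$, so that no further interaction ever occurs outside. This $v^\nu$ is not a front-tracking approximation in the strict sense of Subsection~\ref{Subsec:FTA}, but the paper invokes a point stressed earlier: the decay of the Glimm-type functional and the resulting $V_p$ bounds (Subsections~\ref{Subsec:DeduceVpEst}--\ref{Subsec:propagation}) use only the strict separation of speeds between the two families, the merging rule for same-family fronts, and the Riemann solver at interaction points—not the precise propagation speeds. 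Hence the whole machinery applies verbatim to this modified $v^\nu$, yielding $V_p(v^\nu(t,\cdot)) \lesssim V_p(v_0)$ with no need to match perturbations, while the identity $v^\nu(t,x^*)=u^\nu(t,x^*)$ holds by construction.
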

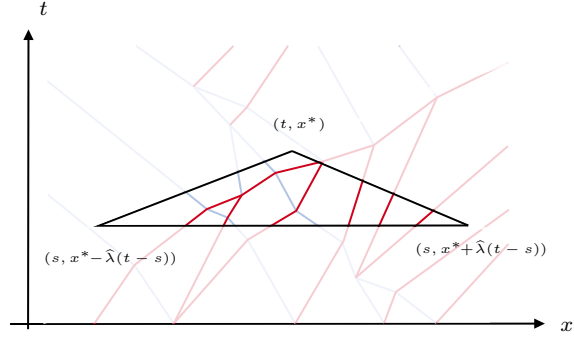
\begin{figure}[ht]
\centering
\tikzset{every picture/.style={line width=0.75pt}} %set default line width to 0.75pt        

\begin{tikzpicture}[x=0.75pt,y=0.75pt,yscale=-1,xscale=1]
%uncomment if require: \path (0,300); %set diagram left start at 0, and has height of 300

%Straight Lines
\draw    (111,250) -- (375,249.8) ;
\draw [shift={(378,249.8)}, rotate = 179.96] [fill={rgb, 255:red, 0; green, 0; blue, 0 }  ][line width=0.08]  [draw opacity=0] (5.36,-2.57) -- (0,0) -- (5.36,2.57) -- cycle    ;

\draw [line width=0.75]    (120,252.6) -- (120.2,104.8) ;
\draw [shift={(120.2,101.8)}, rotate = 90.08] [fill={rgb, 255:red, 0; green, 0; blue, 0 }  ][line width=0.08]  [draw opacity=0] (5.36,-2.57) -- (0,0) -- (5.36,2.57) -- cycle    ;
\draw [color={rgb, 255:red, 177; green, 196; blue, 228 }  ,draw opacity=1 ]   (202.53,130.53) -- (243.47,174) -- (253.83,193.08) -- (276.75,208.83) -- (283.53,226.73) -- (297.73,249.5) ;

\draw [color={rgb, 255:red, 177; green, 196; blue, 228 }  ,draw opacity=1 ]   (202.53,130.53) -- (229.17,141.06) -- (267.27,168.2) ;

\draw [color={rgb, 255:red, 177; green, 196; blue, 228 }  ,draw opacity=1 ]   (264.79,110.25) -- (292.67,160) ;

\draw [color={rgb, 255:red, 177; green, 196; blue, 228 }  ,draw opacity=1 ]   (129.52,127.71) -- (209.27,192.2) -- (219.62,196.48) -- (229.62,207.62) -- (253.35,249.75) ;

\draw [color={rgb, 255:red, 177; green, 196; blue, 228 }  ,draw opacity=1 ]   (129.81,184.86) -- (173.07,220.4) -- (192.73,249.75) ;

\draw [color={rgb, 255:red, 177; green, 196; blue, 228 }  ,draw opacity=1 ]   (305.04,110) -- (324.17,136) ;

\draw [color={rgb, 255:red, 177; green, 196; blue, 228 }  ,draw opacity=1 ]   (173.79,110) -- (202.53,130.53) ;

\draw [color={rgb, 255:red, 177; green, 196; blue, 228 }  ,draw opacity=1 ]   (283.53,226.73) -- (303.53,233.93) -- (323.73,249.53) ;

\draw [color={rgb, 255:red, 177; green, 196; blue, 228 }  ,draw opacity=1 ]   (220.81,149.86) -- (226.67,185.2) ;

\draw [color={rgb, 255:red, 208; green, 2; blue, 27 }  ,draw opacity=1 ]   (359.79,209.75) -- (323.73,249.53) ;

\draw [color={rgb, 255:red, 208; green, 2; blue, 27 }  ,draw opacity=1 ]   (192.73,249.75) -- (229.62,207.62) -- (253.83,193.08) -- (267.27,168.2) ;

\draw [color={rgb, 255:red, 208; green, 2; blue, 27 }  ,draw opacity=1 ]   (152.9,250) -- (173.07,220.4) -- (209.27,192.2) -- (226.67,185.2) -- (243.47,174) -- (267.27,168.2) ;

\draw [color={rgb, 255:red, 208; green, 2; blue, 27 }  ,draw opacity=1 ]   (253.35,249.75) -- (277.58,208) -- (292.67,160) -- (324.17,136) -- (359.57,118.4) ;

\draw [color={rgb, 255:red, 208; green, 2; blue, 27 }  ,draw opacity=1 ]   (360.04,192.5) -- (303.53,233.93) -- (297.73,249.5) ;

\draw [color={rgb, 255:red, 208; green, 2; blue, 27 }  ,draw opacity=1 ]   (324.17,136) -- (283.53,226.73) ;

\draw [color={rgb, 255:red, 208; green, 2; blue, 27 }  ,draw opacity=1 ]   (222.53,110.13) -- (202.53,130.53) ;

\draw [color={rgb, 255:red, 208; green, 2; blue, 27 }  ,draw opacity=1 ]   (359.54,160.5) -- (283.53,226.73) ;

\draw [color={rgb, 255:red, 208; green, 2; blue, 27 }  ,draw opacity=1 ]   (192.73,249.75) -- (219.62,196.48) -- (226.67,185.2) ;

\draw [color={rgb, 255:red, 208; green, 2; blue, 27 }  ,draw opacity=1 ]   (267.27,168.2) -- (292.67,160) ;

\draw [color={rgb, 255:red, 208; green, 2; blue, 27 }  ,draw opacity=1 ]   (249.79,110) -- (229.17,141.06) -- (220.81,149.86) ;

\draw    (251.8,163) -- (339.71,200.43) ;

\draw    (339.71,200.43) -- (154.86,200.71) -- (251.8,163) ;

\draw [draw opacity=0]   (235.2,195.4) -- (335.2,295.4) ;

\draw [draw opacity=0][fill={rgb, 255:red, 255; green, 255; blue, 255 }  ,fill opacity=0.8 ]   (122,213) -- (364.86,117.29) -- (123.48,89.46) ;

\draw [draw opacity=0][fill={rgb, 255:red, 255; green, 255; blue, 255 }  ,fill opacity=0.8 ]   (129.43,249.57) -- (350.56,249.33) -- (363.14,201.29) -- (123.14,201) -- (127.71,250.14) ;

\draw [draw opacity=0][fill={rgb, 255:red, 255; green, 255; blue, 255 }  ,fill opacity=0.8 ]   (359.14,113.57) -- (250.86,161.86) -- (364,209.86) ;

% Text Node
\draw (124.27,86.3) node [anchor=north west][inner sep=0.75pt]  [font=\footnotesize]  {$t$};
% Text Node
\draw (384.2,247.26) node [anchor=north west][inner sep=0.75pt]  [font=\footnotesize]  {$x$};
% Text Node
\draw (240.5,144.35) node [anchor=north west][inner sep=0.75pt]  [font=\tiny]  {$(t,x^{*})$};
% Text Node
\draw (126.4,210.97) node [anchor=north west][inner sep=0.75pt]  [font=\tiny]  {$(s,x^{*}\!\!-\!\widehat{\lambda} (t-s))$};
% Text Node
\draw (311.64,206.26) node [anchor=north west][inner sep=0.75pt]  [font=\tiny]  {$(s,x^{*}\!\!+\!\widehat{\lambda} (t-s))$};

\end{tikzpicture}
\vspace*{-10mm}
\caption{Domain for local estimates}
\end{figure}
\begin{proof}[Proof of Lemma~\ref{Lem:EstVplocale}] %\ \par
Let $t>0$.
As a first condition on $x^*$, we suppose that it is a continuity point of$u^\nu$. %\par
We consider at time $s$ the initial condition:
\begin{equation*}
u^*_s(x) := \left\{ \begin{array}{l}
u^\nu(s,x) \ \text{ on } \ [x^*-\hat{\lambda}(t-s), x^* + \hat{\lambda}(t-s)], \\
u^\nu(s,(x^*-\hat{\lambda}(t-s))^+) \ \text{ on } \  (-\infty,x^*-\hat{\lambda}(t-s)), \\
u^\nu(s,(x^*+\hat{\lambda}(t-s))^-) \ \text{ on } \ (x^*+\hat{\lambda}(t-s),+\infty).
\end{array} \right. 
\end{equation*}
Corresponding to this initial condition, we construct a piecewise constant function
 on a polygonal partition on $[s,+\infty) \times \R$, that we call $v^\nu$, as follows.
Inside the triangle joining $(s,x^*-\hat{\lambda}(t-s))$, $(s,x^*+\hat{\lambda}(t-s))$ and $(t,x^*)$,
 we let $v^\nu= u^\nu$, and outside we construct it progressively for times $s' \geq s$. 
We will refer to segments joining $(t,x^*)$ to $(s,x^*-\hat{\lambda}(t-s))$ and to $(s,x^*+\hat{\lambda}(t-s))$ 
 as the sides of the triangle (we do not consider the horizontal part of the triangle as a side here).
Without loss of generality, we can suppose that no interaction point of $u^\nu$ lies on these sides. 
Indeed, the set of points $x^*$ such that starting from $(t,x^*)$ we find an interaction point on the sides 
 of the corresponding triangle is negligible.
Now we construct $v^\nu$ according to the following algorithm. For $s'$ close to $s$, we let
\begin{equation*}
v^\nu(s',x) := \left\{ \begin{array}{l}
u^\nu(s,(x^*-\hat{\lambda}(t-s))^+) \ \text{ on } \  (-\infty,x^*-\hat{\lambda}(t-s')), \\
u^\nu(s',x) \ \text{ on } \  [x^*-\hat{\lambda}(t-s'),x^*+\hat{\lambda}(t-s')], \\
u^\nu(s,(x^*+\hat{\lambda}(t-s))^-) \ \text{ on } \ (x^*+\hat{\lambda}(t-s'),+\infty),
\end{array} \right. 
\end{equation*}
as long as $s'$ is sufficiently close to $s$ so that no front in $u^\nu$ meets one of the sides of the triangle. 
Then
\begin{itemize}
\item when a front in $u^\nu$ meets one of the left side of the triangle (in which case it is a $1$-front),
 we extend it as a front separating the same states, {\it at speed } $-\widehat{\lambda}$;
\item when a front in $u^\nu$ meets one of the right side of the triangle (in which case it is a $2$-front),
 we extend it as a front separating the same states, {\it at speed } $\widehat{\lambda}$.
\end{itemize}
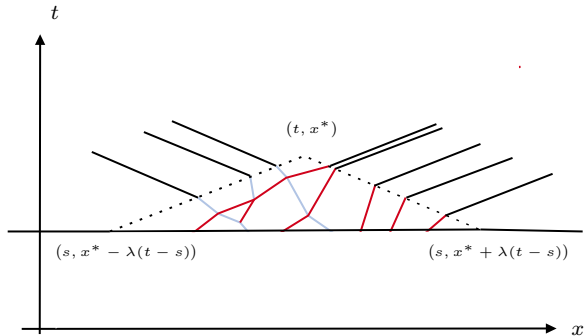
\begin{figure}[ht]
\centering
\tikzset{every picture/.style={line width=0.75pt}} %set default line width to 0.75pt        

\begin{tikzpicture}[x=0.75pt,y=0.75pt,yscale=-1,xscale=1]
%uncomment if require: \path (0,300); %set diagram left start at 0, and has height of 300

%Straight Lines 
\draw    (111,250) -- (375,249.8) ;
\draw [shift={(378,249.8)}, rotate = 179.96] [fill={rgb, 255:red, 0; green, 0; blue, 0 }  ][line width=0.08]  [draw opacity=0] (5.36,-2.57) -- (0,0) -- (5.36,2.57) -- cycle    ;

\draw [line width=0.75]    (120,252.6) -- (120.2,104.8) ;
\draw [shift={(120.2,101.8)}, rotate = 90.08] [fill={rgb, 255:red, 0; green, 0; blue, 0 }  ][line width=0.08]  [draw opacity=0] (5.36,-2.57) -- (0,0) -- (5.36,2.57) -- cycle    ;

%-------------- blue lines
% rightmost
\draw [color={rgb, 255:red, 177; green, 196; blue, 228 }  ,draw opacity=1 ]   (238,168) -- (243.5,174) -- (254,193) -- (277,209) ;
% leftmost 
\draw [color={rgb, 255:red, 177; green, 196; blue, 228 }  ,draw opacity=1 ]   (199,184) -- (209,192) -- (220,196.5) -- (230,208) ;
% middle 
\draw [color={rgb, 255:red, 177; green, 196; blue, 228 }  ,draw opacity=1 ]   (225,173) -- (227,185) ;
%
%
%
%-------------- red lines
%
% 2eme depuis la droite 
\draw [color={rgb, 255:red, 208; green, 2; blue, 27 }  ,draw opacity=1 ]   (324,136) -- (283.5,227) ;
% 3eme depuis la droite
\draw [color={rgb, 255:red, 208; green, 2; blue, 27 }  ,draw opacity=1 ]   (278,208) -- (293,160) -- (324,136) -- (360,118) ;
% 4eme depuis la droite
\draw [color={rgb, 255:red, 208; green, 2; blue, 27 }  ,draw opacity=1 ]   (230,208) -- (254,193) -- (269,167) ;
% leftmost
\draw [color={rgb, 255:red, 208; green, 2; blue, 27 }  ,draw opacity=1 ]   (173,220) -- (209,192) -- (227,185) -- (243,174) -- (265,168) ;
% rightmost 
\draw [color={rgb, 255:red, 208; green, 2; blue, 27 }  ,draw opacity=1 ]   (360,160.5) -- (283.5,227) ;
% 2eme gauche
\draw [color={rgb, 255:red, 208; green, 2; blue, 27 }  ,draw opacity=1 ]   (220,196.5) -- (227,185) ;
%
%
%
%---- cotes du triangle 
\draw [dash pattern={on 1pt off 3pt}]  (155,201) -- (252,163) -- (340,200) ;

%---- base du triangle 
\draw (340,200) -- (155,201) ;

\draw [draw opacity=0][fill={rgb, 255:red, 255; green, 255; blue, 255 }  ,fill opacity=1 ]   (129,249) -- (351,249) -- (363,201) -- (123,201.5) -- (128,250) ;
\draw [draw opacity=0][fill={rgb, 255:red, 255; green, 255; blue, 255 }  ,fill opacity=1 ]   (359,114) -- (251,162) -- (364,210) ;

% Extension leftmost blue
\draw    (146,161) -- (199,184) ;
% Extension middle blue
\draw    (172,151) -- (225,173) ;
% Extension righmost blue
\draw    (186,145.5) -- (238,168) ;
\draw    (318,147) -- (265,168) ;
\draw    (376,172) -- (322.5,193) ;
\draw    (356,164) -- (303,184) ;
\draw    (340,157) -- (287,178) ;
\draw    (321,149) -- (267.5,169.5) ;
\draw    (104,201) -- (155,201) ;
\draw    (340,200) -- (391,201) ;

% Text Node
\draw (124,86) node [anchor=north west][inner sep=0.75pt]  [font=\footnotesize]  {$t$};
% Text Node
\draw (384,247) node [anchor=north west][inner sep=0.75pt]  [font=\footnotesize]  {$x$};
% Text Node
\draw (241,144) node [anchor=north west][inner sep=0.75pt]  [font=\tiny]  {$\left( t,x^{*}\right)$};
% Text Node
\draw (126,206) node [anchor=north west][inner sep=0.75pt]  [font=\tiny]  {$\left( s,x^{*} -\lambda ( t-s)\right)$};
% Text Node
\draw (312,206) node [anchor=north west][inner sep=0.75pt]  [font=\tiny]  {$\left( s,x^{*} +\lambda ( t-s)\right)$};

\end{tikzpicture}
%\vspace*{-10mm}
\caption{The function $v_\nu$}
\end{figure}
This allows to globally construct $v^\nu$. \par
\ \par
Now as mentioned before, the precise speed of propagation is not used in Subsections~\ref{Subsec:Decay},
 \ref{Subsec:DeduceVpEst} and \ref{Subsec:propagation} on the decay of the Glimm functional and the deduced
 estimates in $\mathcal{W}_p$.
We merely use the strict separation of the propagation speeds for fronts of families $1$ and $2$, the fact that
 fronts of the same family merge when they meet, and the fact that at an interaction point the outgoing waves are 
 described in Subsection~\ref{Subsec:FTA}.
(The fact that the speed of propagation is approximately correct 
 is only used to show the consistency of the approximation). 
Hence we can apply the construction and the results of these subsections to $v^\nu$, with time horizon $t$. \par
As a consequence, we find that the $p$-variation of $v^\nu$ along the sides of the triangle is of size $\mathcal{O}(1)V_p(u^*_s(x))$.
This gives us
\begin{eqnarray*}
|u^\nu(t,x^*) - u^\nu(s,x^*)| &\leq& |u^\nu(s,x^* - \widehat{\lambda}(t-s)) - u^\nu(s,x^*)| +  \mathcal{O}(1)V_p(u^*_s(x)) \\
&\leq& \mathcal{O}(1)V_p(u^*_s(x)).
\end{eqnarray*}
Since $V_p(u^*_s(x))$ is precisely $V_p \Big(u^\nu(s,\cdot); [x^*-\hat{\lambda}(t-s), x^*+\hat{\lambda}(t-s)]\Big)$,
 this gives \eqref{Eq:EstVplocale}. This ends the proof of Lemma~\ref{Lem:EstVplocale}.
\end{proof}
We can now prove Proposition~\ref{Pro:TimeEstimates}.
\begin{proof}[Proof of Proposition~\ref{Pro:TimeEstimates}]
Pick $0\leq s<t$. 
Define $L_{s,t} := \widehat{\lambda}(t-s)$.
Then for $k \in \Z$, we raise \eqref{Eq:EstVplocale} to the power $p$ and integrate over
 $[k L_{s,t}, (k+1) L_{s,t}]$.
We easily infer, using that $V_p(f, [a,b])$ is increasing with the interval $[a,b]$, that
\begin{equation} \label{Eq:EstVplocale2}
\int_{k L_{s,t}}^{(k+1) L_{s,t} }|u^\nu(t,x) - u^\nu(s,x)|^p \, dx
\leq C \, L_{s,t} \, V_p^p \Big(u^\nu(s,\cdot); [(k-1)L_{s,t}, (k+2)L_{s,t}]\Big).
\end{equation}
Now due to the construction, on any compact time interval, $u^\nu(t,x) - u^\nu(s,x)$ is compactly supported in $x$,
 because $u^\nu(t,x)$ is equal to its leftmost (respectively rightmost) state for $x$ sufficiently negative
 (resp. large). 
For the same reason, $V_p^p \Big(u^\nu(s,\cdot); [(k-1)L_{s,t}, (k+2)L_{s,t}]\Big)=0$ for $|k|$ large enough.
It follows that we can safely sum \eqref{Eq:EstVplocale2} over $k$ and find
\begin{eqnarray}
\int_{\R} |u^\nu(t,x) - u^\nu(s,x)|^p \, dx 
\nonumber 
&\leq& C \, L_{s,t} \, \sum_{k \in \Z} V_p^p \Big(u^\nu(s,\cdot); [(k-1)L_{s,t}, (k+2)L_{s,t}]\Big) \\
\label{Eq:EstC1surP}
&\leq& 3 C L_{s,t} \, \, V_p^p \Big(u^\nu(s,\cdot); \R\Big),
\end{eqnarray}
where we summed separately over each $\Z / 3\Z$ class and used Proposition~\ref{Pro:ElemePropsVp}-\ref{Item:VpFusion}.
Since $L_{s,t} = \widehat{\lambda}(t-s)$, the conclusion follows.
\end{proof}
%
%
%
%
% ---------------------------------------- Passage to the limit and conclusion ----------------------------------------
%
%
%
%
%
\subsubsection{Passage to the limit and conclusion}
\ \par
\noindent
{\bf Compactness.} At this level, we know that each $u^\nu$ is globally defined on $\R^+ \times \R$, 
and we have uniform bounds for $u^\nu$ in $L^\infty(\R_+;\mathcal{W}_p)$ and in $C^{1/p}([0,+\infty);L^p_{loc}(\R))$.
This guarantees us the compactness of the sequence in $L^1_{loc}(\R^+ \times \R)$. 
Here we just sketch the proof (see for instance \cite[Theorem 3.3]{JenssenRidderPhi20} in the more general case of the $\Phi$-variation).
The idea is that the same proof as for $p=1$ (see for instance \cite[Theorem 2.4]{BressanBook00}) can be done here, 
 because Helly's selection principle is also valid in $\mathcal{W}_p$ spaces (see Proposition~\ref{Pro:Classical}): 
 from a bounded sequence in $\mathcal{W}_p(\R)$, one can extract a sequence converging pointwise, and consequently by dominated
 convergence in $L^1_{loc}$.
Then a diagonal argument ensures that we can extract from $u^\nu$ as sequence that converges in $L^1_{loc}$ for all rational times.
Then thanks to the $C^{1/p}([0,+\infty);L^p_{loc}(\R))$ estimate, we can extend this limit defined on $\Q_+ \times \R$
 to $\R_+  \times \R$ by uniform continuity.
Then one can easily obtain the convergence of the subsequence in $L^1_{loc}(\R_+ \times \R)$ and almost everywhere. \par
Hence, up to a subsequence, we can suppose that
\begin{equation} \label{Eq:LimiteUnu}
u^\nu \longrightarrow u \ \text{ in } \ L^1_{loc}(\R_+ \times \R),
\end{equation}
for some $u \in L^\infty(\R_+;\mathcal{W}_p) \cap C^{1/p}([0,+\infty);L^p_{loc}(\R))$,
which satisfies \eqref{Eq:EstVpGlobale} and \eqref{Eq:TimeEstGlobale}. \ \par
%
%
%
% Taking again a subsequence if necessary, we can suppose that this convergence also holds
%  almost everywhere. \par
%
%
%
Now it remains to prove that $u$ is indeed an entropy solution. 
This is done in several steps. \par
\ \par
\noindent
{\bf Estimate on the size of rarefaction fronts.} %\par
Let us show that rarefaction fronts in $u^\nu$ are all of order $\nu$.
Due to \eqref{Eq:Monotonicity} and \eqref{Eq:VitesseChocModifiee},
 two successive rarefaction fronts of the same family cannot meet. 
 Hence rarefaction fronts can only meet negative fronts of the same family (which make their strengths decrease)
 and fronts of the other family. \par
Call $\alpha$ a rarefaction at time $t$.
To simplify the presentation, we suppose that $\alpha$ is of family $1$, the other case being symmetrical.
We can trace back its history by following the unique incoming rarefaction of the 
 same family at each interaction, to its birthplace, say at $(t_0,x_0)$. Call $a$ the corresponding front line.
Due to the previous considerations, its strength is at most its initial strength,
 amplified by all crossings with $2$-fronts during its lifetime.
Recalling that $C_*$ is a bound for functions $\mathcal{C}^1$ and $\mathcal{C}^2$,
 we find
\begin{equation*}
|\sigma_\alpha(t)| \leq |\sigma_\alpha(t_0)| \prod_{\gamma \in \mathcal{B}_\alpha} (1 + C_*|\sigma_\gamma|^3 ),
\end{equation*}
where $\mathcal{B}_\alpha$ is the set of all $2$-fronts $\gamma$ crossing $\alpha$ between times $t_0$ and $t$.
Now, denoting $\Gamma^{\downarrow}_a$ a $1$-lower measure curve following the front line $a$, we find that
\begin{equation*}
\ln \left( \prod_{\gamma \in \mathcal{B}_\alpha} (1 + C_* |\sigma_\gamma|^3) \right) 
\leq \sum_{\gamma \in \mathcal{B}_\alpha} C_* |\sigma_\gamma|^3 
\lesssim s^3_3(\sigma_\gamma, \gamma \in \mathcal{B}_\alpha) \leq \mathcal{V}_{\Gamma^{\downarrow}_a}.
\end{equation*}
This is bounded according to Proposition~\ref{Pro:DecayGlimm}.
Since initially, rarefaction fronts are of size $\leq \varepsilon$, we find that
 for any rarefaction front $\alpha$ at any time,
\begin{equation} \label{Eq:SmallRar}
\sigma_\alpha(t) = \mathcal{O}(\nu).
\end{equation} \ \par
\ \par
\noindent
{\bf Estimate on the size of compression fronts.}
\label{Place:EstCW}
The case of compression fronts is somewhat similar, but the starting reasoning differs,
 since CF can meet other CF and shocks. 
However, when they do, the only possibility that the outgoing front of the same family is a CF,
 is that entering fronts were CF or RF, and that the sum of their (negative) strengths was more than $-2\nu$. 
Since the initial strength of a CF must be less than $\nu$ (since our convention in the front-tracking algorithm
 is that if the interaction is strong enough to produce an outgoing wave of size $\nu$, we use a classical wave),
 it follows that, beyond $2\nu$, the (absolute) strength of a CF can only increase by means of interactions
 with the other family.
Then we can reason as in the previous case, and obtain that \eqref{Eq:SmallRar} is valid for compression fronts as well. \par
\ \par
\noindent
{\bf Errors on the weak formulation.} 
Let us now prove that we obtain an entropy solution in the limit, by estimating the errors on 
 the weak formulation for the sequence $u^\nu$. \par
Let us consider a convex (in the large sense) entropy couple $(\eta,q)$ as in \eqref{Def:CoupleEntropie}.
Note that among these entropy couples, one can choose 
 $(\eta(u),q(u)) = (\pm \ell(\overline{u}) \cdot u, \pm \ell(\overline{u}) \cdot f(u) )$,
 so that the fact that a limit is a weak distributional solution will follow from the same analysis. \par
Let $\varphi \in \mathcal{D}( (0,+\infty) \times \R)$ such that $\varphi \geq 0$.
Given a front-tracking approximation $u^\nu$, an integration by parts shows that
\begin{multline*}
\int_{\R_+ \times \R} \big( \eta(u^\nu(t,x)) \varphi_t(t,x) + q(u^\nu(t,x)) \varphi_x(t,x) \big) 
= \int_{\R_+} \sum_{\alpha \in \mathcal{A}(t)} \varphi(t,x_\alpha(t)) 
\Big( [q(u^\nu)]_\alpha - \dot{x}_\alpha [\eta(u^\nu)]_\alpha \Big),
\end{multline*}
where we recall that $\mathcal{A}(t)$ designates the set of fronts in $u^\nu$ at time $t$,
 and where $[g(u)]_\alpha := g(t,u(x_\alpha^+)) - g(t,u(x_\alpha^-))$ is the jump of the quantity 
 $g$ across the jump $\alpha$ at time $t$. Define 
\begin{multline} \label{Eq:ErreurEntropie}
\mathcal{H}^\nu(t):= \sum_{\alpha \in \mathcal{A}(t)} \varphi(t,x_\alpha(t)) 
\Big( [q(u^\nu)]_\alpha - \dot{x}_\alpha [\eta(u^\nu)]_\alpha \Big) \\
\ \text{ and } \ 
\widetilde{\mathcal{H}}^\nu(t):= \sum_{\alpha \in \mathcal{A}(t)} \varphi(t,x_\alpha(t)) 
\Big( [q(u^\nu)]_\alpha - \overline{\lambda}_{i_\alpha}(u_\nu^-,u_\nu^+) [\eta(u^\nu)]_\alpha \Big),
\end{multline}
where $i_{\alpha}$ denotes the characteristic family of the front $\alpha$. \par
Recall that all fronts travel at modified shock speed such as defined in \eqref{Eq:VitesseChocModifiee}, 
 with a possible additional speed modification defined in \eqref{Eq:ModifSpeed}. 
We begin by estimating the error if the fronts traveled at actual shock speed, that is 
 we estimate $\widetilde{\mathcal{H}}^\nu(t)$.
We consider $\alpha \in \mathcal{A}(t)$ and discuss according to its nature.
\begin{itemize}
\item If $\alpha$ is a shock, then since it satisfies Liu's $E$-condition, it is entropic in the sense that
\begin{equation*}
[q(u^\nu)]_\alpha - \dot{x}_\alpha [\eta(u^\nu)]_\alpha \geq 0,
\end{equation*}
see e.g. \cite[(8.5.8)]{DafermosBook10}.
\item If $\alpha$ is a rarefaction or a compression front, then it is elementary to show
 (see e.g. \cite[Theorem8.5.1]{DafermosBook10}) that
\begin{equation} \label{Eq:EstErreurTildeHCWRW}
[q(u^\nu)]_\alpha - \dot{x}_\alpha [\eta(u^\nu)]_\alpha \geq -\mathcal{O}(1) |\sigma_\alpha|^3 
\geq -\mathcal{O}(1) \nu |\sigma_\alpha|^2 ,
\end{equation}
due to the above analysis on the sizes of rarefaction and compression fronts.
\end{itemize}
We deduce that
\begin{equation} \label{Eq:EstHTilde}
\widetilde{\mathcal{H}}^\nu(t) \gtrsim - \nu \, V^2_2(u^\nu(t,\cdot)) \gtrsim - \nu \, V^2_p(u^\nu(t,\cdot)).
\end{equation}
It remains to estimate $\mathcal{H}^\nu(t) - \widetilde{\mathcal{H}}^\nu(t)$ which embodies
 the errors due to the modification of the fronts speeds with respect to standard shock speed.
Let us discuss the two modifications of the speed separately.
\begin{itemize}
\item Concerning the possible additional speed modification \eqref{Eq:ModifSpeed}, this gives an additional error on
        $[q(u^\nu)]_\alpha - \dot{x}_\alpha [\eta(u^\nu)]_\alpha$ of size $\mathcal{O}(1) \nu |\sigma_\alpha|^2$.
      Hence this can be treated as for \eqref{Eq:EstErreurTildeHCWRW}, and gives the same type of error as for 
      $\widetilde{\mathcal{H}}^\nu$.
\item The error due to the speed modification \eqref{Eq:VitesseChocModifiee} is more subtle,
       because the error in absolute value for each front is of size $\mathcal{O}(1) \nu |\sigma_\alpha|$,
       while the strengths $\sigma_\alpha$ are not uniformly absolutely summable.
      Taking into account the previous steps and \eqref{Eq:VitesseChocModifiee},
       the quantity that remains to be estimated is
\begin{multline} \label{Eq:ErreurPrincipaleH}
\sum_{\alpha \in \mathcal{A}(t)} \varphi(t,x_\alpha(t)) 
\Big( \overline{\lambda}_{i_\alpha}(u^\nu(x_\alpha^-), u^\nu(x_\alpha^+)) 
    - \overline{\lambda}_{i_\alpha}^\nu(u^\nu(x_\alpha^-), u^\nu(x_\alpha^+)) \Big) 
[\eta(u^\nu)]_\alpha \\
=
\sum_{\alpha \in \mathcal{A}(t)} \nu \, \varphi(t,x_\alpha(t)) 
\Big(  w_i(u^\nu(x_\alpha^+)) -w_i(\overline{u}) \Big) 
[\eta(u^\nu)]_\alpha.
\end{multline}
Now we describe $\mathcal{A}(t)$ according to the position of the front, from left to right.
Due to Proposition~\ref{Pro:ElemePropsVp}-\ref{Item:LipVp}, we have 
\begin{equation*}
s_p \Big([\eta(u^\nu)]_\alpha, \alpha \in \mathcal{A}(t)\Big) 
= v_p\Big(\eta(u^\nu(x_\alpha^+)), \alpha \in \mathcal{A}^*(t)\Big) \lesssim \widetilde{V}_p(u^\nu),
\end{equation*}
where $\mathcal{A}^*(t)$ is obtained by adding a trivial front to the left of all others. \par
On another side, due to the regularity of $\varphi$ and $w_i$, it is elementary to see that
\begin{multline*}
v_1\Big(\varphi(t,x_\alpha(t)), \ \alpha \in \mathcal{A}(t) \Big) \leq TV(\varphi(t,\cdot)) \lesssim 1 \\
\text{ and } \ 
v_p\Big(w_i \big( u^\nu(x_\alpha^+) \big) -w_i(\overline{u}), \ \alpha \in \mathcal{A}(t) \Big) 
= v_p\Big(  w_i \big( u^\nu(x_\alpha^+) \big), \ \alpha \in \mathcal{A}(t) \Big) \leq \widetilde{V}_p(u^\nu(t,\cdot)).
\end{multline*}
Hence using twice Corollary~\ref{Cor:MultiplSp}, we find that
\begin{equation*}
s_p 
\Big( \nu \, \varphi(t,x_\alpha(t)) \, 
\big(  w_i \big( u^\nu(x_\alpha^+) \big) -w_i(\overline{u}) \big) 
\, [\eta(u^\nu)]_\alpha \Big)
\lesssim \nu \, \widetilde{V}_p(u^\nu(t,\cdot))^2.
\end{equation*}
In particular, the right-hand side of \eqref{Eq:ErreurPrincipaleH} can be estimated by 
$\nu \, \widetilde{V}_p(u^\nu(t,\cdot))^2$.
\end{itemize}
Gathering the estimates above, we find that for all $t \in \R^+$,
\begin{equation*}
\mathcal{H}^\nu(t) \geq - \mathcal{O}(1) \nu \widetilde{V}_p(u^\nu(t,\cdot))^2.
\end{equation*}
Hence using Lebesgue's dominated convergence theorem and passing to the limit as $\nu \rightarrow 0^+$,
 we find that the limit $u$ in \eqref{Eq:LimiteUnu} satisfies
\begin{equation*}
\int_{\R_+ \times \R} \big( \eta(u(t,x)) \varphi_t(t,x) + q(u(t,x)) \varphi_x(t,x) \big) 
\geq 0.
\end{equation*}
Finally, that $u$ satisfies the initial condition is a consequence of \eqref{Eq:ApproxInitiale}
 and \eqref{Eq:EstC1surP}. \par
This ends the proof of Theorem~\ref{Thm:Main}.
%
%
%
%
% --------------------------------------------------------------------------------------------------------------------%
%                                                                                                                     %
%                                                      REMARKS                                                        %
%                                                                                                                     %
% --------------------------------------------------------------------------------------------------------------------%
%
%
%
%
% --------------------------------------------------- Counterexample -------------------------------------------------
%
%
%
%
\section{Justification of Remarks~\ref{rem:CasNgeq3} and \ref{rem:OndesEngendreesBV}}
\label{Sec:Remarks}
\subsection{About Remark~\ref{rem:CasNgeq3}: Young's counterexample for $n \times n$ systems with $n \geq 3$}
\label{Subsec:Counterexample}
Remark~\ref{rem:CasNgeq3} is a direct consequence of Young's example \cite[pp. 549-555]{RYoung99}.
We reproduce the example, and then explain how it can be used to show instability in $V_p$ seminorm.
Let
\begin{equation*}
f_2(u,v,w) = \begin{pmatrix} w + 2uv \\ 0 \\ u(1-4v^2) -2vw \end{pmatrix}.
\end{equation*}
With this flux function $f_2$, System~\eqref{Eq:SCL} is strictly hyperbolic with $\lambda_1=-1$, $\lambda_2=0$
 and $\lambda_3=1$, the corresponding characteristic fields being linearly degenerate. \par
For $\Delta x >0$, one describes a $4 \Delta x$-periodic
 initial data $U_0$, obtained as follows.
On the interval $[0,4 \Delta x)$, $U_0$ is constant on each interval $[k \Delta x, (k+1)\Delta x)$,
 the corresponding constants from left to right being called $1,2,5,6$. 
The discontinuities $(1,2)$ and $(5,6)$ will be chosen as $2$-waves,
 while $(2,5)$ and $(6,1)$ determine a full Riemann problem, with intermediate states $3$, $4$ and $7$, $0$,
 respectively. See Figure~\ref{Fig:Young}. \par
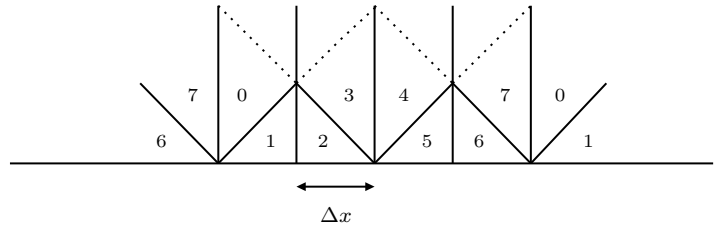
\begin{figure}[!ht]
\centering
\tikzset{every picture/.style={line width=0.75pt}} %set default line width to 0.75pt        

\begin{tikzpicture}[x=0.75pt,y=0.75pt,yscale=-1.3,xscale=1.3]

\draw    (100,201) -- (370,201) ;
\draw    (150,170) -- (180,201) ;
\draw  [dash pattern={on 0.84pt off 2.51pt}]  (180,140) -- (210,170) ;
\draw  [dash pattern={on 0.84pt off 2.51pt}]  (210,170) -- (227,153) -- (241,140) ;
\draw    (180,201) -- (210,170) ;
\draw    (180,140) -- (180,201) ;
\draw    (210,170) -- (240,201) ;
\draw    (240,201) -- (270,170) ;
\draw    (240,140) -- (240,201) ;
\draw    (210,140) -- (210,201) ;
\draw    (270,170) -- (300,201) ;
\draw    (300,201) -- (329,170) ;
\draw    (300,140) -- (300,201) ;
\draw  [dash pattern={on 0.84pt off 2.51pt}]  (240,140) -- (270,170) ;
\draw  [dash pattern={on 0.84pt off 2.51pt}]  (270,170) -- (287,153) -- (300,140) ;

% Delta x 
\draw    (213,210) -- (237,210) ;
\draw [shift={(240,210)}, rotate = 180] [fill={rgb, 255:red, 0; green, 0; blue, 0 }  ][line width=0.08]  [draw opacity=0] (3.57,-1.72) -- (0,0) -- (3.57,1.72) -- cycle    ;
\draw [shift={(210,210)}, rotate = 0] [fill={rgb, 255:red, 0; green, 0; blue, 0 }  ][line width=0.08]  [draw opacity=0] (3.57,-1.72) -- (0,0) -- (3.57,1.72) -- cycle    ;
\draw    (270,140) -- (270,201) ;

% Text 
\draw (218,217) node [anchor=north west][inner sep=0.75pt]  [font=\footnotesize]  {$\Delta x$};
\draw (155,189) node [anchor=north west][inner sep=0.75pt]  [font=\scriptsize]  {$6$};
\draw (197,189) node [anchor=north west][inner sep=0.75pt]  [font=\scriptsize]  {$1$};
\draw (167,171) node [anchor=north west][inner sep=0.75pt]  [font=\scriptsize]  {$7$};
\draw (186,171) node [anchor=north west][inner sep=0.75pt]  [font=\scriptsize]  {$0$};
\draw (217,189) node [anchor=north west][inner sep=0.75pt]  [font=\scriptsize]  {$2$};
\draw (227,171) node [anchor=north west][inner sep=0.75pt]  [font=\scriptsize]  {$3$};
\draw (248,171) node [anchor=north west][inner sep=0.75pt]  [font=\scriptsize]  {$4$};
\draw (257,189) node [anchor=north west][inner sep=0.75pt]  [font=\scriptsize]  {$5$};
\draw (277,189) node [anchor=north west][inner sep=0.75pt]  [font=\scriptsize]  {$6$};
\draw (287,171) node [anchor=north west][inner sep=0.75pt]  [font=\scriptsize]  {$7$};
\draw (308,171) node [anchor=north west][inner sep=0.75pt]  [font=\scriptsize]  {$0$};
\draw (319,189) node [anchor=north west][inner sep=0.75pt]  [font=\scriptsize]  {$1$};

\end{tikzpicture}
\caption{Young's example}
\label{Fig:Young}
\end{figure}
Then for a given state $1$ and $\alpha>0$, $\beta>0$, one chooses the strengths
of the discontinuities $(1,2)$, $(2,3)$, $(3,4)$, $(4,5)$, and $(5,6)$ 
as $\beta$, $\alpha$, $-\beta$, $-\alpha$, and $\beta$, respectively. 
Calculations in \cite[p. 552]{RYoung99} show then that the strengths of discontinuities $(6,7)$, $(7,0)$, and $(0,1)$
 are then $-\alpha$, $-\beta$, and $\alpha$, respectively. \par
One associates the corresponding entropy solution $U$ (obtained by progressively solving Riemann problems),
 and \cite[(5.10)]{RYoung99} proves that for any $k \in \N$, $U(2k \Delta x, \cdot)$ has the exact same structure,
 with $\alpha$ and $\beta$ replaced respectively by $\alpha_k$ and $\beta_k$, with
\begin{equation*}
\beta_k = \beta
\ \text{ and } \ 
\alpha_k = \left(\frac{1+\beta}{1-\beta}\right)^k \alpha.
\end{equation*}
Now we use this example as follows.
For $n \in \N \setminus \{0\}$ we let 
\begin{equation*}
\Delta x = \frac{1}{n} \ \text{ and } \  \alpha=\beta= \frac{\varepsilon}{n^{1/p}}
\end{equation*}
in the previous construction, which gives $U_0$.
Now we define
\begin{equation*}
\mathscr{U}_0(\cdot) := U_0(\cdot) {\bf 1}_{[-2,3]}(\cdot).
\end{equation*}
Call $\mathscr{U}$ the corresponding entropy solution (which again is obtained for each time by solving a finite
 number of Riemann problems).
Due to the finite speed of propagation (speeds being $-1$, $0$, $1$), one has
\begin{equation*}
\mathscr{U}_{|[0,1]} = U_{|[0,1]} \text{ for times } t \text{ in } [0,2].
\end{equation*}
Now it is elementary to check (measuring the $p$-variation for jumps of each 
 characteristic family separately for instance) that
\begin{equation*}
V_p(\mathscr{U}_0)= \mathcal{O}(1) \varepsilon
\ \text{ while } \ 
V_p(\mathscr{U}(2,\cdot)) \geq V_p(\mathscr{U}(2,\cdot)_{|[0,1]}) 
\geq \varepsilon \left(\frac{1 + \frac{\varepsilon}{n^{1/p}}}{1 - \frac{\varepsilon}{n^{1/p}}}\right)^n
\geq \varepsilon \exp \left(2 \varepsilon n^{1-\frac{1}{p}}\right),
\end{equation*}
where the inequality $\frac{1+x}{1-x} \geq \exp(2x)$ for $x \leq 1$ was used. \par
Hence we can find initial states with arbitrarily small $p$-variation, which generate a solution with arbitrarily
large $p$-variation at time $2$ (a time that is arbitrary by scale-invariance of the equation).
%
%
%
%
%
% ---------------------------------------------------- New waves ---------------------------------------------------
%
%
%
\subsection{About Remark~\ref{rem:OndesEngendreesBV}: total production of new waves}
\label{Subsec:NewWaves}
Here we consider ``new'' waves, by which we mean waves created in the other family at the interaction point
 of two waves of the same family.
We can see that the total production of new waves in absolute value (at the level of wave-front tracking approximations) 
 is bounded. 
By contrast the increase of the strengths of waves across interaction points of opposite families can 
 be of unbounded total variation, as explained in the introduction. \par
The fact that the total production of these new waves is bounded (taking the $\ell^1$ norm of the size of the jumps)
 can be seen as an easy consequence of the analysis of Subsection~\ref{Subsec:Decay}. 
Indeed when considering interactions of waves of the same family in Subsection~\ref{Subsec:Decay},
 we proved that $\mathcal{Q}$ decayed of an amount of 
 $\mathcal{O}(1)|\sigma_\alpha||\sigma_\beta|(|\sigma_\alpha| + |\sigma_\beta|)$,
 which is of the same order as the size of newly produced wave at this interaction point, 
 independently of the time horizon $T$. \par
Hence we could prove the decay of a functional of the type
\begin{equation*}
\mathcal{Q}(t) + \sum_{\substack{\alpha \text{ new wave} \\ \text{created before } t}} |\sigma_\alpha^0|,
\end{equation*}
where $\sigma_\alpha^0$ denotes the strength of the wave $\alpha$ at the moment it was created.
This proves that 
\begin{equation*}
\sum_{\alpha \text{ new wave}} |\sigma_\alpha^0| \lesssim V_p(u_0)^{2p}.
\end{equation*}
Going a bit further, and extending these new waves across interactions of opposite families in a natural way,
 and considering them as separate parts of the outgoing wave of the same family at interactions of the same family 
 (in the spirit of Liu's wave-tracing technique \cite{LiuWT77,LiuYang02}), we could even replace 
 the $|\sigma_\alpha^0|$ above with some $|\sigma_\alpha(t)|$, we omit the details. \par
%
%
%
%
% --------------------------------------------------------------------------------------------------------------------%
%                                                                                                                     %
%                                                      APPENDIX                                                       %
%                                                                                                                     %
% --------------------------------------------------------------------------------------------------------------------%
%
%
%
\section{Appendix}
\label{Sec:Appendix}
%
%
%
%
% -------------------------------------------- Subsec: Proofs V_p props --------------------------------------------
%
%
%
%
\subsection{Proofs of elementary properties of $s_p$: Propositions~\ref{Pro:ElemePropsVp} and \ref{Pro:PropsVpEndpoints}}
\label{Subsec:ProofsVp}
\begin{proof}[Proof of Proposition~\ref{Pro:ElemePropsVp}] \ \par
\noindent
{\bf \ref{Item:SpSpPrime}-\ref{Item:TriangIneg}}. These properties are direct consequences of their counterpart
in the sequence space $\ell^p$. \par
\ \par
\noindent
{\bf \ref{Item:DLVp}}. This is a consequence of the above triangle inequality $s_p(a+b) \leq s_p(a) + s_p(b)$
and the elementary inequality
\begin{equation} \label{Eq:SommePuissanceP}
\text{for } x,y \geq 0, \ \  |x+y|^p \leq |x|^p + \mathcal{O}(1)|x|^{p-1}|y| + 2 |y|^p,
\end{equation}
which is trivial for $x \leq (\sqrt[3]{2} -1) y$, and in the opposite case a consequence of
 $\frac{d}{dt}|x+t|^p \lesssim |x|^{p-1}$ for $t \in [0,y]$. \par
\ \par
\noindent
{\bf \ref{Item:Fusion}}. The first inequality expresses that the supremum over all partitions is larger
than the one over partitions that force $a_j$ and $a_{j+1}$ to belong to the same component.
Concerning the same-sign case, it is easy to see that in an optimal partition (calling $a_{n+1}=t$):
\[
s_p^p(a_1,\ldots,a_{n},t) =\sum_{j=0}^{k-1} \left|\sum_{\ell=i_{j}+1}^{i_{j+1}} a_{\ell} \right|^{p},
\]
two successive sums $\sum_{\ell=i_{j}+1}^{i_{j+1}} a_{\ell}$ and $\sum_{\ell=i_{j+1}+1}^{i_{j+2}} a_{\ell}$
 have opposite sign, and that the extreme values of a component $a_{i_{j}+1}$ and $a_{i_{j+1}}$ have the same sign
 as the sum $\sum_{\ell=i_{j}+1}^{i_{j+1}} a_{\ell}$. 
It follows that two successive values $a_i$ and $a_{i+1}$ having the same strict sign
 belong to the same component in an optimal partition, which justifies the claim. \par
\ \par
\noindent
{\bf \ref{Item:VpFusion}}. Property~\ref{Item:VpFusion} comes from the fact that taking the supremum in \eqref{Def:sp}
for $a \diamond b$ is larger without constraint than with the constraint of a partition obtained 
by joining a partition of $a$ and a partition of $b$. \par
\ \par
\noindent
{\bf \ref{Item:Interleave}}. 
Consider an optimal partition for $v_p(a)$. 
For $i \in \{1,\ldots,k\}$, let $N_i$ the position of $a_i$ in the interleaved sequence $c$.
We start from an optimal partition $0=i_{0}<\dots<i_{j}=k$ for $v_p(a)$:
\begin{equation*}
v_{p}(a)= \left(\sum_{\ell=0}^{j-1} \left|a_{i_{\ell+1}} - a_{i_{\ell}+1} \right|^{p} \right)^{1/p}.
\end{equation*}
We let $n_\ell := N_{i_\ell}$ for $\ell=0,\ldots,j$. Then
\begin{equation*}
v_{p}(a)= \left(\sum_{\ell=0}^{j-1} \left|c_{N_{\ell+1}} - c_{N_{\ell}+1} \right|^{p} \right)^{1/p}.
\end{equation*}
Adding $0$ at the beginning and $k+k'$ at the end if necessary, $N_0<\ldots<N_j$ form a partition for $c$,
so $v_p(c) \leq v_p(a)$ follows. \par
Concerning $|s_p(a) - s_p(b)| \leq s_p(c) \leq s_p(a) + s_p(b)$, we define $(\tilde{a}_j)_{j=1,\ldots,k+k'}$
 by $\tilde{a}_j=a_i$ if $j=N_i$ and $\tilde{a}_j=0$ otherwise. 
Similarly, we define $\tilde{b}$ as the sequence retaining only the
 term $b_i$ in $c$. Obviously, $s_p(\tilde{a})=s_p(a)$ and $s_p(\tilde{b})=s_p(b)$.
Then $c=\tilde{a}+\tilde{b}$, and the conclusion follows from point~\ref{Item:TriangIneg}. \par
\ \par
\noindent
{\bf \ref{Item:Repetition}}. Property~\ref{Item:Repetition} comes from the fact that for any partition for $b$,
 the sum in \eqref{Def:vp} corresponding to this partition for $b$, is represented in the sums defining $v_p(a)$. 
Of course one has equality if each $a_i$ is represented at least once in $b$. \par
\ \par
\noindent
{\bf \ref{Item:Ajoutde0}}. Using Property~\ref{Item:Repetition}, we have
$v_p(0,a_1,\ldots,a_k) \leq v_p(a_1,a_1,\ldots,a_k) + v_p(-a_1,0,\ldots,0) = v_p(a_1,\ldots,a_k) +|a_1|$. \par
\ \par
\noindent
{\bf \ref{Item:LipVp}}.
The proof of \ref{Item:LipVp} is immediate. \par
\ \par
\noindent
{\bf \ref{Item:VpProd}}. 
We first treat the case $n=2$.
Consider an optimal partition $0=i_{0}<\dots<i_{j}=k$ for $v_p(a_i b_i)$:
\begin{align*}
v_{p}(a_i b_i)
&= \left(\sum_{m=0}^{j-1} \left| a_{i_{m+1}} b_{i_{m+1}}  - a_{i_{m}+1} b_{i_{m}+1} \right|^{p} \right)^{1/p} \\
& = \| a_{i_{m+1}} b_{i_{m+1}}  - a_{i_{m}+1} b_{i_{m}+1} \|_{\ell^p(0,\ldots,j-1)} \\
& \leq \| a_{i_{m+1}} (b_{i_{m+1}}  -  b_{i_{m}+1}) \|_{\ell^p(0,\ldots,j-1)} 
+ \|  b_{i_{m+1}}  (a_{i_{m+1}} - a_{i_{m}+1})\|_{\ell^p(0,\ldots,j-1)} \\
& \leq  \| a\|_\infty \, \| b_{i_{m+1}}  -  b_{i_{m}+1}  \|_{\ell^p(0,\ldots,j-1)} 
+  \| b\|_\infty \, \| a_{i_{m+1}} - a_{i_{m}+1} \|_{\ell^p(0,\ldots,j-1)} \\
&\leq  \|a\|_\infty  \,v_p(b) + \|b\|_\infty \, v_p(a).
\end{align*}
Then the general case is an immediate induction.
\end{proof}
%
%
%
%-----------------------
%
%
%
%
\begin{proof}[Proof of Proposition~\ref{Pro:PropsVpEndpoints}] \ \par
\noindent
{\bf \ref{Item:EndPoint1}.}
The map $t \mapsto s_p^p(x_1,\ldots,x_{n},t)$ is clearly convex as a supremum of convex functions (see \eqref{Def:sp}). 
Now for each $x_1,\ldots,x_{n}$ and $t \geq 0$, we write $x_{n+1}=t$ 
and consider an optimal partition $0=i_{0}<\dots<i_{k}= n+1$ so that
\begin{equation} \label{Eq:UnePartitionOptimale}
s_p^p(x_1,\ldots,x_{n},t) 
    = \sum_{j=0}^{k-1} \left|\sum_{\ell=i_{j}+1}^{i_{j+1}} x_{\ell} \right|^{p} .
\end{equation}
Then either $i_{k-1}+1=n+1$ and 
\[
s_p^p(x_1,\ldots,x_{n},t) 
    =\sum_{j=0}^{k-2} \left|\sum_{\ell=i_{j}+1}^{i_{j+1}} x_{\ell} \right|^{p} 
    + |t|^p,
\]
or $i_{k-1}<n$ and necessarily $t$ has the same sign as $\sum_{\ell=i_{k-1}+1}^{n} x_{\ell}$,
or the partition would not be optimal. In both cases, we obtain an increasing expression of $t$. \par
It follows that for $t$ in $\R_+$, $s_p^p(x_1,\ldots,x_{n},t)$ can be written as a supremum of expressions
 as in the right-hand side of \eqref{Eq:UnePartitionOptimale} over only partitions satisfying $i_{k-1}+1=n+1$
 or for which the last sum between $i_{k-1}+1$ and $n$ is positive. 
Hence $s_p^p(x_1,\ldots,x_{n},t)$ can be expressed as a (finite) supremum
of increasing functions, so $t \mapsto s_p^p(x_1,\ldots,x_{n},t)$ is increasing.
The reasoning on $\R^-$ is similar. \par
\ \par
\noindent
{\bf \ref{Item:EndPoint2}.} Consider an optimal partition $0=i_{1}<\dots<i_{k}=n$ so that 
\begin{equation} \label{Eq:OP1}
s_p^{p}(x_1,\ldots,x_n) 
    = \sum_{j=0}^{k-1} \left| \sum_{\ell=i_{j}+1}^{i_{j+1}} x_{\ell} \right|^{p} .
\end{equation}
Then clearly 
\[
s_p^{p}(x_1,\ldots,x_n,x_{n+1}) 
    \geq \sum_{j=0}^{k-1} \left|\sum_{\ell=i_{j}+1}^{i_{j+1}} x_{\ell} \right|^{p} + |x_{n+1}|^p,
\]
which gives the second claim. \par
When $x_n$ and $x_{n+1}$ have the same sign, consider again an optimal partition $0=i_{1}<\dots<i_{k}=n$
so that \eqref{Eq:OP1} applies.
Then in the last component of the partition $(x_{i_{k-1}+1},\ldots,x_n)$, $x_n$ has the same sign
as the sum of the other terms $S:=\sum_{\ell=i_{k-1}+1}^{i_{k}-1} x_\ell$,
because otherwise the partition would not be optimal.
Now we have
\begin{equation*}
s_p^{p}(x_1,\ldots,x_n,x_{n+1}) 
    \geq \sum_{j=0}^{k-2} \left|\sum_{\ell=i_{j}+1}^{i_{j+1}} x_{\ell} \right|^{p} 
    + |S+x_n+x_{n+1}|^p,
\end{equation*}
so that
\begin{eqnarray*}
s_p^{p}(x_1,\ldots,x_n,x_{n+1}) - s_p^{p}(x_1,\ldots,x_n) 
    &\geq& |S+x_n+x_{n+1}|^p - |S+x_n|^p \\
    &\geq& |x_n+x_{n+1}|^p - |x_n|^p \\
    &\geq&     p|x_n|^{p-1} |x_{n+1}| ,
\end{eqnarray*}
where we used the convexity of $t \mapsto |t|^p$ and the fact that $S$, $x_n$ and $x_{n+1}$ have the same sign. \par
\ \par
\noindent
{\bf \ref{Item:EndPoint3}.} This case is clearly symmetrical.
\end{proof}
%
%
%
%
% ---------------------------------------------- Subsec: proof of Cor DLY ----------------------------------------------
%
%
%
%
\subsection{Proof of the multiplicative property of $s_p$: Corollary~\ref{Cor:MultiplSp}}
\label{Subsec:ProofsDLY}
Consider $a$ and $b$ as in the statement. We consider an optimal partition $0=k_1 < \cdots < k_\ell = n$ for $ab$, so that
\begin{equation*}
s_p(ab) = \sum_{i=1}^{\ell-1} \left| \sum_{j=k_i+1}^{k_{i+1}} a_j b_j\right|^p.
\end{equation*}
Now calling $\delta b := (b_2-b_1,\ldots,b_n-b_{n-1})$, we have for each index $i$
\begin{equation*}
\sum_{j=k_i+1}^{k_{i+1}} a_j b_j
    = b_{k_i+1} \sum_{j=k_i+1}^{k_{i+1}} a_j 
    + \sum_{k_i+1 \leq m < j \leq k_{i+1}} a_j \, \delta b_m.
\end{equation*}
Now we use the convexity inequality $|x+y|^p \leq 2^{p-1} (|x|^p + |y|^p)$ and the discrete Love-Young inequality
(using $p<2$) to deduce, denoting $s_p(x;m_1;m_2) :=s_p(x_{m_1},\ldots,x_{m_2})$,
\begin{equation*}
\left| \sum_{j=k_i+1}^{k_{i+1}} a_j b_j\right|^p \leq
    2^{p-1} s_p(a;k_i+1,k_{i+1})^p \Big( \| b\|_\infty^p  + [1+\zeta(2/p)] \, s_p(\delta b;k_i+1,k_{i+1})^p \Big) .
\end{equation*}
Using $s_p(a;k_i+1,k_{i+1})^p \leq s_p(a)^p$, Proposition~\ref{Pro:ElemePropsVp}-\ref{Item:VpFusion} repeatedly,
and $v_p(b)=s_p(\delta b)$, we reach the conclusion.
%
%
%
%
%
% ------------------------------------------ Wave curves for monotone fields ------------------------------------------
%
%
%
%
\subsection{Proof of the description of wave curves for monotone fields: Proposition~\ref{Pro:LaxCurves}}
\label{Subsec:WCMontoneFields}
\begin{proof}
 Without loss of generality, we can assume $i=n$ (since we did not order the eigenvalues here);
 this will simplify a bit the notations in the sequel. In consistency with \eqref{Eq:Parametrisation},
 we normalize here $r_n$ by $\ell_n^0 \cdot r_n=1$, where for all $j$ we abridge $\ell_j^0:=\ell_j(u_0)$. \par
 
 For $s \geq 0$, due to assumption \eqref{Eq:Monotonicity} it is clear that 
  $L:s \mapsto \lambda_n(\mathcal{R}_n(s,u_0))$ is a continuous non-decreasing function.
 It follows that $L^{-1}$ is defined as a possibly discontinuous increasing function.
 Hence it is elementary to check that for $u_1 = \mathcal{R}_n(s;u_0)$, the function
 \begin{equation*}
u(t,x) = \left\{ \begin{array}{l}
    u_0 \ \text{ for } \ \frac{x}{t} \leq \lambda_n(u_0), \\
    \mathcal{R}_n(t; u_0) \ \text{ for } \ \frac{x}{t} = \lambda_n( \mathcal{R}_n(t;u_0)), \ t \in [0,s], \\
    u_1 \ \text{ for } \ \frac{x}{t} \geq \lambda_n(u_1),
\end{array}\right.
\end{equation*}
is well-defined apart from (at most countable) discontinuities of $L^{-1}$ and gives an admissible solution
to the Riemann problem $(u_0,u_1)$ and is composed of rarefactions and possibly contact discontinuities
at speeds where $L^{-1}$ is discontinuous. \par
The main point is to prove that for $s<0$, shocks are admissible, despite the possible degeneracy of 
$\nabla \lambda_n \cdot r_n$. 
This relies on the fact that the vanishing of $\nabla \lambda_n \cdot r_n$ guides the tangency of $\mathcal{R}_n$
and $\mathcal{S}_n$ at base point (see the remark below). \par
%
% This was underlined in \cite{Zumbrun93} when this degeneracy is algebraic;
%  here $\nabla \lambda_n \cdot r_n$ is any nonnegative function. \par
%
The main point is to observe that the Hugoniot locus can be obtained by means of Newton's iterations applied
 to the map constructed as follows. 
For $s_0>0$, given a map $(\alpha_j)_{j =1 \ldots n-1} : [-s_0,s_0] \rightarrow \R^{n-1}$
 such that $\alpha(s) / s$ is continuous (so that in particular $\alpha_j(0)=0$), we first associate the curve
$\gamma[(\alpha_j)_{j =1 \ldots n-1}]$ given by the map
\begin{equation*}
    \gamma[(\alpha_j)](s) = u_0 + s r_n(u_0) + \sum_{j =1}^{n-1} \alpha_j(s) r_j(u_0).
\end{equation*}
Next, considering a function $V : [-s_0,s_0] \rightarrow \R$ with values in a neighborhood of $\lambda_n(u_0)$,
we associate $F[(\alpha_j) ; V]: [-s_0,s_0] \rightarrow \R^{n}$ by
\begin{equation} \label{Eq:ApplNewton}
F[(\alpha_j) ; V](s):=
\begin{pmatrix}
    \ell_1^0 \cdot \big[ f(\gamma(s)) - f(u_0) - V (\gamma(s) -u_0)\big] \\
    \vdots \\
    \ell_{n-1}^0 \cdot \big[ f(\gamma(s)) - f(u_0) - V (\gamma(s) -u_0)\big] \smallskip \\
    \displaystyle V -    \overline{\lambda}_n(u_0,\gamma(s))
\end{pmatrix}
\text{ where } \gamma:=\gamma[(\alpha_j) ],
\end{equation}
where we denoted
\begin{equation} \label{Eq:DefLambdaBar}
\overline{\lambda}_n(u_0,\gamma(s)) := \frac{\ell_n^0 \cdot (f(\gamma(s)) - f(u_0))}{s} .
\end{equation}

Then in a neighborhood of $\overline{u}$ ensuring that $(\ell_1^0,\ldots,\ell_{n-1}^0,\ell_n(\overline{u}))$
is a basis of $\R^n$, finding the Hugoniot locus amounts to finding a zero of $F$.
This can be obtained by means of the Newton-Kantorovich theorem.
We let $E$ the space of continuous functions $(\alpha_1,\ldots,\alpha_{n-1}, V) :[-s_0,0] \rightarrow \R^{n}$ 
for which the $n-1$ first components satisfy that $ \alpha_j(s) / s$ is continuous; 
these components are endowed with the norm
$\| \alpha_j(s) / s \|_{\infty}$, the last one with $\| V \|_\infty$; this gives a norm on $E$ 
by taking the max of these.  \par
We compute the differential of $F$:
 \begin{multline*}
 DF[(\alpha_j) ; V]\big((\beta_{j'}) ; v\big)
 =\\
 \displaystyle
 \begin{pmatrix}
    \ell_{1}^0 \cdot (df_\gamma(r_{1}^0) - V r_1^0)   &   \cdots &  \ell_{1}^0 \cdot (df_\gamma(r_{n-1}^0) - V r_{n-1}^0)
        &  \ell_{1}^0 \cdot (\gamma - u_0) \\
    \vdots & \ & \vdots & \vdots \\
    \ell_{n-1}^0 \cdot (df_\gamma(r_{1}^0) - V r_1^0)   &  \cdots 
        &  \ell_{n-1}^0 \cdot (df_\gamma(r_{n-1}^0) - V r_{n-1}^0)    &  \ell_{n-1}^0 \cdot (\gamma - u_0) \\
    \II \displaystyle - \frac{\ell_n^0 \cdot df_\gamma( r_{1}^0 )}{s}   &  \cdots 
        & \displaystyle - \frac{\ell_n^0 \cdot df_\gamma( r_{n-1}^0 )}{s}   & 1
 \end{pmatrix}
 \begin{pmatrix}
     \beta_{1} \\ \vdots \\ \beta_{n-1} \\ v
 \end{pmatrix}.
 \end{multline*}
 We see that the coefficients are bounded as long as $[(\alpha_j) ; V] \in B_E([0;\lambda_n(u_0)], r)$, $r>0$;
 for the coefficients in the last line we use $\ell_n^0 \cdot df_{u_0}( r_{j}^0 )=0$ for $j \leq n-1$. 
 At base point $[0;\lambda_n(u_0)]$ this matrix 
 \begin{equation*}
 \begin{pmatrix}
    \lambda_1^0 - \lambda_n^0  & 0  &    \cdots &  0 & 0\\
    0 & \lambda_2^0 - \lambda_n^0 & 0 & \vdots & \vdots \\
    0 & \ & \ddots & \vdots & \vdots \\
    0 & \cdots & 0 &  \lambda_{n-1}^0 - \lambda_n^0   &  0 \\
    \II \displaystyle - \frac{\ell_n^0 \cdot df_{u_0+sr_n^0} ( r_{1}^0 )}{s}   &  \cdots &  \cdots 
        & \displaystyle  - \frac{\ell_n^0 \cdot df_{u_0+sr_n^0}( r_{n-1}^0 )}{s}   & 1
 \end{pmatrix}
 \end{equation*}
 is clearly invertible.
 Hence, by choosing $r>0$ suitably small, we see that $DF[(\alpha_j) ; V]$ is invertible
 for $[(\alpha_j) ; V] \in B_E([0;\lambda_n(u_0)], r)$;
 it is easy to check that the inverse is continuous for the $E$-norm. 
 In the same way, the second derivative of $F$ is also bounded in such a ball (recalling that $\gamma$ is affine in
  $\alpha_i$). 
 Hence, in order to be able to apply the Newton-Kantorovich theorem, 
 it is sufficient to check that given $R>0$, there exists $r>0$ such that 
 $\| F[(\alpha_j) ; V] \|_E \leq R$ on $B_E([0;\lambda_n(u_0)], r)$. This is elementary. \par
 We will use the following consequence of the Newton-Kantorovich theorem: given a curve $\gamma \in E$ and a velocity function $V \in C^0([-s_0,s_0])$, one has for some $K>0$
\begin{equation*}
\| (\gamma; V) - (\mathcal{S}_n; \overline{\lambda}_n(u_0,\mathcal{S}_n)) \|_{E \times L^\infty} 
\leq K \| F(\gamma,V) \|_{E \times L^\infty}.
\end{equation*}
 We apply this principle with $\gamma=\mathcal{R}_n$ and $V = \overline{\lambda}_n(u_0,\mathcal{R}_n(\cdot))$
 (so that the velocity part of $F(\gamma;V)$ vanishes) and deduce
\begin{equation} \label{Eq:EstRnSn}
\| \mathcal{R}_n - \mathcal{S}_n \|_E +
\| \overline{\lambda}_n(u_0,\mathcal{R}_n(\cdot)) - \overline{\lambda}_n(u_0,\mathcal{S}_n(\cdot)) \|_{\infty}
\leq K \left\| F[\mathcal{R}_n;\overline{\lambda}_n(u_0,\mathcal{R}_n(\cdot))]_{i = 1\ldots n-1} \right\|_E .
\end{equation}
 Now for $i = 1\ldots n-1$ and $s \in [-s_0,s_0]$ we have
 \begin{equation*}
 \ell_i^0 \cdot \big[ f(\mathcal{R}_n(s)) - f(u_0) 
    - \overline{\lambda}_n(u_0,\mathcal{R}_n(s)) (\mathcal{R}_n(s) -u_0)\big] 
=
\ell_i^0 \cdot \int_0^s [\lambda_n(\mathcal{R}_n(\tau)) - \overline{\lambda}_n(u_0,\mathcal{R}_n(s))] 
        r_n(\mathcal{R}_n(\tau))\, d\tau.
 \end{equation*}
 Using moreover that $\ell_i^0 \cdot r_n(\mathcal{R}_n(\tau)) = \mathcal{O}(\tau)$, so we get
 \begin{gather} 
 \label{Eq:DistRS1}
 \| \mathcal{R}_n - \mathcal{S}_n \|_{L^\infty([ s,0]) } 
    \leq C \int_0^s \tau 
        \big| \lambda_n(\mathcal{R}_n(\tau)) - \overline{\lambda}_n(u_0,\mathcal{R}_n(s)) \big| \, d\tau. \\
 \label{Eq:DistRS2}
 \| \overline{\lambda}_n(u_0,\mathcal{R}_n(\cdot)) - \overline{\lambda}_n(u_0,\mathcal{S}_n(\cdot)) \|_{\infty}
    \leq \frac{C}{|s|} \int_0^s \tau 
        \big| \lambda_n(\mathcal{R}_n(\tau)) - \overline{\lambda}_n(u_0,\mathcal{R}_n(s)) \big| \, d\tau.
 \end{gather}
 We will use the following lemma.
 \begin{lemma} \label{Lem:InegFnConvexes}
 Let $h : [-1,1] \rightarrow \R$ a $W^{2,1}$ convex function. Then for all $s \in  [-1,1]$,
 \begin{equation*}
 \left| \, \int_0^s t \left|h'(t) - \frac{h(s) - h(0)}{s} \right| \, dt \, \right|
    \leq \frac{s^2}{2} \left| h'(s)- \frac{h(s) - h(0)}{s} \right|.
 \end{equation*}
 \end{lemma}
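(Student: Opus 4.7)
By considering $\tilde{h}(t):=h(-t)$ (which is again convex $W^{2,1}$) one reduces to the case $s>0$; the case $s=0$ is trivial. Set $\bar{h}:=(h(s)-h(0))/s$ and $\phi(t):=h'(t)-\bar{h}$. Since $h \in W^{2,1}$ is convex, $h'$ is absolutely continuous and non-decreasing on $[0,s]$, so $\phi\in C^0([0,s])$ is non-decreasing; by definition of $\bar h$ one has $\int_0^s \phi(t)\,dt=0$, and in particular $\phi(s)\geq 0$, so the right-hand side of the lemma equals $\frac{s^2}{2}\phi(s)$. Excluding the trivial case $\phi\equiv 0$, there is a unique $t_0\in(0,s)$ with $\phi(t_0)=0$, $\phi\leq 0$ on $[0,t_0]$ and $\phi\geq 0$ on $[t_0,s]$; set $\alpha:=\int_{t_0}^s \phi\,dt=\int_0^{t_0}(-\phi)\,dt$.

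The plan is then to split the left-hand side as
\begin{equation*}
\int_0^s t|\phi(t)|\,dt \;=\; A+B, \quad A:=-\!\int_0^{t_0}\! t\,\phi(t)\,dt,\ \ B:=\int_{t_0}^s\! t\,\phi(t)\,dt,
\end{equation*}
and to estimate each piece. For $A$, I use that $|\phi|=-\phi$ is non-increasing on $[0,t_0]$ while $t$ is increasing: the integral version of Chebyshev's order inequality for anti-monotone functions yields $A\leq \frac{t_0}{2}\alpha$. For $B$, the functions $t$ and $\phi$ are co-monotone, so Chebyshev only gives a lower bound; instead I rewrite $B=\phi(s)\cdot\tfrac{s^2-t_0^2}{2}-\int_{t_0}^s t(\phi(s)-\phi(t))\,dt$ and, using the integral representation $\phi(s)-\phi(t)=\int_t^s h''(u)\,du$ together with Fubini, express the correction as a positive functional of $h''$. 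Combining these with the obvious bound $\alpha\leq(s-t_0)\phi(s)$ should deliver $A+B\leq \frac{s^2}{2}\phi(s)$.

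The hard part will be squeezing out the sharp constant $\tfrac{1}{2}$. The naive bookkeeping based purely on monotonicity of $\phi$ and the mean-zero condition produces $A+B\leq (s^2-t_0^2)\phi(s)$, which is off by a factor of two in the worst regime $t_0\to 0$. Recovering the factor $\tfrac{1}{2}$ forces me to exploit the \emph{convexity} of $h$ beyond mere monotonicity of $\phi$, via the kernel representation
\begin{equation*}
\phi(t)=\int_0^s K(t,u)h''(u)\,du, \qquad K(t,u)=\frac{u}{s}-\mathbf{1}_{\{u>t\}},
\end{equation*}
together with $\phi(s)=\int_0^s \tfrac{u}{s}h''(u)\,du$, which places weight on $h''$ concentrated near $u=s$ and thereby penalises the extremal configurations (mass concentrated near $u=t_0$) for the left-hand side. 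The key analytic step will be to switch the order of integration in $A+B$ and to compare the resulting $u$-kernel against $\tfrac{u}{s}\cdot\tfrac{s^2}{2}$ after subtracting the cancellation coming from the signed structure of $K$ on the two regions $[0,t_0]$ and $[t_0,s]$; this cancellation, rather than a pointwise kernel inequality, is what delivers the constant $\tfrac{1}{2}$.
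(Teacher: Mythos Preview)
Your setup mirrors the paper's: reduce to $s>0$, locate the unique zero $t_0$ of $\phi$, and split as $A+B$. The paper calls these $I_1,I_2$; it bounds $I_1$ by an integration by parts (equivalent in effect to your Chebyshev step, yielding $I_1\le\frac{s\tau}{2}\phi(s)$) and $I_2$ by the crude $\phi\le\phi(s)$.

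However, your plan to extract the constant $\tfrac12$ via the kernel representation cannot succeed, because the inequality with constant $\tfrac12$ is \emph{false}. Take $s=1$ and $h_\varepsilon''=\varepsilon^{-1}\mathbf{1}_{[1/4,\,1/4+\varepsilon]}$ (so $h_\varepsilon\in W^{2,1}$). As $\varepsilon\to0^+$ one has $\phi\to-\tfrac34$ on $[0,\tfrac14]$ and $\phi\to\tfrac14$ on $[\tfrac14,1]$, hence
\[
\int_0^1 t\,|\phi(t)|\,dt\;\longrightarrow\;\tfrac34\cdot\tfrac{1}{32}+\tfrac14\cdot\tfrac{15}{32}=\tfrac{9}{64},
\qquad
\tfrac12\,\phi(1)\;\longrightarrow\;\tfrac18=\tfrac{8}{64}.
\]
This limiting example saturates every step of your chain $A\le\tfrac{t_0}{2}\alpha$, $\alpha\le(s-t_0)\phi(s)$, $B\le\tfrac{s^2-t_0^2}{2}\phi(s)$, and maximising $\tfrac12(s-t_0)(s+2t_0)$ over $t_0\in[0,s]$ gives $\tfrac{9s^2}{16}$ at $t_0=s/4$; so the sharp constant is $\tfrac{9}{16}$, not $\tfrac12$. (The paper's own argument slips at the same place: it writes $\int_\tau^s t\,dt=\frac{(s-\tau)^2}{2}$ instead of $\frac{s^2-\tau^2}{2}$, which is what makes the final line $s^2-s\tau+\tau^2\le s^2$ appear to close.) For the application to Proposition~\ref{Pro:LaxCurves} only \emph{some} constant is needed --- it is absorbed into the $C|s|^2$ prefactor in \eqref{Eq:DistRS1b}--\eqref{Eq:DistRS2b} --- and your Chebyshev/monotonicity bound already delivers that. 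So drop the kernel refinement and state the lemma with a non-sharp constant.
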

 \begin{proof}[Proof of Lemma~\ref{Lem:InegFnConvexes}]
 Without loss of generality, we assume $s>0$ (replacing $s$ with $-s$ and $h$ with $h(-\cdot)$ if necessary).
 By the mean value theorem, we introduce $\tau \in (0,s)$ such that 
 \begin{equation*}
     h'(\tau) = \frac{h(s) - h(0)}{s},
 \end{equation*}
 so that 
 \begin{equation*}
 \int_0^s t \left|h'(t) - \frac{h(s) - h(0)}{s} \right| \, dt 
 =
 \int_0^\tau t \left( \frac{h(s) - h(0)}{s} - h'(t) \right) \, dt 
 +
 \int_\tau^s t \left( h'(t) - \frac{h(s) - h(0)}{s} \right) \, dt =: I_1 + I_2.
 \end{equation*}
 Concerning the second integral, we have $h'(t) \leq h'(s)$ for $\tau \leq t \leq s$, so that
 \begin{equation*}
I_2 \leq \left(h'(s) - \frac{h(s) - h(0)}{s} \right) \int_\tau^s t \, dt 
    =  \frac{(s-\tau)^2}{2} \left(h'(s)- \frac{h(s) - h(0)}{s} \right).
 \end{equation*}
 Concerning the first one, we integrate by parts to obtain
 \begin{equation*}
 %\int_0^\tau t \left( \frac{f(s) - f(0)}{s} - f'(t) \right) \, dt 
 I_1
 = \left[ \frac{t^2}{2} \left( \frac{h(s) - h(0)}{s} - h'(t) \right) \right]_0^\tau 
    + \int_0^\tau \frac{t^2}{2} h''(t) \, dt 
 = \int_0^\tau \frac{t^2}{2} h''(t) \, dt .
 \end{equation*}
Since $h$ is convex, it follows that
 \begin{equation*}
 I_1 \leq \frac{\tau}{2} \int_0^s t h''(t) \, dt 
     = \frac{\tau}{2} \left( \left[ th'(t)\right]_0^{s} - \int_0^s h'(t) \, dt \right)
     =  \frac{s\tau}{2} \left(h'(s) - \frac{h(s) - h(0)}{s} \right),
 \end{equation*}
 which, with $s^2 - s \tau + \tau^2 \leq s^2$, concludes the proof of Lemma~\ref{Lem:InegFnConvexes}.
 \end{proof}
 Going back to the proof of Proposition~\ref{Pro:LaxCurves}, we apply Lemma~\ref{Lem:InegFnConvexes} to $h$ 
 defined as a primitive of $s \mapsto \lambda_n(\mathcal{R}_n(s,u_0))$, 
 that is convex due to the monotonicity assumption. 
 We observe that since $\ell_n^0  \cdot r_n (\cdot) =1$, contracting $\mathcal{R}_n(s,u_0)$ in $\mathcal{R}_n(s)$, we have  
 \begin{equation} \label{Eq:ShockSpeedRar}
s \overline{\lambda}_n(u_0,\mathcal{R}_n(s)) 
   = \ell_n^0  \cdot \int_0^s \frac{d}{d\sigma}\Big(f(\mathcal{R}_n(\sigma))\Big) \, d \sigma
   = \ell_n^0  \cdot \int_0^s \lambda_n(\mathcal{R}_n(\sigma)) r_n (\mathcal{R}_n(\sigma)) \, d \sigma
   = \int_0^s \lambda_n(\mathcal{R}_n(\sigma)) \, d \sigma.
\end{equation}
Consequently, \eqref{Eq:DistRS1}-\eqref{Eq:DistRS2} give
\begin{gather}  
\label{Eq:DistRS1b}
    \| \mathcal{R}_n - \mathcal{S}_n \|_{L^\infty([ s,0]) } 
        \leq C |s|^2 \left[ \overline{\lambda}_n(u_0,\mathcal{R}_n(s)) - \lambda_n(\mathcal{R}_n(s)) \right]. \\
\label{Eq:DistRS2b}
    \| \overline{\lambda}_n(u_0,\mathcal{R}_n(\cdot)) - \overline{\lambda}_n(u_0,\mathcal{S}_n(\cdot)) \|_{\infty} 
        \leq C |s| \left[ \overline{\lambda}_n(u_0,\mathcal{R}_n(s)) - \lambda_n(\mathcal{R}_n(s)) \right] .
\end{gather}
 It follows that for $s \in [-s_0,0]$,
 \begin{equation*}
 \overline{\lambda}_n(u_0,\mathcal{S}_n(\cdot)) - \lambda_n(\mathcal{S}_n(s)) 
 \geq
 \overline{\lambda}_n(u_0,\mathcal{R}_n(\cdot)) - \lambda_n(\mathcal{R}_n(s)) 
 + \mathcal{O}(s^2) \left[ \overline{\lambda}_n(u_0,\mathcal{R}_n(\cdot)) - \lambda_n(\mathcal{R}_n(s)) \right],
 \end{equation*}
 where we used monotonicity of the field to deduce that for $s<0$
\begin{equation*}
\frac{\int_0^s \lambda_n(\mathcal{R}_n(\sigma)) \, d \sigma}{s} \geq \lambda_n(\mathcal{R}(s,u_0)).
\end{equation*}
In particular, reducing $s_0$ if necessary, we have
\begin{equation*}
\overline{\lambda}_n(u_0,\mathcal{S}_n(\cdot)) - \lambda_n(\mathcal{S}_n(s)) 
\geq
\frac{1}{2} \left( \overline{\lambda}_n(u_0,\mathcal{R}_n(\cdot)) - \lambda_n(\mathcal{R}_n(s)) \right) \geq 0 ,
\end{equation*}
 the last inequality coming from the monotonicity of the characteristic field. 
 Differentiating the Rankine-Hugoniot relation and applying $\ell_n(\mathcal{S}_n(s,u_0))$, we have
 \begin{multline} \label{Eq:DerivShockSpeed}
\ell_n(\mathcal{S}_n(s,u_0)) \cdot \dot{\mathcal{S}}(s,u_0) \ 
    \Big(\lambda_n(\mathcal{S}_n(s,u_0)) - \overline{\lambda}_n(u_0,\mathcal{S}_n(s,u_0)) \Big) \\
= \partial_s \overline{\lambda}_n(u_0,\mathcal{S}_n(s,u_0)  )
    \ \ell_n(\mathcal{S}_n(s,u_0)) \cdot \Big( \mathcal{S}_n(s,u_0)- u_0 \Big).
\end{multline}
It follows (on some neighborhood of $\overline{u}$) that $s \mapsto \overline{\lambda}_n(u_0,\mathcal{S}_n(s;u_0))$ is non-decreasing on $[-s_0,0]$,
from which we deduce that for all $s \in [-s_0,0]$, the shock $(\mathcal{S}_n(s;u_0),u_0)$
satisfies Liu's $E$-condition.
\end{proof}
\begin{remark}
Notice that by the mean value theorem we have 
\begin{equation*}
|\lambda_n(\mathcal{R}_n(\tau)) - \overline{\lambda}_n(u_0,\mathcal{R}_n(s)) | \leq \| (\nabla \lambda_n \cdot r_n) \circ \mathcal{R}_n(\cdot) \|_{L^1([s,0])},
\end{equation*}
so \eqref{Eq:DistRS1}-\eqref{Eq:DistRS2} gives us
\begin{gather*}
\| \mathcal{R}_n - \mathcal{S}_n \|_{L^\infty([ - s_0,0]) } 
        \leq C s_0^2 \| (\nabla \lambda_n \cdot r_n) \circ \mathcal{R}_n(\cdot)\|_{L^1([ - s_0,0])}, \\
\| \overline{\lambda}_n(u_0,\mathcal{R}_n(\cdot)) - \overline{\lambda}_n(u_0,\mathcal{S}_n(\cdot)) \|_{\infty} 
        \leq C s_0 \| (\nabla \lambda_n \cdot r_n) \circ \mathcal{R}_n(\cdot) \|_{L^1([ - s_0,0])},
\end{gather*}
which is classical in both cases of genuine nonlinearity and linear degeneracy; in the case of an algebraic degeneracy of $\nabla\lambda_n \cdot r_n$ at $u_0$, this was also pointed out by Zumbrun \cite{Zumbrun93}. \par
\end{remark}
%
%
%
%
%
% ------------------------------------------- Sharper interaction estimates -------------------------------------------
%
%
%
%
\subsection{Proof of sharper interaction estimates: Propositions~\ref{Prop:InteractionDF} and \ref{Prop:InteractionSF}}
\label{Subsec:ProofsShaperEstimates}
\begin{proof}[Proof of Proposition~\ref{Prop:InteractionDF} (Interactions of opposite families)]
The fact that when the $i$-th incoming wave is a rarefaction or a compression wave, 
then $\sigma'_{3-i}=\sigma_{3-i}$ is classical and a straightforward consequence of \eqref{Eq:RarRC}. 
Due to the symmetry between the two characteristic fields, 
it is therefore sufficient to prove \eqref{Eq:InteractionDF} and \eqref{Eq:InteractionDF-CW} on $\sigma_1'$ 
when $(u_l,u_m)$ is a shock wave (and in particular $\sigma_2<0$). \par
\ \par
\noindent
We discuss according to the nature of the wave $(u_m,u_r)$:
\begin{itemize}
\item When $(u_m,u_r)$ is a rarefaction ($\sigma_1>0$) or a compression wave ($\sigma_1<0$), then 
\begin{equation*}
u_r=\mathcal{R}_1(\sigma_1;\mathcal{S}_2(\sigma_2;u_l)) 
        =  \mathcal{S}_2(\sigma'_2;\mathcal{R}_1(\sigma'_1;u_l)).
\end{equation*}
(This writing covers the first statement when $\sigma_1>0$ and the second one when $\sigma_1<0$). \par
Projecting on the $w_2$ axis gives $\sigma_2=\sigma_2'$ while
projecting on the $w_1$ axis, denoting $\widetilde{u}_m=\mathcal{R}_1(\sigma'_1;u_l)$
 and recalling \eqref{Eq:ChocRC}, gives
\begin{eqnarray}
\nonumber
\sigma'_1 &=& \sigma_1 + \sigma_2^3 \mathcal{D}_2(\sigma_2;u_l)  
    - \sigma_2^3 \mathcal{D}_2(\sigma_2;\widetilde{u}_m)  \\
\label{Eq:DiffSigma1}
 &=& \sigma_1 + \sigma_2^3 \mathcal{D}_2(-\sigma_2;u_m) 
    - \sigma_2^3 \mathcal{D}_2(-\sigma_2;u_r)  .
 \end{eqnarray}
Hence \eqref{Eq:InteractionDF} holds (in both cases of a rarefaction and a compression) 
if we set $\mathcal{C}^1=\mathcal{C}^1_{\mathcal{R}}$ where
\begin{equation} \label{Eq:DefC1a}
\mathcal{C}^1_{\mathcal{R}}(u_m;\sigma_1,\sigma_2) 
:= -\frac{ \mathcal{D}_2(-\sigma_2;u_r) - \mathcal{D}_2(-\sigma_2;u_m) }{\sigma_1}
= -\frac{ \mathcal{D}_2(-\sigma_2;\mathcal{R}_1(\sigma_1;u_m)) - \mathcal{D}_2(-\sigma_2;u_m) }{\sigma_1},
\end{equation}
which due to the regularity of $\mathcal{D}_2$ is Lipschitz. \par
%
%In the case of a compression wave, \eqref{Eq:InteractionDF-CW} just states that $\mathcal{C}^1$ is bounded. \par
%
%
%
%
\item The case where  $(u_m,u_r)$ is a shock (and in particular $\sigma_1<0$) is a bit less explicit.
It is a classical and direct consequence  of the implicit function theorem that 
$(u_m;\sigma_1,\sigma_2) \mapsto (\sigma_1',\sigma_2')$ (in terms of Rankine-Hugoniot curves solely),
 is a smooth function.
Since it equals the identity for $\sigma_1=0$ and $\sigma_2=0$, it is of the form 
 $(\sigma_1',\sigma_2') = (\sigma_1,\sigma_2) + \sigma_1\sigma_2(\Phi_1,\Phi_2)$ where $\Phi_1, \Phi_2$ are smooth
 functions of $(\sigma_1,\sigma_2)$.
It remains to prove that $\Phi_i$ can be smoothly factorized by $\sigma_i^2$.
To see that, similarly to \eqref{Eq:DefC1a}, we start from 
 $u_r=\mathcal{S}_1(\sigma_1;\mathcal{S}_2(\sigma_2;u_l)) =  \mathcal{S}_2(\sigma'_2;\mathcal{S}_1(\sigma'_1;u_l))$,
 and denoting here $\widetilde{u}_m=\mathcal{S}_1(\sigma'_1;u_l)$ and projecting on the $w_1$ and $w_2$ axes,
 we find as for \eqref{Eq:DiffSigma1} that
\begin{equation} \label{Eq:S1S2}
\left\{ \begin{array}{l}
\sigma'_1 = \sigma_1 + \sigma_2^3 \mathcal{D}_2(\sigma_2;u_l) - {\sigma'_2}^3 \mathcal{D}_2(\sigma'_2;\widetilde{u}_m) 
= \sigma_1 + \sigma_2^3 \mathcal{D}_2(-\sigma_2;u_m) - {\sigma'_2}^3 \mathcal{D}_2(-\sigma'_2;{u}_r) 
\smallskip \\
\sigma'_2 = \sigma_2 + \sigma_1^3 \mathcal{D}_1(\sigma_1;u_m) - {\sigma'_1}^3 \mathcal{D}_1(\sigma'_1;u_l) .
\end{array} \right. 
\end{equation}
We divide $\sigma_1'-\sigma_1$ by $\sigma_1 \sigma_2^3$ and replace $\sigma_2'$ by $\sigma_2+ \sigma_1\sigma_2\Phi_2$
when considering the last term:
\begin{equation*}
\frac{\sigma_1'-\sigma_1}{\sigma_1 \sigma_2^3} 
= \frac{\mathcal{D}_2(-\sigma_2;u_m) - (1+\sigma_1 \Phi_2)^3 \, \mathcal{D}_2(-\sigma_2(1+\sigma_1 \Phi_2);{u}_r)}
                    {\sigma_1}.
\end{equation*}
We obtain that \eqref{Eq:InteractionDF} holds when setting $\mathcal{C}^1=\mathcal{C}^1_{\mathcal{S}}$ where:
\begin{multline} \label{Eq:DefC1b}
\mathcal{C}_{\mathcal{S}}^1(u_m;\sigma_1,\sigma_2) 
= -\frac{ \mathcal{D}_2(-\sigma_2;u_r) - \mathcal{D}_2(-\sigma_2;u_m) }{\sigma_1} \\
+ \frac{ \mathcal{D}_2(-\sigma_2;u_r) - (1+\sigma_1 \Phi_2)^3 \, \mathcal{D}_2(-\sigma_2(1+\sigma_1 \Phi_2);u_r) }
                                            {\sigma_1}.
\end{multline}
It is a consequence of the regularity of $\Phi_2$ and $\mathcal{D}_2$ that $\mathcal{C}^1_\mathcal{S}$ is Lipschitz.
\end{itemize}
Now we define
\begin{equation*}
\mathcal{C}^1(u_m;\sigma_1,\sigma_2) = \mathcal{C}_{\mathcal{R}}^1(u_m;\sigma_1,\sigma_2) 
        \text{ if } \sigma_1 >0
\ \text{ and } \ 
\mathcal{C}^1(u_m;\sigma_1,\sigma_2) = \mathcal{C}_{\mathcal{S}}^1(u_m;\sigma_1,\sigma_2) 
        \text{ if } \sigma_1 <0.
\end{equation*}

We have to check that these two expressions $\mathcal{C}^1_\mathcal{R}$ and $\mathcal{C}^1_\mathcal{S}$
 are compatible at $\sigma_1=0$, and to explain how this generates the additional error term in \eqref{Eq:InteractionDF-CW}. \par
From $\sigma_1' - \sigma_1 = \mathcal{O}(\sigma_1 \sigma_2)$ and the second equation in \eqref{Eq:S1S2},
we infer that $\sigma_2' - \sigma_2 = \mathcal{O}(\sigma_1^3 \sigma_2)$, that is $\Phi_2 = \mathcal{O}(\sigma_1^2)$.
Together with the smoothness of $\mathcal{D}_2$, this proves that the second term in \eqref{Eq:DefC1b}
 is of order $\mathcal{O}(\sigma_1^2)$. \par
In particular, for $\sigma_1<0$, we have 
$\mathcal{C}^1(u_m;\sigma_1,\sigma_2) - \mathcal{C}_{\mathcal{R}}^1(u_m;\sigma_1,\sigma_2) = \mathcal{O}(\sigma_1^2)$,
which justifies the additional error in \eqref{Eq:InteractionDF-CW}.
Moreover, this proves that $\mathcal{C}^1$ is continuous at $\sigma_1=0$.
Since it is regular on each side $\sigma_1<0$ and $\sigma_1>0$, this proves that $\mathcal{C}^1$ is 
globally Lipschitz as claimed.
\end{proof}
\ \par
\begin{proof}[Proof of Proposition~\ref{Prop:InteractionSF} (Interactions within a family)]
We only treat interactions of the first family by symmetry. \par
\noindent
{\bf 1.} We consider the case of classical waves.
In that case, the second estimate on $\widetilde{\sigma}_2$ is classical and not peculiar to $2 \times 2$ systems,
see e.g. \cite[Lemma 7.2]{BressanBook00}.
Concerning the first inequality, we project 
$u_r= \mathcal{T}_2(\widetilde{\sigma}_2;\mathcal{T}_1(\widetilde{\sigma}_1;u_l)) 
    = \mathcal{T}_1(\sigma'_1;\mathcal{T}_1(\sigma_1;u_l))$ on the $w_1$-axis to obtain:
\begin{equation*}
w_1[u_r] - w_1[\mathcal{T}_1(\widetilde{\sigma}_1;u_l)] + \widetilde{\sigma}_1 = \sigma'_1 + \sigma_1.
\end{equation*}
Since the wave between $\mathcal{T}_1(\widetilde{\sigma}_1;u_l)$ and $u_r$ is of the second family 
and due to the second-order tangency of rarefaction curves and Lax wave curves, we deduce
\begin{equation*}
w_1[u_r] - w_1[\mathcal{T}_1(\widetilde{\sigma}_1;u_l)] = \mathcal{O}(\widetilde{\sigma}_2^3),
\end{equation*}
which proves the claim. \par
\ \par
\noindent
{\bf 2.}
In the case of a compression wave meeting a shock, we treat only interactions in family one 
 with the compression wave on the right, the other cases being similar.
We reason as previously. 
We use the notation $\check{u}_m:=\mathcal{S}_1(\sigma_1;u_l)$.
First, by the classical Glimm estimates, we have 
 $\widetilde{\sigma}_1 = \sigma_1 + \sigma'_1 + \mathcal{O}(\sigma_1 \sigma'_1)$.
In particular, we see that $\widetilde{\sigma}_1 -\sigma_1 = \mathcal{O}(\sigma'_1)$
and that $\sigma_1 = \mathcal{O}(\widetilde{\sigma}_1)$, since $\sigma_1$ and $\sigma'_1$ have the same sign. \par
Now we project
$u_r= \mathcal{T}_2(\widetilde{\sigma}_2;\mathcal{S}_1(\widetilde{\sigma}_1;u_l)) 
    = \mathcal{R}_1(\sigma'_1;\mathcal{S}_1(\sigma_1;u_l))$ 
on the $w_1$/$w_2$-axes and use \eqref{Eq:ChocRC} to obtain:
\begin{equation} \label{Eq:S1S2SFCMS}
(\widetilde{\sigma}_2)_*^3 \, \mathcal{D}_2(\check{u}_m, {\sigma}_2) + \widetilde{\sigma}_1
= \sigma'_1 + \sigma_1
\ \text{ and } \ 
\widetilde{\sigma}_2 + \widetilde{\sigma}_1^3 \, \mathcal{D}_1(\widetilde{\sigma}_1;u_l) 
= \sigma_1^3 \, \mathcal{D}_1(\sigma_1;u_l) . 
\end{equation}
We infer from the second equation and the regularity of $\mathcal{D}_1$ that
 $\widetilde{\sigma}_2 = \mathcal{O}(1) |\widetilde{\sigma}_1|^2 |\widetilde{\sigma}_1 - \sigma_1| $,
 so that 
 $\widetilde{\sigma}_2 = \mathcal{O}(1) |\widetilde{\sigma}_1|^2 |\sigma'_1| $, which gives the second part of
 \eqref{Eq:InteractionsSF-CWvS}.
Injecting in the first equation of \eqref{Eq:S1S2SFCMS}, we get the remaining part. \par
\ \par
\noindent
{\bf 3.} Finally, the case of two compression/rarefaction waves is a direct consequence of \eqref{Eq:RarRC}. 
\end{proof}
%
%
%
% ---------------------------------------------- Proof of Lemma V/tildeV ----------------------------------------------
%
%
%
\subsection{Estimate on the modified $p$-variation: proof of Lemma~\ref{Lem:VVtilde}}
\label{Subsec:ProofVVtilde}
Call $\gamma^1_1,\ldots,\gamma^1_{n_1}$ the list of all $1$-fronts at time $t$, 
 and $\gamma^2_1,\ldots,\gamma^2_{n_2}$ the list of all $2$-fronts at time $t$, from left to right.
Now to get the $p$-variations $V_p(w_1(u^\nu))$ and $V_p(w_2(u^\nu))$ from $\widetilde{V}_p(u^\nu)$,
 we need to include the jumps of $w_1$ at fronts $\gamma^2_k$ (that we will denote $[w_1]_{\gamma^2_k}$)
 in $s_p(\sigma_{\gamma^1_1},\ldots,\sigma_{\gamma^1_{n_1}})$,  
 and the jumps of $w_2$ at fronts $\gamma^1_i$ (that we will denote $[w_2]_{\gamma^1_i}$)
 in $s_p(\sigma_{\gamma^2_1},\ldots,\sigma_{\gamma^2_{n_2}})$,  respectively.
Due to \eqref{Eq:RarRC}-\eqref{Eq:ChocRC},
 for any $k= 1,\ldots,{n_2}$ (resp. $i=1,\ldots,{n_1}$), one has $|[w_1]_{\gamma^2_k}| \lesssim |\sigma_{\gamma^2_k}|^3$
 (resp. $|[w_2]_{\gamma^1_i}| \lesssim |\sigma_{\gamma^1_i}|^3$).
It follows that
\begin{equation} \label{Eq:SwpV}
s_p([w_2]_{\gamma^1_1},\ldots,[w_2]_{\gamma^1_{n_1}}) 
\lesssim s_1(|\sigma_{\gamma^1_1}|^3,\ldots,|\sigma_{\gamma^1_{n_1}}|^3) 
\leq \sum_{\gamma \in \mathcal{A}(t)} |\sigma_\gamma|^3
\leq V_3(u^\nu(t,\cdot))^3 \leq V_p(u^\nu(t,\cdot))^3 .
\end{equation}
Using Proposition~\ref{Pro:ElemePropsVp}--\ref{Item:Interleave}, we have
\begin{equation*}
s_p([w_2]_{\gamma}; \gamma \in \mathcal{A}(t)) 
\leq s_p(\sigma_{\gamma^2_1},\ldots,\sigma_{\gamma^2_{n_2}})
+ s_p([w_2]_{\gamma^1_1},\ldots,[w_2]_{\gamma^1_{n_1}}) 
\leq s_p(\sigma_{\gamma^2_1},\ldots,\sigma_{\gamma^2_{n_2}})
+ \mathcal{O}(1) V_p(u^\nu(t,\cdot))^3,
\end{equation*}
and all the same, 
\begin{equation*}
s_p([w_1]_{\gamma}; \gamma \in \mathcal{A}(t)) 
\leq s_p(\sigma_{\gamma^1_1},\ldots,\sigma_{\gamma^1_{n_1}})
+ \mathcal{O}(1) V_p(u^\nu(t,\cdot))^3.
\end{equation*}
Using the triangle inequality for the $p$-norm in $\R^2$, we find
\begin{equation*}
V_p(u^\nu(t,\cdot)) \leq \widetilde{V}_p(u^\nu(t,\cdot)) + \mathcal{O}(1)V_p(u^\nu(t,\cdot))^3.
\end{equation*}
Proposition~\ref{Pro:ElemePropsVp}--\ref{Item:Interleave} also gives us
\begin{equation*}
s_p(\sigma_{\gamma^2_1},\ldots,\sigma_{\gamma^2_{n_2}})
\leq s_p([w_2]_{\gamma}; \gamma \in \mathcal{A}(t))  
+ \mathcal{O}(1) V_p(u^\nu(t,\cdot))^3,
\end{equation*}
so by the same reasoning we find 
\begin{equation*}
\widetilde{V}_p(u^\nu(t,\cdot)) \leq V_p(u^\nu(t,\cdot)) + \mathcal{O}(1)V_p(u^\nu(t,\cdot))^3.
\end{equation*}
Now instead of \eqref{Eq:SwpV} we can use
\begin{equation*} %\label{Eq:SwpV}
s_p([w_2]_{\gamma^1_1},\ldots,[w_2]_{\gamma^1_{n_1}}) 
\lesssim s_1(|\sigma_{\gamma^1_1}|^3,\ldots,|\sigma_{\gamma^1_{n_1}}|^3) 
\lesssim s_3(\sigma_{\gamma^1_1},\ldots,\sigma_{\gamma^1_{n_1}})^3 
\leq \widetilde{V}_p(u^\nu(t,\cdot))^3.
\end{equation*}
So following the same lines we find
\begin{equation*}
V_p(u^\nu(t,\cdot)) = \widetilde{V}_p(u^\nu(t,\cdot)) + \mathcal{O}(1) \widetilde{V}_p(u^\nu(t,\cdot))^3.
\end{equation*}
This concludes the proof of Lemma~\ref{Lem:VVtilde}.
%
%
%
% --------------------------------------------------------------------------------------------------------------------%
%                                                                                                                     %
%                                                    BIBLIOGRAPHY                                                     %
%                                                                                                                     %
% --------------------------------------------------------------------------------------------------------------------%
%
%
\ \par
\noindent
\bibliography{biblio-sclvp}
\bibliographystyle{abbrv} % alpha, plain
\addcontentsline{toc}{section}{References}
\end{document}